\documentclass[10pt]{article}
\oddsidemargin 0in \textwidth 6.5in \textheight 8.5in \topmargin
-0.5in

\usepackage[utf8]{inputenc}
\usepackage{amsmath,amsthm,amsfonts}
\usepackage{graphicx}
\usepackage{color}
\usepackage{multirow}
\newtheorem{theorem}{Theorem}
\newtheorem{lemma}{Lemma}

\usepackage{subcaption}
\graphicspath{ {./figs/}}

\usepackage{tikz}
\usetikzlibrary{shapes.geometric, arrows, positioning}
\tikzstyle{process} = [rectangle, minimum width=3cm, minimum height=1cm, text centered, draw=black]
\tikzstyle{decision} = [diamond, minimum width=3cm, minimum height=1cm, text centered, draw=black]
\tikzstyle{arrow} = [thick,->,>=stealth]
\tikzstyle{block} = [rectangle, draw, text width=7cm, align=center, text centered, rounded corners, minimum height=4cm]

\newcommand{\rev}[1]{{\color{black}#1}}

\begin{document}

\title{Generalized Multiscale Finite Element Method \\
for discrete network (graph) models}

\author{
Maria Vasilyeva\thanks{Department of Mathematics \& Statistics, Texas A\& M University - Corpus Christi, Corpus Christi, TX, USA.  
Email: {\tt maria.vasilyeva@tamucc.edu}.} 
}

\maketitle

\begin{abstract}
In this paper, we consider a time-dependent discrete network model with highly varying connectivity.  The approximation by time is performed using an implicit scheme.  We propose the coarse scale approximation construction of network models based on the Generalized Multiscale Finite Element Method.  An accurate coarse-scale approximation is generated by solving local spectral problems in sub-networks.  
Convergence analysis of the proposed method is presented for semi-discrete and discrete network models.  We establish the stability of the multiscale discrete network.  
Numerical results are presented for structured and random heterogeneous networks.   
\end{abstract}

\section{Introduction}

Multiphysics models on large networks are used in many applications, for example, pore network models in reservoir simulation, epidemiological models of disease spread, ecological models on multispecies interaction, medical applications such as multiscale multidimensional simulations of blood flow, fibrous materials, electric power systems, and many others \cite{gortz2020numerical}. 
In porous media flow simulation,  instead of direct numerical simulation of the flow in the exact pore geometry-based Navier-Stokes flow models, one can be approximated by a simplified network model of throats and pores \cite{bluntpnm}. This technique reduces the computational complexity and allows for simulation using larger computational domains.
In simulations of the class of insulation materials that are composed of a large number of fibers, the network models are used to represent the high-conductive fibrous \cite{iliev2010fast, gortz2022iterative}
The application of the discrete network model to river network simulations is presented in \cite{betrie2018scalable}. The implementation of the model is based on the PETSc library for high-performance computing systems (HPC).  
The application of the network models to transient hydraulic simulations is considered in \cite{abhyankar2020petsc, maldonado2017scalable} and performed for problems such as water distribution in urban distribution systems, oil distribution, and hydraulic generation. 
In \cite{jalving2017graph},  the authors present a graph-based computational framework that facilitates the construction and analysis of large-scale optimization and simulation applications of coupled infrastructure networks.
The dynamic optimal electric power flow is simulated using network models in \cite{sundermann2023parallel, sundermann2022efficient}.  
In  \cite{della2010distributed}, traffic flows are considered in complex networks, where the model is based on a graph or network of streets in which vehicles can move.
The application of the spatial networks for disease transmission in epidemiological models is considered in \cite{keeling2005networks}. 
In \cite{reichold2009vascular}, a cerebral blood flow is modeled as fluid flow driven through a network of resistors by pressure gradients. The authors introduce a vascular graph modeling framework based on these principles to compute blood pressure, flow, and scalar transport in realistic vascular networks. 
The simulation of blood flow in microvascular networks and the surrounding tissue is considered in \cite{vidotto2019hybrid}. To reduce the computational complexity of this issue, the network structures are modeled by a one-dimensional graph whose location in space is determined by the centerlines of the three-dimensional vessels. 
Despite eliminating a significant portion of complexity through this approach, efficiently solving the resulting model remains a common challenge.
Homogenization and multiscale methods are commonly used techniques for implementing upscaling to manage the extensive computational complexity associated with large network models.

Multiscale problems arise in many areas of science and engineering and typically involve multiple spacial length scales.   Traditional numerical methods, such as finite element or finite volume methods, can become prohibitively expensive when the number of degrees of freedom required to capture all relevant scales becomes large.
To address this issue, multiscale and homogenization methods have been developed that seek to efficiently capture the essential features of the problem at each length scale.  
In homogenization methods, the fine-scale problem is replaced by an equivalent coarse-scale problem that captures the essential features of the original problem at a lower computational cost \cite{sanchez1980non, jikov2012homogenization, bakhvalov2012homogenisation, allaire1992homogenization}.  The effective properties for the coarse grid approximation are computed by analyzing the behavior of unit cells. 
The most common homogenization method is based on periodicity, which assumes that the fine-scale problem can be represented as a periodic array of repeating unit cells. 
Compared to the homogenization techniques, a multiscale methods form a broader class of numerical techniques, for example,  the Multiscale Finite Element Method  \cite{hou1997multiscale, efendiev2009multiscale},  the Heterogeneous Multiscale Method \cite{abdulle2012heterogeneous}, the Local Orthogonal Decomposition method \cite{henning2014localized},  the Variational Multiscale Method \cite{hughes1998variational}, the Generalized Multiscale Finite Element Method \cite{efendiev2011multiscale, efendiev2013generalized},  the Multiscale Finite Volume Method  \cite{lunati2006multiscale, hajibeygi2008iterative} and many others. 
Most multiscale methods are based on constructing the multiscale basis functions in the local domains to capture fine-scale behavior.

There are various applications of multiscale and upscaling methods for network problems that have been studied extensively.
Upscaling traffic flows in complex networks is considered in \cite{della2010distributed}. The problem is considered in two-dimensional regions whose size (macroscale) is much greater than the characteristic size of the network arcs (microscale).  
The numerical upscaling method is presented in \cite{kettil2020numerical}, where a finite element approximation with a  localized orthogonal decomposition method represents the macroscale model. Moreover, the application to a two-dimensional network model of paper-based materials in the form of fiber networks is considered \cite{gortz2022iterative, kulachenko2012direct}.   
In \cite{gortz2020numerical}, the nodal displacement in a fiber network model is analyzed using a multiscale method based on the LOD. 
In \cite{chu2012multiscale, chu2013multiscale}, the heterogeneous multiscale method (HMM) is proposed to couple a network model on the microscale with a continuum scale over the same physical domain.
The coarsening procedures for graph Laplacian problems written in a mixed saddle point form were presented in \cite{barker2017spectral, barker2021multilevel}. 
The numerical methods for computing the effective heat conductivity of fibrous insulation materials are presented in  \cite{iliev2010fast}. The fast algorithm is constructed based on the upscaling procedure. It contains the solution of the auxiliary boundary value problems of the steady-state heat equation in a representative elementary volume occupied by fibers and air. The presented approach ignores air and is further simplified by taking advantage of the slender shape of the fibers and assuming that they form a network. 
A multiscale method for networks representing flows in a porous medium is presented in \cite{chu2012multiscale, chu2013multiscale}. 
In \cite{balhoff2008mortar},  the mortar coupling is presented to couple pore-scale network models to additional pore-scale or continuum-scale models using mortars. Mortars are finite-element spaces in two dimensions connecting distinct subdomains by ensuring pressure and flux continuity at their shared interfaces.
In \cite{reichold2009vascular}, a cerebral blood flow is modeled as fluid flow in a complex network.   The authors construct an upscaling algorithm that significantly reduces the computational cost. Furthermore,  the upscaled model no longer requires extensive information regarding the topology of the capillary bed.
The reduction of the computational complexity of the simulation of blood flow in microvascular networks and the surrounding tissue is considered in  \cite{vidotto2019hybrid}. The authors employ homogenization to the microvascular network's fine-scale structures in the study, leading to a new hybrid approach. This approach models the fine-scale structures as a heterogeneous porous medium, while the larger vessels' flow is modeled using one-dimensional flow equations.

This paper introduces the novel approach for upscaling the complex network model based on the Generalized Multiscale Finite Element Method \cite{efendiev2011multiscale, efendiev2013generalized}. The GMsFEM has a significant advantage in incorporating small-scale features from heterogeneities into coarse-grid basis functions. The multiscale basis functions are constructed by solving local eigenvalue problems. The online solutions can be calculated for any suitable boundary condition or forcing by these greatly reduced-dimension multiscale basis functions.
In this work, we present the construction of the reduced-order model for complex, highly heterogeneous networks. The network model represents the fine-scale model. In contrast, the coarse-scale approximation is represented by a much coarser finite element mesh than the fine-scale network. We design multiscale basis functions to account for the networks' microscale features by solving the local spectral problems in the primary local network cluster.   The constructed multiscale approximation can handle highly varying connectivity and random network structure with a large system size reduction. Convergence analysis of the proposed method is presented for semi-discrete and discrete network models. We establish the stability of the multiscale discrete network for implicit approximation.   
Numerical implementation of the network model is performed based on the DMNetwork framework. DMNetwork is a part of PETSc library  \cite{balay2019petsc} for high-resolution multiphysics simulations on the large-scale complex network  \cite{abhyankar2020petsc, maldonado2017scalable}. DMNetwork provides data and topology management, parallelization for multiphysics systems over a network, and hierarchical solvers. 
We present the main components of the multiscale method for constructing the coarse-scale system.   
To test the presented upscaling approach, we consider regular cubic lattice networks with and without random elimination processes and random networks \cite{gostick2016openpnm, raoof2010new}. We show that the multiscale method can provide an accurate solution with a large system size reduction and fine-scale solution reconstruction. 
\rev{This paper aims to fill the gap in the existing literature, where the most effort is given to the homogenization and upscaling techniques for discrete network models. Regular flux averaging defines one macroscopic variable to capture all underlying physics and may not work well for complex high-construct networks. 
Several recent multiscale methods developed for network models differ from homogenization approaches \cite{balhoff2008mortar, chu2012multiscale, chu2013multiscale, kettil2020numerical, gortz2022iterative}. However, these approaches are still based on a single macroscopic variable for each coarse region and are inaccurate for problems with high contrast and complex structure. In the proposed approach, we use local eigenvectors to accurately represent fine-scale behavior on a coarse level, offering a fresh perspective on the network simulations and generalizing an existing GMsFEM-based approach for a larger class of heterogeneous structures and applications. }

The paper is organized as follows. In Section 2, we consider the fine-scale model of the complex network in semi-discrete and discrete forms with stability analysis. In Section 3, we present the construction of the coarse-scale model using the multiscale method and provide a priory estimate.   The numerical results are presented in Section 4 for structured and random heterogeneous three-dimensional network models. The conclusion and future works discussion are given in Section 5.

\section{Problem formulation}

We consider network represented as a undirected graph $G = (P, E)$, where 
$P = \{v_1, v_2, ..., v_{N_v} \}$ is a set of vertices (nodes $v_i \in \mathcal{R}^d$, $d=2,3$) and 
$E$ is a set of two-elements subsets of $P$ (connections or edges $e_{ij} = \{v_i, v_j \}$ that connect vertices $v_i$ (head) and $v_j$(tail) and $i \neq j$).  
Here $N_v$ is the total number of vertices/nodes, and $N_e$ is the total number of connections \cite{gallier2016spectral}.  We suppose that the graph is connected and weighted. 
Furthermore, we assume that the network is embedded into the rectangular cuboid
\[
\Omega  = [0, L_1] \times ... \times [0, L_d]. 
\]

\begin{figure}[h!]
\centering
\includegraphics[width=0.25\linewidth]{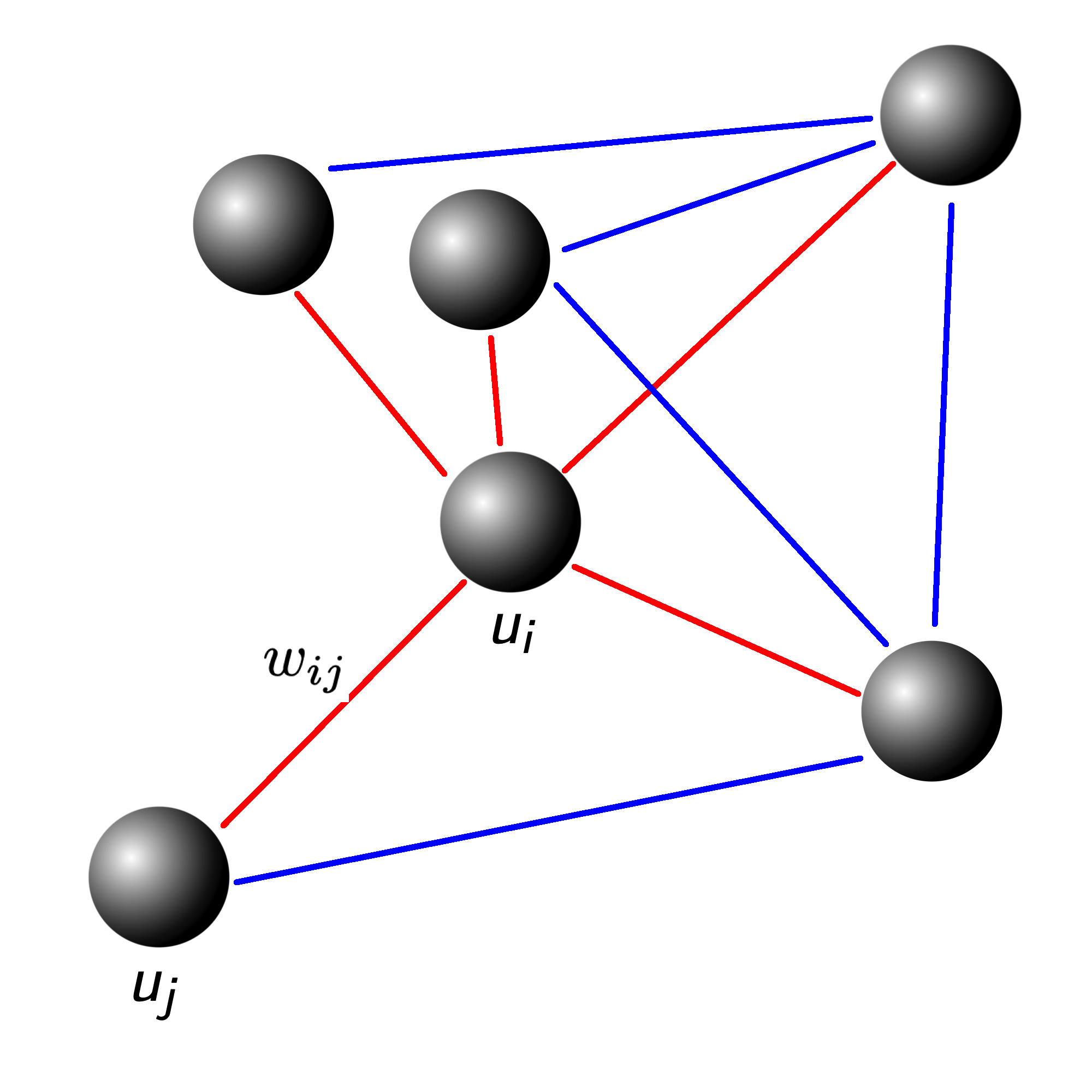}
\caption{Nodes and connections (edges)}
\label{fig:conn}
\end{figure}

We assign a heterogeneous property $c_i$ for each node $x_i$  that can be associated with the volume in the pore-network model \cite{bluntpnm}.  Then, we associate a weight $w_{ij}$  to each connection proportional to the area of the edge and inversely proportional to the distance between nodes \cite{gostick2016openpnm} (Figure \ref{fig:conn}).  
Additionally, we label nodes associated with the top and bottom boundaries ($\Gamma \subset \partial \Omega$)  to set Dirichlet boundary conditions.


\begin{figure}[h!]
\centering
\begin{subfigure}{1\textwidth}
\centering
\includegraphics[width=0.32\linewidth]{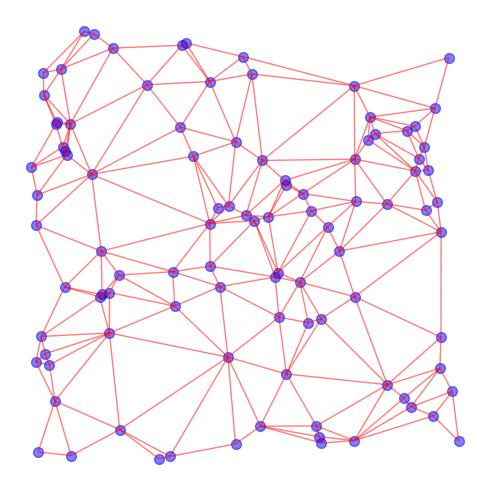}
\includegraphics[width=0.32\linewidth]{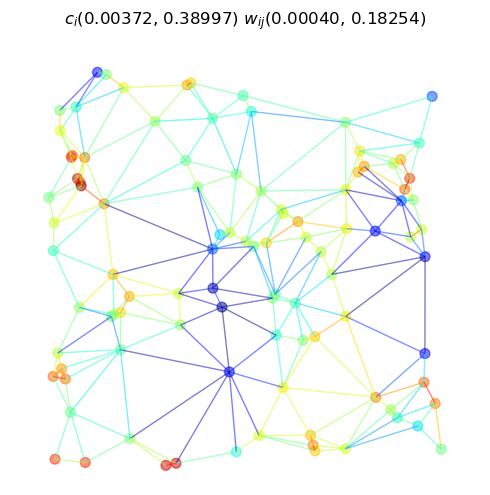}
\includegraphics[width=0.26\linewidth]{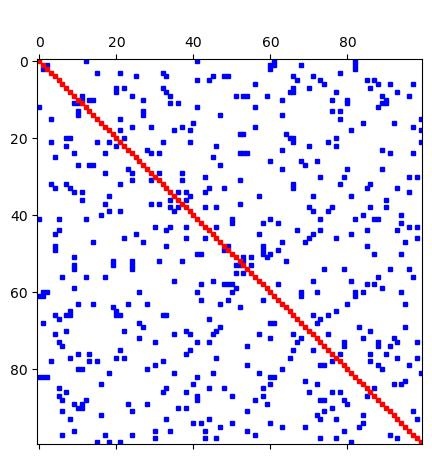}
\caption{\rev{2D network $G$ with 100 nodes and 260 connections; random weights $w_{ij}$ of connections and random node coefficient $c_i$; and structure of the graph Laplacian ($L(G)$), where degree matrix ($D(G)$) elements  are depicted in red color and weight matrix ($W(G)$) elements are depicted in blue color.}}
\end{subfigure}

\vspace{5pt}

\begin{subfigure}{1\textwidth}
\centering
\includegraphics[width=0.35\linewidth]{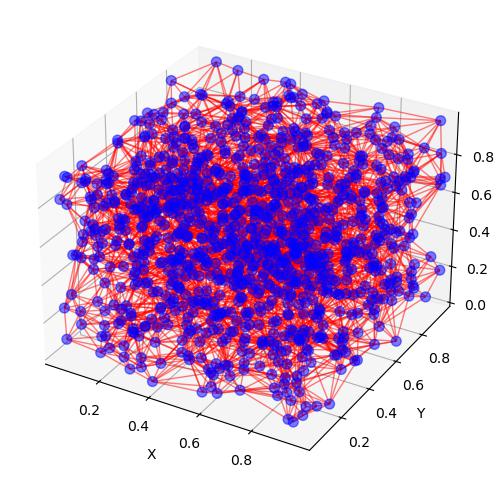}
\includegraphics[width=0.35\linewidth]{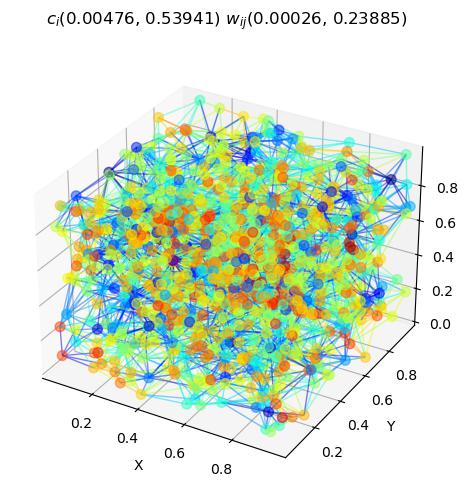}
\includegraphics[width=0.26\linewidth]{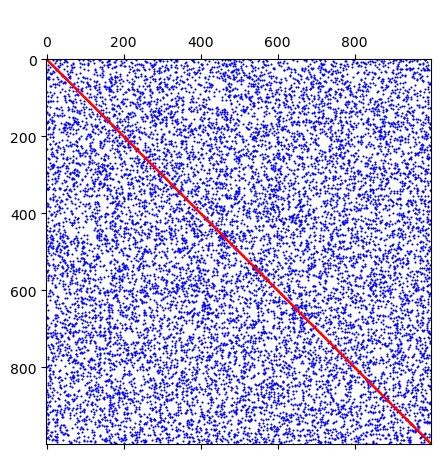}
\caption{\rev{3D network $G$ with 1000 nodes and 6674 connections; random weights $w_{ij}$ of connections and random node coefficient $c_i$; and structure of the graph Laplacian ($L(G)$), where degree matrix ($D(G)$) elements  are depicted in red color and weight matrix ($W(G)$) elements are depicted in blue color.}}
\end{subfigure}
\caption{\rev{Illustration of the 2D and 3D networks and corresponding graph Laplacian.}}
\label{fig:net-mat}
\end{figure}

Let $D(G)$ be a degree matrix, $W = \{w_{ij}\}_{i,j=1}^{N_v}$ be a symmetric weight matrix ($w_{ij} \geq 0$,  $w_{ii} = 0$ and $w_{ij} > 0$ if $e_{ij}$ is an edge) and $L(G) = D(G) - W$ be a graph Laplacian of $G$, where  $D(G) = \text{diag}(d_1,  \ldots, d_{N_v})$ with $d_{i} = \sum_{j=1}^{N_v} w_{ij}$ \cite{gallier2016spectral}.  Note that we will write $L$ instead of $L(G)$ for simplicity.  
\rev{In Figure \ref{fig:net-mat}, we illustrate the two and three-dimensional networks with the corresponding graph Laplacian. We show a typical graph structure with irregular connections in the first column, where we depict nodes in blue and connections in red. In the second column, we plot random coefficients $w_{ij}$ and $c_i$ that are represented by different colors of connections and nodes, respectively. The plot of coefficients is given in the logarithmic scale, and the range of coefficient values is depicted in the title. In the third column, we present a structure of the corresponding graph Laplacian. From the plot, we can observe how network (graph) connectivity affects the matrix sparsity. The size of the matrix is $N_v \times N_v$, where $N_v$ is the number of nodes. The elements corresponding to the degree matrix ($D$) are depicted in red; elements corresponding to the weight matrix $W$ are depicted in blue, and the resulting graph Laplacian is $L = D-W$. }  
For network generation, we use OpenPNM library \cite{gostick2016openpnm}.  We note that the method can be constructed for a general case without embedding it into a hyper-rectangle.   However, it will affect the coarse-grid construction, and a more general way should be considered based on the graph partitioning.

For all $u \in \mathcal{R}^{N_v}$, we have 
\begin{equation}
(Lu)_i = \sum_{j \sim  i} w_{ij} (u_i - u_j),
\end{equation}
where we write $j \sim  i$ if  nodes $x_i$ and $x_j$ are connected. 

We suppose that $w_{ij}$ are bounded weights ($0 < \underline{w} \leq w_{ij} \leq \overline{w} < \infty$) then   
\begin{equation}
\begin{split}
u^T L u
&= u^T D u - u^T W u
= \sum_{i=1}^{N_v} d_i u_i^2 - \sum_{i, j=1}^{N_v} w_{ij} u_i u_j\\
&= \frac{1}{2} \left( 
 \sum_{i=1}^{N_v} d_i u_i^2  
 - 2 \sum_{i, j=1}^{N_v}  w_{ij} u_i u_j 
 + \sum_{i=1}^{N_v} d_i u_i^2
\right)
 = \frac{1}{2} \sum_{i, j=1}^{N_v} w_{ij} (u_i - u_j)^2 \geq 0,
\end{split}
\end{equation}
and operator $L$ is positive semidefinite. 

Furthermore, we can write the following representation of the graph Laplacian $L$ 
\[
L = B M B^T,
\]
where matrix $M=\{m_{ij}\}_{i,j=1}^{N_e}$ is the diagonal matrix filled with the edge weights and $B = \{b_{ij}\}$ is the vertex-edge incident matrix with size $N_v \times N_e$ and element $b_{ij} = 1$ if $v_i$ is a head of edge $e_j$,  $b_{ij} = -1$ if $v_i$ is a tail of edge $e_j$ and zero otherwise.  
Them, we have  $(Lu, u) = u^T L u = (B^T u)^T M (B^T u) = \frac{1}{2} \sum_{i, j=1}^{N_v} w_{ij} (u_i - u_j)^2$.

\subsection{Semi-discrete network model}

\rev{We consider a sufficiently fine weighted undirected graph $G$ embedded into the cube (see Figure \ref{fig:netf}).} 
On $G$, we consider the following time-dependent  problem 
\begin{equation}
c_i \frac{\partial u_i}{\partial t}  + \sum_j w_{ij} (u_i - u_j) = f_i,  
\quad \forall i  \in \mathcal{N},   
\quad 0 < t \leq T,
\label{mm00}
\end{equation}
where $\mathcal{N}=\{1,\ldots,N_v\}$ is the set of indexes,  $u_i$ is defined on on the node $x_i$, $f_i$ is the given nodal source term and $c_i$ is the bounded coefficient, $0 < \underline{c} \leq c_i \leq \overline{c} < \infty$.

We consider equations \eqref{mm00} with initial conditions
\begin{equation}
u_i = u_{i,0}, 
\quad \forall i  \in \mathcal{N}, 
\quad t = 0, 
\end{equation}
and boundary conditions 
\begin{equation}
u_i = g_i, 
\quad i \in \mathcal{N}_D, 
\quad  0 < t \leq T,
\end{equation}
where $\mathcal{N}_D$ is the set of node corresponded to the boundary \cite{kettil2020numerical}.

\begin{figure}[h!]
\centering
\begin{subfigure}{0.45\textwidth}
\includegraphics[width=1\linewidth]{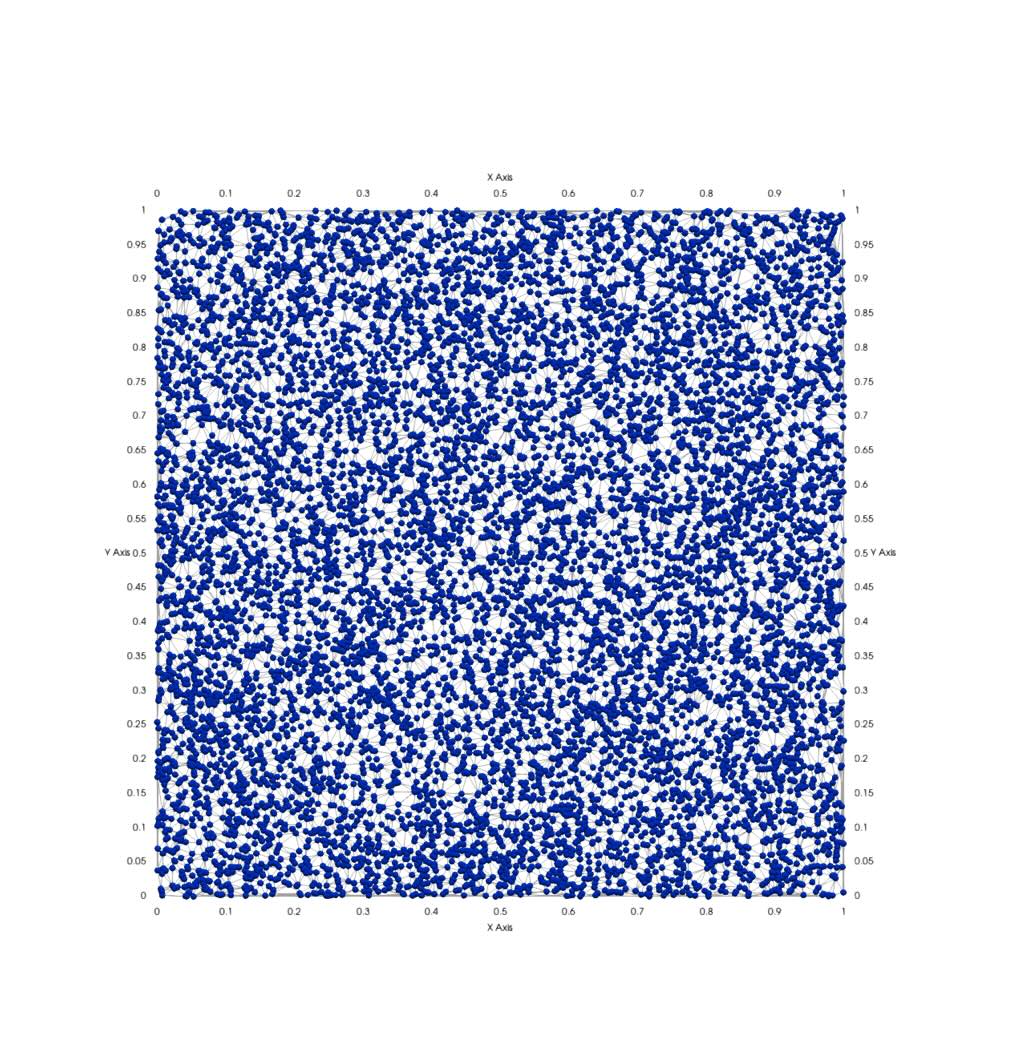}
\caption{2D}
\end{subfigure}
\ \ \ \ 
\begin{subfigure}{0.45\textwidth}
\includegraphics[width=1\linewidth]{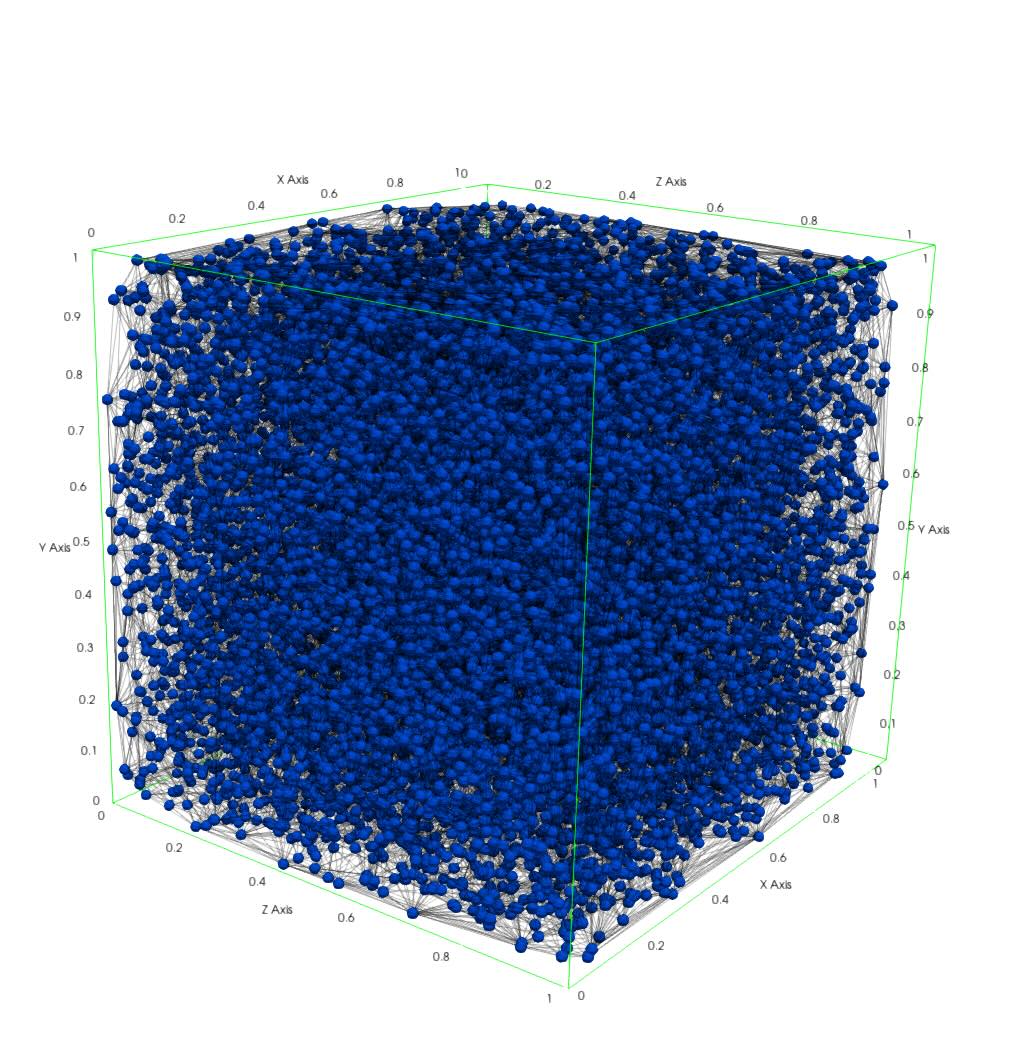}
\caption{3D}
\end{subfigure}
\caption{\rev{Illustration of two and three-dimensional fine-scale networks embedded into the cube }}
\label{fig:netf}
\end{figure}

Let $u = (u_1, \ldots, u_{N_v})^T \in \mathcal{R}^{N_v}$ be the vector defined on the set of nodes,  $f = (f_1, \ldots, f_{N_v})^T \in \mathcal{R}^{N_v}$ be the given source term vector, and $C$ be the diagonal matrix, $C=\text{diag}(c_1,  \ldots, c_{N_v})$. Then we can write problem \eqref{mm0} in the following matrix form
\begin{equation}
C \frac{\partial u}{\partial t}  + L u = f, \quad 0 < t \leq T.
\label{mm0}
\end{equation}
with initial conditions $u = u_0$ for $t=0$. 
Note that, in this formulation, the matrices and right-hand side vector are modified to incorporate boundary conditions. For instance, to have a symmetric matrix, we do not include boundary nodes and impose boundary conditions by setting flux on the nodes connected to the boundary.

Let  $V \subset \mathcal{R}^{N_v}$ be the subspace of real-valued functions defined on the set of nodes satisfying Dirichlet boundary conditions  \cite{hauck2022super}.  Then we can write problem formulation in the following equivalent form: find $u \in  V$ such that 
\begin{equation}
\left( C \frac{\partial u}{\partial t}, v \right)  + (L u, v) = (f, v), 
\quad \forall v \in V, 
\quad 0 < t \leq T,
\label{mm1}
\end{equation}
where 
$(u, v) = u^T v$ and $(Lu, v) = u^T L v$ is scalar products.

Next, we introduce the following two norms $||u|| = \sqrt{(u,u)}$ and $||u||_L = \sqrt{ (L u, u)}$  and  show a stability estimate for a semi-discrete fine-scale scheme \eqref{mm1} \cite{samarskii2001theory, vabishchevich2013additive, vasilyeva2023efficient, kolesov2014splitting}. 

\begin{lemma}
\label{t:t1}
The  solution of the problem \eqref{mm1} satisfies the following a priory estimate
\begin{equation}
\label{t1}
||u(t)||_{L}^2
 \preceq 
 ||u_0||_{L}^2 
+\int_0^t ||f||^2_{C^{-1}} ds.
\end{equation}
\end{lemma}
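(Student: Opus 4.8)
The plan is to carry out a standard energy argument, but testing with the \emph{time derivative} of the solution rather than with $u$ itself, since the target norm on the left is $\|\cdot\|_L$ rather than $\|\cdot\|$. First I would take $v = \partial u / \partial t$ in the variational formulation \eqref{mm1}; this is admissible because $\partial u/\partial t \in V$ (assuming, as is implicit here, that the Dirichlet data $g_i$ are time-independent, so the boundary constraint is preserved under differentiation in $t$). This gives
\begin{equation}
\left( C \frac{\partial u}{\partial t}, \frac{\partial u}{\partial t} \right)
+ \left( L u, \frac{\partial u}{\partial t} \right)
= \left( f, \frac{\partial u}{\partial t} \right).
\label{enpf1}
\end{equation}
The first term equals $\| \partial u/\partial t \|_C^2 \geq 0$. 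For the second term I would use that $L$ is symmetric (it is a graph Laplacian, $L = L^T$), so $\left( L u, \partial u/\partial t \right) = \tfrac12 \tfrac{d}{dt}(Lu,u) = \tfrac12 \tfrac{d}{dt}\|u\|_L^2$; note only symmetry is used here, positive semidefiniteness of $L$ (established in the excerpt) is needed only to make $\|\cdot\|_L$ a genuine seminorm.

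Next I would bound the right-hand side. Since $C$ is diagonal with $0 < \underline{c} \le c_i \le \overline{c}$, the matrix $C^{-1}$ is well-defined, symmetric and positive definite, so $\|f\|_{C^{-1}} = \sqrt{f^T C^{-1} f}$ makes sense, and by Cauchy--Schwarz followed by Young's inequality,
\begin{equation}
\left( f, \frac{\partial u}{\partial t} \right)
= \left( C^{-1/2} f, \, C^{1/2}\frac{\partial u}{\partial t} \right)
\le \|f\|_{C^{-1}} \left\| \frac{\partial u}{\partial t} \right\|_C
\le \frac12 \|f\|_{C^{-1}}^2 + \frac12 \left\| \frac{\partial u}{\partial t} \right\|_C^2.
\label{enpf2}
\end{equation}
Substituting \eqref{enpf2} into \eqref{enpf1} and moving the $\tfrac12\|\partial u/\partial t\|_C^2$ term to the left, the first term is absorbed and I obtain
\begin{equation}
\frac12 \left\| \frac{\partial u}{\partial t} \right\|_C^2 + \frac12 \frac{d}{dt}\|u\|_L^2 \le \frac12 \|f\|_{C^{-1}}^2.
\label{enpf3}
\end{equation}

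Finally, dropping the nonnegative term $\tfrac12\|\partial u/\partial t\|_C^2$ leaves $\tfrac{d}{dt}\|u\|_L^2 \le \|f\|_{C^{-1}}^2$, and integrating from $0$ to $t$ together with $u(0) = u_0$ yields exactly \eqref{t1} (in fact with constant $1$, so $\preceq$ can be replaced by $\le$). I do not expect a serious obstacle here: the argument is a textbook parabolic energy estimate. The one point requiring care is the admissibility of the test function $\partial u/\partial t$ in $V$, i.e. the implicit assumption of time-independent boundary data; if one wanted genuinely time-dependent $g_i$ one would first subtract a lift of the boundary data and carry the correction terms through on the right-hand side, which would change the constants but not the structure of the estimate.
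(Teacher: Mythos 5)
Your argument is correct and matches the paper's proof essentially line for line: test with $v=\partial u/\partial t$, use symmetry of $L$ to write $(Lu,\partial u/\partial t)=\tfrac12\tfrac{d}{dt}\|u\|_L^2$, bound the right-hand side by Young's inequality so the $\|\partial u/\partial t\|_C^2$ term is absorbed, and integrate in time. The only difference is the choice of constants in Young's inequality (you use $\tfrac12,\tfrac12$ where the paper uses $1,\tfrac14$), which is immaterial for the stated estimate with $\preceq$.
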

\begin{proof}
Let $v = \frac{du}{dt}$ in  \eqref{mm1} then we have
\begin{equation}
\left(C \frac{du}{dt}, \frac{du}{dt}\right) 
+ \left( L u,   \frac{du}{dt} \right) = \left(f, \frac{du}{dt} \right). 
\end{equation}
Using Young's inequality 
and 
$\frac{1}{2} \frac{d}{dt} \left(L u,  u\right) =  \left( Lu,   \frac{du}{dt} \right)$, we obtain the following estimate
\begin{equation}
\left\| \frac{du}{dt} \right\|^2_C  + \frac{1}{2} \frac{d}{dt} \left( Lu,  u \right) 
=
 \left(f, \frac{du}{dt} \right)
\leq
\left\| \frac{du}{dt} \right\|^2_C + \frac{1}{4} \left\| f \right\|^2_{C^{-1}},
\end{equation}
or
\begin{equation}
\frac{d}{dt}  \left\| u \right\|_{L}^2
\leq 
\frac{1}{2} \left\| f \right\|^2_{C^{-1}}.
\end{equation}
Finally,  after integration by time, we obtain inequality \eqref{t1} . 
\end{proof}

\rev{The Lemma \ref{t:t1} gives the simplest stability estimates of the solution on the initial data and the right-hand side \cite{vabishchevich2013additive}.}

\subsection{Time-approximation and discrete network model}

Let $u^n = u(t_n)$ and $u^{n-1} = u(t_{n-1})$, where $t_n = n \tau$,  $n=1,2, ...$ and $\tau > 0$ be the uniform time step size.  
For approximation by time, we use backward Euler's  to obtain an implicit fully discrete scheme: find $u^n \in V$ such that 
\begin{equation}
\label{mm2}
\left( C \frac{ u^n - u^{n-1} }{\tau}, v\right) + (L u^n, v) = (f^n, v), 
\quad \forall v \in V, 
\quad n = 1,2,...
\end{equation}
with initial conditions $u^0 = u_0$. 
Similarly to the semi-discrete problem \eqref{mm1}, we have the following stability estimate \cite{samarskii2001theory, vabishchevich2013additive}. 

\begin{lemma}
\label{t:t2}
The solution of the  problem \eqref{mm2} is unconditionally stable and satisfies the following estimate
\begin{equation}
\label{t2}
||u^n||_L^2 
\preceq 
||u_0||_L^2 
+ \tau \sum_{k=1}^n ||f^k||^2_{\left(C + \frac{\tau}{2} L \right)^{-1}}.
\end{equation}
\end{lemma}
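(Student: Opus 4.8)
The plan is to mirror the proof of Lemma~\ref{t:t1}, replacing the time derivative by its backward difference and choosing the test function accordingly. First I would test \eqref{mm2} with $v = u^n - u^{n-1}$, the discrete counterpart of $v = du/dt$ used in the semi-discrete case. Since $C$ is diagonal and positive definite, this immediately gives $\frac{1}{\tau}\|u^n-u^{n-1}\|_C^2 + (Lu^n,\,u^n-u^{n-1}) = (f^n,\,u^n-u^{n-1})$.

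The key algebraic step is the symmetric-matrix identity $(Lu^n,\,u^n-u^{n-1}) = \tfrac12\big(\|u^n\|_L^2 - \|u^{n-1}\|_L^2 + \|u^n-u^{n-1}\|_L^2\big)$, which holds because $L = L^T \succeq 0$, as established in Section~2. Substituting it, the extra nonnegative term $\tfrac12\|u^n-u^{n-1}\|_L^2$ combines with the mass term into $\frac{1}{\tau}\|u^n-u^{n-1}\|^2_{C+\frac{\tau}{2}L}$. Here I would note that $A := C + \tfrac{\tau}{2}L$ is symmetric positive definite for every $\tau > 0$, since $C \succ 0$ and $L \succeq 0$; hence the weighted norm $\|\cdot\|_A$ and its dual $\|\cdot\|_{A^{-1}}$ are well defined, the linear system for $u^n$ is uniquely solvable, and — crucially — no restriction on $\tau$ is needed, which is exactly what ``unconditionally stable'' means.

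Next I would bound the right-hand side by Cauchy--Schwarz and Young's inequality in the $A$-inner product: $(f^n,\,u^n-u^{n-1}) \le \|f^n\|_{A^{-1}}\,\|u^n-u^{n-1}\|_A \le \frac{1}{\tau}\|u^n-u^{n-1}\|_A^2 + \frac{\tau}{4}\|f^n\|_{A^{-1}}^2$. The first term on the right cancels the one produced on the left, leaving $\|u^n\|_L^2 - \|u^{n-1}\|_L^2 \le \tfrac{\tau}{2}\|f^n\|^2_{(C+\frac{\tau}{2}L)^{-1}}$. Summing this telescoping inequality over $k = 1,\dots,n$ and using $u^0 = u_0$ yields \eqref{t2}, with the multiplicative constant absorbed into $\preceq$.

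There is no genuine obstacle here; the only point requiring care is choosing the splitting so that the dual norm appearing in the bound is exactly the $(C+\tfrac{\tau}{2}L)^{-1}$ weight stated in the lemma — that is, retaining the full term $\tfrac12\|u^n-u^{n-1}\|_L^2$ coming from the identity rather than discarding it as one does in the semi-discrete estimate. It is also worth emphasizing, when concluding, that unlike explicit time-stepping the present estimate holds with no CFL-type condition on $\tau$, precisely because $C + \tfrac{\tau}{2}L$ stays invertible for all $\tau > 0$.
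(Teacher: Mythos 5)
Your proof is correct and follows essentially the same route as the paper: testing with the backward difference, regrouping the mass and stiffness contributions into the $C+\tfrac{\tau}{2}L$ weight (your identity $(Lu^n,u^n-u^{n-1})=\tfrac12\bigl(\|u^n\|_L^2-\|u^{n-1}\|_L^2+\|u^n-u^{n-1}\|_L^2\bigr)$ is exactly the paper's rewriting of the scheme in two-level operator-difference form), applying Young's inequality in the $(C+\tfrac{\tau}{2}L)$-inner product, and telescoping. The only differences are cosmetic (your test function is $u^n-u^{n-1}$ rather than $(u^n-u^{n-1})/\tau$), and your explicit remark on the positive definiteness of $C+\tfrac{\tau}{2}L$ for all $\tau>0$ is a welcome clarification of "unconditionally stable."
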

\begin{proof}
The equation \eqref{mm2} can be written as follows
\begin{equation}
\left( C \frac{ u^n - u^{n-1} }{\tau}, v\right) + (L u^n, v) 
= 
\left( \left(C + \frac{\tau}{2} L \right) \frac{ u^n - u^{n-1} }{\tau}, v  \right) 
+ \frac{1}{2} (L (u^n  + u^{n-1}), v) = (f^n, v).
\end{equation}
Let $v = \frac{ u^n - u^{n-1} }{\tau}$ then we have
\begin{equation}
\left( \left(C + \frac{\tau}{2} L \right) \frac{ u^n - u^{n-1} }{\tau} , \frac{ u^n - u^{n-1} }{\tau} \right) 
+ \frac{1}{2}  \left( L (u^n + u^{n-1}),   \frac{ u^n - u^{n-1} }{\tau} \right) = \left(f^n, \frac{ u^n - u^{n-1} }{\tau} \right). 
\end{equation}
Using  the following inequality for the right-hand side
\begin{equation}
\left(f^n, \frac{ u^n - u^{n-1} }{\tau} \right) \leq 
\left\| \frac{ u^n - u^{n-1} }{\tau} \right\|_{ \left(C + \frac{\tau}{2} L \right) }^2 
+ \frac{1}{4} 
\left\| f^n \right\|_{ \left(C + \frac{\tau}{2} L \right)^{-1}}^2,
\end{equation}
we obtain the following estimate for $L = L^T$
\begin{equation}
\frac{1}{2 \tau} \left( L (u^n + u^{n-1}),   (u^n - u^{n-1})  \right)
=
\frac{1}{2 \tau} (L u^n,u^n) + 
\frac{1}{2 \tau} (L u^{n-1}, u^{n-1}) 
\leq 
\frac{1}{4} 
\left\| f^n \right\|_{ \left(C + \frac{\tau}{2} L \right)^{-1}}^2,
\end{equation}
or
\begin{equation}
||u^n||^2_L \leq ||u^{n-1}||^2_L  + \frac{\tau}{2} 
\left\| f^n \right\|_{ \left(C + \frac{\tau}{2} L \right)^{-1}}^2.
\end{equation}
\end{proof}
\rev{The Lemma \ref{t:t2}  refer to general stability results for operator-difference schemes and show that the two-level time approximation \eqref{mm2} using the backward Euler method for a discrete network model is unconditionally stable \cite{vabishchevich2013additive,vabishchevich2024computational}.}
We note that the presented discrete network model with implicit approximation by time leads to solving the large system of equations on each time step. To reduce the size of the system, we use a homogenization approach and construct a coarse-scale approximation by introducing local spectral multiscale basis functions.

\section{Multiscale model order reduction}

Multiscale methods form a broad class of numerical techniques where most multiscale methods are based on constructing the multiscale basis functions in the local domains to capture fine-scale behavior.  In this work, we construct a local spectral multiscale basis function for the network model described above and follow the procedure defined in the Generalized Multiscale Finite Element Method (GMsFEM) \cite{akkutlu2016multiscale, efendiev2013generalized, chung2016generalized}.

\begin{figure}[h!]
\centering
\begin{subfigure}{0.4\textwidth}
\includegraphics[width=1\linewidth]{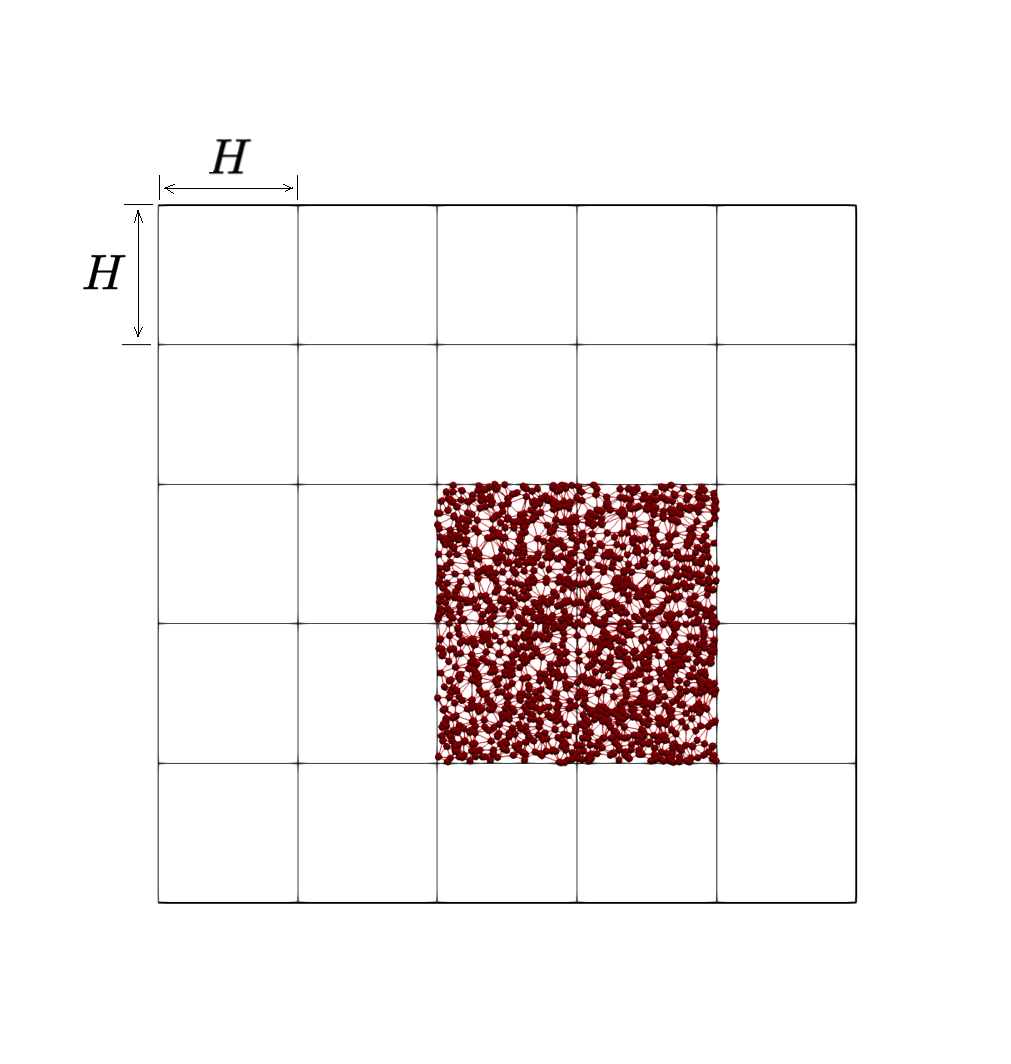}
\caption{2D}
\end{subfigure}
\ \ \ \ 
\begin{subfigure}{0.4\textwidth}
\includegraphics[width=1\linewidth]{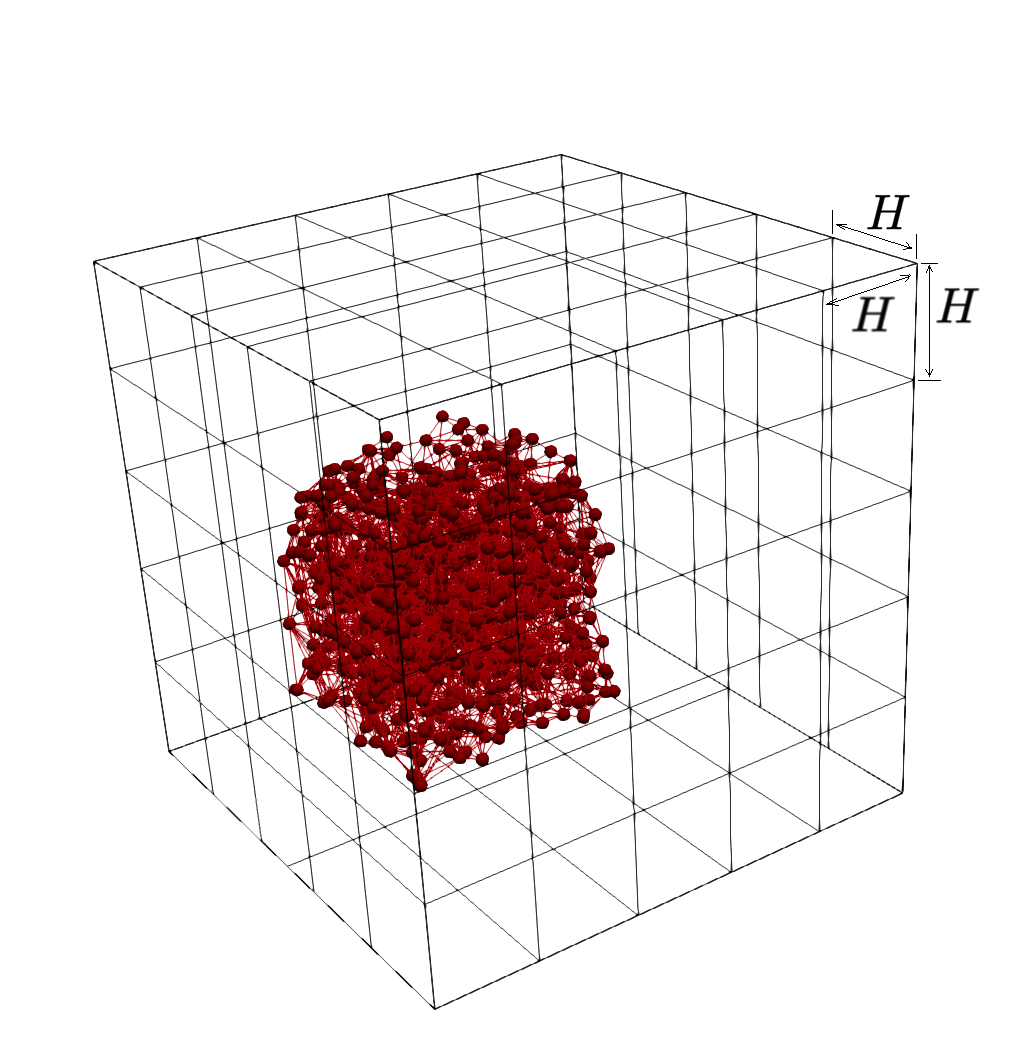}
\caption{3D}
\end{subfigure}
\caption{Illustration of coarse grid $\mathcal{T}_H$ and local domain $\omega_i$ with sub-network $G^{\omega_i}$}
\label{fig:netc}
\end{figure}

Let $\mathcal{T}_H$ be a coarse mesh in $\Omega$
\[
\mathcal{T}_H = \bigcup_{i=1}^{N_c} K_i,
\]
where $K_i$ is the coarse cell, and $N_c$ is the number of coarse cells. 
We denote $\{y_i\}_{i=1}^N$ the nodes of the coarse mesh $\mathcal{T}_H$, where $N$ is the number of coarse grid nodes \cite{abreu2019convergence}.  We define $\omega_i$ as a neighborhood of the node $y_i$
\[
\omega_i = \bigcup \{K_j \in \mathcal{T}_H: \ y_i \in \bar{K_j} \},
\]
and $\omega^{K_j}$ as a the neighborhood of the element ${K_j}$
\[
\omega^{K_j} = \bigcup\{\omega_i : K_j \in \omega_i  \}.
\]
In this work we suppose that $\Omega  = [0, L_1] \times ... \times [0, L_d]$, where $d = 2,3$ is the dimension.  Furthermore,  we consider a uniform mesh with square and cubic cells for simplicity
\[
K_i = [y_i, y_i+H e_1] \times ... \times [y_i, y_i+H e_d],
\]
where $H$ is the coarse grid size.

Next, we associate a sub-network $G^{\omega_i}$ to each $\omega_i$. 
In Figure \ref{fig:netc}, we depict a coarse $5 \times 5$ mesh for 2D network and $5 \times 5 \times 5$ coarse grid for 3D network with local domain $\omega_i$ and corresponded sub-network $G^{\omega_i}$.  
Note that in implementing the sub-network extraction, we ensure that any node from the global graph $G$ is contained on exactly one coarse element. 

The objective of this paper is to develop and analyze a multiscale finite element method for discrete network models defined above.  The main idea of the multiscale method is in construction of the coarse-scale approximation of the network by construction of the accurate multiscale basis functions $\{\rev{{\psi}_r^{\omega_i}}\}_{r=1}^{M_i}$ in each local domain $G^{\omega_i}$ (sub-network), where $M_i$ is the number of local basis functions.  
In the multiscale method, we define offline and online stages:
\begin{itemize}
\item 
\textit{Offline stage:}
\begin{itemize}
\item Coarse grid ($\mathcal{T}_H$) and local domains construction (sub-network, $G^{\omega_i}$).
\item Multiscale space construction by the solution of the local eigenvalue in each $G^{\omega_i}$
\[
V_{ms} = \text{span} \{\rev{{\psi}_r^{\omega_i}}: 1 \leq i \leq N, \ 1 \leq r \leq M_i  \}.
\]
\end{itemize} 
\item 
\textit{Online stage: }  
\begin{itemize}
\item Solution of the coarse-scale problem (Galerkin approximation): find $u \in V_{ms}$ such that 
\begin{equation}
\left( C \frac{\partial u_{ms}}{\partial t}, v_{ms} \right)  + (L u_{ms}, v_{ms}) = (f, v_{ms}), 
\quad \forall v_{ms} \in V_{ms}.
\label{ms1}
\end{equation}
\end{itemize}
\end{itemize}

We use a same-time approximation as for fine-scale problem with $u_{ms}^n = u_{ms}(t_n)$, where $t_n = n \tau$,  $n=1,2, ...$ and $\tau > 0$.  
By backward Euler's scheme, we obtain a fully discrete scheme: find $u_{ms}^n \in V_{ms}$ such that 
\begin{equation}
\label{ms2}
\left( C \frac{ u_{ms}^n - u_{ms}^{n-1} }{\tau}, v_{ms}\right) + (L u_{ms}^n, v_{ms}) = (f^n, v_{ms}), 
\quad \forall v_{ms} \in V_{ms}, 
\quad n = 1,2,...
\end{equation}

Next, we discuss details of the multiscale space construction and coarse-scale model generation with some implementation aspects.

\subsection{Multiscale basis functions and algorithm}

In this work, we use the OpenPNM \cite{gostick2016openpnm} to construct a network and assign properties. The constructed network is saved in the text files with the corresponding properties. To implement the numerical solution of the fine-scale discrete network model, we use a PETSc library with DMNetwork framework \cite{abhyankar2020petsc, maldonado2017scalable}. DMNetwork provides data and topology management, parallelization for multiphysics systems over a large network, and hierarchical solvers. 

\rev{We start with extracting the local subnetwork. For the general case, we applied a preprocessing step for the network extraction to avoid possible disconnected clusters. Let $\tilde{G}^{\omega_i}$ be the subnetwork of $G$ that contains all nodes and connections within the local domain $\omega_i$. Then, we extract a main cluster $G^{\omega_i} \subset \tilde{G}^{\omega_i}$ on which we solve a local eigenvalue problem. We suppose that the size of the primary cluster is much larger than the others. In order to incorporate disconnected clusters into the local multiscale space, we use indicator vectors $\eta_i=[\eta_{i,1}, \ldots, \eta_{i,N_v^{\omega_i}}]$ with $\eta_{i,l} = 0$ if $x_l \in G^{\omega_{i}}$ and 1 otherwise. Note that we solve an eigenvalue problem in $G^{\omega_i}$ to incorporate local fine-scale behavior and construct an accurate approximation.} For network preprocessing, we use existing tools from the OpenPNM library \cite{gostick2016openpnm}.  In the numerical implementation of the eigenvalue problem solution, we use the SLEPc library \cite{hernandez2005slepc}. 

For the local network $G^{\omega_i}$, we have
\rev{
\begin{equation}
L^{\omega_i} = D^{\omega_i} - W^{\omega_i},  \quad 
D^{\omega_i} = \text{diag}(d_1, \ldots,d_{N_v^{\omega_i}}), \quad 
d_i = \sum_{j=1}^{N_v^{\omega_i}} w_{ij}, 
\end{equation}
where $L^{\omega_i}$, $W^{\omega_i}$ and  $D^{\omega_i}$ are the matrices defined on the sub-network $G^{\omega_i}$ and  $N_v^{\omega_i}$ is the number of nodes of local network $G^{\omega_i}$ associated with the local domain $\omega_i$.
Based on the preprocessing and considering the main cluster, we suppose that $G^{\omega_i}$ has no isolated vertices, and therefore the local degree matrix $D^{\omega_i}$ contains positive entities and is invertible. }


To construct a multiscale basis functions in each sub-network $G^{\omega_i}$, we solve a local generalized eigenvalue problem 
\begin{equation}
\label{sp1}
L^{\omega_i} {\phi}_r^{\omega_i}
= \lambda_r^{\omega_i} D^{\omega_i} {\phi}_r^{\omega_i}.
\end{equation}
Then, we choose first $M_i$ eigenvectors corresponded to the smallest eigenvalues,  $\phi_r^{\omega_i}$, $r = 1,\ldots,M_i$ in each $G^{\omega_i}$ with $i = 1,\ldots,N$.  
For the  sub-network $G^{\omega_i}$,  the local matrix $L^{\omega_i}$ is positive semidefinite for $w_{ij} > 0$.  Therefore, the eigenvalues $0 = \lambda^{\omega_i}_1 \leq \lambda^{\omega_i}_2 \leq \ldots \leq \lambda^{\omega_i}_{N_v^{\omega_i}}$  of $L^{\omega_i}$ are real and nonnegative,  and there is an orthonormal basis of eigenvectors of  $L^{\omega_i}$;  and the smallest eigenvalue $\lambda^{\omega_i}_1 = 0$ and corresponded eigenvector is one  \cite{gallier2016spectral}. 

Then similarly to the  GMsFEM for standard finite element-based definition, we use a generalized eigenvalue problem that gives a good approximation space \cite{efendiev2013generalized, abreu2019convergence}. 
We choose first $M_i$ eigenvectors corresponded to the smallest eigenvalues,  $\phi_r^{\omega_i}$, $r = 1,\ldots,M_i$ in each $G^{\omega_i}$ with $i = 1,\ldots,N$.  
In order to form a continuous space, we multiply eigenvalues to the linear partition of unity functions and form a multiscale space
\begin{equation}
V_{ms} = \text{span} \{\rev{{\psi}_r^{\omega_i}}: 1 \leq i \leq N, \ 1 \leq r \leq M_i  \}.
\end{equation}
where
\rev{${\psi}_{1}^{\omega_i}= \chi_i \eta_i$, ${\psi}_{{r+1}}^{\omega_i}= \chi_i \phi_r^{\omega_i}$ and $\chi_i$ is the projection of the linear partition of unity functions in $\omega_i$ to the nodes of corresponded sub-network $G^{\omega_i}$.}

We implement the presented multiscale algorithm based on the algebraic way by forming the projection matrix 
\begin{equation}
R =[  
\rev{{\psi}_1^{\omega_1}}, \ldots, \rev{{\psi}_{M_1}^{\omega_1}}, \ldots ,  
\rev{{\psi}_{1}^{\omega_N}}, \ldots,  \rev{{\psi}_{M_N}^{\omega_N}} ]^T.
\end{equation}
\rev{We form a projection matrix in an offline stage and use it to construct and solve a coarse grid system. }


\begin{figure}[h!]
\centering
\begin{tikzpicture}[node distance=2cm]
\node (local_block) [block] at (0, 0) {
    \textbf{Offline stage}\\
    (local calculations)\\[0.3cm]
    \begin{tikzpicture}[node distance=1.5cm, scale=0.8, transform shape]
        \node (start_local) [process] {Identify nodes and connections within $\omega_i$};
        \node (local_calc1) [process, below of=start_local,  yshift=-0.5cm] {Extract subnetwork $\tilde{G}^{\omega_{i}}$ from $G$; \\ identify main cluster $G^{\omega_{i}}$;\\
        form $\eta_i=[\eta_{i,1}, \ldots, \eta_{i,N_v^{\omega_i}}]$,\\
         $\eta_{i,l} = 0$ if $x_l \in G^{\omega_{i}}$ and 1 otherwise};
        \node (local_calc2) [process, below of=local_calc1,  yshift=-0.5cm] {Extract submatrices $L^{\omega_{i}}$ from $L$ corresponded to $G^{\omega_{i}}$;\\ find $D^{\omega_{i}} = \text{diag}(L^{\omega_{i}})$};
        \node (local_calc3) [process, below of=local_calc2] {Solve generalized eigenvalue problem \eqref{sp1} in $G^{w_{i}}$};
        \node (local_calc4) [process, below of=local_calc3,  yshift=-0.1cm] {Choose $M_i$ eigenvectors and form matrix\\ $R^{\omega_i} = [{\psi}_{1}^{\omega_i}, \ldots,  {\psi}_{M_i}^{\omega_i} ]^T$ \\
        with ${\psi}_{1}^{\omega_i} = \chi_i, \eta_i$ and ${\psi}_{r+1}^{\omega_i} = \chi_i {\psi}_{r}^{\omega_i}$};
        \node (local_end) [process, below of=local_calc4, yshift=-0.5cm] {For global matrix by mapping local blocks $R^{\omega_i}$ \\
        $R = \begin{bmatrix}
       		R^{\omega_1}\\
       		\ldots\\
       		R^{\omega_N}
			\end{bmatrix}$};
        \draw [arrow] (start_local) -- (local_calc1);
        \draw [arrow] (local_calc1) -- (local_calc2);
        \draw [arrow] (local_calc2) -- (local_calc3);
        \draw [arrow] (local_calc3) -- (local_calc4);
        \draw [arrow] (local_calc4) -- (local_end);
    \end{tikzpicture}
};
\node (macro_block) [block, right of=local_block, xshift=6.5cm] {
    \textbf{Online stage}\\
    (global coarse calculations)\\[0.3cm]
    \begin{tikzpicture}[node distance=1.5cm, scale=0.8, transform shape]
        \node (start_macro) [process] {For a given system $C u_t + L u = F$,\\
        project matrices and vectors to coarse grid\\
        $C_H = R C R^T, \quad  L_H = R L R^T, \quad F_H= R F$};
        \node (macro_calc1) [process, below of=start_macro,  yshift=-0.5cm] {Solve time-dependet problem\\
        $(C_H + \tau L_H) u_H^n = \tau F_H + C_H u_H^{n-1}$\\
        with $u^0_H = R u_0$};
        \node (macro_calc2) [process, below of=macro_calc1,  yshift=-0.3cm] {Reconstruction  fine-scale solution\\
         $ u^n_{ms} = R^T u^n_H$};
        \draw [arrow] (start_macro) -- (macro_calc1);
        \draw [arrow] (macro_calc1) -- (macro_calc2);
    \end{tikzpicture}
};
\draw [arrow] (local_block.east) -- ++(0.6,0) node[anchor=south] {$R$} -- (macro_block.west);
\end{tikzpicture}
\caption{\rev{Schematic flow chart of the multiscale scheme.}}
\label{fig:ms}
\end{figure}

To define a coarse grid approximation, we use a projection matrix and form a coarse-scale matrix and right-hand side vector
\begin{equation}
C_H = R C R^T, \quad  L_H = R L R^T, \quad F_H= R F.
\end{equation}
Finally,  we solve a coarse-scale system each time $t_n$
\begin{equation}
C_H \frac{u_H^n - u_H^{n-1}}{\tau} + L_H u_H^n = F_H, 
\end{equation}
and reconstruction of the fine-scale solution $ u^n_{ms} = R^T u^n_H$. 
\rev{In Figure \ref{fig:ms} we present a schematic illustration of the multiscale solver.}
We note that all offline calculations are independent for each local domain $\omega_i$ and can be done entirely parallelly. All coupling is done on the coarse level and generally does not require parallelization. However, for large systems, the size of the coarse-scale approximation can still lead to the large system, and further parallelization of the linear system solution at time $t_n$ can be done based on the PETSc implementation of the parallel linear or nonlinear solvers (KSP and SNES classes). Furthermore, the more advanced models can be upscaled similarly with advanced time-stepping techniques available in the TS framework of PETSc.   We will consider such advanced coupling in future works.

\subsection{Convergence analysis}

In this part of the work, we present a priory error estimate of the multiscale method for network models.  We note that the analysis presented below is closely related to the analysis in \cite{efendiev2011multiscale, abreu2019convergence, fu2022generalized, guan2023coupling, poveda2023convergence}.  
We start with the definition of global norms used in the presented analysis.

Let $d(u, u) = (Du,u) =  u^T Du$ and $A = D^{-1} L$  then 
\begin{equation}
\begin{split}
& d(u,u) = ||u||^2_D, \\
& d(Au, u) = (Au)^T D u = u^T A^T D u  
= u^T (D^{-1} L)^T D u 
= u^T L u = a(u,u) = ||u||^2_L,\\
& d(Au, Au) = (Au)^T D (Au) = u^T (D^{-1} L)^T D (D^{-1} L) u 
=  u^T (L^T  D^{-1} L) u  = ||Au||^2_D,
\end{split}
\end{equation}
for $D= D^T$ and $L = L^T$ \cite{abreu2019convergence}.

For each $\omega_i$ ($i=1,\ldots,N$) we define the local projection
\begin{equation}
\label{pr}
P_{M_i}^{\omega_i} v 
= \sum_{r=1}^{M_i} c_{ri} {\phi}_r^{\omega_i}, \quad c_{ri} = ({\phi}_r^{\omega_i})^T D^{\omega_i} v, 
\end{equation}
for $v \in V$  and given $M_i$ in the local eigenvalue problem \eqref{sp1}.

For $P_{M_i}^{\omega_i}$ the following inequalities holds \cite{abreu2019convergence, guan2023coupling}
\begin{equation}
\label{pest}
\begin{split}
&||v - P_{M_i}^{\omega_i} v ||^2_{D^{\omega_i} } \leq 
\frac{1}{ \lambda_{M_i+1}^{\omega_i} } ||v - P_{M_i}^{\omega_i} v ||^2_{L^{\omega_i}} \leq
\frac{1}{ \lambda_{M_i+1}^{\omega_i} } ||v||^2_{L^{\omega_i}}, 
\\
&||v - P_{M_i}^{\omega_i} v ||^2_{D^{\omega_i}} \leq 
\frac{1}{ (\lambda_{M_i+1}^{\omega_i})^2 } ||A (v - P_{M_i}^{\omega_i} v) ||^2_{D^{\omega_i}} \leq\frac{1}{ (\lambda_{M_i+1}^{\omega_i})^2 } ||Av||^2_{D^{\omega_i}},
\\
&||v - P_{M_i}^{\omega_i} v ||^2_{L^{\omega_i}} \leq 
\frac{1}{ \lambda_{M_i+1}^{\omega_i} } ||A (v - P_{M_i}^{\omega_i} v) ||^2_{D^{\omega_i}} \leq\frac{1}{ \lambda_{M_i+1}^{\omega_i} } ||Av||^2_{D^{\omega_i}}.
\end{split}
\end{equation}

Next, we define the coarse projection $\Pi: V \rightarrow V_{ms}$ by
\begin{equation}
\label{pr}
\Pi v = \sum_{l=1}^N  \chi_l (P_{M_l}^{\omega_l} v).
\end{equation}
and $v - \Pi v  = \sum_{l=1}^N \chi_l (v - P_{M_l}^{\omega_l}  v)$.

\begin{lemma}
\label{t:tms1}
Assume that  $u \in V$ is the fine scale solution of \eqref{mm1} then the  following estimate  holds
\begin{equation}
\begin{split}
||u - \Pi u ||_D^2 \leq 
\frac{1}{\lambda_{M+1}^2}  ||A u||^2_D
\end{split}
\end{equation}
where 
$\lambda_{M+1} = \min_K  \lambda_{K, M+1}$ and  
$ \lambda_{K, M+1} = \min_{y_l \in K}  \lambda_{M_l+1}^{\omega_l}$.
\end{lemma}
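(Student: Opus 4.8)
The plan is to localize the estimate: first bound the global $D$-error of the coarse projection by a sum of the local projection errors over the neighborhoods $\omega_l$, and then invoke the second inequality of \eqref{pest} in each $\omega_l$ together with the definition of $\lambda_{M+1}$ as a worst-case local eigenvalue.

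To carry out the localization I would start from the identity $u - \Pi u = \sum_{l=1}^N \chi_l\,(u - P_{M_l}^{\omega_l}u)$ recorded just after the definition of $\Pi$. Fixing a node $x_i$ of $G$, it belongs to exactly one coarse cell $K$, so the only partition-of-unity functions that do not vanish at $x_i$ are the $\chi_l$ with $y_l \in \bar K$; these satisfy $\chi_{l,i}\ge 0$ and $\sum_{y_l \in \bar K}\chi_{l,i}=1$. Convexity of $t\mapsto t^2$ then gives, at $x_i$,
\[
(u - \Pi u)_i^2 \le \sum_{y_l \in \bar K}\chi_{l,i}\,\big(u - P_{M_l}^{\omega_l}u\big)_i^2 .
\]
Multiplying by $d_i$, summing over $i$, swapping the order of summation, and using $\chi_{l,i}\le 1$ together with the fact that the nodes contributing to a given $\omega_l$ are exactly those of $G^{\omega_l}$, I would obtain
\[
\|u - \Pi u\|_D^2 \le \sum_{l=1}^N \|u - P_{M_l}^{\omega_l}u\|_{D^{\omega_l}}^2 .
\]

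Next I would apply the second bound of \eqref{pest}, $\|u - P_{M_l}^{\omega_l}u\|_{D^{\omega_l}}^2 \le (\lambda_{M_l+1}^{\omega_l})^{-2}\|Au\|_{D^{\omega_l}}^2$, in every neighborhood. Since $\lambda_{K,M+1}=\min_{y_l\in K}\lambda_{M_l+1}^{\omega_l}$ and $\lambda_{M+1}=\min_K\lambda_{K,M+1}$, we have $\lambda_{M_l+1}^{\omega_l}\ge\lambda_{M+1}$ for every $l$, hence $(\lambda_{M_l+1}^{\omega_l})^{-2}\le\lambda_{M+1}^{-2}$; pulling this uniform factor out of the sum and reassembling $\sum_l\|Au\|_{D^{\omega_l}}^2$ into $\|Au\|_D^2$ (each node lying in a single coarse cell and hence in only the finitely many neighborhoods of that cell's vertices) would give the claimed estimate. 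The step I expect to be the main obstacle is precisely this assembly/disassembly between the global $D$-norm and the local $D^{\omega_l}$-norms: one must reconcile the global degree matrix with the local ones and keep the overlap of the coarse neighborhoods under control. With the uniform cubic coarse mesh this overlap is bounded (an interior node belongs to the $2^d$ neighborhoods of the vertices of its cell), so the argument closes, and the remainder is a mechanical combination of \eqref{pest} with the min-definitions of $\lambda_{M+1}$.
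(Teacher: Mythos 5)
Your argument is correct and follows essentially the same route as the paper: decompose $u-\Pi u=\sum_l \chi_l\,(u-P_{M_l}^{\omega_l}u)$, localize cell by cell using the partition of unity and $\chi_l\le 1$, apply the second inequality of \eqref{pest} in each $\omega_l$, and pass to the worst-case eigenvalue $\lambda_{M+1}$. Your Jensen/convexity step and the explicit remark about the bounded overlap of the neighborhoods are, if anything, slightly more careful than the paper's own write-up, which (like yours) strictly yields the estimate only up to an overlap constant of order $2^d$, in line with the generic-constant convention the paper uses elsewhere.
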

\begin{proof}
Using that $\chi_l \leq 1$ we have
\begin{equation}
||u - \Pi u ||_{D^K}^2 
\leq 
\sum_{y_l \in K}  ||\chi_l (v - P_{M_l}^{\omega_l}  u)||^2_{D^{K}}
\leq
\sum_{y_l \in K} ||u - P_{M_l}^{\omega_l}  u||_{D^{\omega_l}}^2.
\end{equation}
By combing with estimate \eqref{pest}, we obtain the result.
\end{proof}

\begin{lemma}
\label{t:tms2}
Assume that  $u \in V$ is the fine scale solution of \eqref{mm1} then the  following estimate  holds
\begin{equation}
||u - \Pi u ||_L^2 \leq 
\left( \frac{1}{H^2 \lambda_{M+1}^2} + \frac{1}{\lambda_{M+1}} \right) ||A u||^2_D
\end{equation}
where 
$\lambda_{M+1} = \min_K  \lambda_{K, M+1}$ and  
$ \lambda_{K, M+1} = \min_{y_l \in K}  \lambda_{M_l+1}^{\omega_l}$.
\end{lemma}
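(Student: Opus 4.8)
The plan is to follow the same strategy used in Lemma~\ref{t:tms1}, but now measuring the error in the $L$-norm (energy norm) rather than the $D$-norm, which forces us to control the commutator between the partition-of-unity multiplication $\chi_l$ and the graph Laplacian $L$. First I would localize: since $v - \Pi v = \sum_{l=1}^N \chi_l (v - P_{M_l}^{\omega_l} v)$ and each $\chi_l$ is supported on $\omega_l$, I would write $||u - \Pi u||_L^2$ as a sum over coarse elements $K$ of local contributions $||u - \Pi u||_{L^K}^2$, and on each $K$ bound this by $\sum_{y_l \in K} ||\chi_l (u - P_{M_l}^{\omega_l} u)||_{L^{\omega_l}}^2$ up to a bounded-overlap constant (the number of coarse nodes touching a fixed element is bounded by $2^d$).

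The key step is to expand $||\chi_l w||_L^2$ for $w = u - P_{M_l}^{\omega_l} u$ using the edge-sum representation $u^T L u = \tfrac12 \sum_{i,j} w_{ij}(u_i - u_j)^2$. For an edge $e_{ij}$ we have $(\chi_l)_i w_i - (\chi_l)_j w_j = (\chi_l)_i (w_i - w_j) + w_j((\chi_l)_i - (\chi_l)_j)$, so by the elementary inequality $(a+b)^2 \le 2a^2 + 2b^2$ the energy splits into a term controlled by $\|w\|_{L^{\omega_l}}^2$ (using $(\chi_l)_i \le 1$) plus a term of the form $\sum_{i,j} w_{ij} w_j^2 ((\chi_l)_i - (\chi_l)_j)^2$. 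Because $\chi_l$ is the projection of a \emph{linear} partition-of-unity function onto the nodes, its increment across an edge of length $\sim H$ is $O(1/H)$ times the edge length; combined with the weight scaling this yields a bound of the form $\frac{1}{H^2}\sum_j d_j w_j^2 = \frac{1}{H^2}\|w\|_{D^{\omega_l}}^2$ for that second term (a graph inverse-inequality / Caccioppoli-type estimate). Hence $||\chi_l(u - P_{M_l}^{\omega_l} u)||_{L^{\omega_l}}^2 \preceq \frac{1}{H^2}||u - P_{M_l}^{\omega_l} u||_{D^{\omega_l}}^2 + ||u - P_{M_l}^{\omega_l} u||_{L^{\omega_l}}^2$.

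Finally I would insert the spectral estimates from \eqref{pest}: the third line gives $||u - P_{M_l}^{\omega_l} u||_{L^{\omega_l}}^2 \le \frac{1}{\lambda_{M_l+1}^{\omega_l}}||Au||_{D^{\omega_l}}^2$, and the second line gives $||u - P_{M_l}^{\omega_l} u||_{D^{\omega_l}}^2 \le \frac{1}{(\lambda_{M_l+1}^{\omega_l})^2}||Au||_{D^{\omega_l}}^2$. Summing over $l$ and $K$, using the bounded overlap of the $\omega_l$ and the definitions $\lambda_{M+1} = \min_K \lambda_{K,M+1}$, $\lambda_{K,M+1} = \min_{y_l\in K}\lambda_{M_l+1}^{\omega_l}$ to replace each local eigenvalue by the global worst case, produces the stated bound $\left(\frac{1}{H^2\lambda_{M+1}^2} + \frac{1}{\lambda_{M+1}}\right)||Au||_D^2$. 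The main obstacle is the discrete Caccioppoli step: making rigorous that the edge-increments of the projected linear partition of unity are controlled by $O(1/H^2)$ in the appropriate weighted sense, since on an irregular or random network one must carefully relate the geometric edge lengths, the weights $w_{ij}$, and the coarse mesh size $H$ — this is where an assumption on how the weights scale with edge geometry (already implicit in the "proportional to area, inversely proportional to distance" modeling choice) gets used.
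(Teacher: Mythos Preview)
Your proposal is correct and follows essentially the same route as the paper: the same localization to coarse elements, the same algebraic splitting $(\chi_l)_i w_i-(\chi_l)_j w_j=(\chi_l)_i(w_i-w_j)+w_j((\chi_l)_i-(\chi_l)_j)$ in the edge-sum representation of $\|\cdot\|_L^2$, the same partition-of-unity bound producing the $\tfrac{1}{H^2}\|w\|_{D^{\omega_l}}^2$ term, and the same application of the spectral estimates \eqref{pest}. Your closing remark about the discrete Caccioppoli step is apt; the paper handles exactly that point by citing the standard partition-of-unity properties without spelling out the weight/edge-length scaling you flag.
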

\begin{proof}
For $z = \{z_j\}_{j=1}^{N_v}$ with $z_j = \chi_{l,j} v_j$,  we have
\begin{equation}
\begin{split}
||z||^2_{L^{\omega_l}}
&  = z^T L^{\omega_l} z
= \frac{1}{2} \sum_{i, j=1}^{N^{\omega_l}_v} 
w_{ij} (z_i - z_j)^2 
= \frac{1}{2} \sum_{i, j=1}^{N^{\omega_l}_v} 
w_{ij} (\chi_{l,i} v_i - \chi_{l,j} v_j)^2 \\
& = \frac{1}{2} \sum_{i, j=1}^{N^{\omega_l}_v} 
w_{ij} (\chi_{l,i} (v_i - v_j) - v_j( \chi_{l,j} - \chi_{l,i}))^2
\leq 
\frac{1}{2} \sum_{i, j=1}^{N^{\omega_l}_v} w_{ij} \chi_{l,i}^2 (v_i - v_j)^2
+ 
\frac{1}{2} \sum_{i, j=1}^{N^{\omega_l}_v} w_{ij} v_j^2 (\chi_{l,i} - \chi_{l,j})^2.
\end{split}
\end{equation}
Then for $z = \chi_l (v - P_{M_l}^{\omega_l}  v)$  using properties of partition of unity function \cite{melenk1996partition},   
we have
\begin{equation}
||u - \Pi u ||_{L^K}^2 
\leq 
\sum_{y_l \in K}  ||\chi_l (v - P_{M_l}^{\omega_l}  u)||^2_{L^{\omega_l}}
\leq
\sum_{y_l \in K} \frac{1}{H^2} ||u - P_{M_l}^{\omega_l}  u||_{D^{\omega_l}}^2  
+  
\sum_{y_l \in K} ||u - P_{M_l}^{\omega_l}  u||_{L^{\omega_l}}^2.
\end{equation}
By combing with estimate \eqref{pest}, we obtain  
\begin{equation}
||u - \Pi u ||_{L^K}^2 
\leq 
\left( \frac{1}{H^2 \lambda_{K, M+1}^2} + \frac{1}{ \lambda_{K, M+1}} \right) \sum_{y_l \in K} 
||Au||^2_{D^{\omega_l}}
\end{equation}
with $ \lambda_{K, M+1} = \min_{y_l \in K}  \lambda_{M_l+1}^{\omega_l}$. 

Then, we have
\begin{equation}
\begin{split}
||u - \Pi u ||_{L}^2   
 = \sum_{K \in \mathcal{T}_H}  ||u - \Pi u||_{L^K}^2 
& \leq \sum_{K \in \mathcal{T}_H} 
\left( \frac{1}{H^2 \lambda_{K, M+1}^2} + \frac{1}{ \lambda_{K, M+1}} \right) \sum_{y_l \in K}  ||A (u - P_{M_l}^{\omega_l}  u) ||^2_{D^{\omega_l}}\\
& \leq 
\left( \frac{1}{H^2 \lambda_{M+1}^2} + \frac{1}{\lambda_{M+1}} \right) \sum_{y_l \in K}||Au||_{D^{\omega_l}}^2,
\end{split}
\end{equation}
with $\lambda_{M+1} = \min_K  \lambda_{K, M+1}$. 
\end{proof}

\subsubsection{Multiscale semi-discrete network}

In this subsection, we consider semi-discrete network \eqref{ms1} and give a priory estimates for multiscale approximation. 

Stability estimate derivation is similar to \ref{t1}. We let $v_{ms} = \frac{\partial u_{ms}}{\partial t} = (u_{ms})_t$ in  \eqref{ms1} then we have
\begin{equation}
\left(C (u_{ms})_t,  (u_{ms})_t \right) 
+ \left( L u_{ms},   (u_{ms})_t \right) = \left(f, (u_{ms})_t \right). 
\end{equation}
Then by Young's inequality, we obtain the following estimate 
\begin{equation}
\left\| (u_{ms})_t \right\|^2_C  + \frac{1}{2} \frac{d}{dt} \left( Lu_{ms},  u_{ms} \right) 
\leq
\left\|(u_{ms})_t \right\|^2_C + \frac{1}{4} \left\| f \right\|^2_{C^{-1}}.
\end{equation}
Then after integration by time, we obtain the following lemma. 

\begin{lemma}
\label{t:tms3}
The  solution of the problem \eqref{ms1} satisfies the following a priory estimate
\begin{equation}
||u_{ms}(t)||_{L}^2 \leq ||u_{ms}(0)||_{L}^2 
+ \frac{1}{2} \int_0^t ||f||^2_{C^{-1}} ds.
\end{equation}
\end{lemma}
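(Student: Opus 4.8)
The plan is to repeat, inside the subspace $V_{ms}$, the energy argument already used for the fine-scale semi-discrete problem in Lemma~\ref{t:t1}. First I would record that $V_{ms}$ is a fixed, finite-dimensional subspace of $V$ that does not depend on $t$, so \eqref{ms1} is a linear system of ODEs with symmetric positive definite mass matrix $C_H = R C R^T$; consequently $u_{ms}(\cdot)$ exists, is differentiable in $t$, and its derivative $(u_{ms})_t$ again lies in $V_{ms}$. This is the one point that must be checked before proceeding, and it makes $v_{ms} = (u_{ms})_t$ an admissible test function in \eqref{ms1}.

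With that choice, \eqref{ms1} reads
\[
\left( C (u_{ms})_t, (u_{ms})_t \right) + \left( L u_{ms}, (u_{ms})_t \right) = \left( f, (u_{ms})_t \right).
\]
Next I would use the symmetry $L = L^T$ to rewrite the second term as $\left( L u_{ms}, (u_{ms})_t \right) = \tfrac12 \tfrac{d}{dt}(L u_{ms}, u_{ms}) = \tfrac12 \tfrac{d}{dt}\|u_{ms}\|_L^2$, and bound the right-hand side by Young's inequality, $\left( f, (u_{ms})_t \right) \le \|(u_{ms})_t\|_C^2 + \tfrac14 \|f\|_{C^{-1}}^2$. Substituting and cancelling the common term $\|(u_{ms})_t\|_C^2$ gives
\[
\frac{d}{dt}\|u_{ms}\|_L^2 \le \tfrac12 \|f\|_{C^{-1}}^2,
\]
and integrating from $0$ to $t$ delivers the stated estimate.

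I do not expect any real obstacle here: the Galerkin restriction to $V_{ms}$ preserves both the symmetry of $L$ and the positivity of $C$ on which the computation relies, so the proof is essentially identical to that of Lemma~\ref{t:t1}. The only item worth stating explicitly is the admissibility of $(u_{ms})_t$ as a test function, which follows immediately from $V_{ms}$ being time-independent; the chain rule for $\tfrac{d}{dt}(L u_{ms}, u_{ms})$, the use of Young's inequality, and the final integration in time are all routine. If desired, one can equivalently carry out the same computation at the algebraic level with the coarse matrices $C_H$, $L_H$ and coarse unknown $u_H$, but the subspace formulation makes the analogy with the fine-scale estimate most transparent.
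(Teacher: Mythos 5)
Your proof is correct and follows essentially the same argument as the paper: the paper also tests \eqref{ms1} with $v_{ms} = (u_{ms})_t$, rewrites $(Lu_{ms},(u_{ms})_t)$ as $\tfrac12\tfrac{d}{dt}\|u_{ms}\|_L^2$, applies Young's inequality, and integrates in time. Your explicit remark that $(u_{ms})_t \in V_{ms}$ because $V_{ms}$ is time-independent is a sensible (if routine) addition that the paper leaves implicit.
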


Next, we can obtain the error estimate of the GMsFEM for the semi-discrete network model. 
\begin{theorem}
\label{t:tms4}
Assume that  $u \in V$ is the fine scale solution of \eqref{mm1} and 
$u_{ms} \in V_{ms}$ is the multiscale solution of \eqref{ms1}  then the  following error estimate  holds
\begin{equation}
\begin{split}
||(u - u_{ms})(t)||_C^2 + \int_0^t || u - u_{ms}||_L^2 ds
&\preceq
||(w - u_{ms})(0)||_C^2 
+ \frac{1}{\lambda_{M+1}^2} ||A u||^2_D \\
&+ \int_0^t  \left(  
\left( \frac{1}{H^2 \lambda_{M+1}^2} + \frac{1}{\lambda_{M+1}} \right)  || A u||^2_D 
+  \frac{1}{\lambda_{M+1}^2} || (A u)_t ||^2_D 
\right) ds
\end{split}
\end{equation}
\end{theorem}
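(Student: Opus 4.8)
The plan is to run the classical energy-method error analysis for parabolic problems, splitting the error by means of the coarse projection $\Pi$ from \eqref{pr}. I would write $u - u_{ms} = \rho + \theta$ with $\rho = u - \Pi u$ and $\theta = \Pi u - u_{ms} \in V_{ms}$. The projection part $\rho$ is handled directly by Lemmas~\ref{t:tms1} and~\ref{t:tms2}. Since $\Pi$ and $A = D^{-1}L$ are linear and time independent, $\rho_t = (I-\Pi)u_t$ and $A u_t = (Au)_t$, so Lemma~\ref{t:tms1} applied to $u_t \in V$ gives $||\rho_t||_D^2 \le \lambda_{M+1}^{-2} ||(Au)_t||_D^2$; combined with the norm equivalences $||v||_C^2 \le \overline{c}\,||v||^2$ and $||v||^2 \le \underline{w}^{-1}||v||_D^2$ (the latter because a connected graph with weights bounded below has all degrees $d_i \ge \underline{w} > 0$) this yields $||\rho(t)||_C^2 \preceq \lambda_{M+1}^{-2} ||Au(t)||_D^2$ and $||\rho_t||_C^2 \preceq \lambda_{M+1}^{-2} ||(Au)_t||_D^2$, while Lemma~\ref{t:tms2} gives $||\rho||_L^2 \preceq \left(H^{-2}\lambda_{M+1}^{-2} + \lambda_{M+1}^{-1}\right)||Au||_D^2$.

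Next I would derive the error equation for $\theta$. Because $V_{ms} \subset V$, testing \eqref{mm1} with $v_{ms} \in V_{ms}$ and subtracting \eqref{ms1} gives the Galerkin orthogonality $\left(C(u-u_{ms})_t, v_{ms}\right) + \left(L(u-u_{ms}), v_{ms}\right) = 0$, hence
\[
\left(C\theta_t, v_{ms}\right) + \left(L\theta, v_{ms}\right) = -\left(C\rho_t, v_{ms}\right) - \left(L\rho, v_{ms}\right), \qquad \forall v_{ms} \in V_{ms}.
\]
Choosing $v_{ms} = \theta$ and using the symmetry of $C$ and $L$ so that $\left(C\theta_t, \theta\right) = \tfrac12 \tfrac{d}{dt}||\theta||_C^2$ and $\left(L\theta, \theta\right) = ||\theta||_L^2$, then applying Cauchy--Schwarz and Young's inequality on the right-hand side ($|\left(C\rho_t, \theta\right)| \le \tfrac12 ||\theta||_C^2 + \tfrac12 ||\rho_t||_C^2$ and $|\left(L\rho, \theta\right)| \le \tfrac12 ||\theta||_L^2 + \tfrac12 ||\rho||_L^2$), I can absorb the $||\theta||_L^2$ term and obtain
\[
\frac{d}{dt}||\theta||_C^2 + ||\theta||_L^2 \le ||\theta||_C^2 + ||\rho_t||_C^2 + ||\rho||_L^2.
\]

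Finally I would integrate in time and apply Gr\"onwall's inequality to $||\theta||_C^2$, which gives, up to a factor $e^{T}$ absorbed in $\preceq$,
\[
||\theta(t)||_C^2 + \int_0^t ||\theta||_L^2\, ds \preceq ||\theta(0)||_C^2 + \int_0^t \left(||\rho_t||_C^2 + ||\rho||_L^2\right) ds,
\]
and then combine with the triangle inequality $||u-u_{ms}||_\bullet \le ||\rho||_\bullet + ||\theta||_\bullet$ for $\bullet \in \{C, L\}$, bounding $||\theta(0)||_C \le ||\rho(0)||_C + ||(u-u_{ms})(0)||_C$ and inserting the $\rho$-estimates from the first paragraph; the $||\rho(0)||_C^2$ contribution is absorbed into the $\lambda_{M+1}^{-2}||Au||_D^2$ term. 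This produces exactly the claimed bound. The main obstacle is not the energy estimate, which is routine, but the norm bookkeeping needed to convert $||\rho_t||_C$ and $||\rho||_L$ into the $D$-norms of $Au$ and $(Au)_t$ appearing in the statement: one must check that $\Pi$ and $A$ commute with $\partial_t$ so that Lemmas~\ref{t:tms1}--\ref{t:tms2} apply to $u_t$, and use connectivity together with the two-sided weight bounds to control $||\cdot||$ by $||\cdot||_D$ from both sides; one also tacitly assumes enough time-regularity of the fine-scale solution $u$ for $u_t$ to make sense.
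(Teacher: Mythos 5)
Your proposal is correct and follows essentially the same route as the paper's proof: the splitting $u-u_{ms}=(u-\Pi u)+(\Pi u-u_{ms})$, Galerkin orthogonality tested with $\theta=\Pi u-u_{ms}$, Young's inequality, Gr\"onwall, the triangle inequality, and finally Lemmas~\ref{t:tms1} and~\ref{t:tms2} together with $\|\cdot\|_C^2\preceq\|\cdot\|_D^2$. You additionally make explicit some steps the paper leaves implicit (that $\Pi$ and $A$ commute with $\partial_t$ so the projection lemmas apply to $u_t$, and the degree-based justification of the $C$--$D$ norm comparison), which is fine.
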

\begin{proof}
Subtracting \eqref{mm1}  from  \eqref{ms1} , we have 
\begin{equation}
\left( C (u -u_{ms})_t, v \right)  + (L (u - u_{ms}), v)  = 0
\quad \forall v \in V_{ms}.
\end{equation}

We write $u - u_{ms} = (u  - w) + (w - u_{ms})$ with $w = \Pi u$ and since $w \in V_{ms}$ we take $v = w - u_{ms}$ and obtain
\begin{equation}
\label{teqe}
\left( C (w -u_{ms})_t, w - u_{ms} \right)  + (L (w - u_{ms}), w - u_{ms})  
=
\left( C (w - u)_t, w - u_{ms} \right)  + (L (w - u), w - u_{ms}).
\end{equation}

For the left part of \eqref{teqe}, we have
\begin{equation}
\left( C(w -u_{ms})_t, w - u_{ms} \right) = \frac{1}{2} \frac{d}{dt}||w - u_{ms}||_C^2, \quad
(L (w - u_{ms}), w - u_{ms})   = ||w - u_{ms}||^2_L.
\end{equation}

For the right part of \eqref{teqe} by Young's inequality, we obtain
\begin{equation}
\left( C(w - u)_t,  w - u_{ms} \right) \leq 
\frac{1}{4 \delta_1}|| (w - u)_t ||^2_{C} + \delta_1 ||w - u_{ms} ||^2_{C},
\end{equation}
and
\begin{equation}
(L (w - u), w - u_{ms}) \leq 
\frac{1}{4 \delta_2} ||w - u||^2_L + \delta_2 ||w - u_{ms}||^2_L.
\end{equation}

Combing estimates for right and left parts of \eqref{teqe} with $\delta_1 = \delta_2 = 1/2$, we can obtain 
\begin{equation}
\frac{d}{dt}||w - u_{ms}||_C^2 + ||w - u_{ms}||^2_L 
\leq
|| (w - u)_t ||^2_{C} + ||w - u_{ms} ||^2_{C} + ||w - u||^2_L.
\end{equation}
By the Gronwall Lemma 
we obtain
\begin{equation}
|| (w - u_{ms})(t)||_C^2 
+ \int_0^t ||w - u_{ms}||^2_L  ds
\leq
|| (w - u_{ms})(0)||_C^2
+ \int_0^t \left( || (w - u)_t ||^2_{C} + ||w - u||^2_L  \right) ds.
\end{equation}
Then using triangle inequality we have
\begin{equation}
\begin{split}
|| (u - u_{ms})(t)||_C^2 
+ \int_0^t ||u - u_{ms}||^2_L  ds
& \leq
|| (w - u_{ms})(0)||_C^2 + || (w-u)(t) ||_C^2 \\
&+ \int_0^t \left( ||(w-u)_t ||^2_{C} + ||w-u||^2_L \right) ds
\end{split}
\end{equation}
Finally with $||u-v||^2_C \preceq ||u-v||^2_D$ and using Lemmas \ref{t:tms1} and \ref{t:tms2}, we obtain the result.
\end{proof}

Note that, we have $Au = D^{-1} Lu = D^{-1} (f - C u_t)$ then
\begin{equation}
||Au||_D^2 = ||f- C u_t||^2_{D^{-1}}.
\end{equation} 
Therefore the error estimate in Theorem \ref{t:tms4} can be written as follows
\begin{equation}
\begin{split}
||(u - u_{ms})(t)||_C^2 &+ \int_0^t || u - u_{ms}||_L^2 ds 
\preceq
||(w - u_{ms})(0)||_C^2 
+ \frac{1}{\lambda_{M+1}^2} ||f- C u_t||^2_{D^{-1}} \\
&+ \int_0^t  \left(  
\left( \frac{1}{H^2 \lambda_{M+1}^2} + \frac{1}{\lambda_{M+1}} \right) ||f- C u_t||^2_{D^{-1}}
+  \frac{1}{\lambda_{M+1}^2}||(f- C u_t)_t||^2_{D^{-1}} 
\right) ds
\end{split}
\end{equation}
If we map the local eigenvalue problem to size one domain \cite{abreu2019convergence}, then the eigenvalues scale with $H^{-2}$ \rev{($\lambda_{M+1} \approx \Lambda^* H^{-2}$) and we obtain 
\begin{equation}
||(u - u_{ms})(t)||_C^2 + \int_0^t || u - u_{ms}||_L^2 ds 
\preceq_T  H^2 (\Lambda^*)^{-1}
\end{equation}
with accurate choose of $u_{ms}(0)$  and $a \preceq_T b$ means $a \leq C_T b$ where $C_T$ is the constant depending on $T$ \cite{chetverushkin2021computational}.}

\subsubsection{Multiscale discrete network}

Finally, we present a priory estimate for a fully discrete problem \eqref{ms2}. 

Stability estimate derivation is similar to Lemma \ref{t2}. We let $v_{ms} = ( u_{ms}^n - u_{ms}^{n-1})/ \tau$ then 
\begin{equation}
\begin{split}
\left( C \frac{ u_{ms}^n - u_{ms}^{n-1} }{\tau},  \frac{ u_{ms}^n - u_{ms}^{n-1} }{\tau}\right) & + (L u_{ms}^n, \frac{ u_{ms}^n - u_{ms}^{n-1} }{\tau}) \\
&= 
\left( \left(C + \frac{\tau}{2} L \right) \frac{ u_{ms}^n - u_{ms}^{n-1} }{\tau} , \frac{ u_{ms}^n - u_{ms}^{n-1} }{\tau} \right) 
+ \frac{1}{2}  \left( L (u_{ms}^n + u_{ms}^{n-1}),   \frac{ u_{ms}^n - u_{ms}^{n-1} }{\tau} \right) \\
&= 
\left(f^n, \frac{ u_{ms}^n - u_{ms}^{n-1} }{\tau} \right)
\leq 
\left\| \frac{ u_{ms}^n - u_{ms}^{n-1} }{\tau} \right\|_{ \left(C + \frac{\tau}{2} L \right) }^2 
+ \frac{1}{4} 
\left\| f^n \right\|_{ \left(C + \frac{\tau}{2} L \right)^{-1}}^2,
\end{split}
\end{equation}
Then we have
\begin{equation}
\frac{1}{2 \tau} \left( L (u_{ms}^n + u_{ms}^{n-1}),   (u_{ms}^n - u_{ms}^{n-1})  \right)
=
\frac{1}{2 \tau} (L u_{ms}^n,u_{ms}^n) + 
\frac{1}{2 \tau} (L u_{ms}^{n-1}, u_{ms}^{n-1}) 
\leq 
\frac{1}{4} 
\left\| f^n \right\|_{ \left(C + \frac{\tau}{2} L \right)^{-1}}^2,
\end{equation}
or
\begin{equation}
||u_{ms}^n||^2_L \leq ||u_{ms}^{n-1}||^2_L  + \frac{\tau}{2} 
\left\| f^n \right\|_{ \left(C + \frac{\tau}{2} L \right)^{-1}}^2.
\end{equation}

\begin{lemma}
\label{t:tms5}
The solution of the  problem \eqref{ms2} is unconditionally stable and satisfies the following estimate
\begin{equation}
\label{t2}
||u_{ms}^n||_L^2 
\preceq 
||u_{ms}^0||_L^2 
+ \tau \sum_{k=1}^n ||f^k||^2_{\left(C + \frac{\tau}{2} L \right)^{-1}}.
\end{equation}
\end{lemma}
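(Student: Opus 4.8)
The plan is to carry over verbatim the energy argument already used for the fine-scale Lemma \ref{t:t2}, exploiting that the multiscale scheme \eqref{ms2} is a Galerkin method posed in the subspace $V_{ms} \subset \mathcal{R}^{N_v}$; every step used for \eqref{mm2} is available for \eqref{ms2} with $V$ replaced by $V_{ms}$. First I would take the test function $v_{ms} = (u_{ms}^n - u_{ms}^{n-1})/\tau \in V_{ms}$ in \eqref{ms2} and use the algebraic identity
\[
C \frac{u_{ms}^n - u_{ms}^{n-1}}{\tau} + L u_{ms}^n
= \left(C + \frac{\tau}{2} L\right) \frac{u_{ms}^n - u_{ms}^{n-1}}{\tau} + \frac{1}{2} L (u_{ms}^n + u_{ms}^{n-1}),
\]
which is the backward-Euler analogue of the relation $\frac{1}{2}\frac{d}{dt}(Lu,u) = (Lu, u_t)$ employed in the semi-discrete case.

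Second, I would estimate the source term with Young's inequality in the $(C + \frac{\tau}{2}L)$-inner product, $(f^n, w) \le \|w\|^2_{C + \frac{\tau}{2}L} + \frac{1}{4}\|f^n\|^2_{(C + \frac{\tau}{2}L)^{-1}}$ with $w = (u_{ms}^n - u_{ms}^{n-1})/\tau$; this is legitimate because $C$ is symmetric positive definite and $L$ symmetric positive semidefinite, so $C + \frac{\tau}{2}L$ is symmetric positive definite and induces a genuine norm with invertible dual. The positive term $\|w\|^2_{C+\frac{\tau}{2}L}$ then cancels the first contribution on the left-hand side, exactly as in the proof of Lemma \ref{t:t2}.

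Third, using $L = L^T$ I would expand $\frac{1}{2\tau}\big(L(u_{ms}^n + u_{ms}^{n-1}), u_{ms}^n - u_{ms}^{n-1}\big) = \frac{1}{2\tau}\big((Lu_{ms}^n, u_{ms}^n) - (Lu_{ms}^{n-1}, u_{ms}^{n-1})\big)$, which yields the one-step inequality
\[
\|u_{ms}^n\|_L^2 \le \|u_{ms}^{n-1}\|_L^2 + \frac{\tau}{2}\,\|f^n\|^2_{(C + \frac{\tau}{2}L)^{-1}}.
\]
Summing this telescoping inequality over $k = 1, \ldots, n$ and recalling $u_{ms}^0 = R u_0$ gives the claimed estimate \eqref{t2}; unconditional stability is immediate, since no condition relating $\tau$ to any mesh size $H$ or to the spectral threshold $\lambda_{M+1}$ has been used anywhere.

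There is essentially no obstacle here: the proof is identical to that of Lemma \ref{t:t2}, the only thing worth remarking being that the Galerkin restriction does not spoil the structure — $L$ restricted to $V_{ms}$ is still symmetric positive semidefinite, and correspondingly the reduced operators $C_H = RCR^T$ and $L_H = RLR^T$ inherit symmetric positive definiteness and positive semidefiniteness (the partition-of-unity construction yields a linearly independent multiscale set, so $C_H$ is in fact positive definite and the coarse system is well posed). Once this is noted, each inequality above is just the corresponding fine-scale inequality read on the subspace $V_{ms}$.
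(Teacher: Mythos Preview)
Your proposal is correct and follows exactly the argument the paper gives (the derivation is written out in the paragraph preceding Lemma~\ref{t:tms5}): take $v_{ms}=(u_{ms}^n-u_{ms}^{n-1})/\tau$, rewrite via $(C+\tfrac{\tau}{2}L)$, apply Young's inequality in that weighted norm, and telescope. Your expansion $\tfrac{1}{2\tau}\big(L(u_{ms}^n+u_{ms}^{n-1}),u_{ms}^n-u_{ms}^{n-1}\big)=\tfrac{1}{2\tau}\big(\|u_{ms}^n\|_L^2-\|u_{ms}^{n-1}\|_L^2\big)$ is in fact the correct sign; the paper's displayed ``$+$'' there is a typo.
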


Next, we present the error estimate of the multiscale method  for the discrete  network model. 

\begin{theorem}
\label{t:tms6}
Assume that  $u^n \in V$ is the fine scale solution of \eqref{mm2} and 
$u^n_{ms} \in V_{ms}$ is the multiscale solution of \eqref{ms2}  then the  following error estimate  holds
\begin{equation}
\begin{split}
||u^n - u^n_{ms}||_C^2 + \tau \sum_{k=1}^n || u^k - u^k_{ms}||_L^2
&\preceq
||u^0 - u_{ms}^0||_C^2 \\
+ 
& \sum_{k=1}^n \left( 
\tau \left( \frac{1}{H^2 \lambda_{M+1}^2} + \frac{1}{\lambda_{M+1}} \right)  || A u^k||^2_D
+
 \frac{1}{\lambda_{M+1}^2} || A u^k||^2_D
 \right). 
\end{split}
\end{equation}
\end{theorem}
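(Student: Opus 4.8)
The plan is to transcribe the proof of Theorem~\ref{t:tms4} into the backward-Euler setting, replacing the continuous energy identity by its one-step analogue and the continuous Gr\"onwall inequality by the discrete one (the manipulations parallel those already used in Lemma~\ref{t:tms5}). Since $V_{ms}\subset V$, subtracting \eqref{mm2} from \eqref{ms2} yields the error equation $\big(C\tau^{-1}(e^n-e^{n-1}),v_{ms}\big)+(Le^n,v_{ms})=0$ for all $v_{ms}\in V_{ms}$, where $e^n:=u^n-u^n_{ms}$. I would then use the same decomposition as in the semi-discrete proof, $e^n=\rho^n+\theta^n$ with $\rho^n:=(I-\Pi)u^n$ and $\theta^n:=\Pi u^n-u^n_{ms}\in V_{ms}$, and test with $v_{ms}=\theta^n$, which gives
\[
\big(C(\theta^n-\theta^{n-1}),\theta^n\big)+\tau(L\theta^n,\theta^n)=-\big(C(\rho^n-\rho^{n-1}),\theta^n\big)-\tau(L\rho^n,\theta^n).
\]

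On the left-hand side I would use the elementary identity $\big(C(\theta^n-\theta^{n-1}),\theta^n\big)\ge\tfrac12\|\theta^n\|_C^2-\tfrac12\|\theta^{n-1}\|_C^2$ together with $(L\theta^n,\theta^n)=\|\theta^n\|_L^2$, which isolates the energy increment plus a genuinely dissipative term. On the right-hand side I would apply Young's inequality to each term exactly as in Theorem~\ref{t:tms4}: in $-\tau(L\rho^n,\theta^n)$ pick the weight so that the $\|\theta^n\|_L^2$ part is absorbed into $\tau\|\theta^n\|_L^2$, leaving $\tfrac{\tau}{2}\|\rho^n\|_L^2$; in $-\big(C(\rho^n-\rho^{n-1}),\theta^n\big)$ split the product so that the $\theta$-side leaves a multiple of $\|\theta^n\|_C^2$ and the $\rho$-side leaves a multiple of $\|\rho^n-\rho^{n-1}\|_C^2$. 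Summing over $k=1,\dots,n$ and applying the discrete Gr\"onwall inequality to absorb the accumulated $\|\theta^k\|_C^2$ contributions (this is what introduces the implied $T$-dependence of the constant), followed by the triangle inequality $\|e^n\|_C\le\|\rho^n\|_C+\|\theta^n\|_C$ and likewise at $n=0$, reduces the estimate to bounding the interpolation defects $\|\rho^k\|_L^2$, $\|\rho^k-\rho^{k-1}\|_C^2$, and $\|\rho^0\|_C^2$.

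It then remains to estimate these defects. Using $\|\cdot\|_C\preceq\|\cdot\|_D$ and Lemma~\ref{t:tms1} one gets $\|\rho^k\|_D^2\le\lambda_{M+1}^{-2}\|Au^k\|_D^2$; using Lemma~\ref{t:tms2} one gets $\|\rho^k\|_L^2\le(H^{-2}\lambda_{M+1}^{-2}+\lambda_{M+1}^{-1})\|Au^k\|_D^2$; and since $\Pi$ is linear, $\rho^k-\rho^{k-1}=(I-\Pi)(u^k-u^{k-1})$, so Lemma~\ref{t:tms1} followed by the crude bound $\|A(u^k-u^{k-1})\|_D\le\|Au^k\|_D+\|Au^{k-1}\|_D$ and reindexing produces a term of the stated form $\lambda_{M+1}^{-2}\|Au^k\|_D^2$. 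Collecting the coarse-cell contributions exactly as in the proofs of Lemmas~\ref{t:tms1}--\ref{t:tms2} then yields the final inequality. I expect the main obstacle to be the handling of the temporal increment $\rho^n-\rho^{n-1}$: one must distribute a factor $\tau$ correctly in the corresponding Young estimate so that the discrete Gr\"onwall argument closes with a $\tau$-uniform constant and so that the resulting bound keeps the same order in $H$ and $\lambda_{M+1}$ as the semi-discrete estimate of Theorem~\ref{t:tms4}, rather than degrading by a power of $\tau$.
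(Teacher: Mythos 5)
Your plan (Thom\'ee-style splitting $e^n=\rho^n+\theta^n$, the increment identity, discrete Gr\"onwall) is a legitimate strategy in principle, but as written it does not yield the stated bound, and the gap is exactly the point you flagged yourself: the term $\bigl(C(\rho^n-\rho^{n-1}),\theta^n\bigr)$. For the discrete Gr\"onwall step to close with a $\tau$-uniform constant, the Young weight on the $\theta$-side must be of order $\tau$, which leaves $\tau^{-1}\|\rho^n-\rho^{n-1}\|_C^2$ on the data side. Your proposed bound $\|\rho^k-\rho^{k-1}\|_C\preceq\lambda_{M+1}^{-1}\bigl(\|Au^k\|_D+\|Au^{k-1}\|_D\bigr)$ supplies no compensating factor of $\tau$, so after summation you obtain $\tau^{-1}\lambda_{M+1}^{-2}\sum_k\|Au^k\|_D^2$, weaker than the claimed $\lambda_{M+1}^{-2}\sum_k\|Au^k\|_D^2$ by a factor $\tau^{-1}$; choosing a $\tau$-independent weight instead makes the Gr\"onwall constant blow up like $e^{cT/\tau}$. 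The natural repair along your route is $\rho^k-\rho^{k-1}=(I-\Pi)(u^k-u^{k-1})$ together with $\|A(u^k-u^{k-1})\|_D=\tau\|A\bar{\partial}u^k\|_D$, where $\bar{\partial}u^k=(u^k-u^{k-1})/\tau$, which produces $\tau\lambda_{M+1}^{-2}\sum_k\|A\bar{\partial}u^k\|_D^2$ --- a perfectly reasonable estimate (the discrete analogue of the $\|(Au)_t\|_D$ term in Theorem \ref{t:tms4}), but not the inequality asserted in Theorem \ref{t:tms6}.

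The paper's proof avoids the increment altogether and is cruder but shorter: it splits only the current time level, $u^n-u^n_{ms}=(u^n-w^n)+(w^n-u^n_{ms})$ with $w^n=\Pi u^n$, tests with $w^n-u^n_{ms}$, and keeps the \emph{full} previous-level error in the term $\bigl(C(u^{n-1}-u_{ms}^{n-1})/\tau,\;w^n-u^n_{ms}\bigr)$, see \eqref{teqe2}. Three applications of Young's inequality (with $\delta_1=\delta_2=1/4$, $\delta_3=1/2$) and the triangle inequality give the one-step recursion
\begin{equation*}
\|u^n-u^n_{ms}\|_C^2+\tau\|u^n-u^n_{ms}\|_L^2\preceq\|u^{n-1}-u_{ms}^{n-1}\|_C^2+\|w^n-u^n\|_C^2+\tau\|w^n-u^n\|_L^2,
\end{equation*}
which is simply iterated in $n$ --- no increment identity, no discrete Gr\"onwall, and no term $\rho^k-\rho^{k-1}$ ever appears. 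The $\tau$-free sum $\sum_k\lambda_{M+1}^{-2}\|Au^k\|_D^2$ in the statement is precisely the per-step, unweighted projection error $\|w^k-u^k\|_C^2\preceq\|w^k-u^k\|_D^2$ bounded by Lemma \ref{t:tms1}, while Lemma \ref{t:tms2} handles the $\tau\|w^k-u^k\|_L^2$ contribution. In short, the theorem's right-hand side is an artifact of this particular splitting; your sharper treatment of the previous time level cannot reproduce it without either accepting the $\tau^{-1}$ loss or replacing that term by discrete time differences of $Au$.
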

\begin{proof}
Subtracting \eqref{mm2}  from  \eqref{ms2} , we have 
\begin{equation}
\left( C \frac{ (u^n -u^n_{ms}) }{\tau}, v \right) - \left( C \frac{ (u^{n-1} -u^{n-1}_{ms}) }{\tau}, v \right)  + (L (u^n - u^n_{ms}), v)  = 0
\quad \forall v \in V_{ms}.
\end{equation}

For $u^n - u^n_{ms} = (w^n - u^n_{ms}) + (u^n - w^n)$ with $w^n = \Pi u^n$ and $v = w^n - u^n_{ms}$, we have
\begin{equation}
\label{teqe2}
\begin{split}
&\left( C \frac{ (w^n -u^n_{ms}) }{\tau}, w^n - u^n_{ms} \right) 
+  (L (w^n - u^n_{ms}), w^n - u^n_{ms})  \\  
& =  
\left( C \frac{(w^n - u^n) }{\tau}, w^n - u^n_{ms} \right) 
+ \left( C \frac{(u^{n-1} - u_{ms}^{n-1}) }{\tau}, w^n - u^n_{ms} \right)  
+ (L (w^n - u^n), w^n - u^n_{ms}).
\end{split}
\end{equation}

For the left part of \eqref{teqe2}, we have
\begin{equation}
\left( C (w^n -u^n_{ms}), w^n - u^n_{ms} \right) = ||w^n - u^n_{ms}||_C^2, \quad
(L (w^n - u^n_{ms}), w^n - u^n_{ms})   = ||w^n - u^n_{ms}||^2_L.
\end{equation}

For the right part of \eqref{teqe2} by Young's inequality, we have
\begin{equation}
\begin{split}
&\left( C (w^n - u^n), w^n - u^n_{ms} \right) \leq 
\frac{1}{4 \delta_1}||w^n - u^n ||^2_{C} + \delta_1 ||w^n - u^n_{ms} ||^2_{C},
\\
&\left( C (u^{n-1} - u_{ms}^{n-1}), w^n - u^n_{ms} \right)  \leq 
\frac{1}{4 \delta_2}||u^{n-1} - u_{ms}^{n-1} ||^2_{C} + \delta_2 ||w^n - u^n_{ms} ||^2_{C},
\end{split}
\end{equation}
and
\begin{equation}
(L (w^n - u^n), w^n - u^n_{ms}) \leq 
\frac{1}{4 \delta_3} ||w^n - u^n||^2_L + \delta_3 ||w^n - u^n_{ms}||^2_L.
\end{equation}

Then with $\delta_1 = \delta_2 = 1/4$ and $\delta_3 = 1/2$, we obtain 
\begin{equation}
||w^n - u^n_{ms}||_C^2 + \tau ||w^n - u^n_{ms}||^2_L 
\preceq 
||u^{n-1} - u_{ms}^{n-1} ||^2_{C} 
+
||w^n - u^n||^2_{C} +  \tau ||w^n - u^n||^2_L.
\end{equation}

Using triangle inequality we have
\begin{equation}
||u^n - u^n_{ms}||_C^2 
+ \tau ||u^n - u^n_{ms}||^2_L 
\preceq
||u^{n-1} - u_{ms}^{n-1} ||^2_{C} 
+
||w^n - u^n||^2_{C} + \tau ||w^n - u^n||^2_L.
\end{equation}
Therefore
\begin{equation}
||u^n - u^n_{ms}||_C^2 
+ \tau \sum_{k=1}^n ||u^n - u^n_{ms}||^2_L 
\preceq
|| u^0 - u_{ms}^0 ||^2_{C} 
+ \sum_{k=1}^n  \left(
||w^k - u^k||^2_{C} + \tau ||w^k - u^k||^2_L
\right).
\end{equation}
Finally using Lemmas \ref{t:tms1} and \ref{t:tms2}, we obtain the result.
\end{proof}

\rev{The local eigenvalue are scaled by a size of the domain $H^{-2}$ ($\lambda_{M+1} \approx  \Lambda^* H^{-2}$), then similarly to the time continuous case we have \cite{abreu2019convergence, chetverushkin2021computational}
\begin{equation}
||u^n - u^n_{ms}||_C^2 + \tau \sum_{k=1}^n || u^k - u^k_{ms}||_L^2
\preceq_T  H^2 (\Lambda^*)^{-1}.
\end{equation}
The convergence study demonstrates that the error bound of the multiscale method is proportional to $H/(\Lambda^*)^{1/2}$, where $\Lambda^*$ is related to the minimum of the eigenvalues that are not included in the coarse space \cite{efendiev2011multiscale}.}

\section{Numerical results}

\rev{In this section, we consider illustrative examples for time-dependent discrete network models. We start with the illustration of the three geometric structures that a commonly occur in pore network models:  (1) structured regular network, (2) structured irregular
network, and (3) unstructured irregular. 
In Figure \ref{fig:netstr}, we show three network structures in two-dimensional and three-dimensional case, where nodes and connections are depicted in blue and red colors, respectively. 
Such structures often represent pore-structure in reservoir simulation \cite{joekar2012analysis, cui2022pore}. In this application, the complex pore structure is approximated by a pore-network model, leading to reduced computational cost \cite{bluntpnm, blunt2013pore}. Pore-network models are characterized by pore bodies connected by pore throats. The spatial location of pore bodies (structured or unstructured lattice) and pore connectivity (regular or irregular) are the main topological characteristics of the network. 
As the simplest network, we consider a structure regular network with equally spaced nodes (pore bodies) with four connections in a two-dimensional case and six connections in a three-dimensional case. 
In the second case, the structured pores distribution can be characterized by smaller connectivity, where some connections randomly dropped with a post-coming filtration of the disconnected clusters \cite{raoof2010new}. 
A real porous medium is usually characterized by an unstructured lattice with a more extensive connectivity. To mimic the disordered arrangement of pores and connections, an unstructured pore-network model is used and became very popular due to its closeness to the realistic structure \cite{cui2022pore}. The unstructured irregular networks can capture a large range of pore connectivities from one to twenty-six with the orientation in thirteen instead of the limitation in three directions.  To construct the considered networks, we use OpenPNM library \cite{gostick2016openpnm}.  }

\begin{figure}[h!]
\centering
\begin{subfigure}{0.3\textwidth}
\centering
\includegraphics[width=0.7\linewidth]{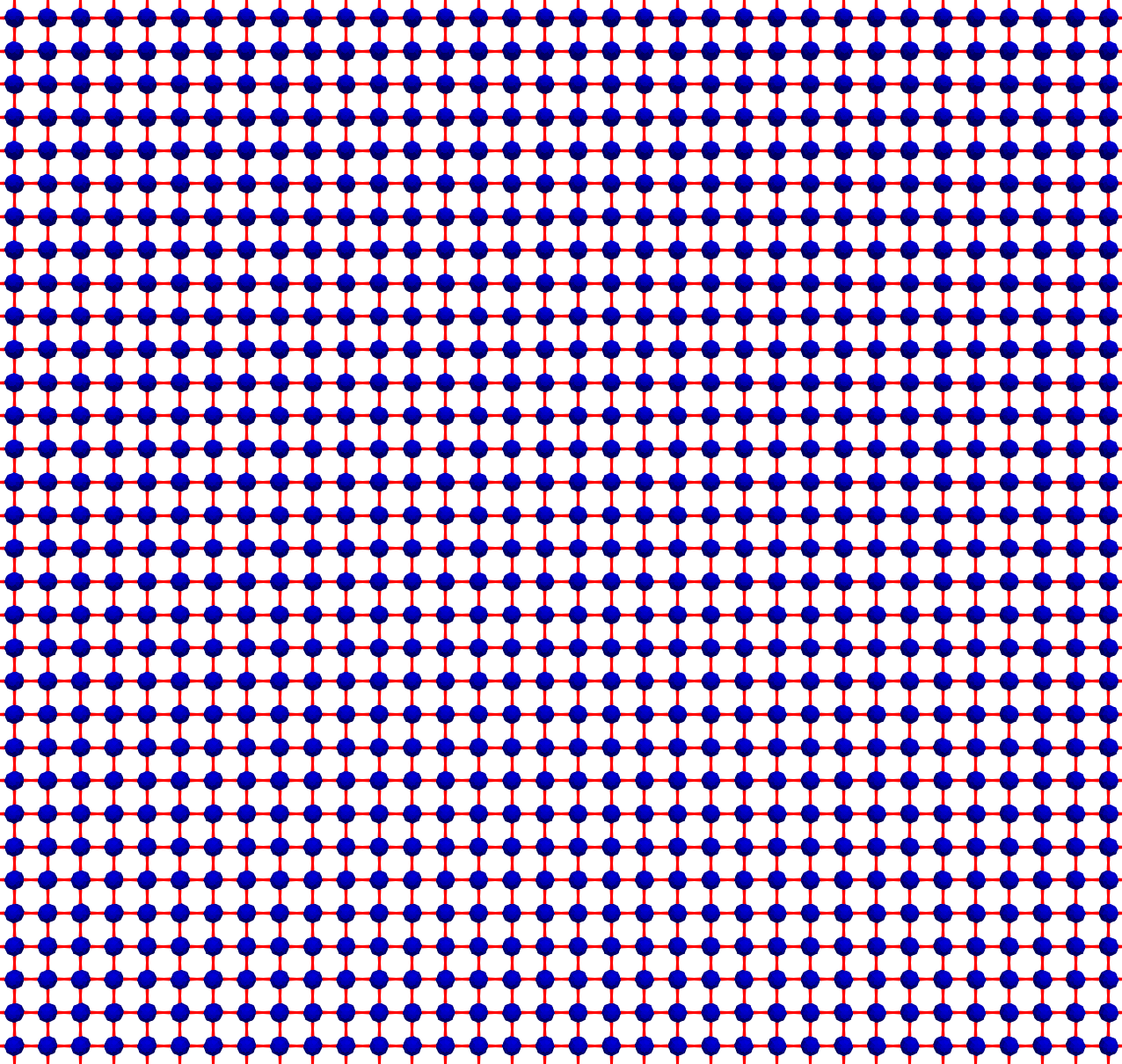}
\caption{\rev{(1) Structured regular network in 2D}}
\end{subfigure}
\ \ \ \ \ \
\begin{subfigure}{0.3\textwidth}
\centering
\includegraphics[width=0.7\linewidth]{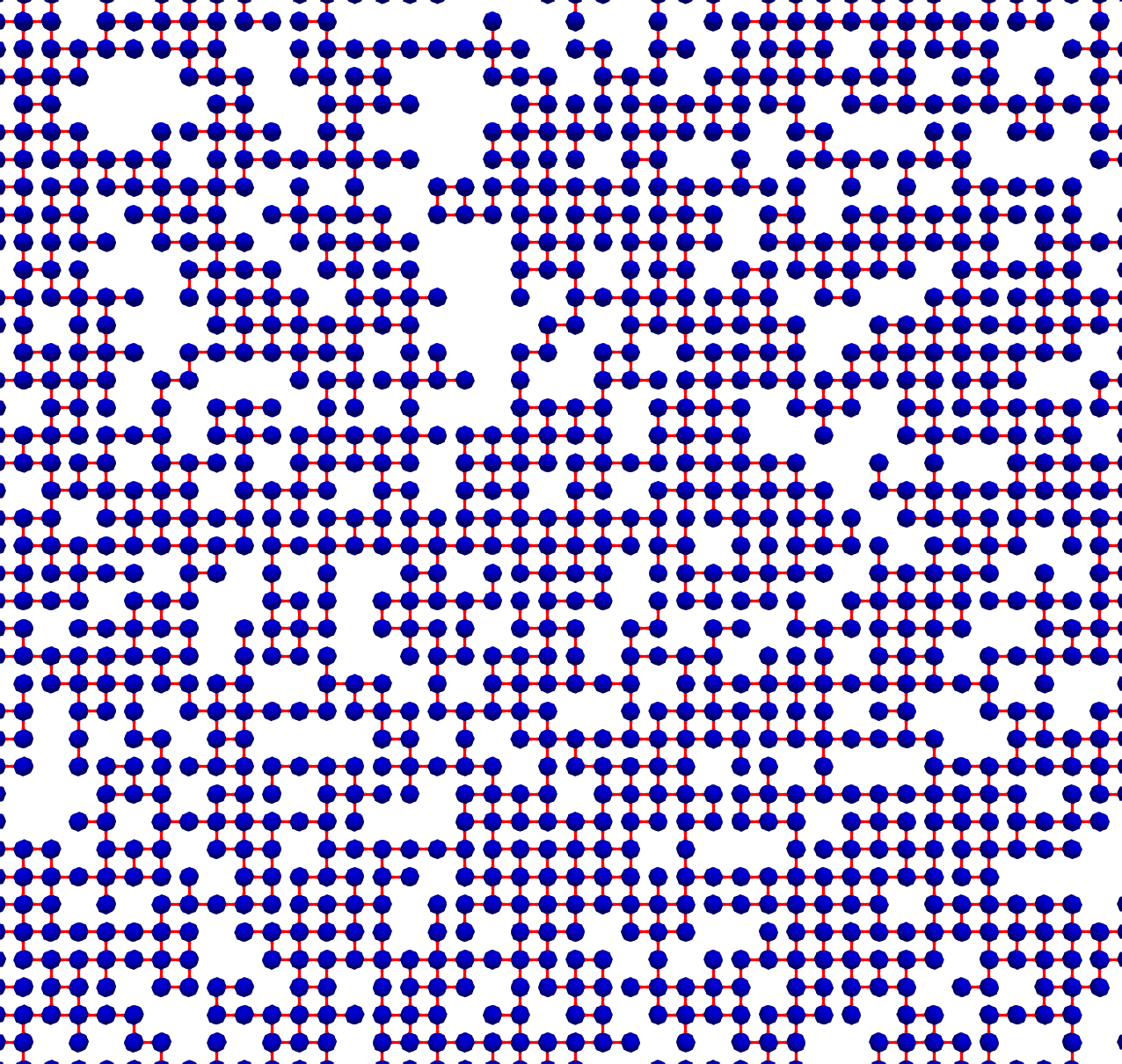}
\caption{\rev{(2) Structured irregular network in 2D}}
\end{subfigure}
\ \ \ \ \ \
\begin{subfigure}{0.3\textwidth}
\centering
\includegraphics[width=0.7\linewidth]{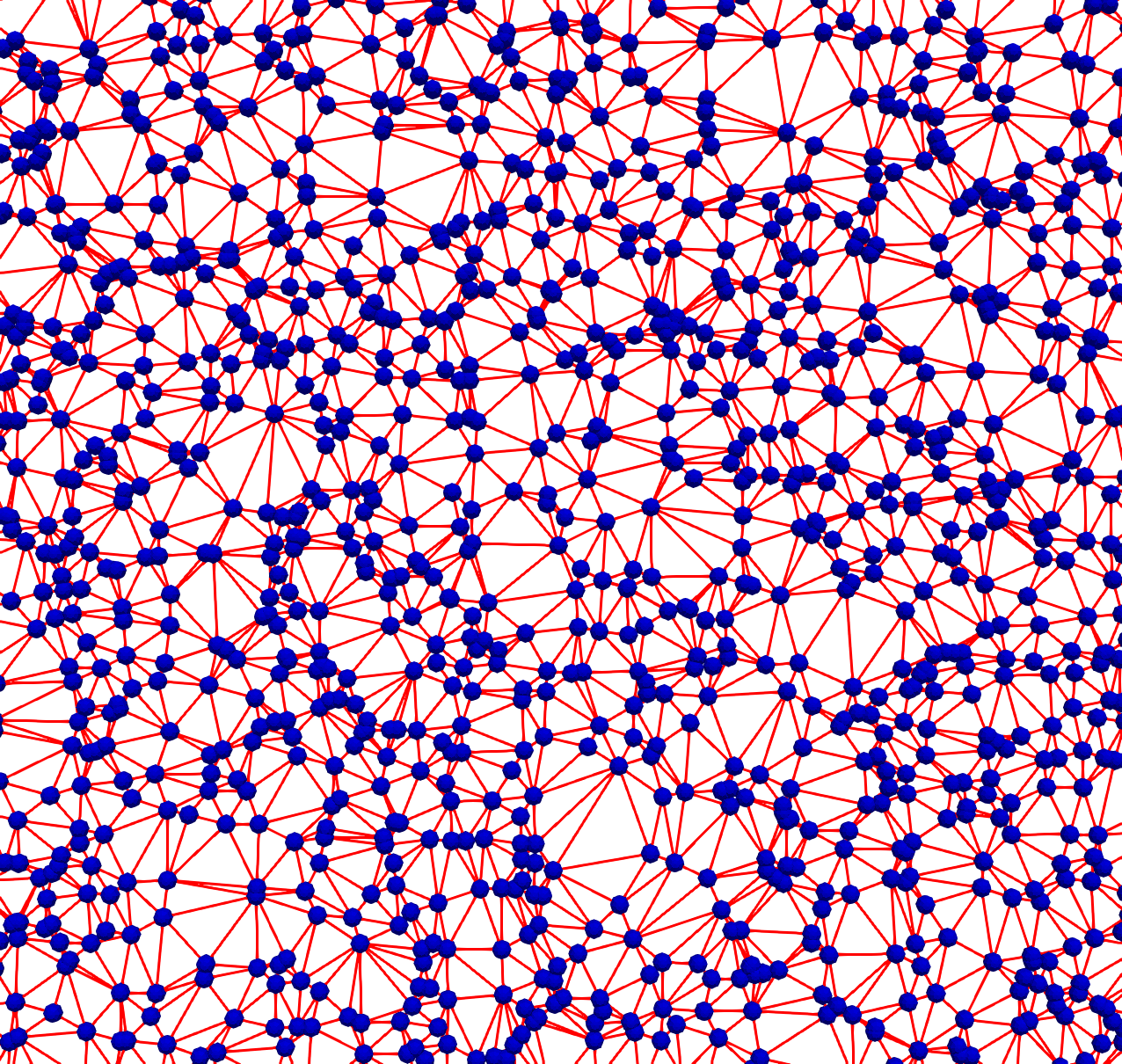}
\caption{\rev{(3) Unstructured regular network in 2D}}
\end{subfigure}
\begin{subfigure}{0.3\textwidth}
\includegraphics[width=1\linewidth]{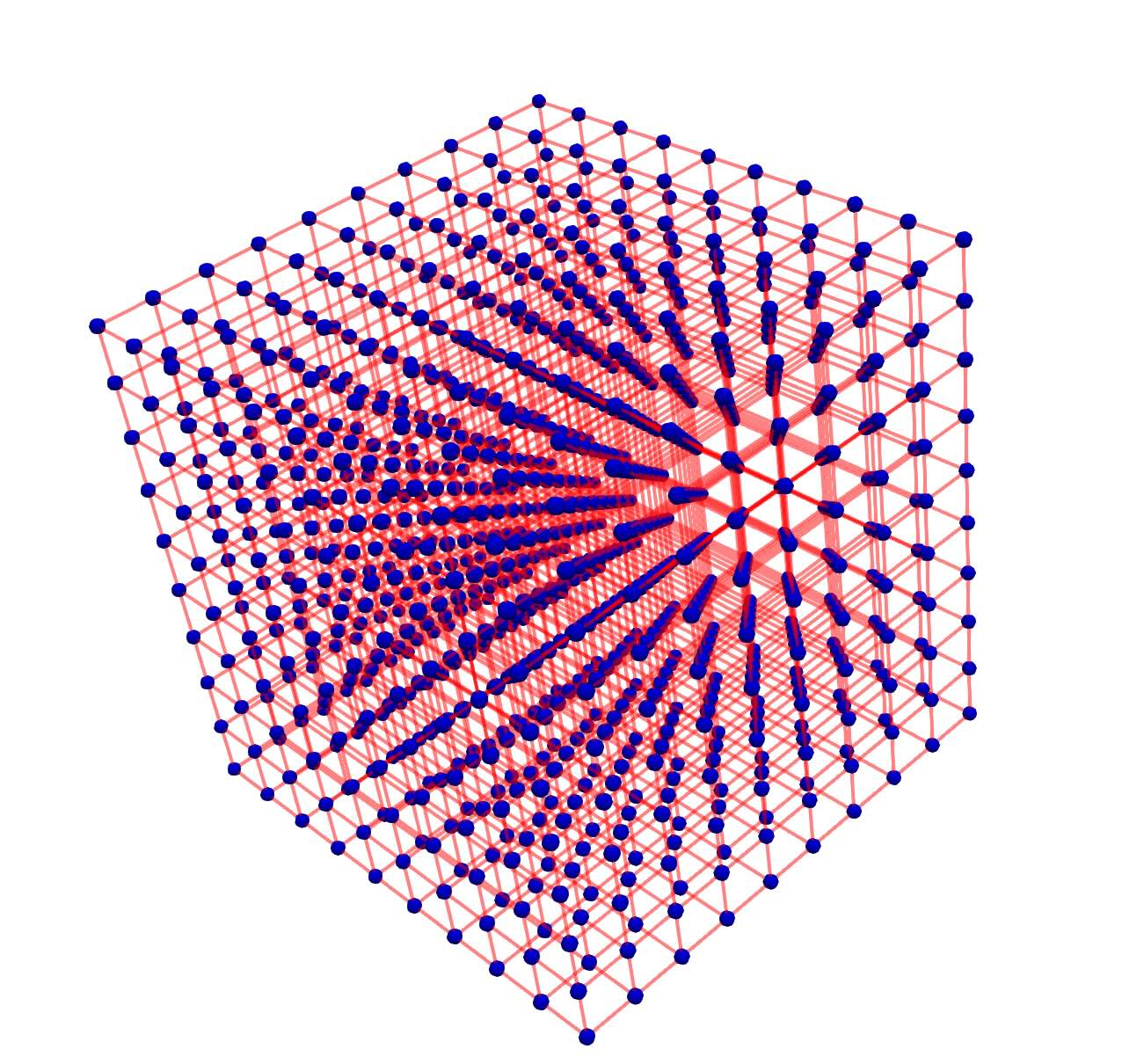}
\caption{\rev{(1) Structured regular network in 3D}}
\end{subfigure}
\ \ \ \ \ \
\begin{subfigure}{0.3\textwidth}
\includegraphics[width=1\linewidth]{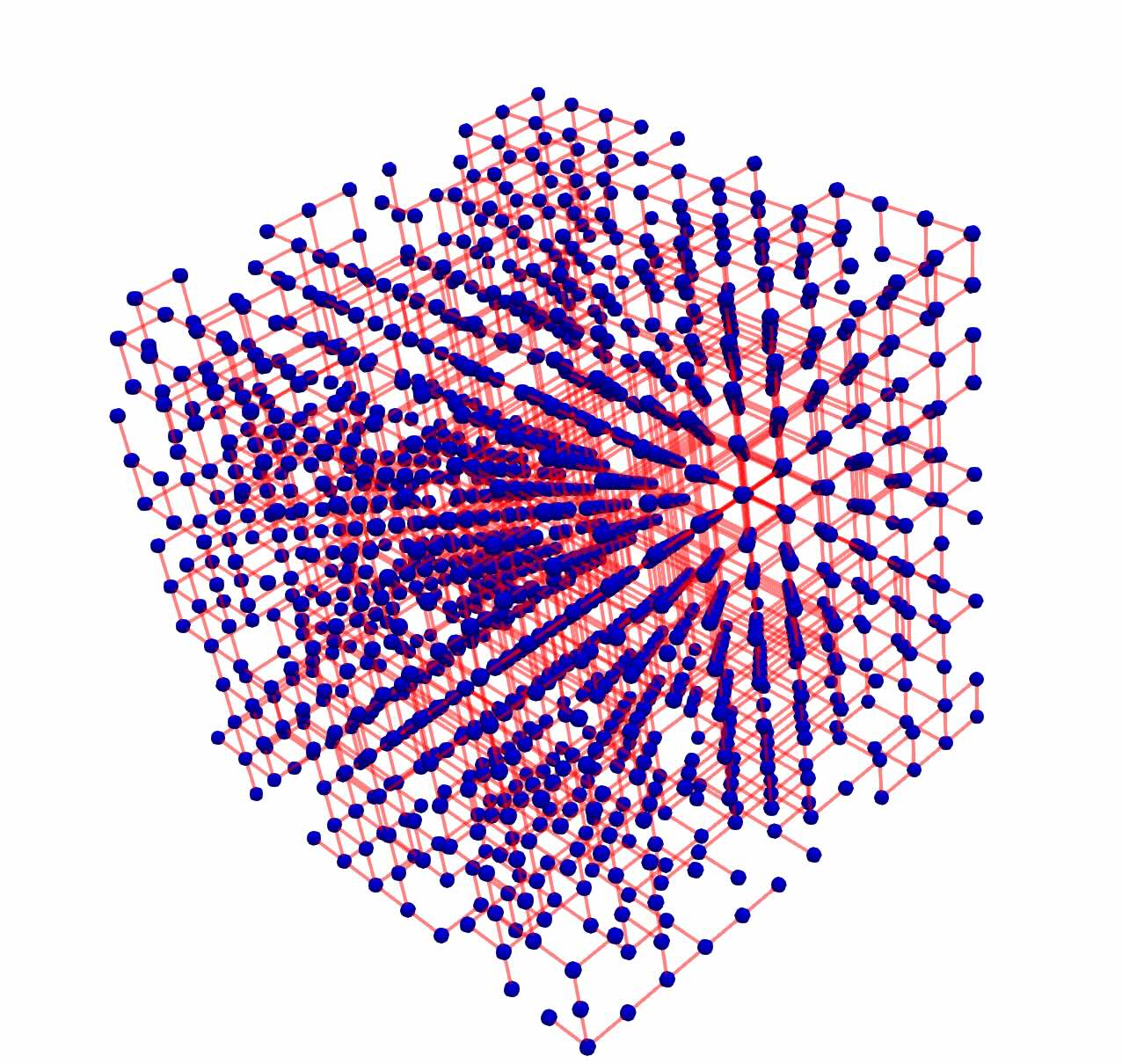}
\caption{\rev{(2) Structured irregular network in 3D}}
\end{subfigure}
\ \ \ \ \ \
\begin{subfigure}{0.3\textwidth}
\includegraphics[width=1\linewidth]{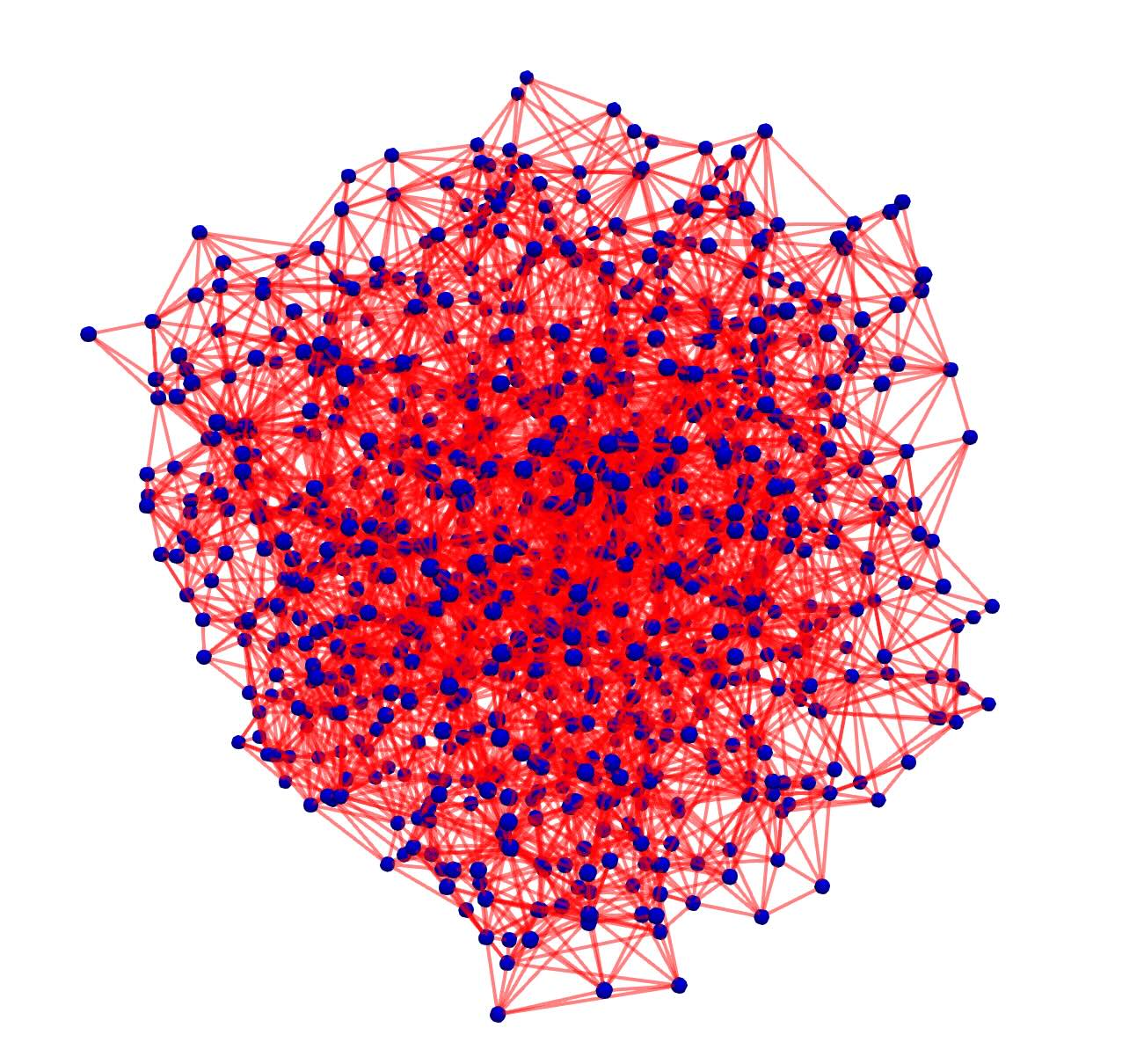}
\caption{\rev{(3) Unstructured regular network in 3D}}
\end{subfigure}
\caption{\rev{Network geometric structure in 2D and 3D}}
\label{fig:netstr}
\end{figure}

\rev{To generate a heterogeneous property, we randomly assign node and throat diameters, find the length of each throat based on the network's geometric structure, and calculate the connectivity coefficient based on the area and length of the throat. In this work, we consider a simplified approach to pore network construction, where the pores are represented by sphere geometry and throats are represented by cylinders. 
Moreover, the structured regular network can be considered a structured grid with a finite volume approximation of the parabolic equation. In this case, pore body volume is the volume of cells, and the throat is the connection with weight proportional to cell permeability, distance, and the surface of facets between two cells. 
Additionally, we consider high-contrast properties generated by assigning node diameters in different subdomains and calculating flow parameters based on them. The Hagen-Poiseuille equation for single phase flow in a cylindrical tube is used to calculate connection weights \cite{gostick2016openpnm}. }

\rev{We start by explaining the test cases in detail. Then, we illustrate the multiscale basis for network structures. Finally, we investigate the accuracy of the proposed multiscale method for the presented test cases and compare it with the upscaling method. }

\subsection{\rev{Test cases: networks and flow properties}}

\rev{We consider three network structures in two-dimensional (2D) and three-dimensional (3D) formulations: }
\begin{itemize}
\item \textit{Network-1} - \rev{structured regular network:}
\begin{itemize}
\item \rev{2D: Network-1a with 40000 nodes and 79600 connections ($200 \times 200$ structured grid)}
\item \rev{3D: Network-1b with 15625 nodes and 45000 connections ($25 \times 25 \times 25$ structured grid)}
\end{itemize}

\item \textit{Network-2} - \rev{structured irregular network:}
\begin{itemize}
\item \rev{2D: Network-2a with 46006 nodes and 62538 connections ($240 \times 240$ structured grid with random removal of nodes/connections and additional removal of disconnected clusters)}
\item \rev{3D: Network-2b with 22092 nodes and 43075 connections ($30 \times 30 \times 30$ structured grid  with random removal of nodes/connections and additional removal of disconnected clusters)}
\end{itemize}

\item \textit{Network-3} - \rev{unstructured irregular network:}
\begin{itemize}
\item \rev{2D: Network-3a with 40000 nodes and 119297 connections (related to Delaunay tessellation of arbitrary base points implemented in openpnm with highly connected 40000 nodes)}
\item \rev{3D: Network-3b with 15625 nodes and 114147 connections (Delaunay tessellation of arbitrary base points implemented in openpnm with highly connected 15625 nodes)}
\end{itemize}
\end{itemize}
\rev{In Figures \ref{fig:test2} and  \ref{fig:test3}, we plot the structured regular,  structured irregular, and unstructured irregular networks for 2D and 3D cases, respectively. Note that we set labels for nodes on the top and bottom boundaries to assign boundary conditions for discrete network models. }

\begin{figure}[h!]
\centering
\begin{subfigure}{0.3\textwidth}
\includegraphics[width=1\linewidth]{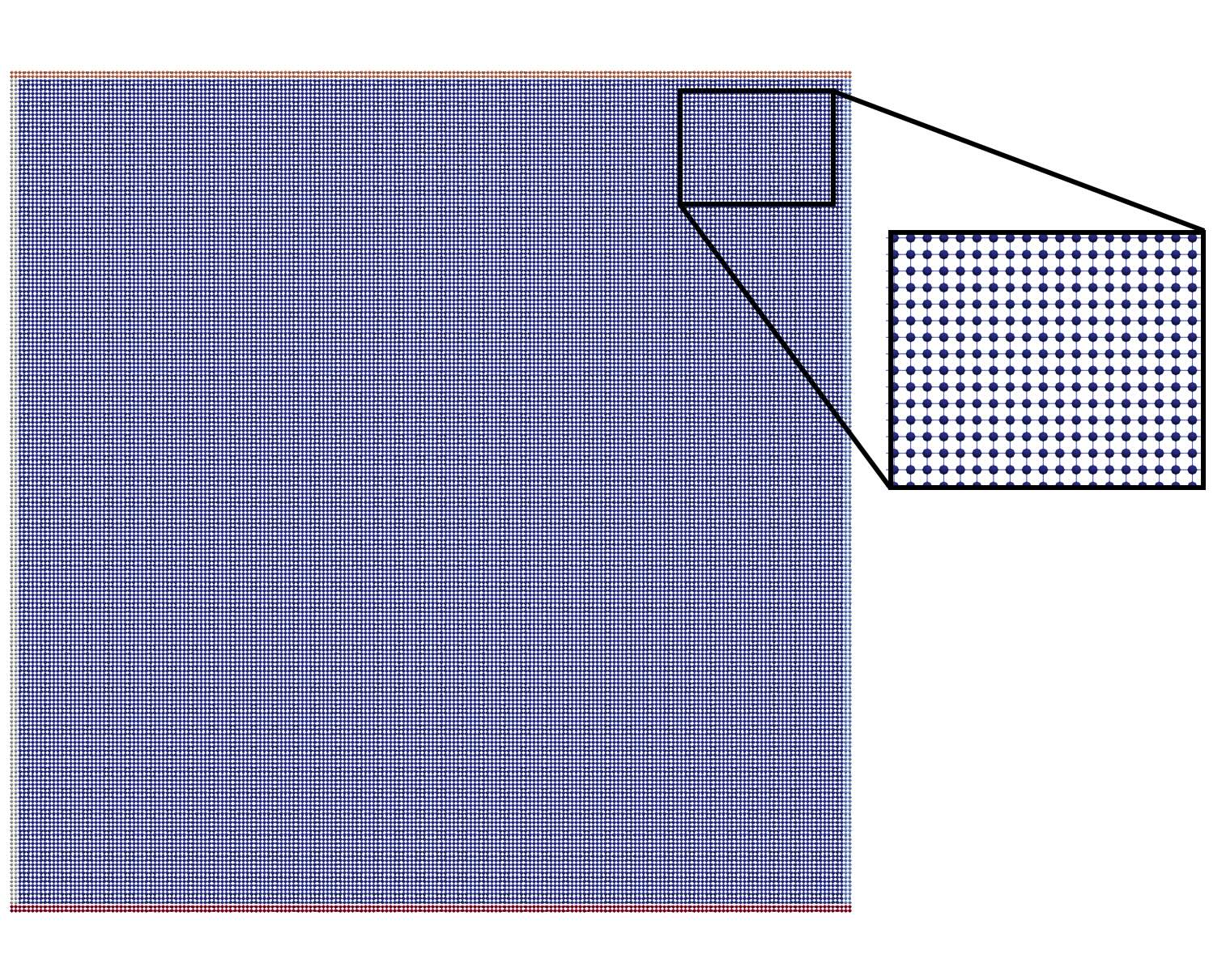}
\caption{\rev{Network-1a: Structured regular network}}
\end{subfigure}
\ \ \ \ \ \
\begin{subfigure}{0.3\textwidth}
\includegraphics[width=1\linewidth]{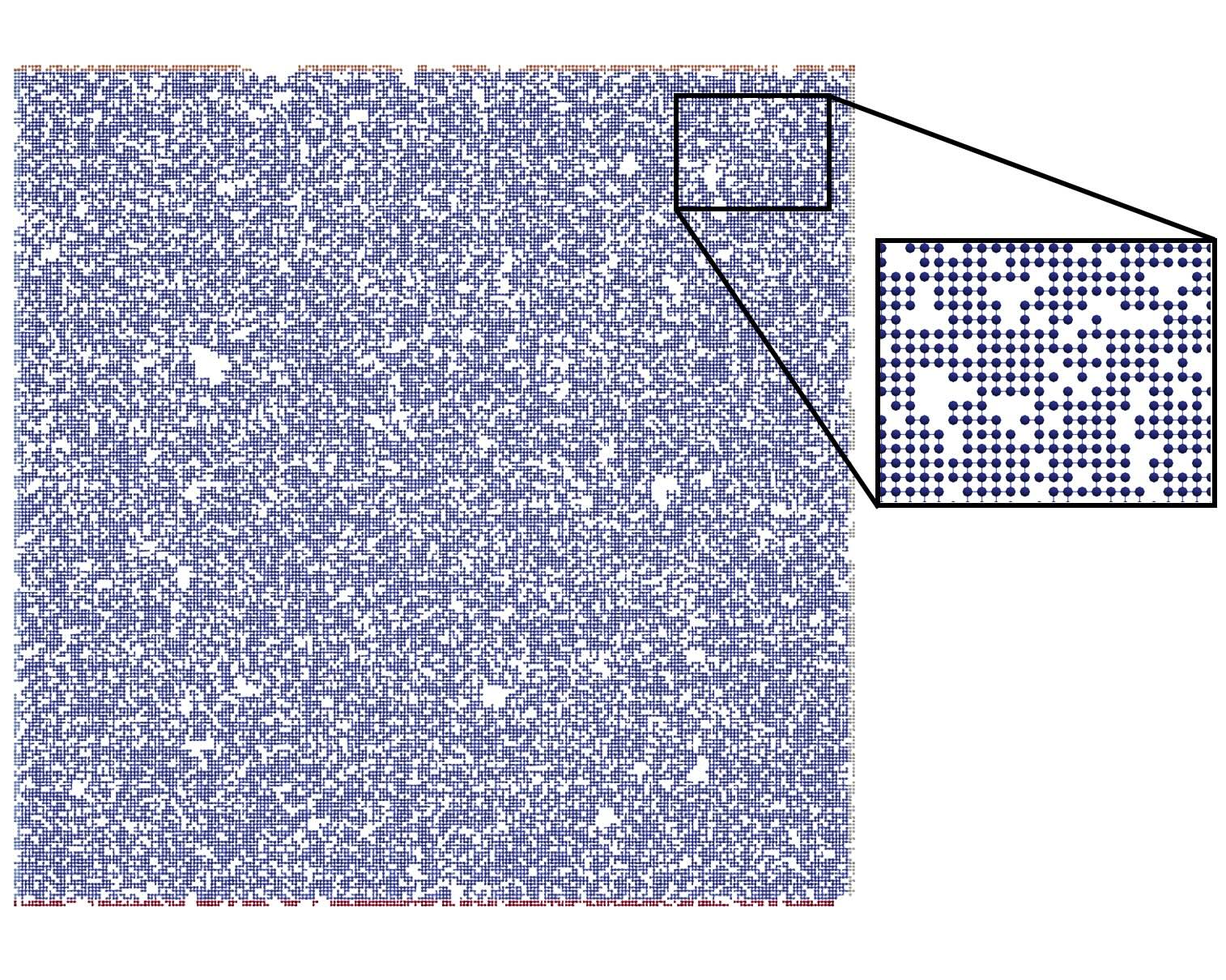}
\caption{\rev{Network-2a: Structured irregular network}}
\end{subfigure}
\ \ \ \ \ \
\begin{subfigure}{0.3\textwidth}
\includegraphics[width=1\linewidth]{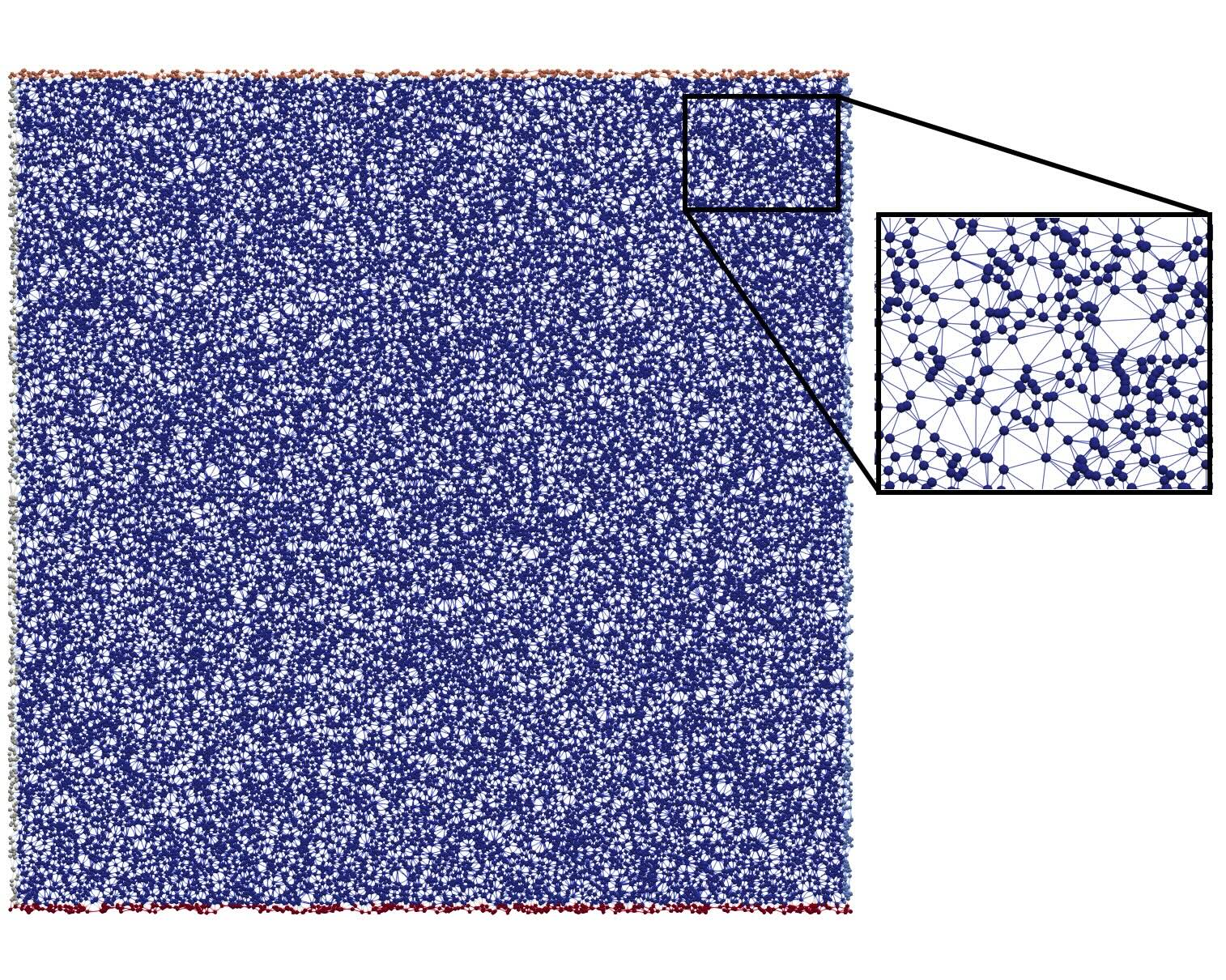}
\caption{\rev{Network-3a: Unstructured regular network}}
\end{subfigure}
\caption{\rev{Networks in 2D}}
\label{fig:test2}
\end{figure}

\begin{figure}[h!]
\centering
\begin{subfigure}{0.3\textwidth}
\includegraphics[width=1\linewidth]{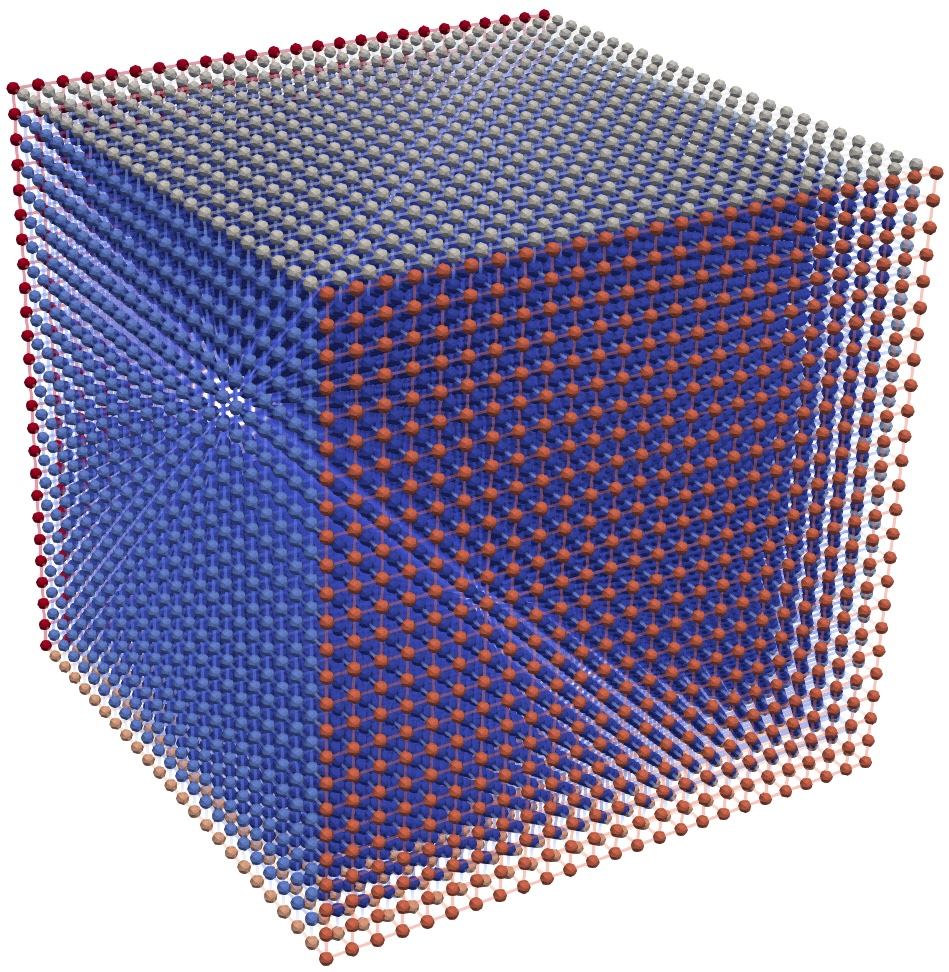}
\caption{\rev{Network-1b: Structured regular network}}
\end{subfigure}
\ \ \ \ \ \
\begin{subfigure}{0.3\textwidth}
\includegraphics[width=1\linewidth]{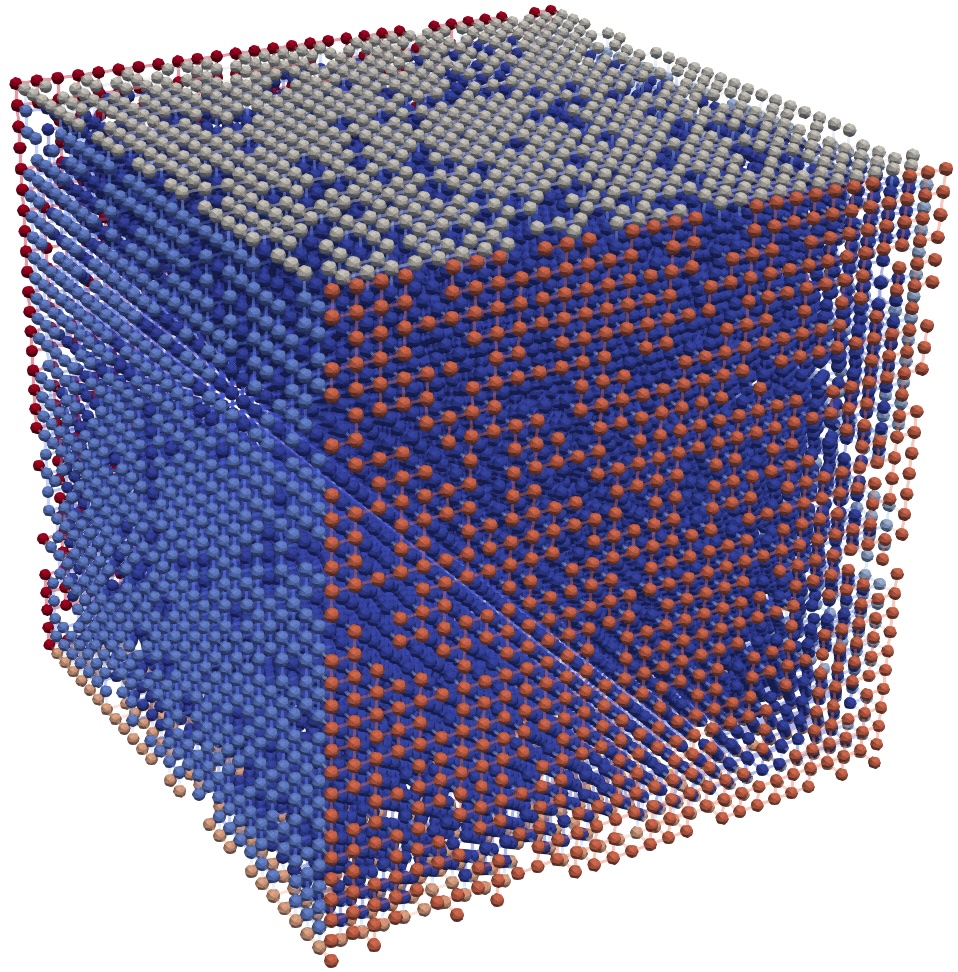}
\caption{\rev{Network-2b: Structured irregular network}}
\end{subfigure}
\ \ \ \ \ \
\begin{subfigure}{0.3\textwidth}
\includegraphics[width=1\linewidth]{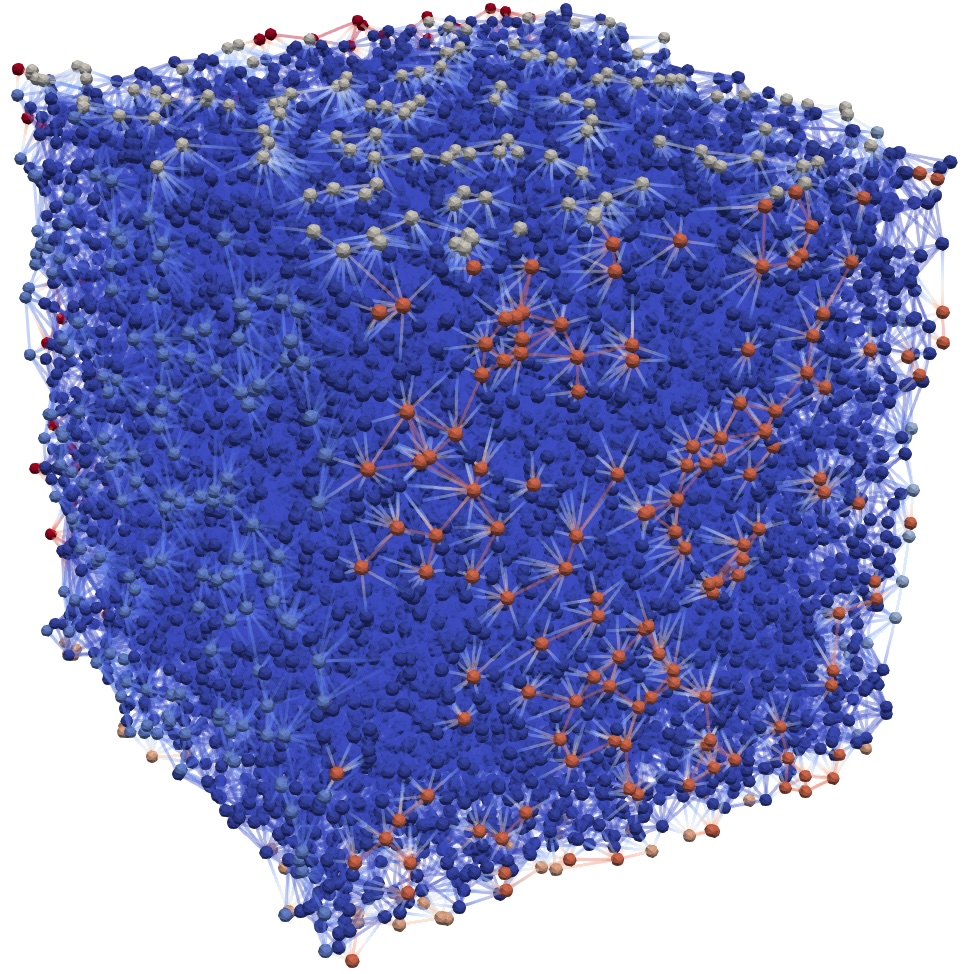}
\caption{\rev{Network-3b: Unstructured regular network}}
\end{subfigure}
\caption{\rev{Networks in 3D}}
\label{fig:test3}
\end{figure}

\begin{figure}[h!]
\centering
\begin{subfigure}{0.3\textwidth}
\includegraphics[width=1\linewidth]{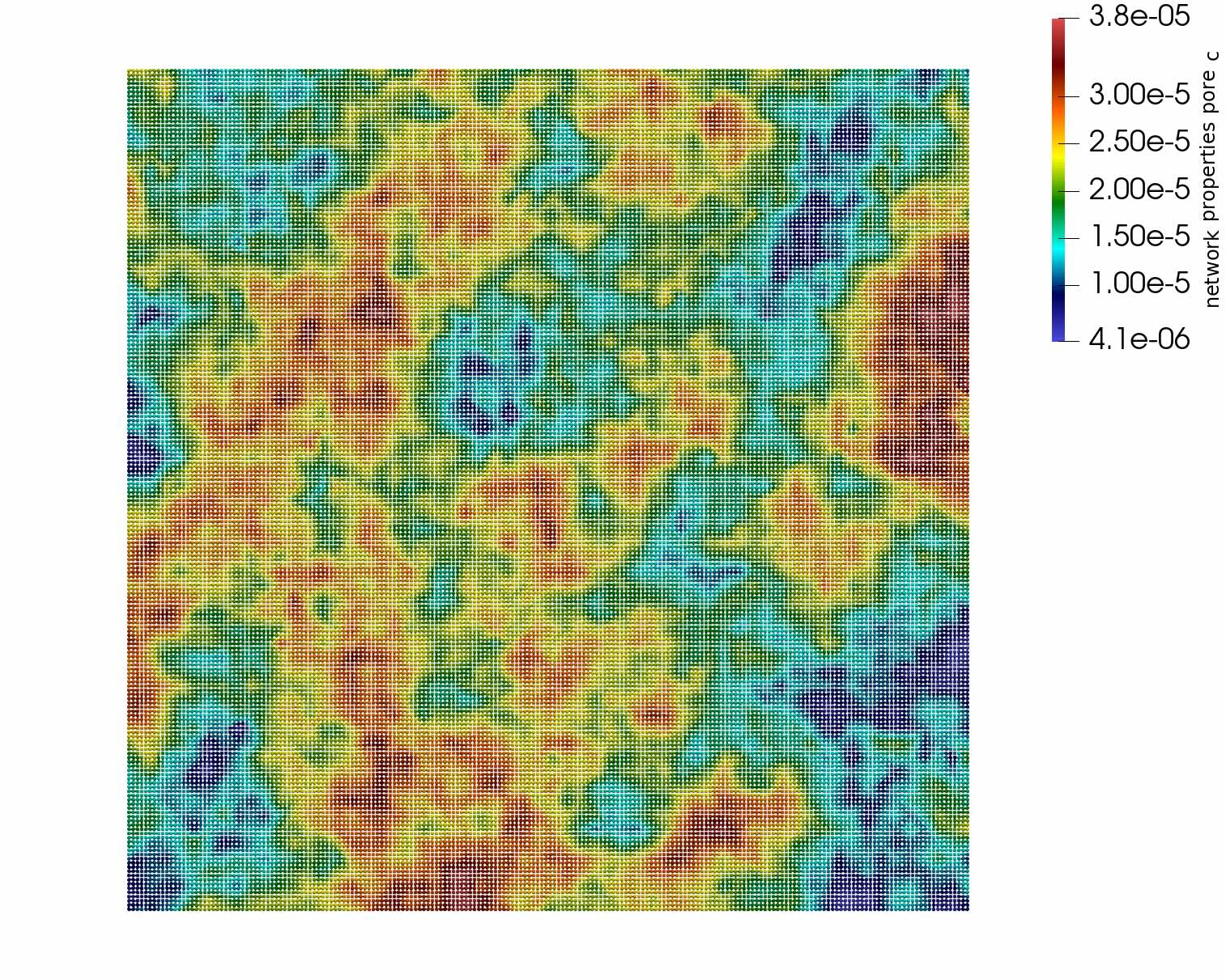}
\includegraphics[width=1\linewidth]{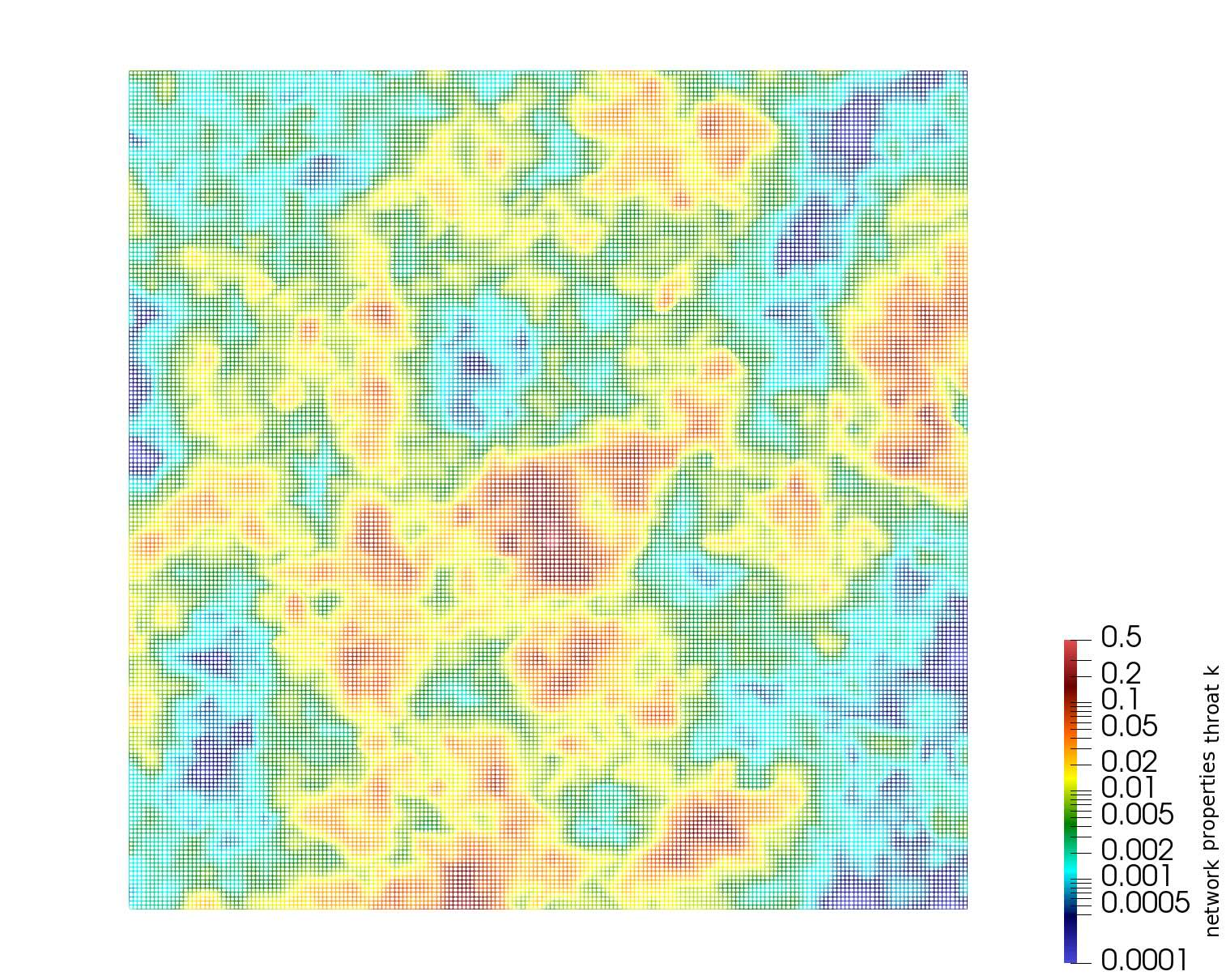}
\caption{\rev{Test-1a: SPE10 properties on Network-1a}}
\end{subfigure}
\ \ \ \ \ \
\begin{subfigure}{0.3\textwidth}
\includegraphics[width=1\linewidth]{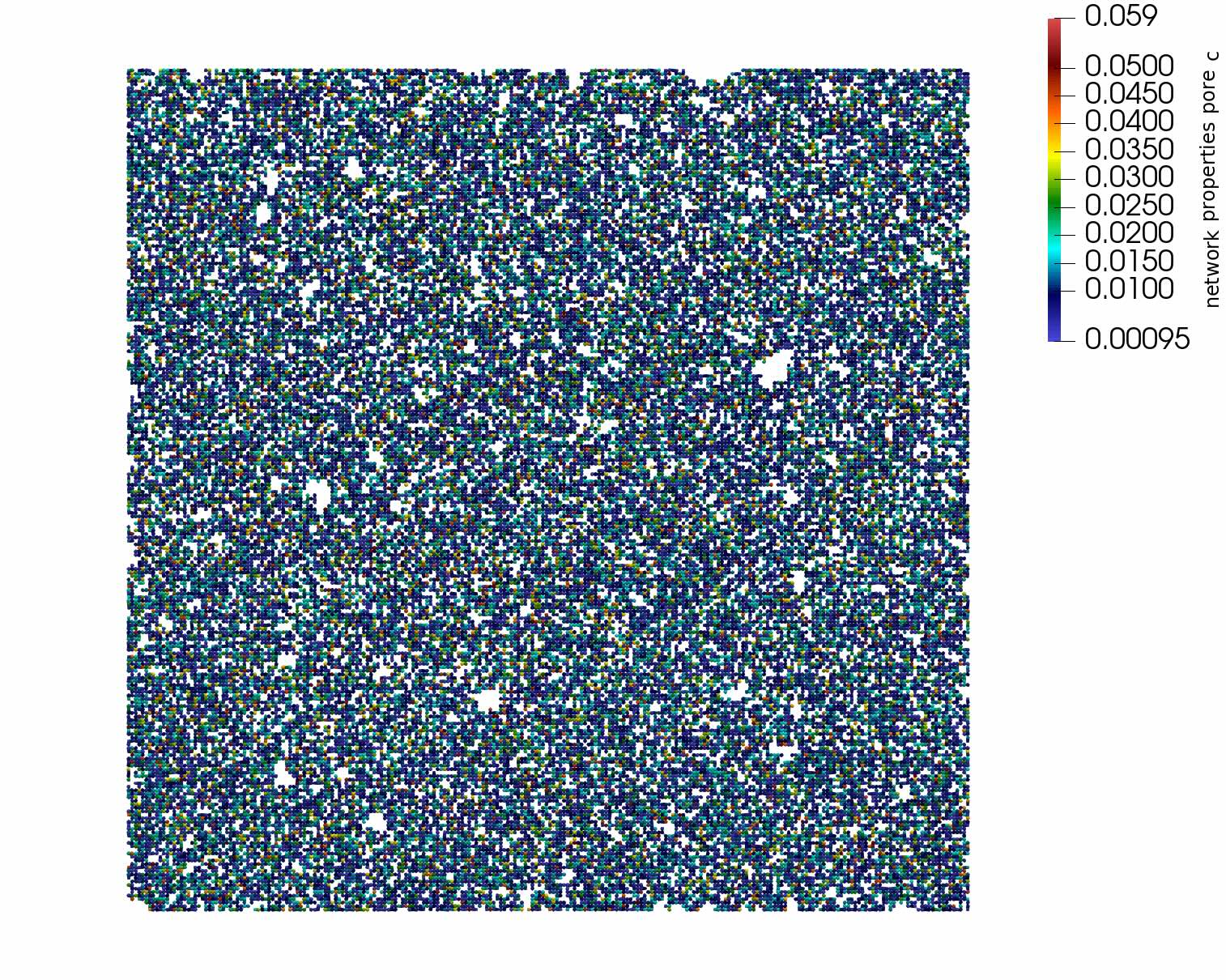}
\includegraphics[width=1\linewidth]{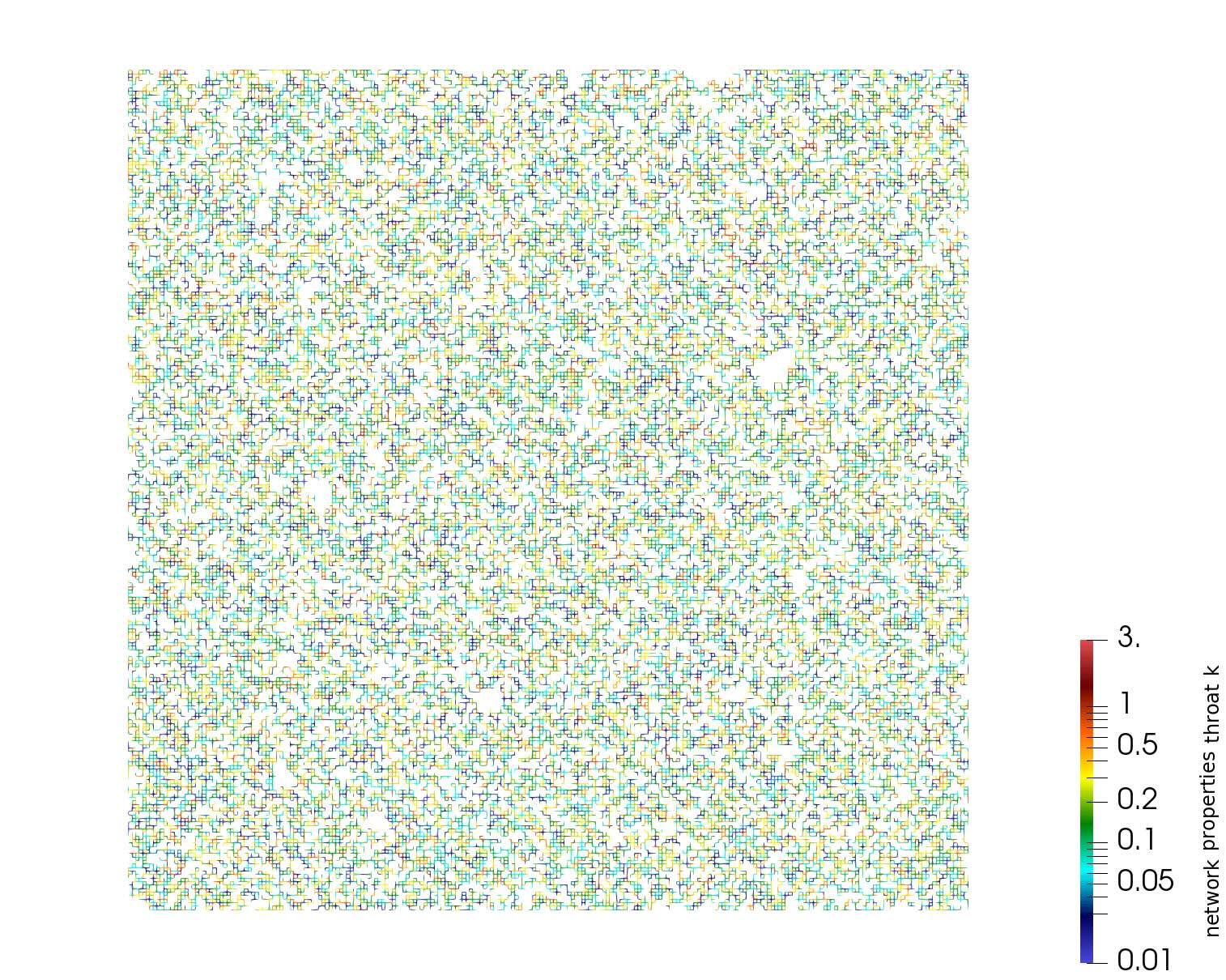}
\caption{\rev{Test-2a: Random properties on Network-2a}}
\end{subfigure}
\ \ \ \ \ \
\begin{subfigure}{0.3\textwidth}
\includegraphics[width=1\linewidth]{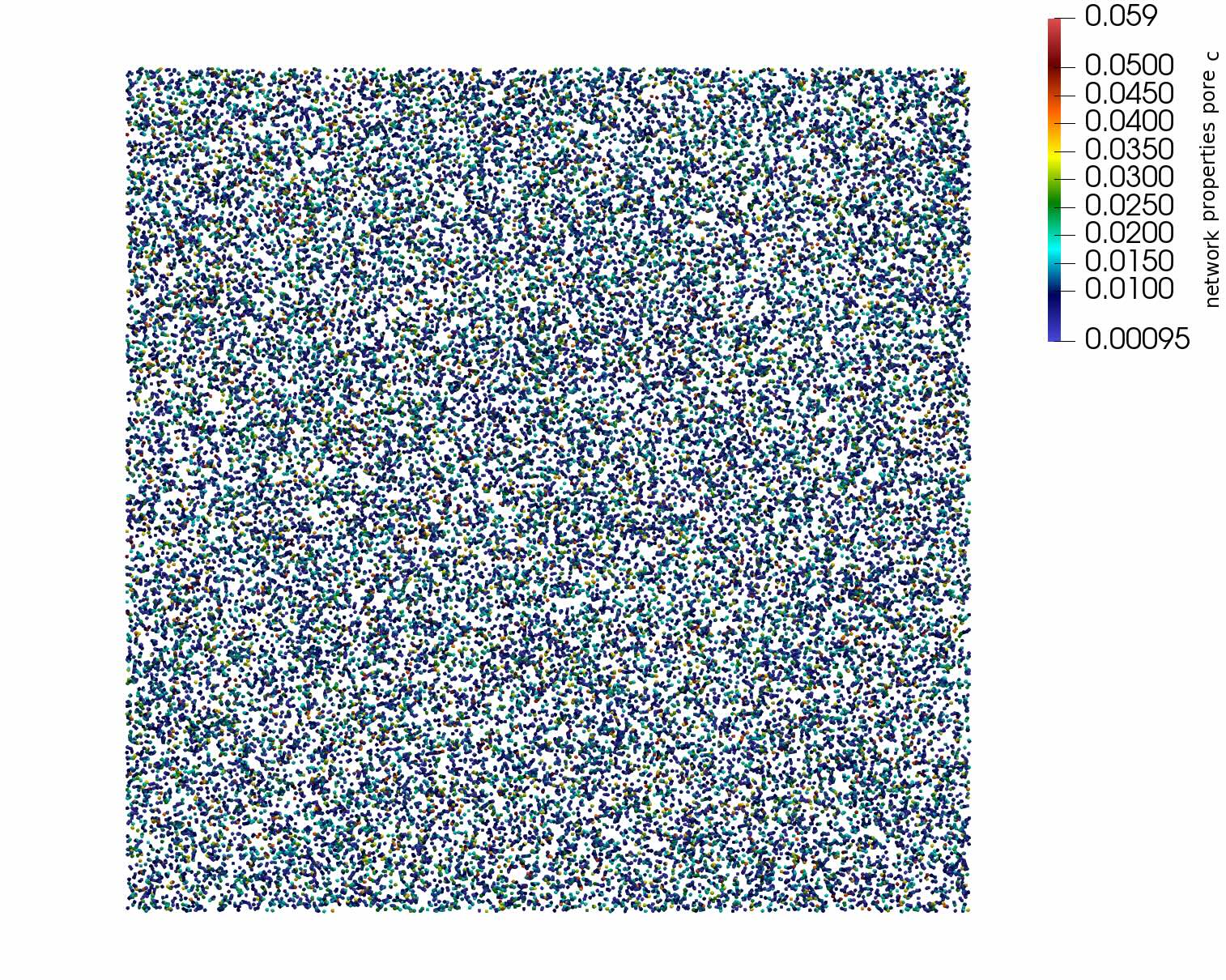}
\includegraphics[width=1\linewidth]{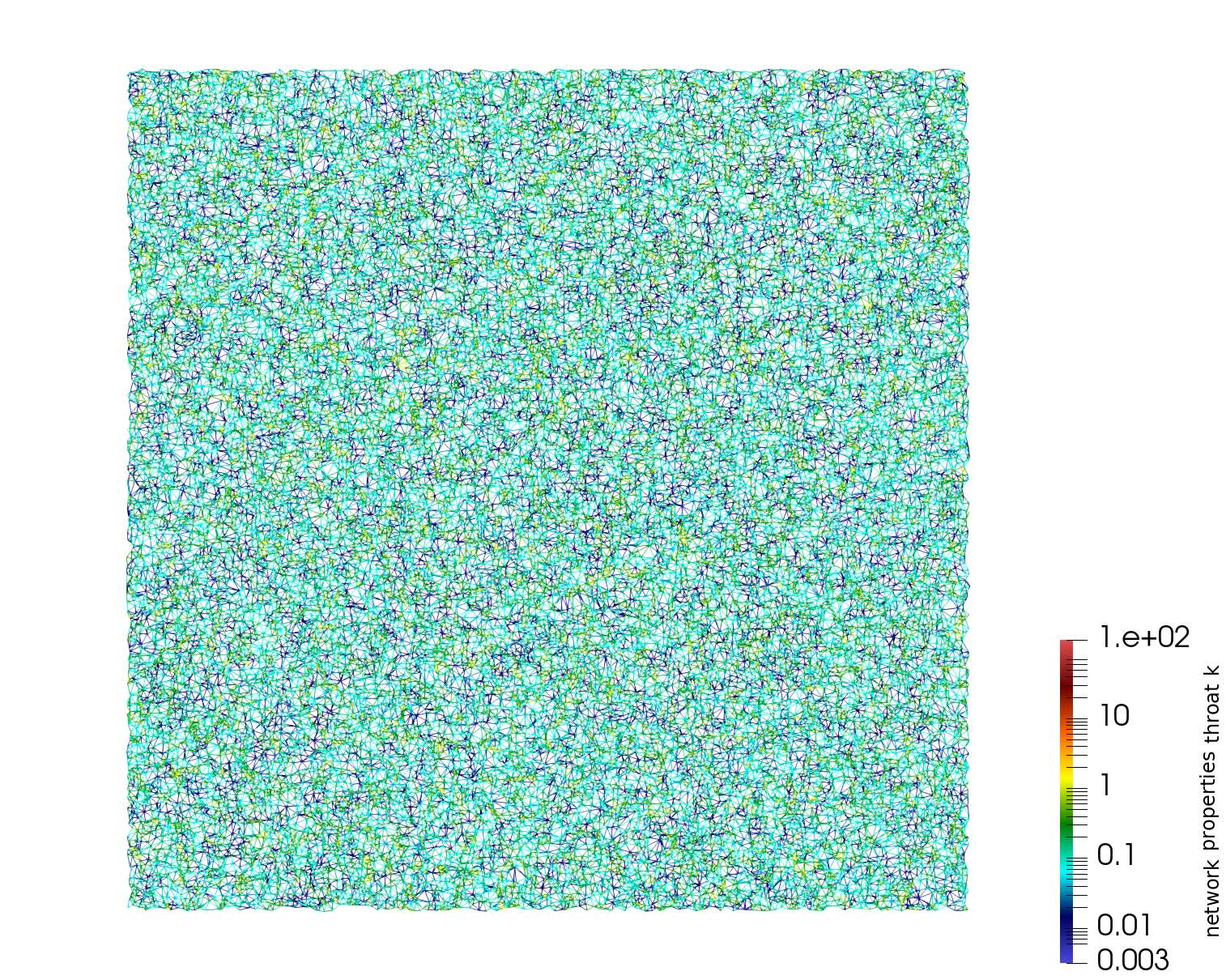}
\caption{\rev{Test-3a: Random properties on Network-3a}}
\end{subfigure}
\caption{\rev{Heterogeneous coefficients $c_i$ (first row) and $w_{ij}$ (second row) for 2D networks}}
\label{fig:test2k}
\end{figure}

\begin{figure}[h!]
\centering
\begin{subfigure}{0.3\textwidth}
\includegraphics[width=1\linewidth]{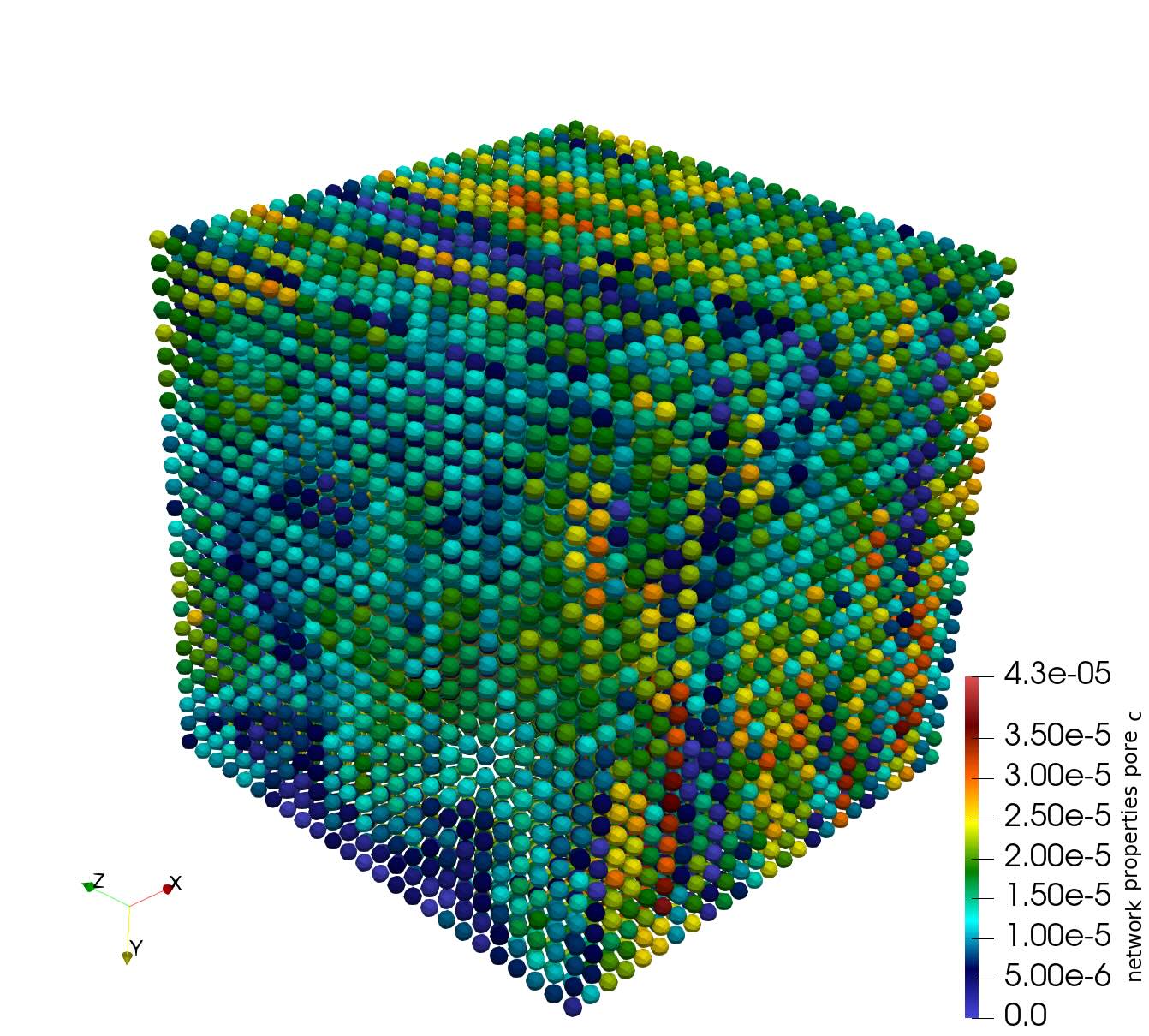}
\includegraphics[width=1\linewidth]{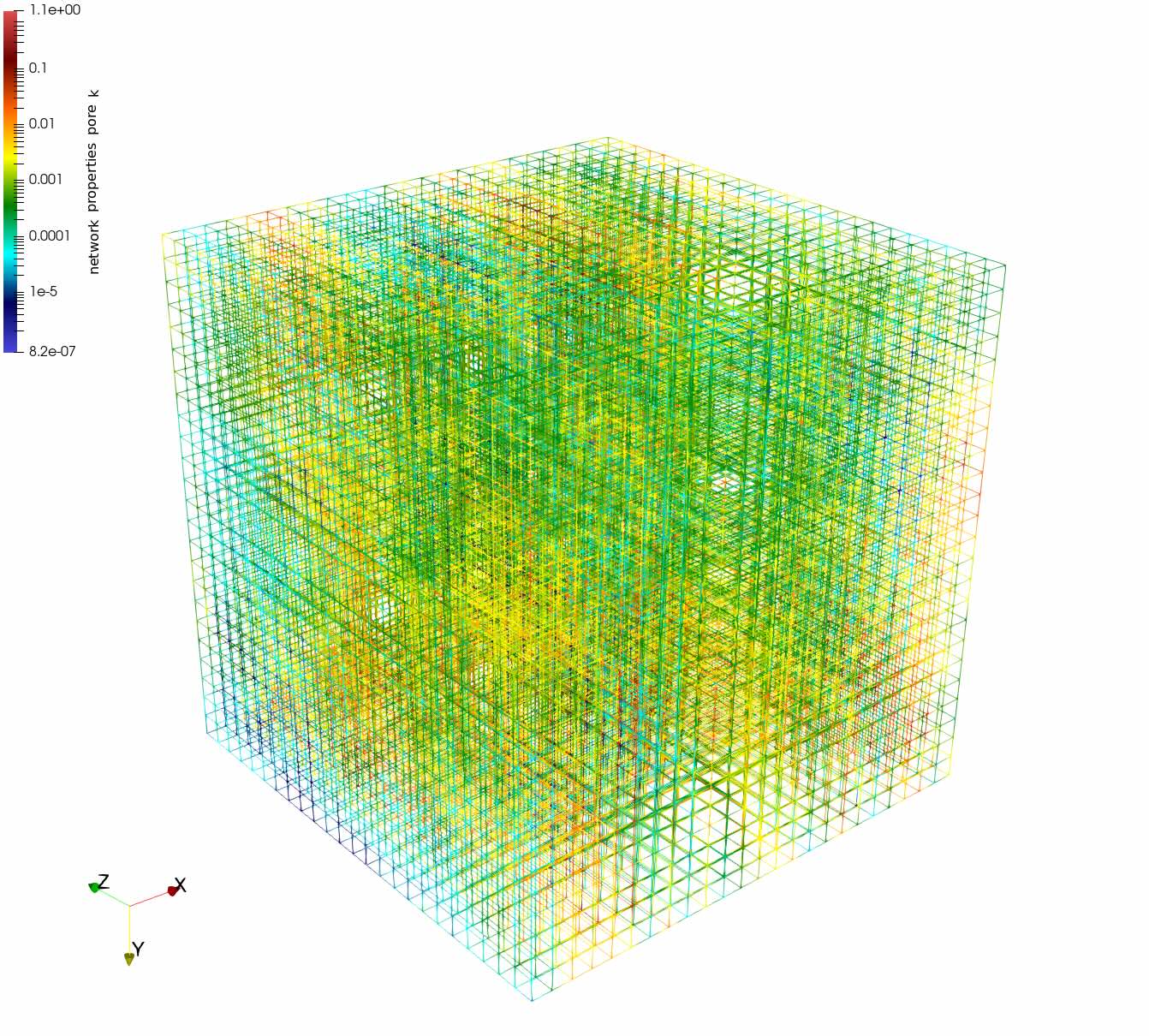}
\caption{\rev{Test-1b: SPE10 properties on Network-1b}}
\end{subfigure}
\ \ \ \ \ \
\begin{subfigure}{0.3\textwidth}
\includegraphics[width=1\linewidth]{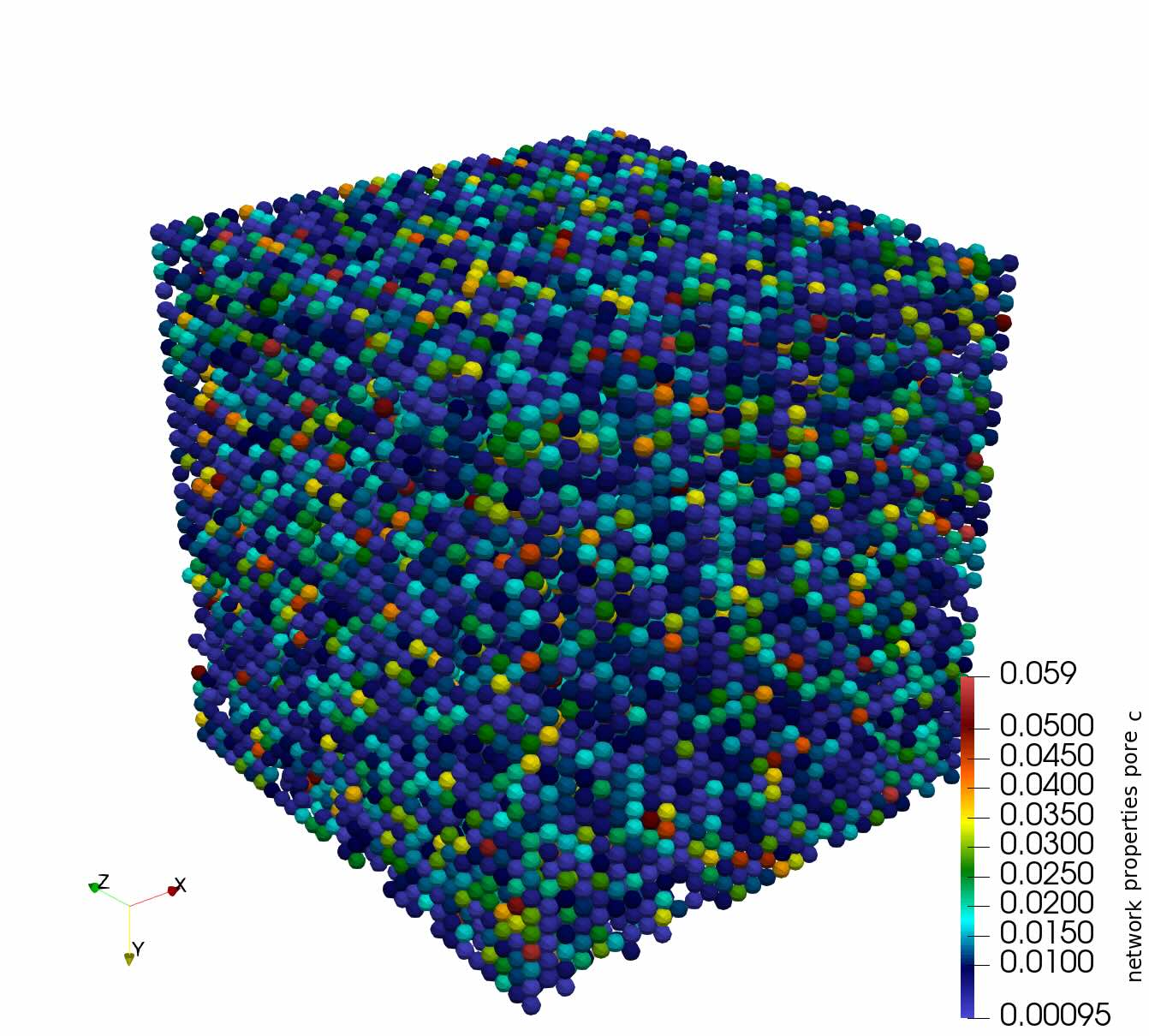}
\includegraphics[width=1\linewidth]{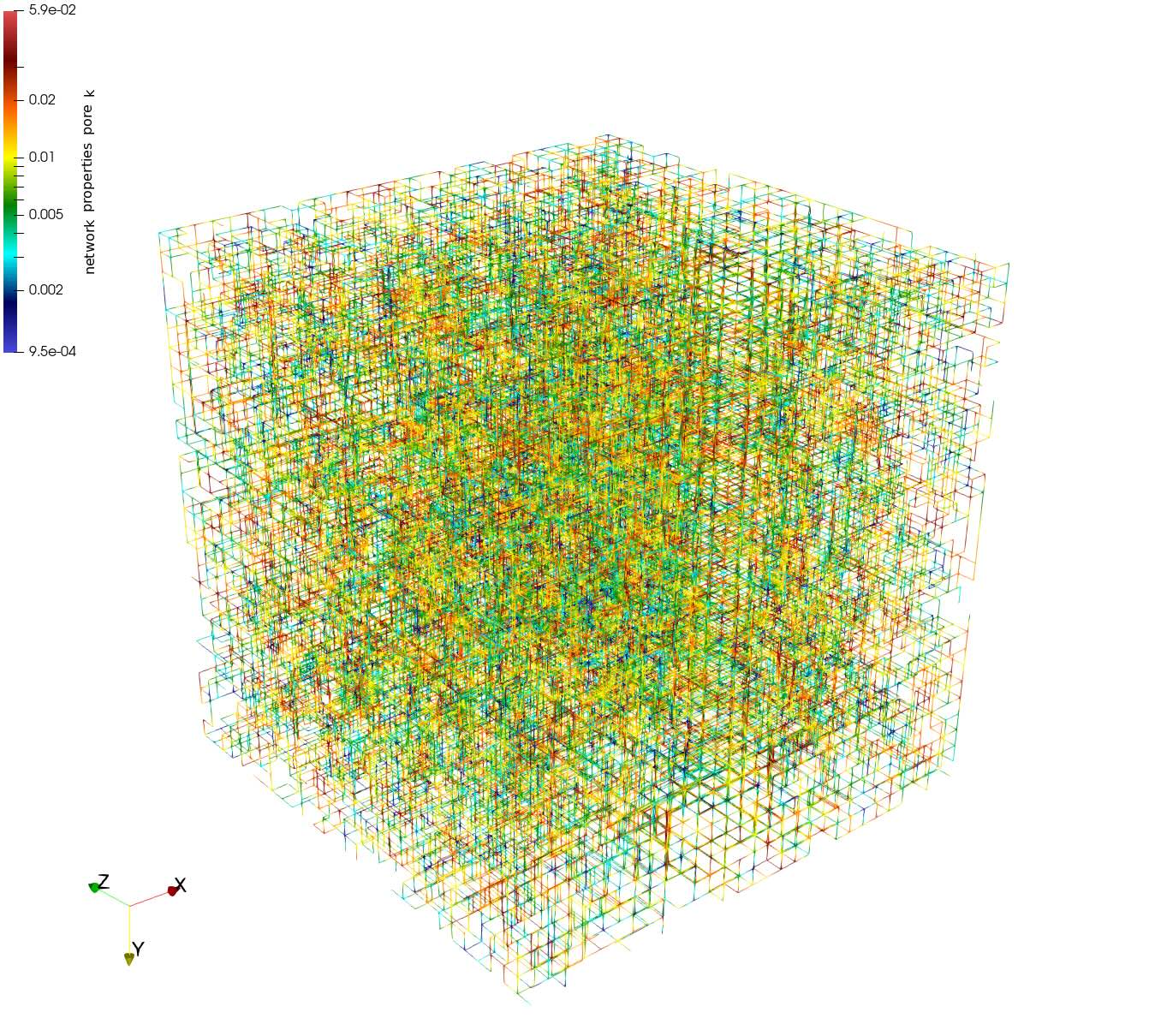}
\caption{\rev{Test-2b: Random properties on Network-2b}}
\end{subfigure}
\ \ \ \ \ \
\begin{subfigure}{0.3\textwidth}
\includegraphics[width=1\linewidth]{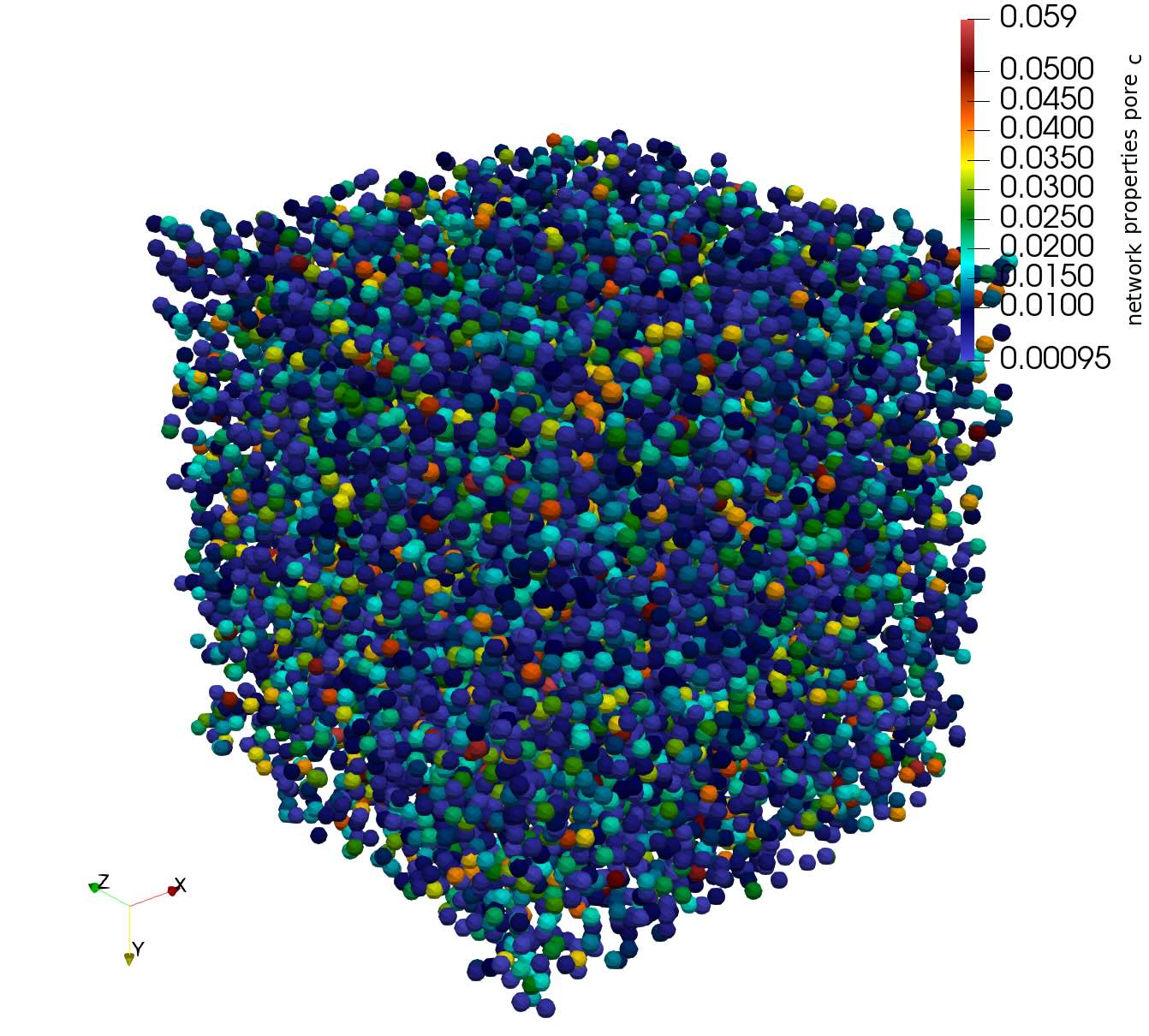}
\includegraphics[width=1\linewidth]{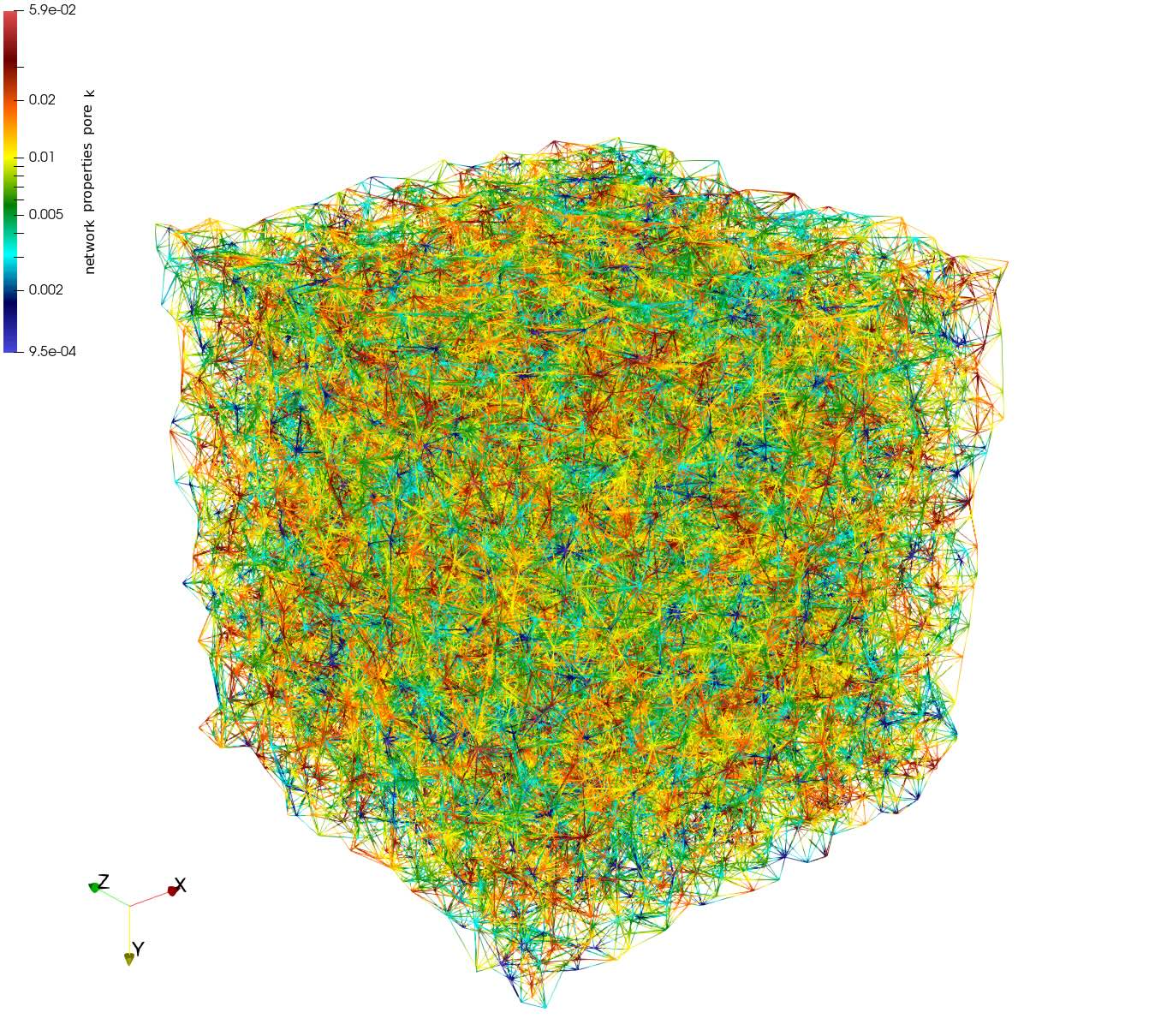}
\caption{\rev{Test-3b: Random properties on Network-3b}}
\end{subfigure}
\caption{\rev{Heterogeneous coefficients $c_i$ (first row) and $w_{ij}$  (second row) for 3D networks}}
\label{fig:test3k}
\end{figure}

\begin{figure}[h!]
\centering
\begin{subfigure}{0.3\textwidth}
\includegraphics[width=1\linewidth]{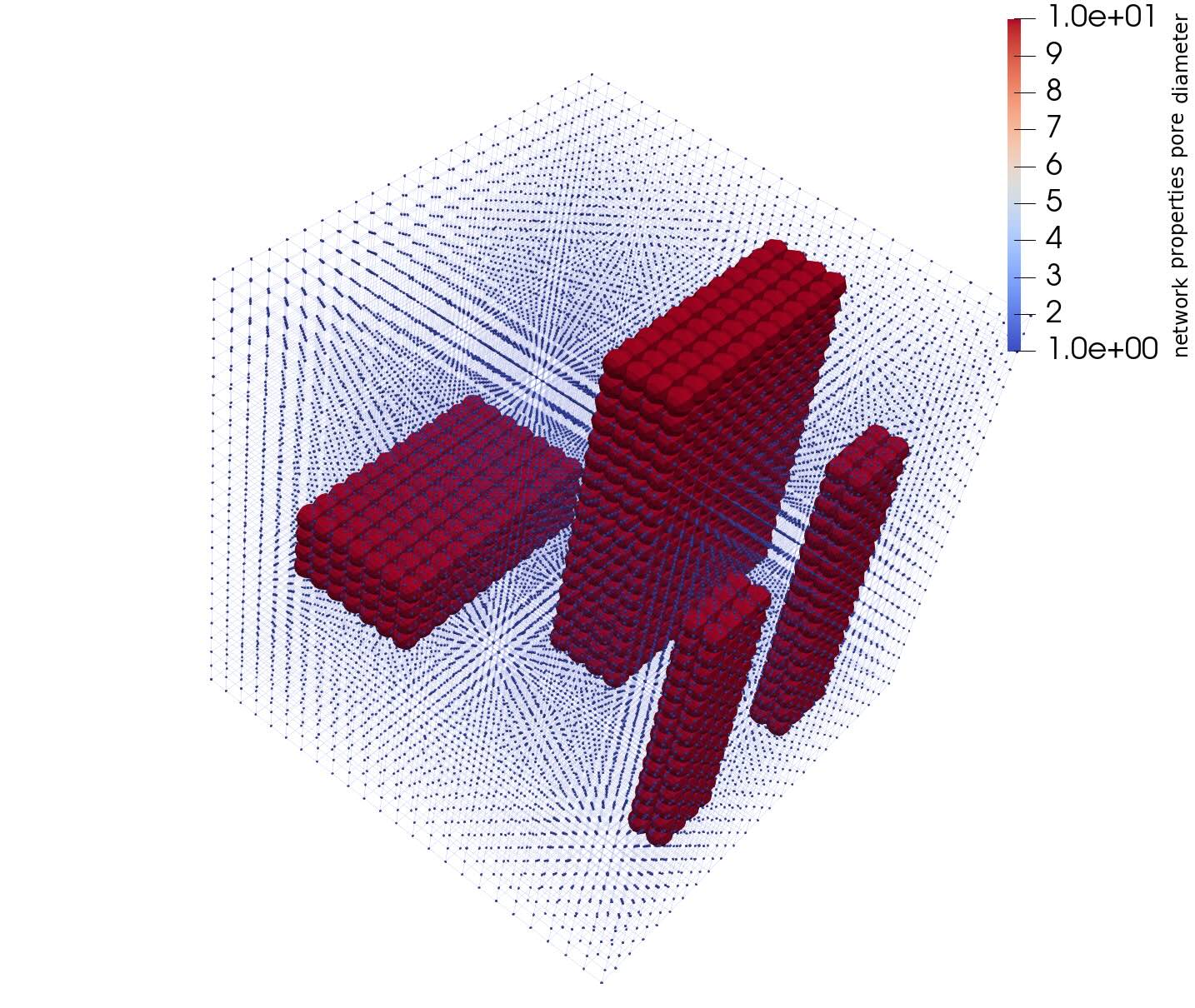}
\caption{\rev{Test-1c on Network-1b}}
\end{subfigure}
\ \ \ \ \ \
\begin{subfigure}{0.3\textwidth}
\includegraphics[width=1\linewidth]{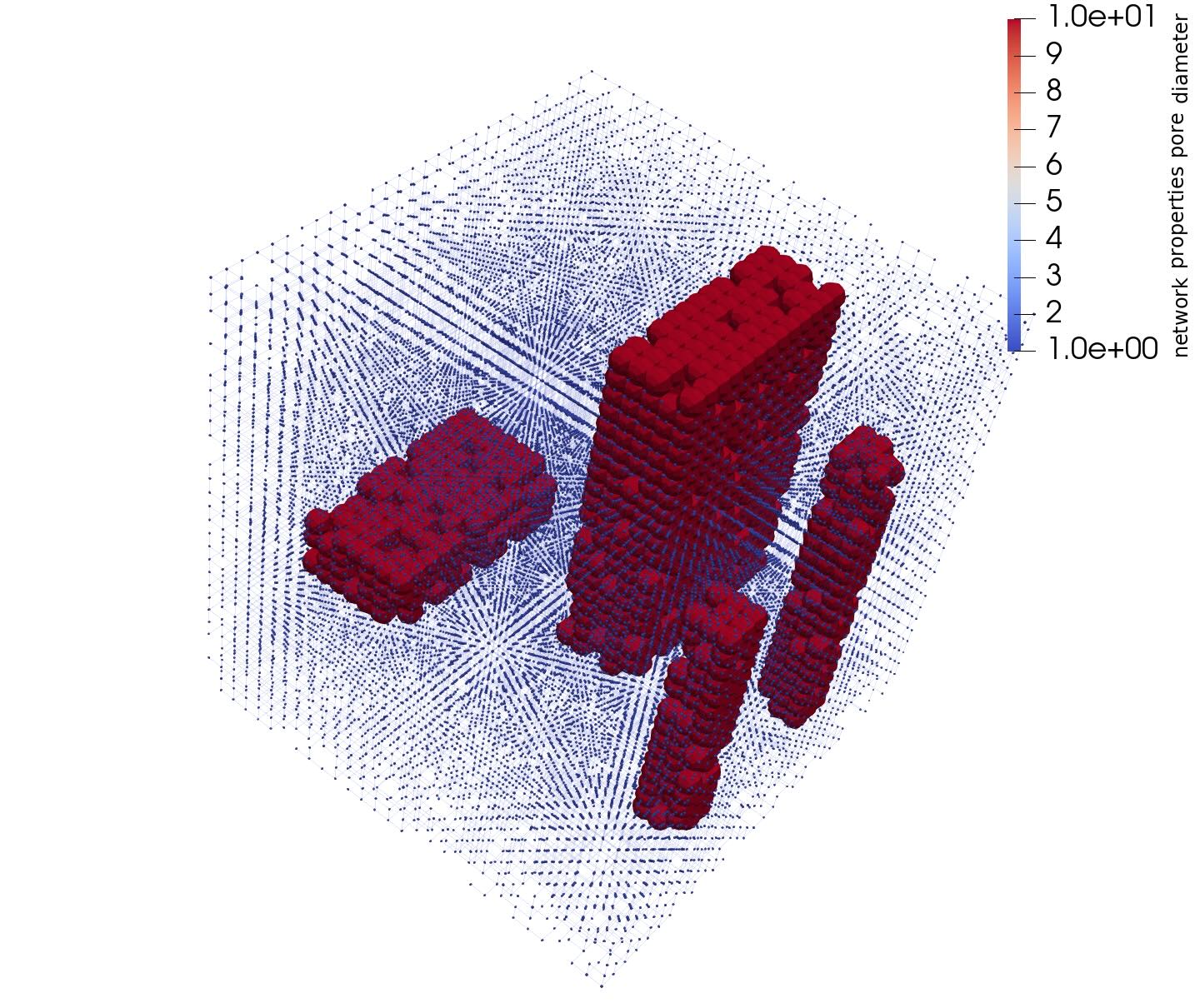}
\caption{\rev{Test-2c on Network-2b}}
\end{subfigure}
\ \ \ \ \ \
\begin{subfigure}{0.3\textwidth}
\includegraphics[width=1\linewidth]{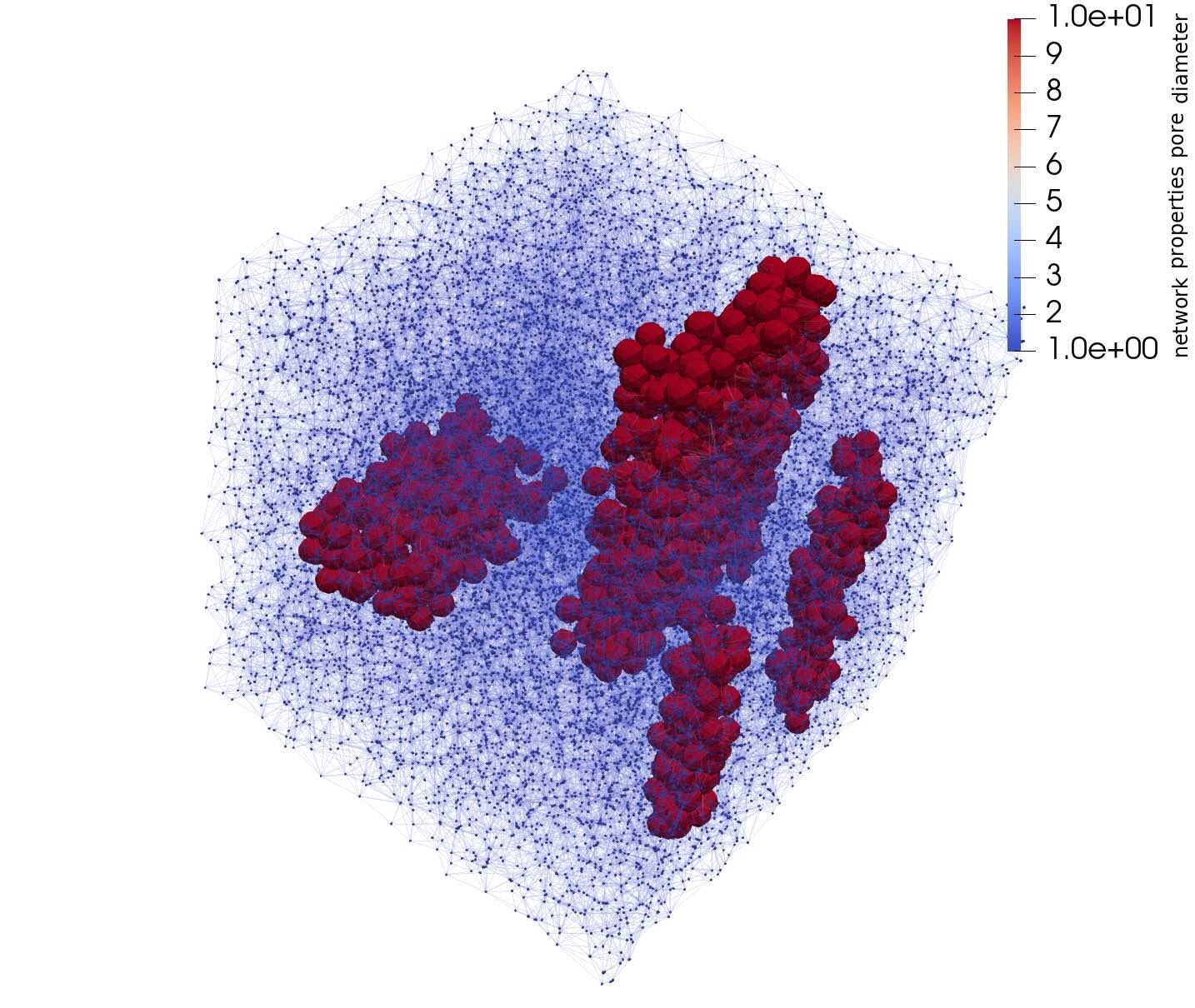}
\caption{\rev{Test-3c on Network-3b}}
\end{subfigure}
\caption{\rev{High-contrast 3D networks associated with a pore size}}
\label{fig:test3k2}
\end{figure}

\rev{The structured regular network is related to the finite volume approximation of the parabolic equation in the heterogeneous two-dimensional and three-dimensional domains. In this network, we assign a heterogeneous diffusion coefficient to each node (cell) and calculate connection weight as a harmonic average of the cell's coefficients that it connect. The heterogeneous properties are assigned from the SPE10 test \cite{christie2001tenth}.
To set heterogeneous properties of other networks, we follow the approach from the pore-network modeling that follows the geometrical representation of the pore structure. We randomly assign pore and throat diameters ($d^p_{i}$ and $d^t_{ij}$), then we set time-derivative coefficient $c_i$ as a pore volume (sphere)  and calculate connection weight $w_{ij}$ using the Hagen-Poiseuille equation for single phase flow in a cylindrical tube \cite{gostick2016openpnm}
\begin{equation}
c_i = \frac{4}{3} \pi R^3_i, \quad 
w_{ij} = \frac{\pi R^4_{ij}}{8 \mu L_{ij}},
\end{equation}
where $R_i = d^p_{i}/2$ is the pore radius, $L_{ij}$ and $R_{ij} = d^t_{ij}/2$ are the length and radius of the throat 
connecting pores $i$ and $j$.}

\rev{We consider the following test cases: }
\begin{itemize}
\item 
\rev{\textit{Network-1} - structured regular network:}
\begin{itemize}
\item \rev{Test-1a with $T = 20$ on Network-1a,   
$4.08 \cdot 10^{-6} \leq c_i \leq 3.82 \cdot 10^{-5}$ and  
$0.0001 \leq w_{ij} \leq 0.5085$}
\item \rev{Test-1b with $T = 0.6$ on Network-1b,   
$3.01 \cdot 10^{-6}  \leq c_i \leq 2.0$ and  
$2.58 \cdot 10^{-7} \leq w_{ij} \leq 2.0$}
\item \rev{Test-1c with $T = 10$ on Network-1b,   
$0.52  \leq c_i \leq 523.60$ and  
$6.14 \leq w_{ij} \leq 6.14 \cdot 10^4$}
\end{itemize}
\item \rev{\textit{Network-2} - structured irregular network:}
\begin{itemize}
\item \rev{Test-2a  with $T = 4000$ on Network-2a,   
$0.0007 \cdot 10^{-6} \leq c_i \leq 0.0309$ and  
$0.0007 \leq w_{ij} \leq  1.005$}
\item \rev{Test-2b  with $T = 200$ on Network-2b,   
$2.74 \cdot 10^{-6}  \leq c_i \leq  0.0587$ and  
$5.58 \cdot 10^{-7} \leq w_{ij} \leq 0.3073$}
\item \rev{Test-2c with $T = 10$ on Network-2b,   
$0.52  \leq c_i \leq 523.60$ and  
$7.36 \leq w_{ij} \leq 7.36 \cdot 10^4$}
\end{itemize}
\item \rev{\textit{Network-3} - unstructured irregular network:}
\begin{itemize}
\item \rev{Test-3a  with $T = 300$ on Network-3a,   
$2.74 \cdot 10^{-6} \leq c_i \leq 0.0588$ and  
$1.41 \cdot 10^{-7} \leq w_{ij} \leq 3.4619$}
\item \rev{Test-3b  with $T = 20$ on Network-3b,   
$2.77 \cdot 10^{-6}  \leq  c_i \leq 0.0588$ and  
$1.56 \cdot 10^{-8} \leq w_{ij} \leq 0.2330$}
\item \rev{Test-3c with $T = 10$ on Network-3b,   
$0.52  \leq c_i \leq 523.60$ and  
$1.81 \leq w_{ij} \leq 1.19 \cdot 10^6$}
\end{itemize}
\end{itemize}
\rev{The network properties are depicted in Figures \ref{fig:test2k}, \ref{fig:test3k} and  \ref{fig:test3k2}for 2D and 3D tests. 
From first column in Figures \ref{fig:test2k} and \ref{fig:test3k}, we observe the heterogeneous coefficients based on SPE10 test used in continuum-scale reservoir simulations \cite{christie2001tenth}. We took subdomain from the original heterogeneity, where we use porosity to generate node coefficient $c_i$ and use permeability to assign connection weight $w_{ij}$.  }

\rev{As the third test case for 3D networks, we consider high-contrast properties. To generate a high-contrast network, we set node diameter equal to 10 in several subdomains; in the rest of the domain, we set node diameter equal to 1. In the Figure \ref{fig:test3k2}, we depicted three network structures with nodes and connections, where we scale the node's volume based on the node diameter. Such a case can be considered when we have a subdomain with different microstructures in pore network models. Then, we set throat diameter as a harmonic average between pores diameters that the throat is connecting. The coefficients $c_i$ and $w_{ij}$ are generated based on the Hagen-Poiseuille equation discussed above. In the second and third rows of  Figure \ref{fig:test3k2}, we depicted corresponded values of high-contrast coefficients. }

\begin{figure}[h!]
\centering
\begin{subfigure}{1\textwidth}
\centering
\includegraphics[width=0.17\linewidth]{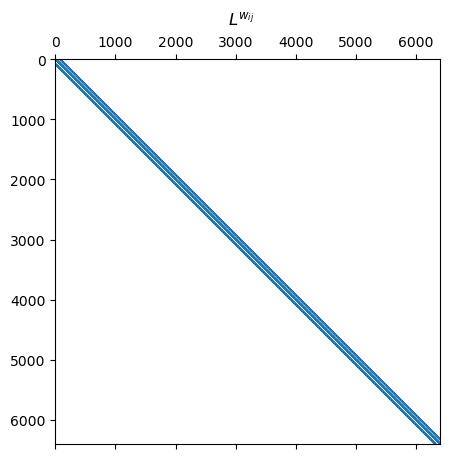}
\includegraphics[width=0.19\linewidth]{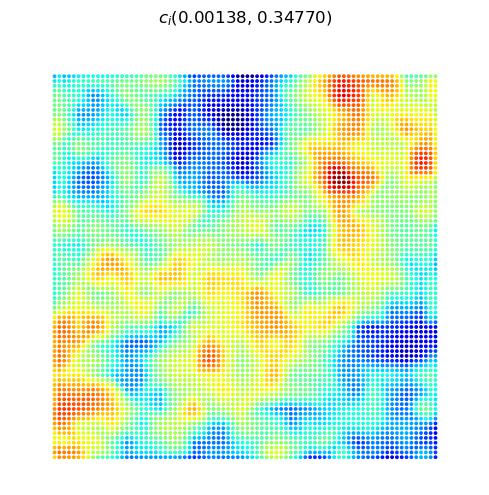}
\includegraphics[width=0.19\linewidth]{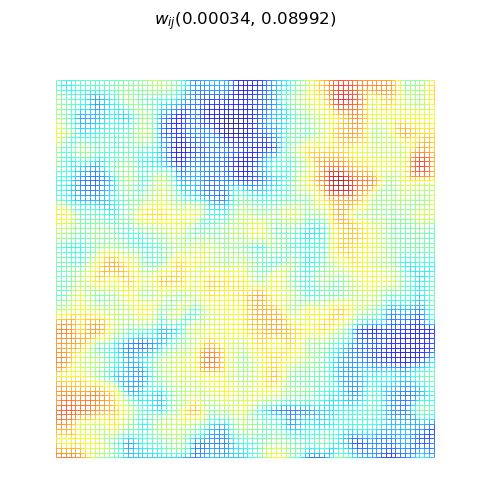}
\includegraphics[width=0.31\linewidth]{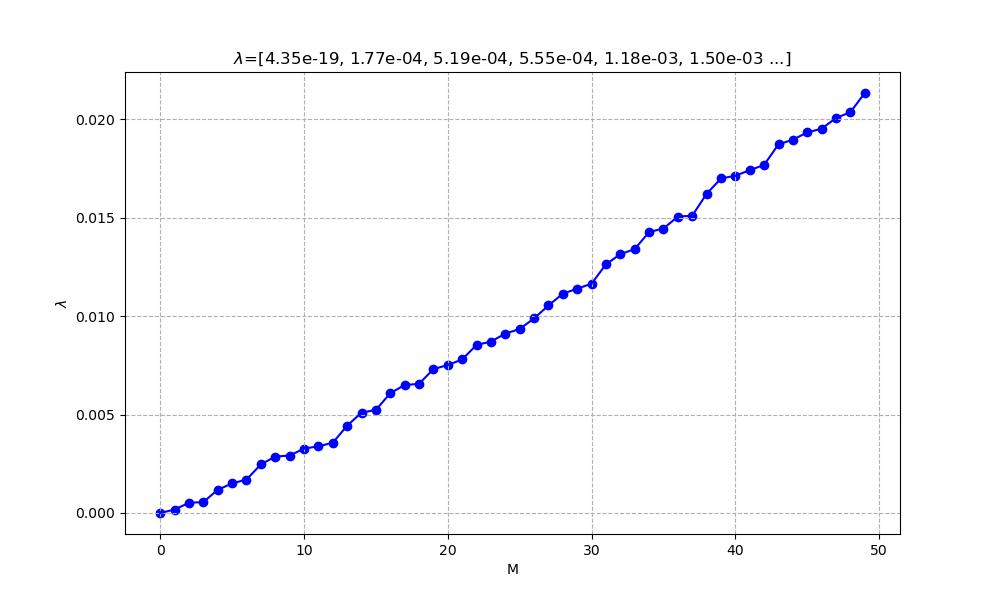}
\caption{\rev{Test-1a: Local matrix $L^{\omega_{ij}} = D^{\omega_{ij}} - W^{\omega_{ij}}$; illustration of  $D^{\omega_{ij}}$ and $W^{\omega_{ij}}$ and plot of the first smallest eigenvalues.}}
\end{subfigure}
\begin{subfigure}{1\textwidth}
\includegraphics[width=0.12\linewidth]{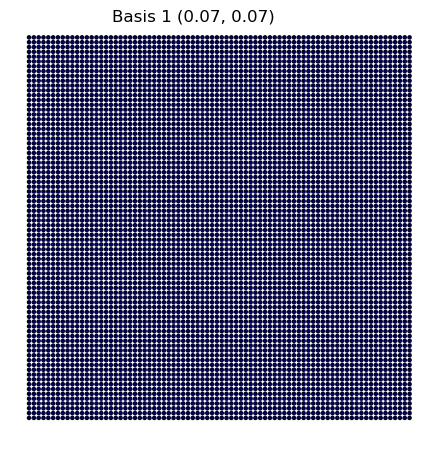}
\includegraphics[width=0.12\linewidth]{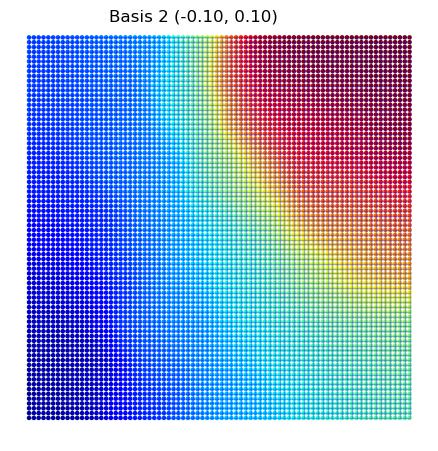}
\includegraphics[width=0.12\linewidth]{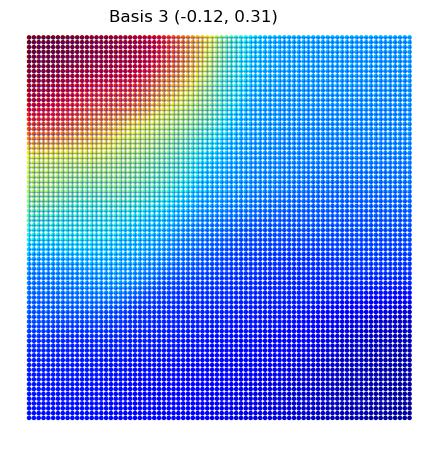}
\includegraphics[width=0.12\linewidth]{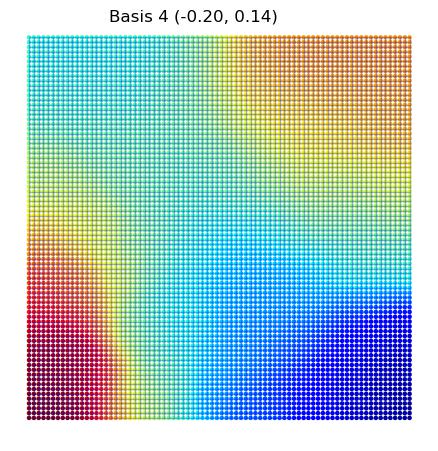}
\includegraphics[width=0.12\linewidth]{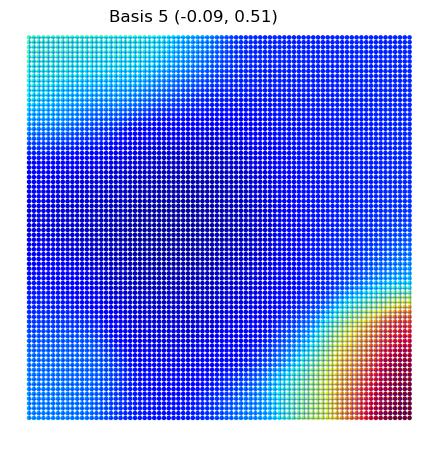}
\includegraphics[width=0.12\linewidth]{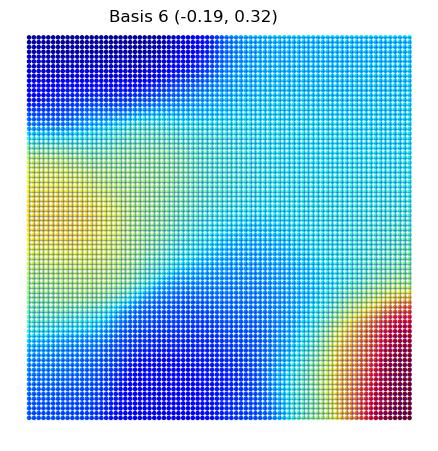}
\includegraphics[width=0.12\linewidth]{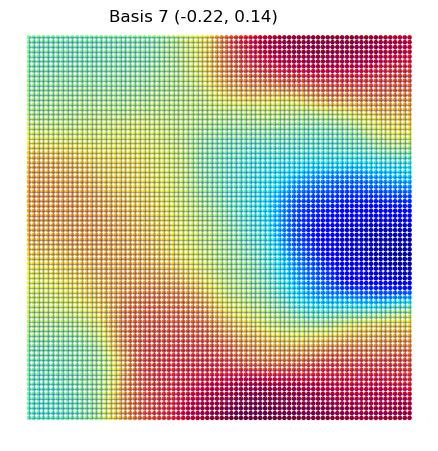}
\includegraphics[width=0.12\linewidth]{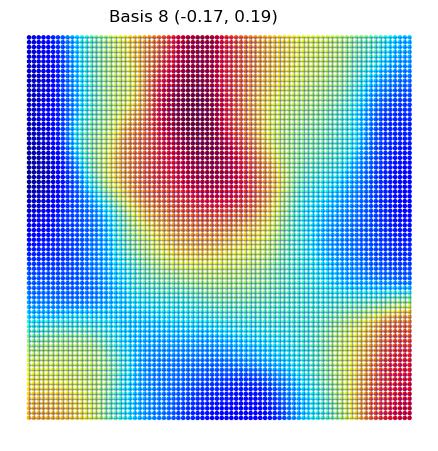}
\caption{\rev{Test-1a: The first eight eigenvectors corresponded to the smallest eigenvalues of $L^{\omega_{ij}} \phi^{\omega_{ij}}_r = \lambda^{\omega_{ij}}_r D^{\omega_{ij}} \phi^{\omega_{ij}}_r$.}}
\end{subfigure}
\begin{subfigure}{1\textwidth}
\centering
\includegraphics[width=0.17\linewidth]{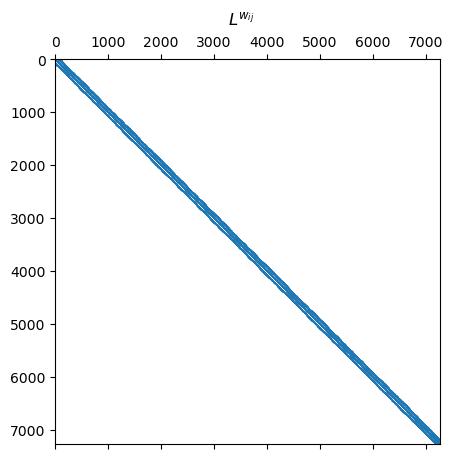}
\includegraphics[width=0.19\linewidth]{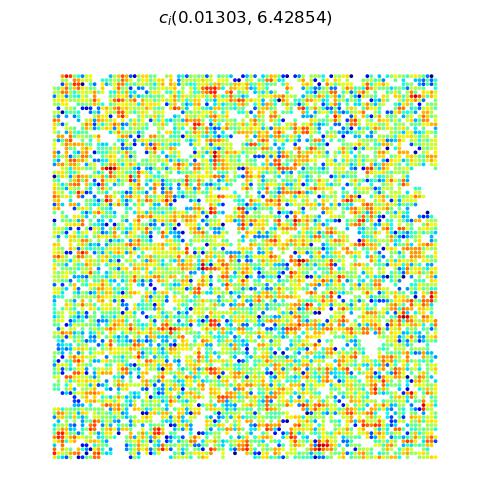}
\includegraphics[width=0.19\linewidth]{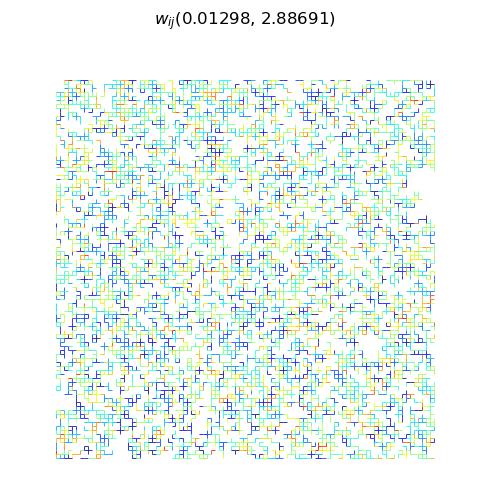}
\includegraphics[width=0.31\linewidth]{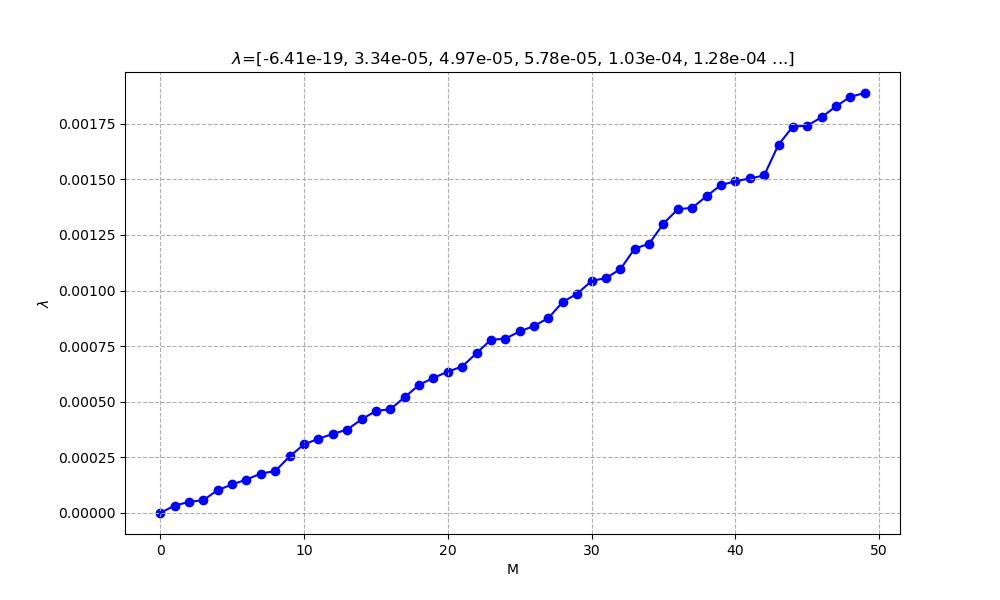}
\caption{\rev{Test-2a: Local matrix $L^{\omega_{ij}} = D^{\omega_{ij}} - W^{\omega_{ij}}$; illustration of  $D^{\omega_{ij}}$ and $W^{\omega_{ij}}$ and plot of the first smallest eigenvalues.}}
\end{subfigure}
\begin{subfigure}{1\textwidth}
\includegraphics[width=0.12\linewidth]{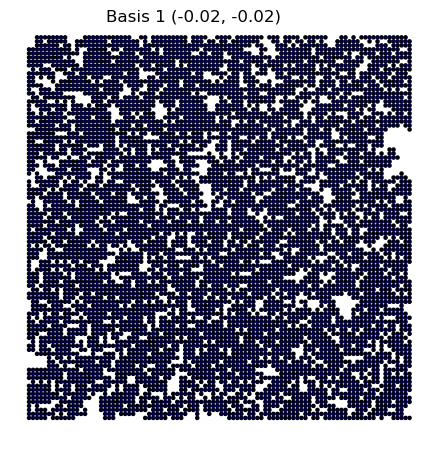}
\includegraphics[width=0.12\linewidth]{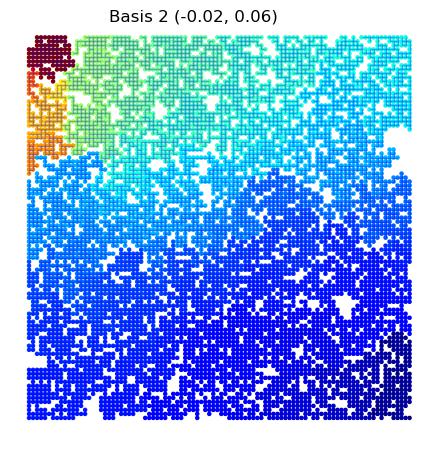}
\includegraphics[width=0.12\linewidth]{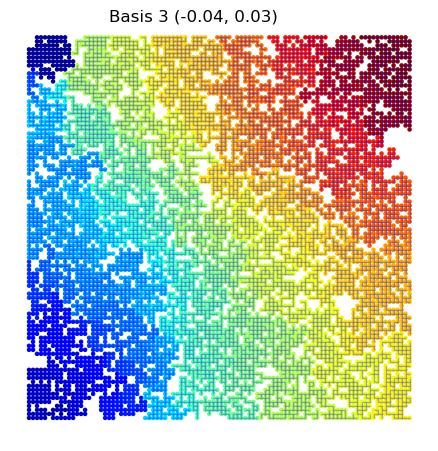}
\includegraphics[width=0.12\linewidth]{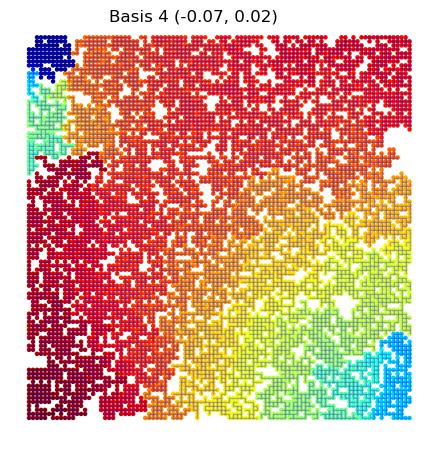}
\includegraphics[width=0.12\linewidth]{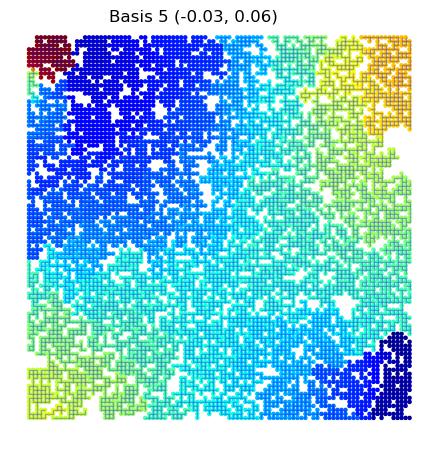}
\includegraphics[width=0.12\linewidth]{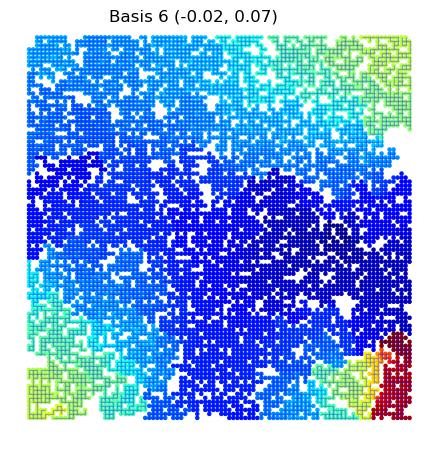}
\includegraphics[width=0.12\linewidth]{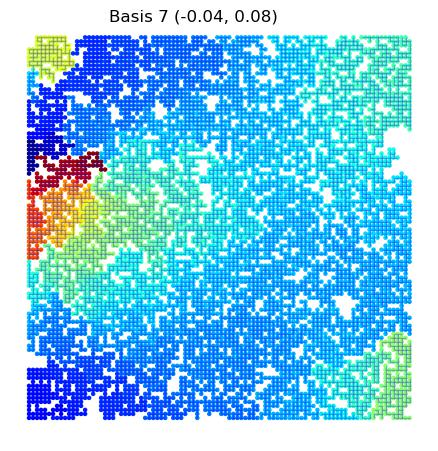}
\includegraphics[width=0.12\linewidth]{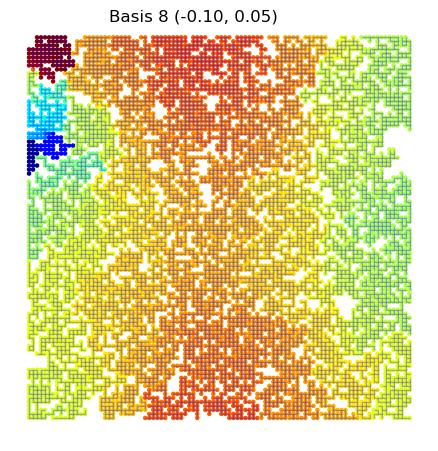}
\caption{\rev{Test-2a: The first eight eigenvectors corresponded to the smallest eigenvalues of $L^{\omega_{ij}} \phi^{\omega_{ij}}_r = \lambda^{\omega_{ij}}_r D^{\omega_{ij}} \phi^{\omega_{ij}}_r$.}}
\end{subfigure}
\begin{subfigure}{1\textwidth}
\centering
\includegraphics[width=0.17\linewidth]{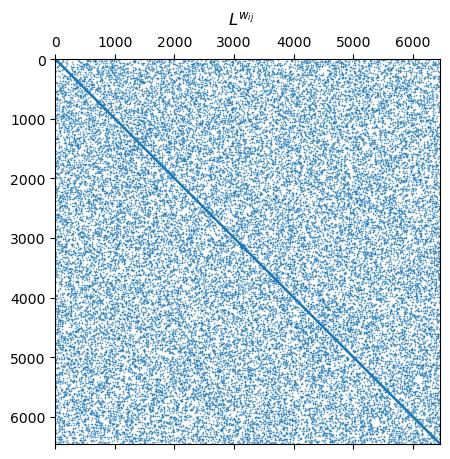}
\includegraphics[width=0.19\linewidth]{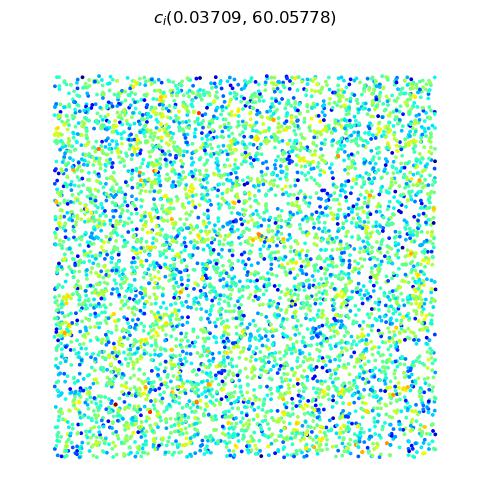}
\includegraphics[width=0.19\linewidth]{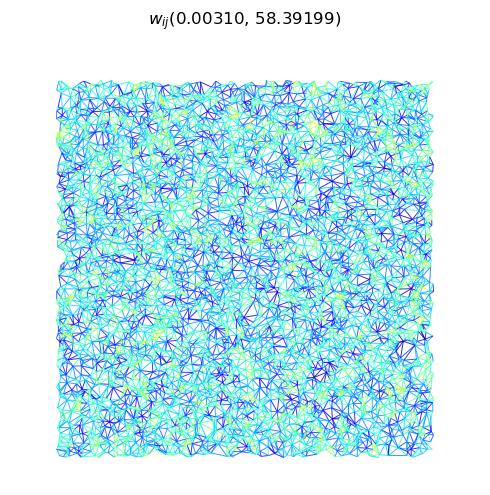}
\includegraphics[width=0.31\linewidth]{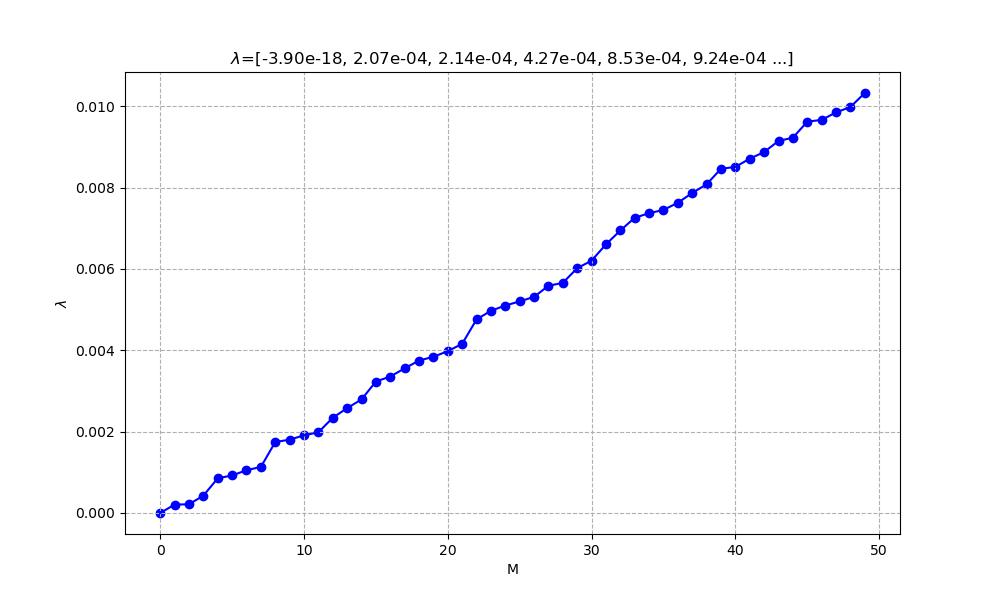}
\caption{\rev{Test-3a: Local matrix $L^{\omega_{ij}} = D^{\omega_{ij}} - W^{\omega_{ij}}$; illustration of  $D^{\omega_{ij}}$ and $W^{\omega_{ij}}$ and plot of the first smallest eigenvalues.}}
\end{subfigure}
\begin{subfigure}{1\textwidth}
\includegraphics[width=0.12\linewidth]{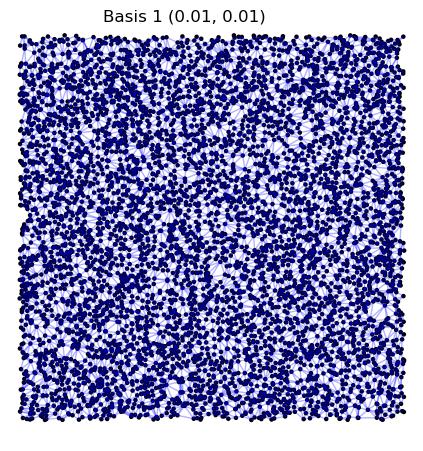}
\includegraphics[width=0.12\linewidth]{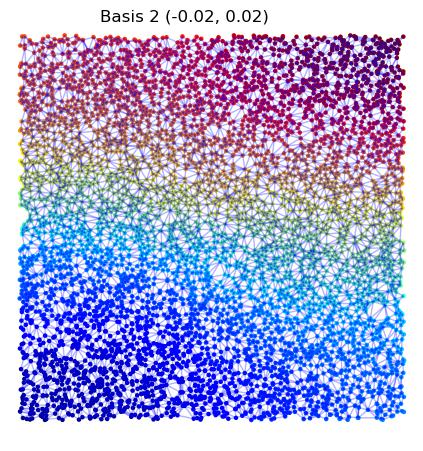}
\includegraphics[width=0.12\linewidth]{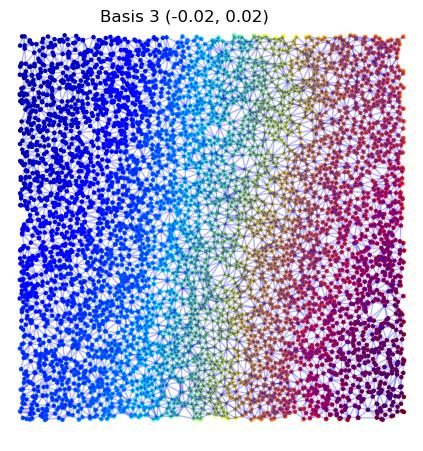}
\includegraphics[width=0.12\linewidth]{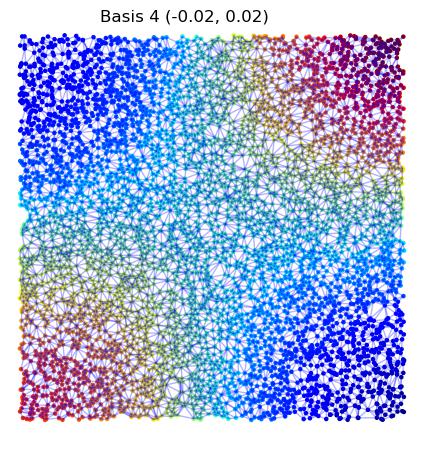}
\includegraphics[width=0.12\linewidth]{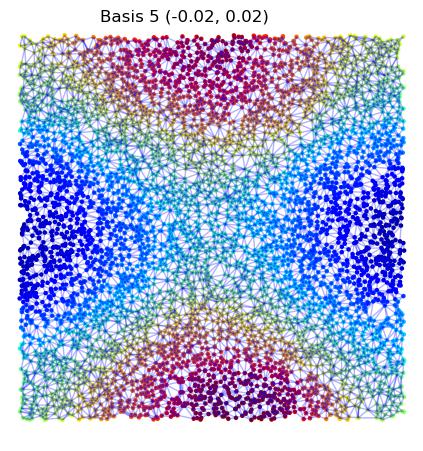}
\includegraphics[width=0.12\linewidth]{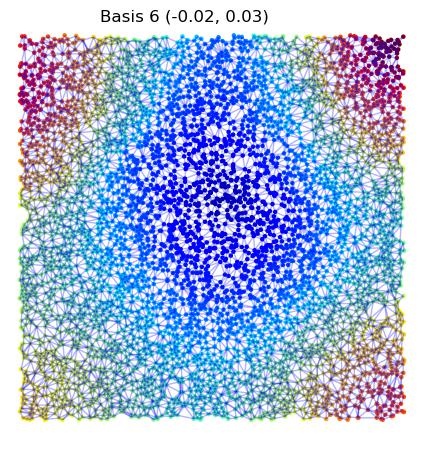}
\includegraphics[width=0.12\linewidth]{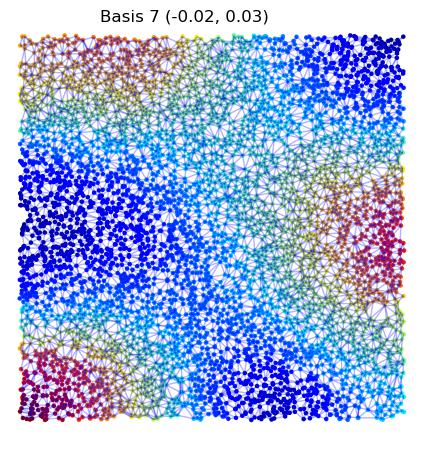}
\includegraphics[width=0.12\linewidth]{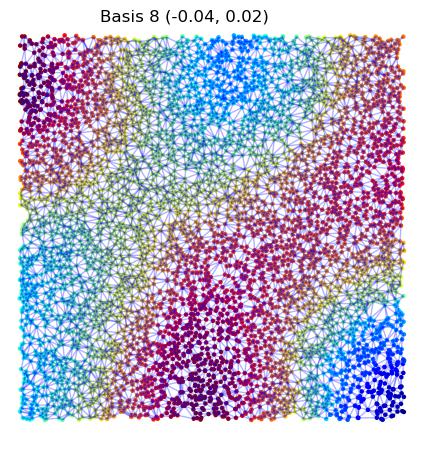}
\caption{\rev{Test-3a: The first eight eigenvectors corresponded to the smallest eigenvalues of $L^{\omega_{ij}} \phi^{\omega_{ij}}_r = \lambda^{\omega_{ij}}_r D^{\omega_{ij}} \phi^{\omega_{ij}}_r$.}}
\end{subfigure}
\caption{\rev{Illustration of the local network spectral properties for Test-1a, Test-2a and  Test-3a.}}
\label{fig:eig-t2d}
\end{figure}

\begin{figure}[h!]
\centering
\begin{subfigure}{1\textwidth}
\centering
\includegraphics[width=0.17\linewidth]{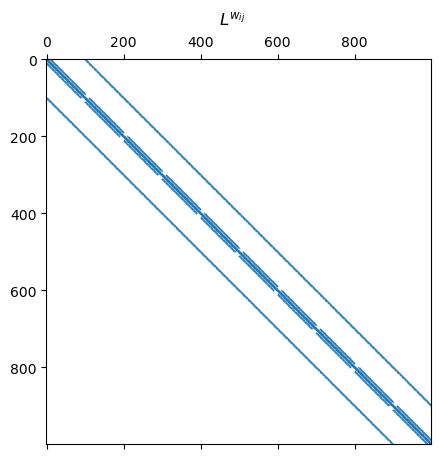}
\includegraphics[width=0.19\linewidth]{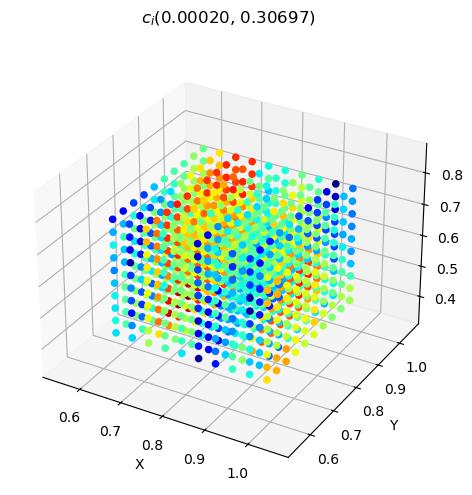}
\includegraphics[width=0.19\linewidth]{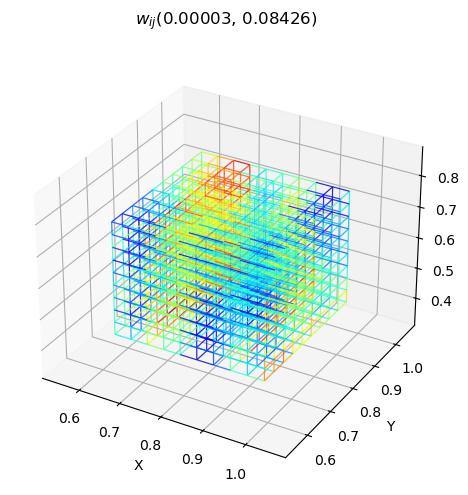}
\includegraphics[width=0.31\linewidth]{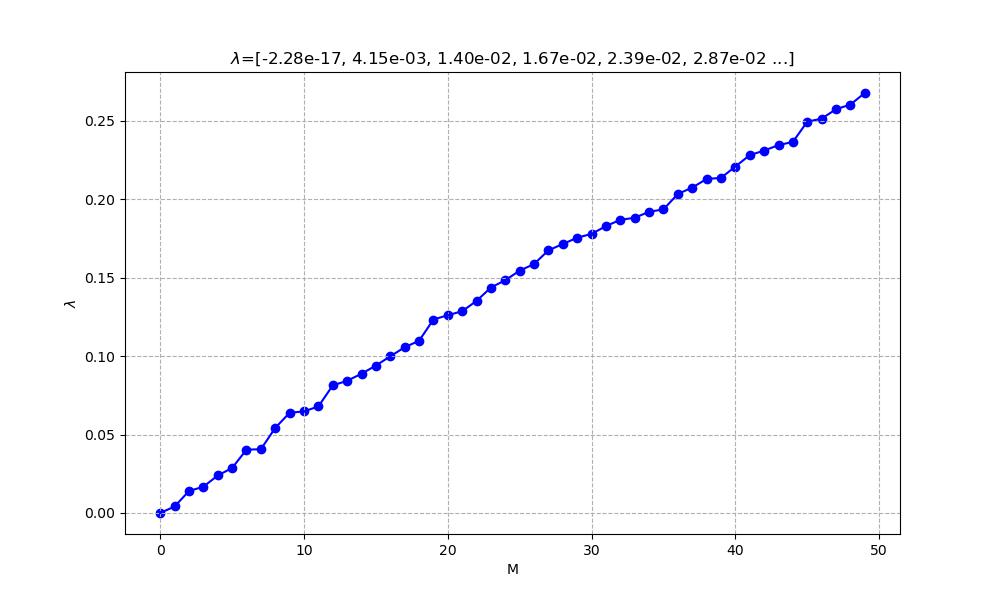}
\caption{\rev{Test-1b: Local matrix $L^{\omega_{ij}} = D^{\omega_{ij}} - W^{\omega_{ij}}$; illustration of  $D^{\omega_{ij}}$ and $W^{\omega_{ij}}$ and plot of the first smallest eigenvalues.}}
\end{subfigure}
\begin{subfigure}{1\textwidth}
\includegraphics[width=0.12\linewidth]{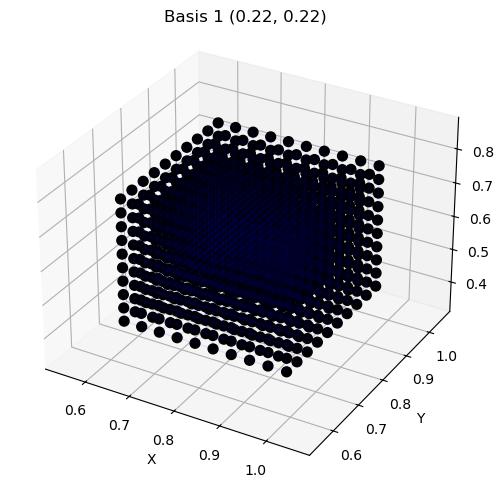}
\includegraphics[width=0.12\linewidth]{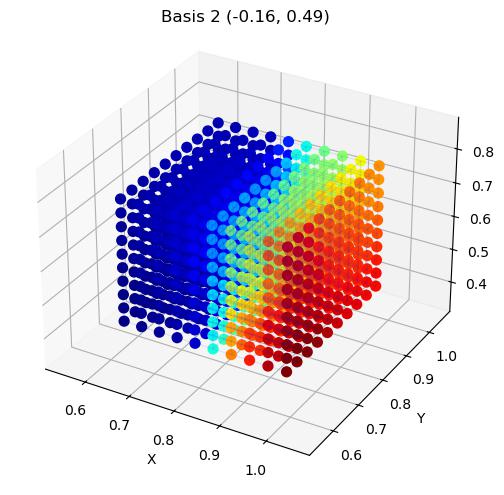}
\includegraphics[width=0.12\linewidth]{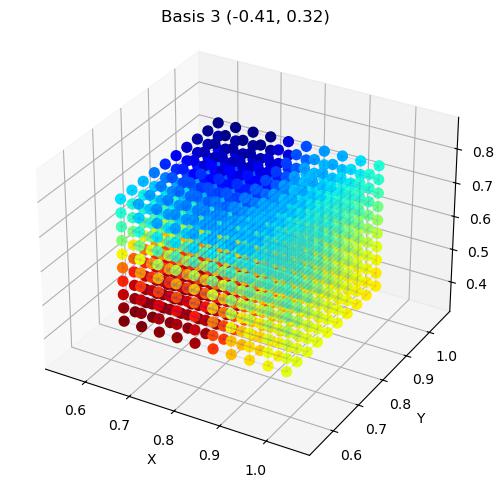}
\includegraphics[width=0.12\linewidth]{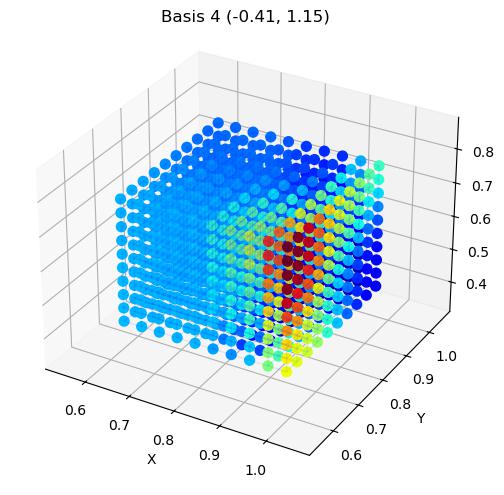}
\includegraphics[width=0.12\linewidth]{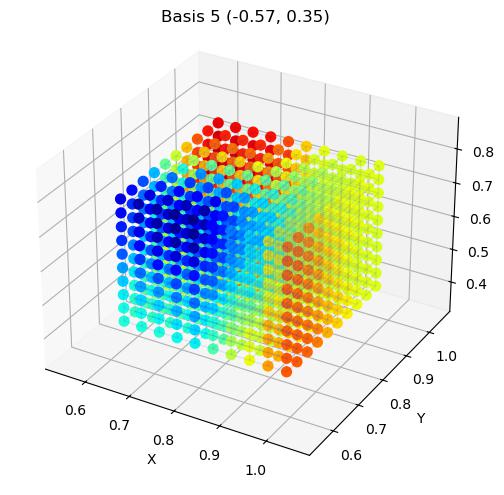}
\includegraphics[width=0.12\linewidth]{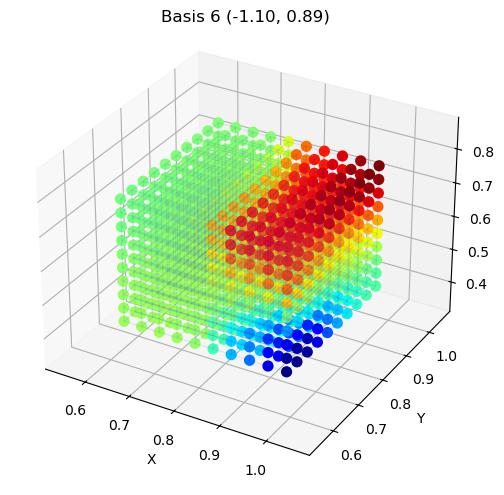}
\includegraphics[width=0.12\linewidth]{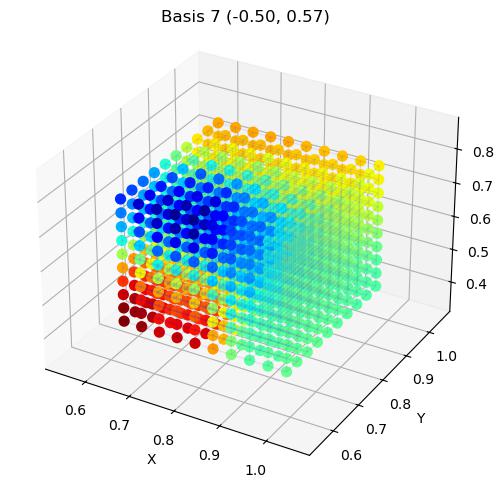}
\includegraphics[width=0.12\linewidth]{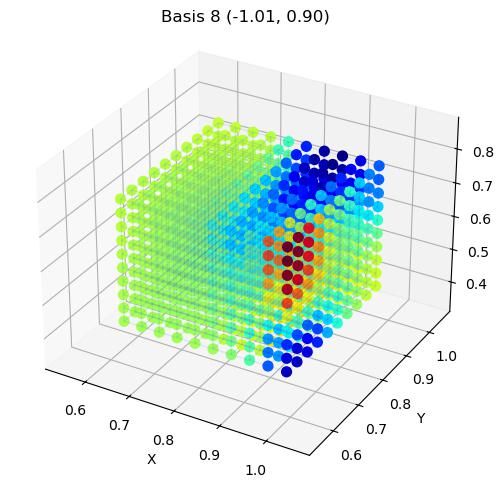}
\caption{\rev{Test-1b: The first eight eigenvectors corresponded to the smallest eigenvalues of $L^{\omega_{ij}} \phi^{\omega_{ij}}_r = \lambda^{\omega_{ij}}_r D^{\omega_{ij}} \phi^{\omega_{ij}}_r$.}}
\end{subfigure}
\begin{subfigure}{1\textwidth}
\centering
\includegraphics[width=0.17\linewidth]{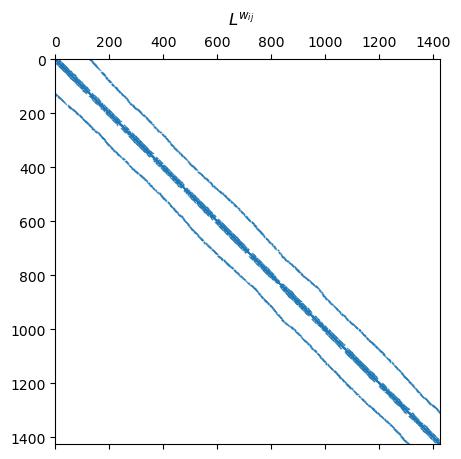}
\includegraphics[width=0.19\linewidth]{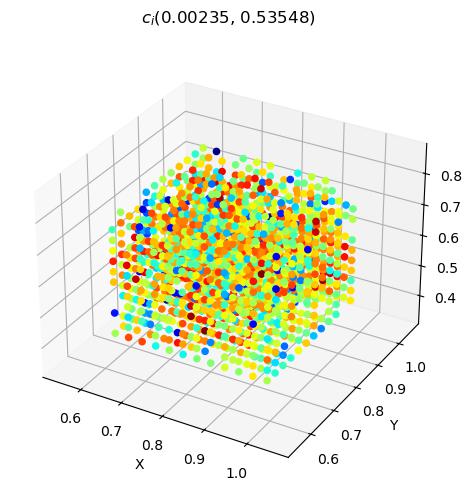}
\includegraphics[width=0.19\linewidth]{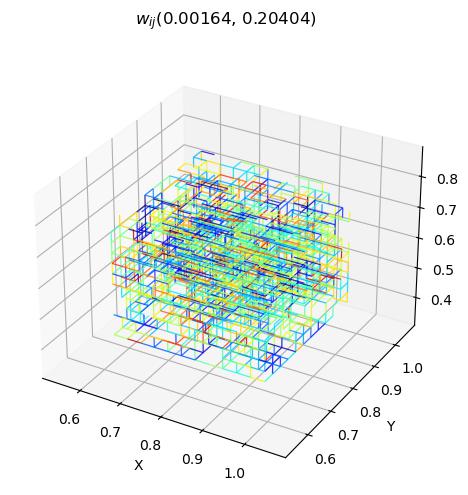}
\includegraphics[width=0.31\linewidth]{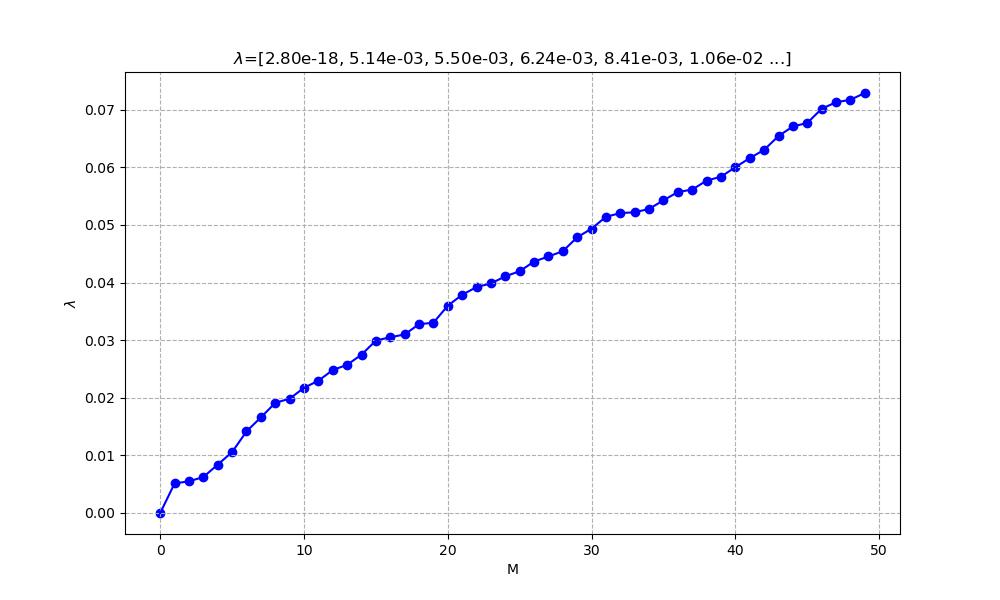}
\caption{\rev{Test-2b: Local matrix $L^{\omega_{ij}} = D^{\omega_{ij}} - W^{\omega_{ij}}$; illustration of  $D^{\omega_{ij}}$ and $W^{\omega_{ij}}$ and plot of the first smallest eigenvalues.}}
\end{subfigure}
\begin{subfigure}{1\textwidth}
\includegraphics[width=0.12\linewidth]{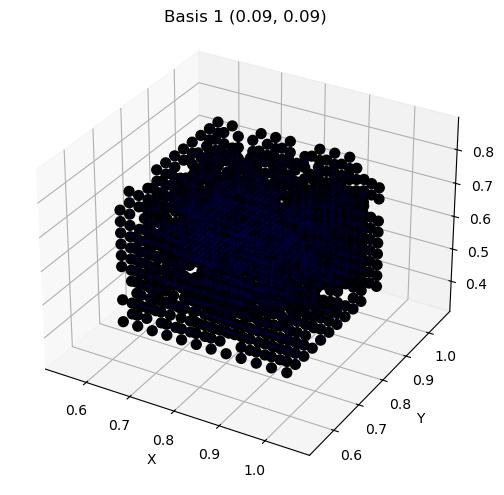}
\includegraphics[width=0.12\linewidth]{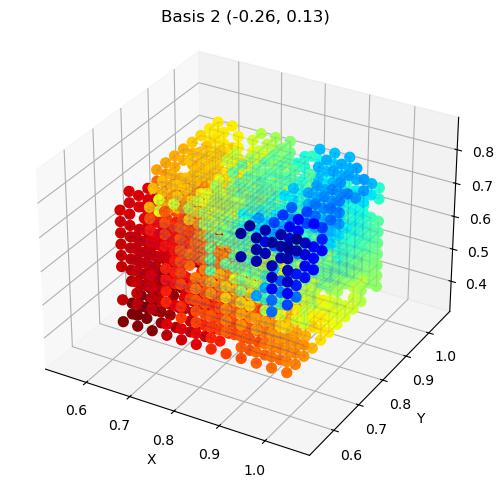}
\includegraphics[width=0.12\linewidth]{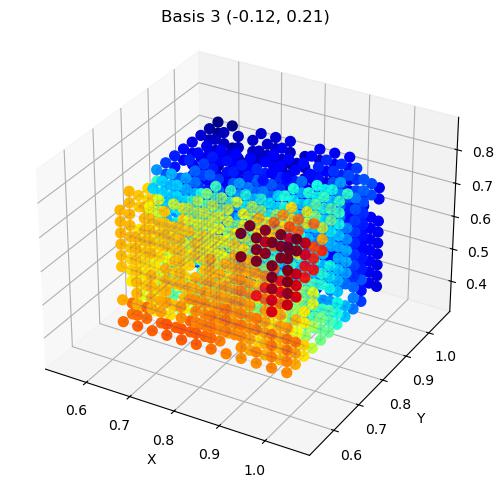}
\includegraphics[width=0.12\linewidth]{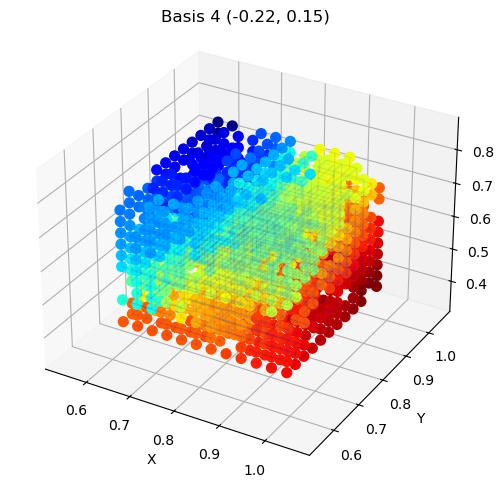}
\includegraphics[width=0.12\linewidth]{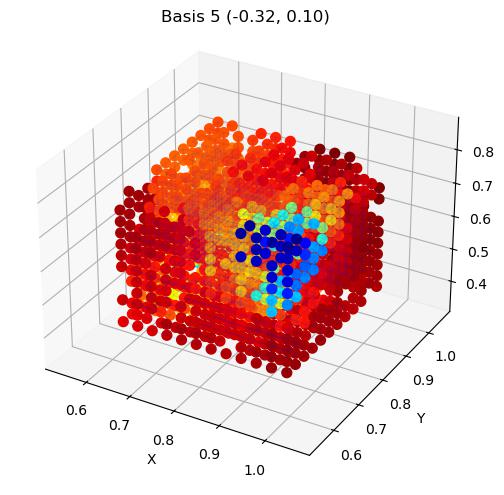}
\includegraphics[width=0.12\linewidth]{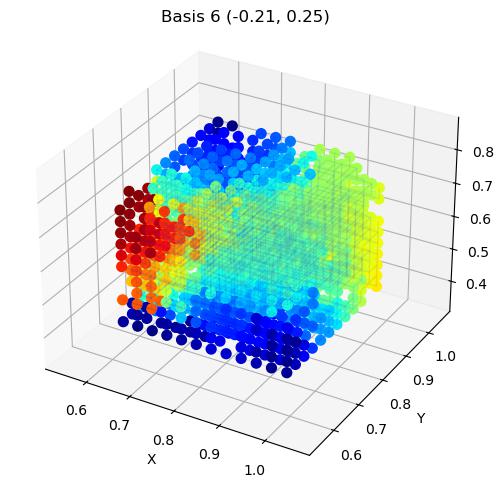}
\includegraphics[width=0.12\linewidth]{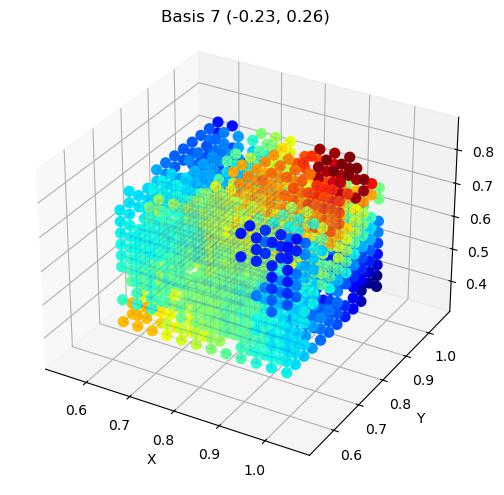}
\includegraphics[width=0.12\linewidth]{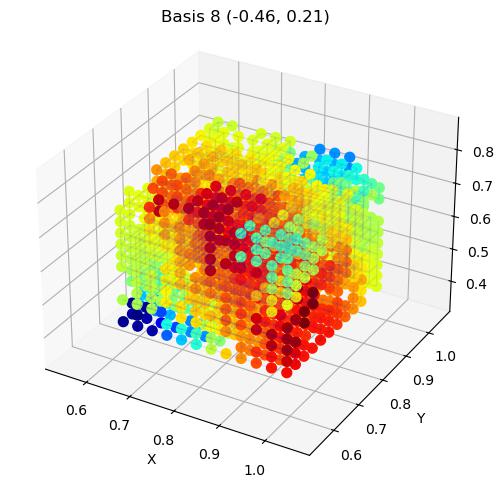}
\caption{\rev{Test-2b: The first eight eigenvectors corresponded to the smallest eigenvalues of $L^{\omega_{ij}} \phi^{\omega_{ij}}_r = \lambda^{\omega_{ij}}_r D^{\omega_{ij}} \phi^{\omega_{ij}}_r$.}}
\end{subfigure}
\begin{subfigure}{1\textwidth}
\centering
\includegraphics[width=0.17\linewidth]{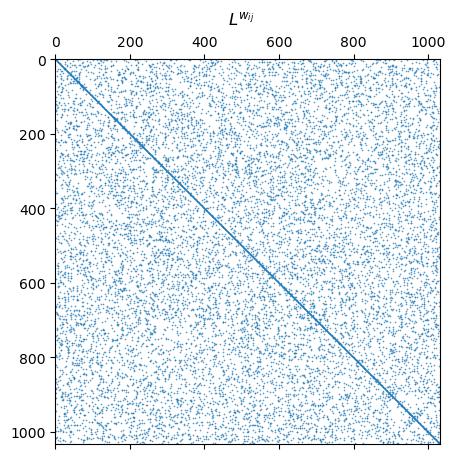}
\includegraphics[width=0.19\linewidth]{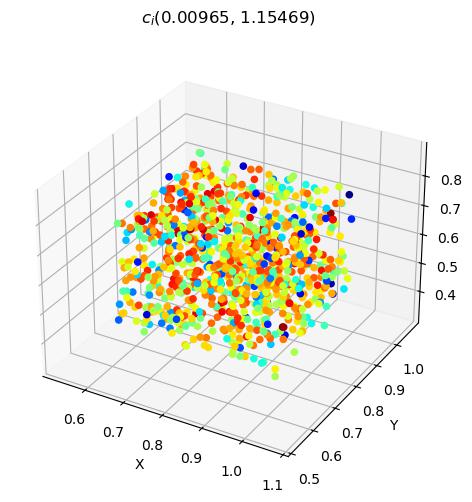}
\includegraphics[width=0.19\linewidth]{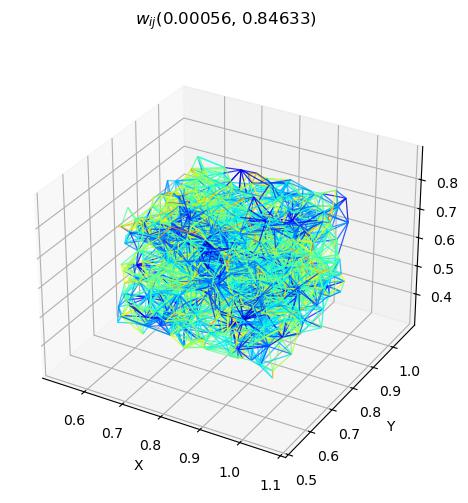}
\includegraphics[width=0.31\linewidth]{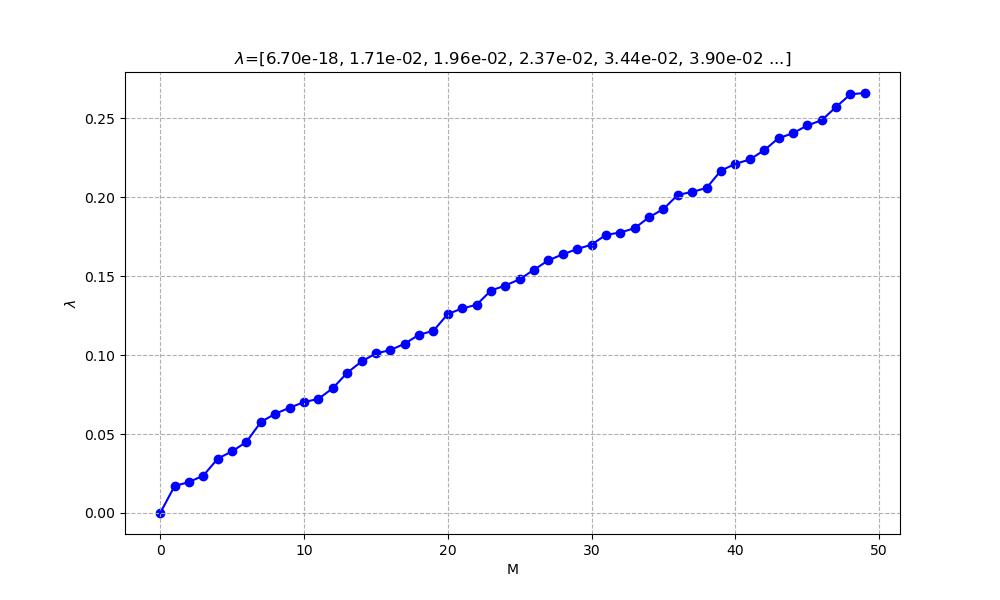}
\caption{\rev{Test-3b: Local matrix $L^{\omega_{ij}} = D^{\omega_{ij}} - W^{\omega_{ij}}$; illustration of  $D^{\omega_{ij}}$ and $W^{\omega_{ij}}$ and plot of the first smallest eigenvalues.}}
\end{subfigure}
\begin{subfigure}{1\textwidth}
\includegraphics[width=0.12\linewidth]{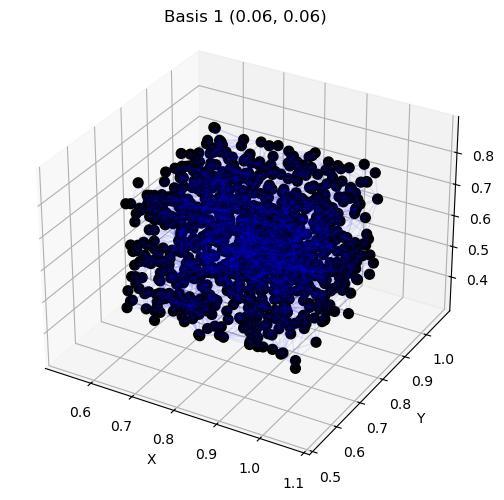}
\includegraphics[width=0.12\linewidth]{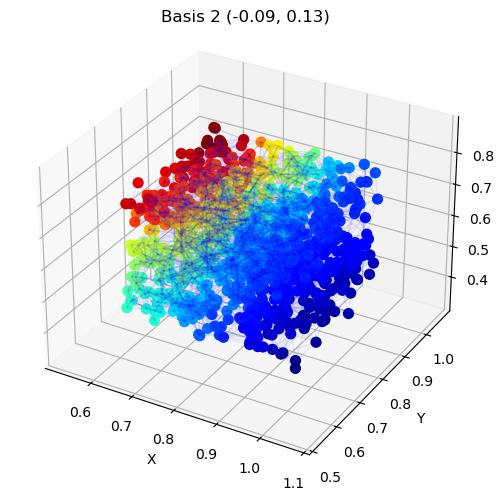}
\includegraphics[width=0.12\linewidth]{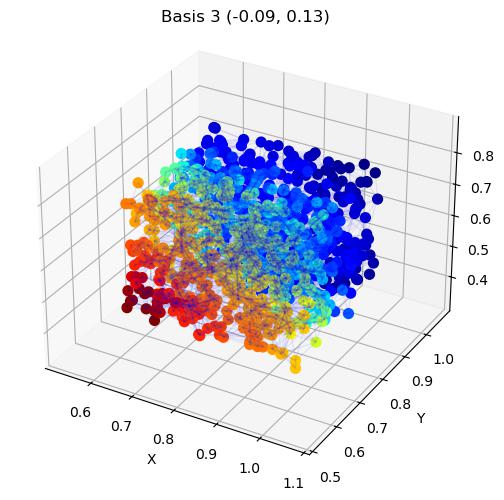}
\includegraphics[width=0.12\linewidth]{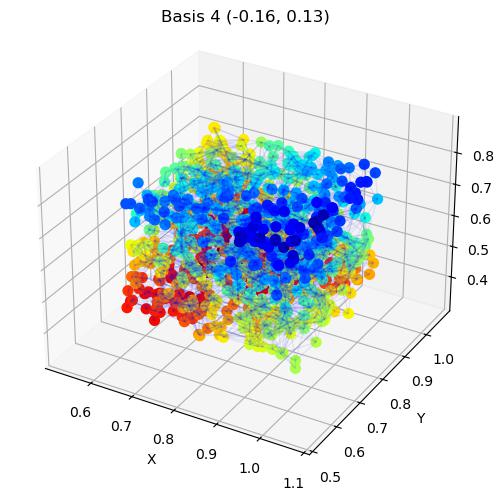}
\includegraphics[width=0.12\linewidth]{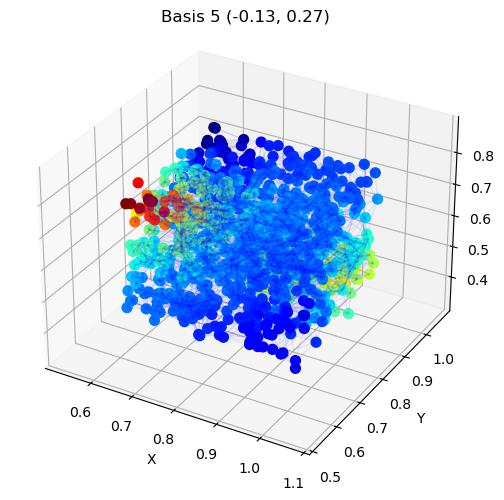}
\includegraphics[width=0.12\linewidth]{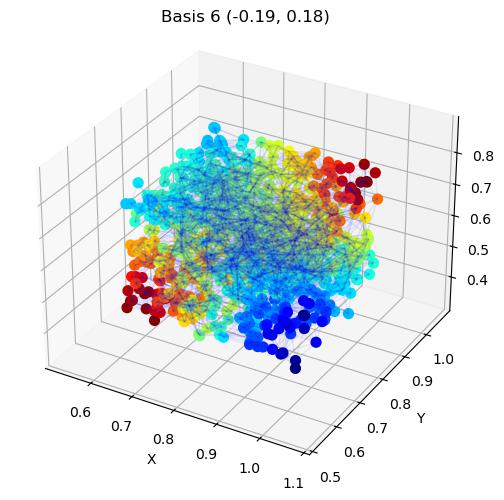}
\includegraphics[width=0.12\linewidth]{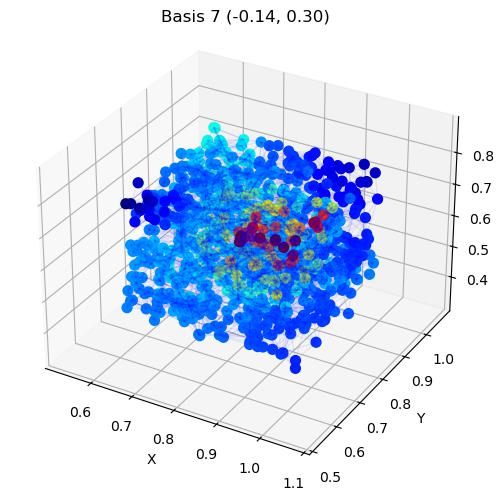}
\includegraphics[width=0.12\linewidth]{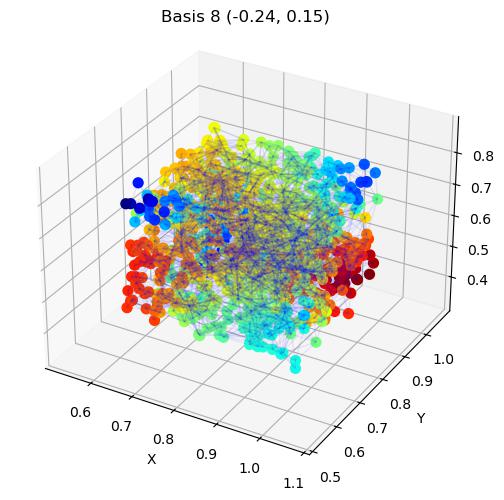}
\caption{\rev{Test-3b: The first eight eigenvectors corresponded to the smallest eigenvalues of $L^{\omega_{ij}} \phi^{\omega_{ij}}_r = \lambda^{\omega_{ij}}_r D^{\omega_{ij}} \phi^{\omega_{ij}}_r$.}}
\end{subfigure}
\caption{\rev{Illustration of the local network spectral properties for Test-1b, Test-2b and  Test-3b.}}
\label{fig:eig-t3d}
\end{figure}

\begin{figure}[h!]
\centering
\begin{subfigure}{1\textwidth}
\centering
\includegraphics[width=0.17\linewidth]{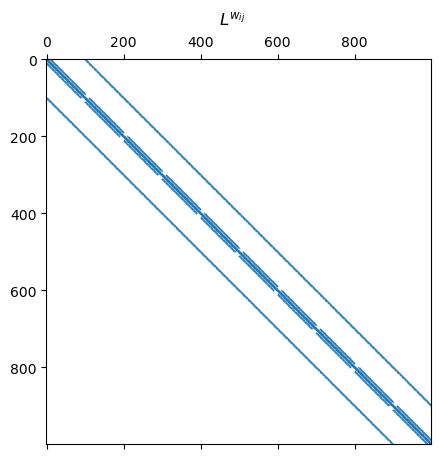}
\includegraphics[width=0.19\linewidth]{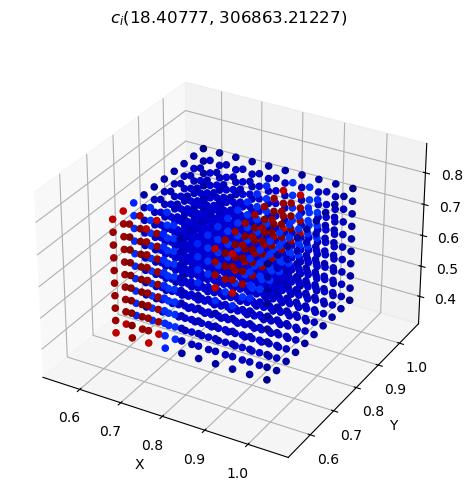}
\includegraphics[width=0.19\linewidth]{pic/t12-d3kk}
\includegraphics[width=0.31\linewidth]{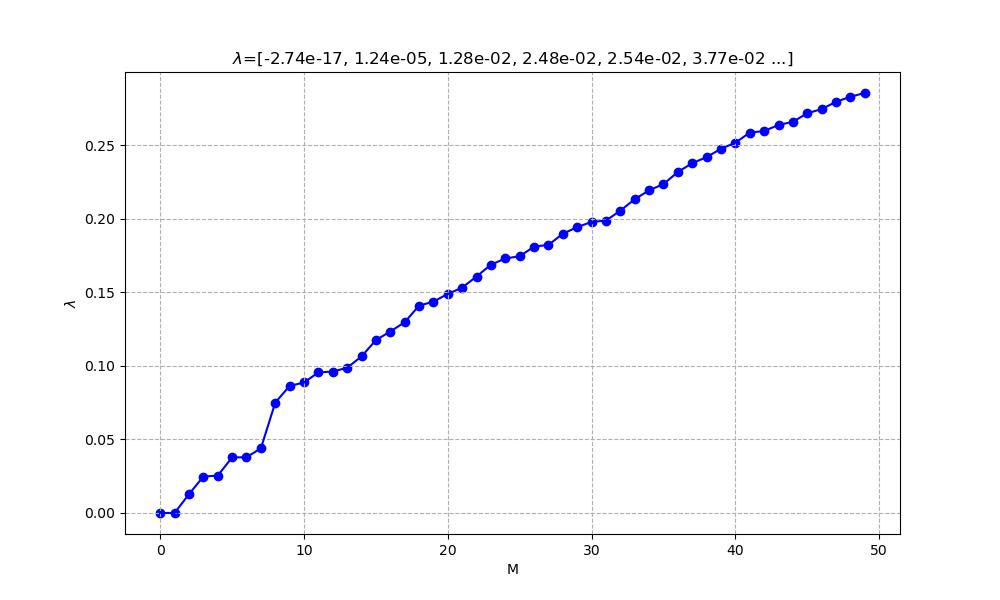}
\caption{\rev{Test-1c: Local matrix $L^{\omega_{ij}} = D^{\omega_{ij}} - W^{\omega_{ij}}$; illustration of  $D^{\omega_{ij}}$ and $W^{\omega_{ij}}$ and plot of the first smallest eigenvalues.}}
\end{subfigure}
\begin{subfigure}{1\textwidth}
\includegraphics[width=0.12\linewidth]{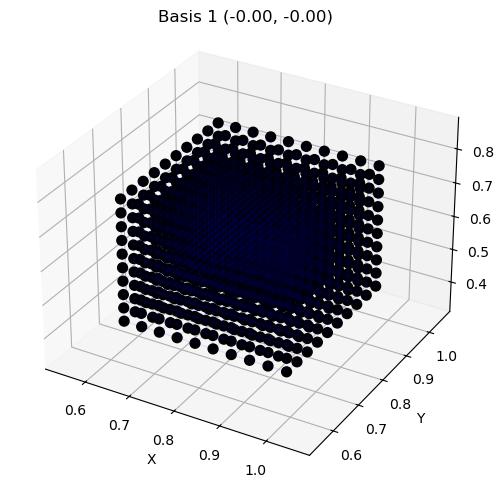}
\includegraphics[width=0.12\linewidth]{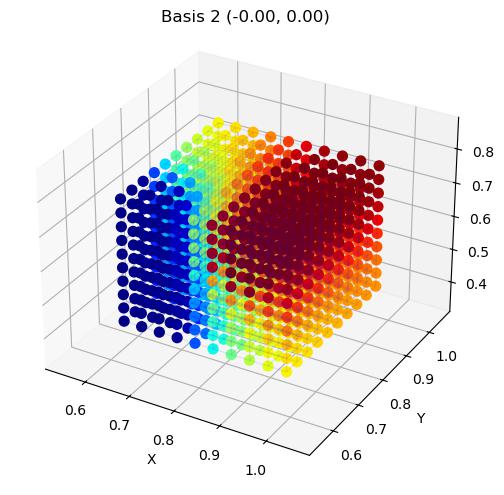}
\includegraphics[width=0.12\linewidth]{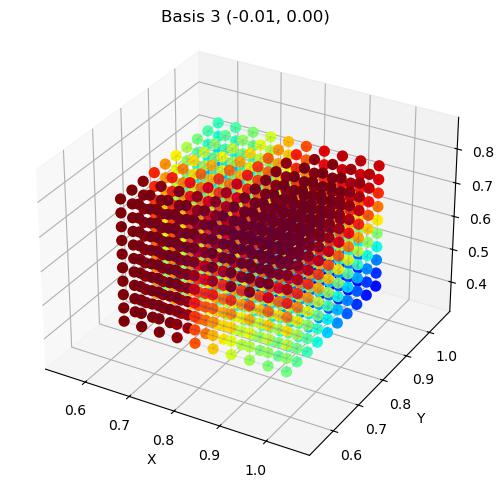}
\includegraphics[width=0.12\linewidth]{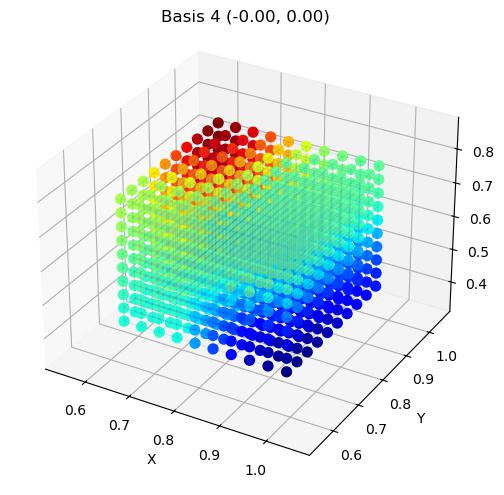}
\includegraphics[width=0.12\linewidth]{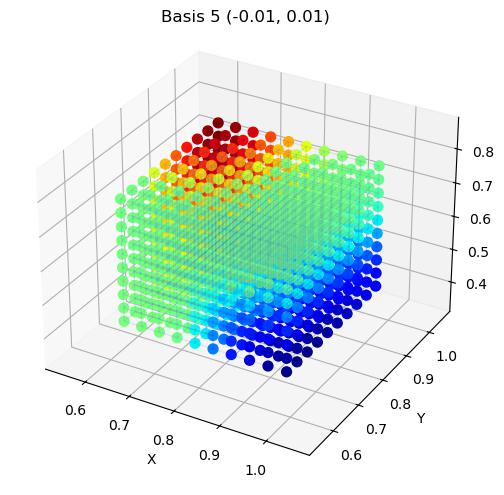}
\includegraphics[width=0.12\linewidth]{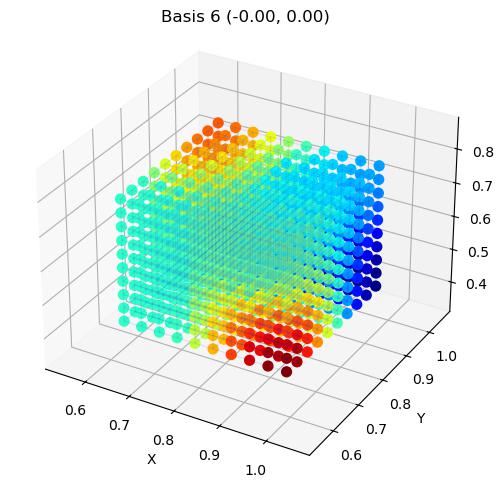}
\includegraphics[width=0.12\linewidth]{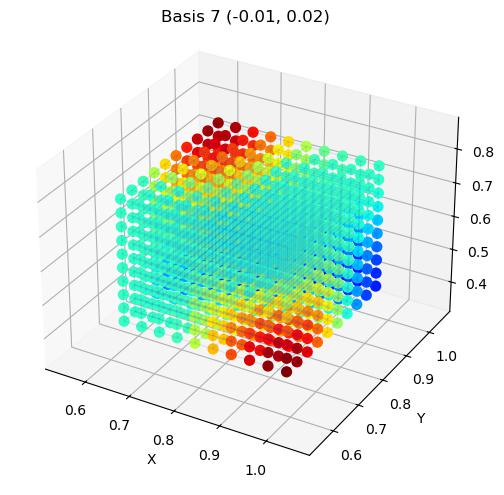}
\includegraphics[width=0.12\linewidth]{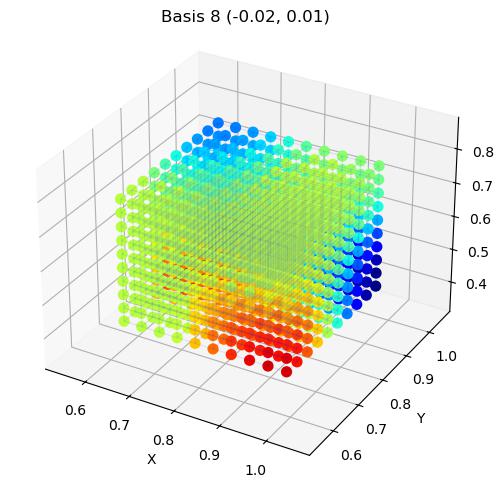}
\caption{\rev{Test-1c: The first eight eigenvectors corresponded to the smallest eigenvalues of $L^{\omega_{ij}} \phi^{\omega_{ij}}_r = \lambda^{\omega_{ij}}_r D^{\omega_{ij}} \phi^{\omega_{ij}}_r$.}}
\end{subfigure}
\begin{subfigure}{1\textwidth}
\centering
\includegraphics[width=0.17\linewidth]{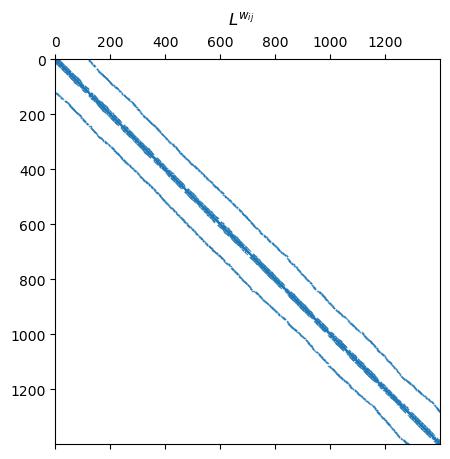}
\includegraphics[width=0.19\linewidth]{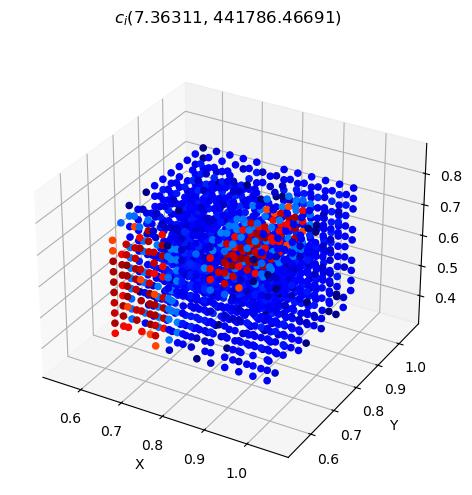}
\includegraphics[width=0.19\linewidth]{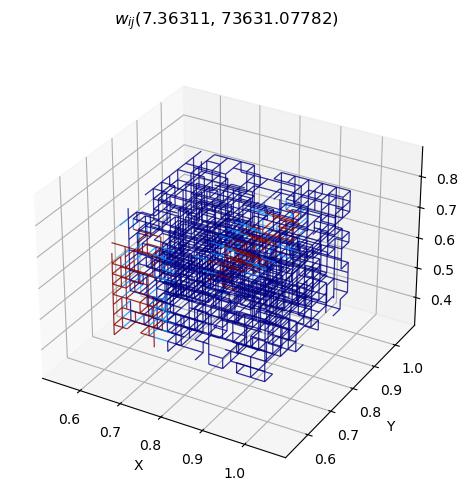}
\includegraphics[width=0.31\linewidth]{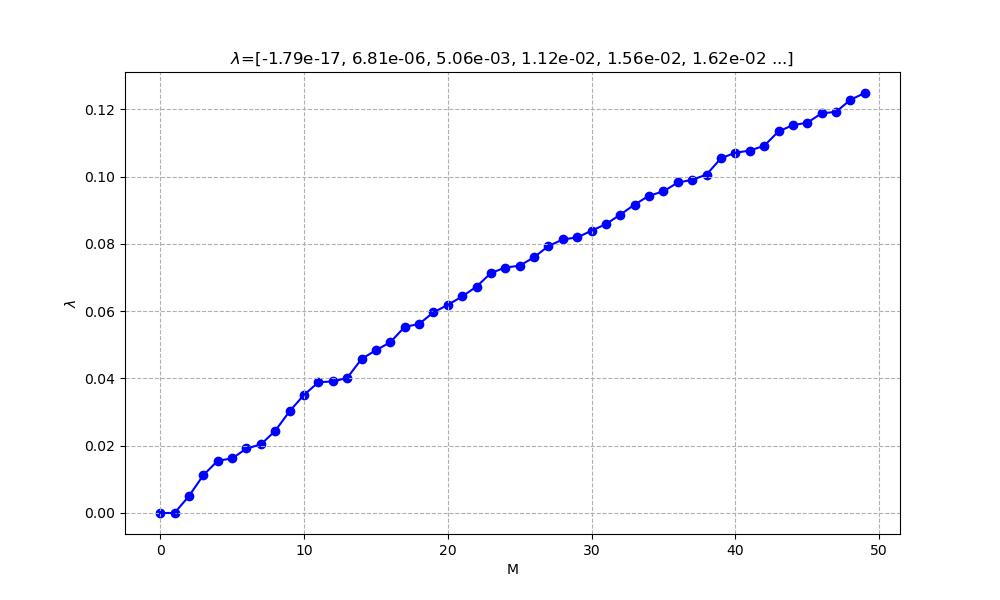}
\caption{\rev{Test-2c: Local matrix $L^{\omega_{ij}} = D^{\omega_{ij}} - W^{\omega_{ij}}$; illustration of  $D^{\omega_{ij}}$ and $W^{\omega_{ij}}$ and plot of the first smallest eigenvalues.}}
\end{subfigure}
\begin{subfigure}{1\textwidth}
\includegraphics[width=0.12\linewidth]{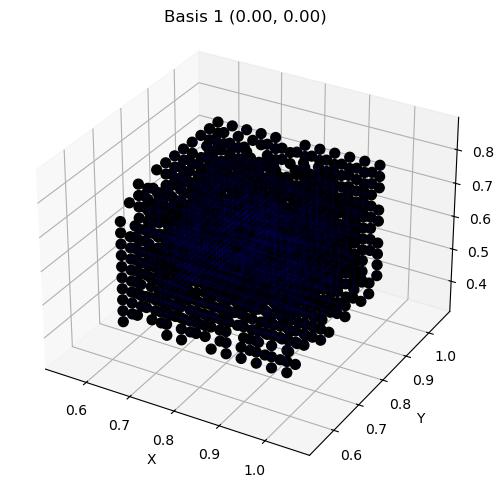}
\includegraphics[width=0.12\linewidth]{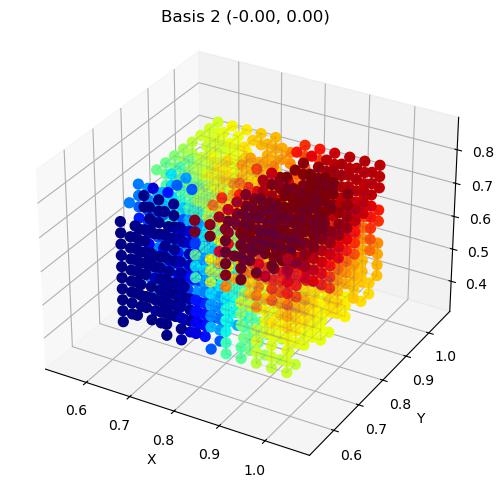}
\includegraphics[width=0.12\linewidth]{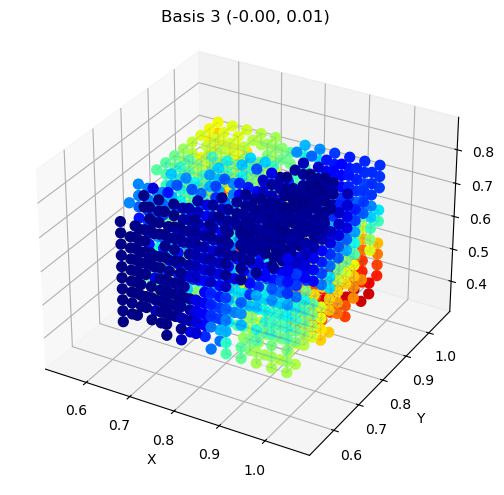}
\includegraphics[width=0.12\linewidth]{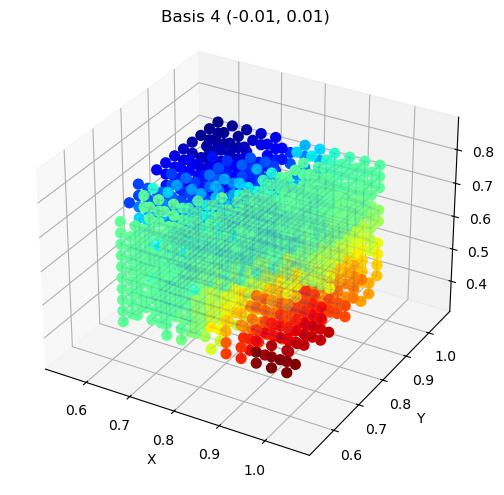}
\includegraphics[width=0.12\linewidth]{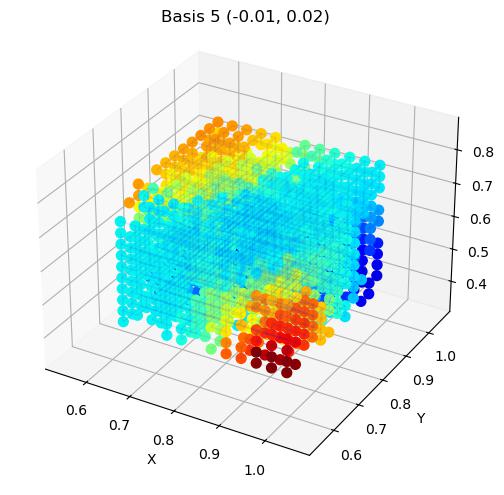}
\includegraphics[width=0.12\linewidth]{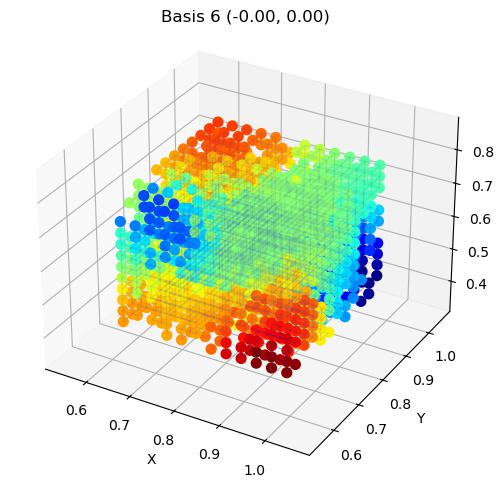}
\includegraphics[width=0.12\linewidth]{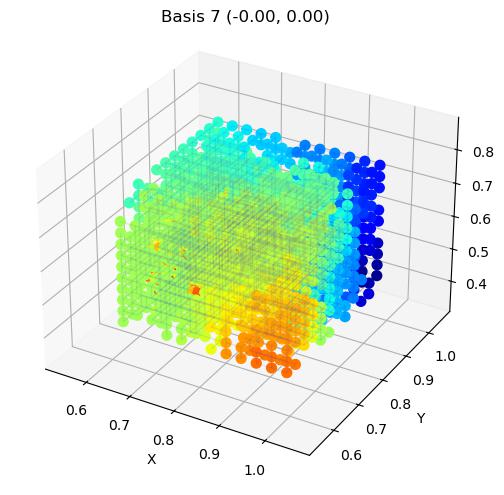}
\includegraphics[width=0.12\linewidth]{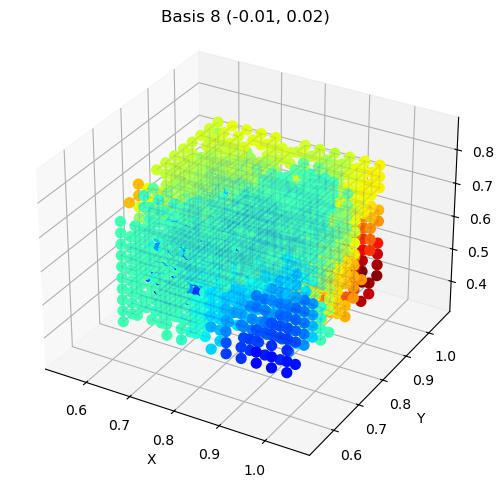}
\caption{\rev{Test-2c: The first eight eigenvectors corresponded to the smallest eigenvalues of  $L^{\omega_{ij}} \phi^{\omega_{ij}}_r = \lambda^{\omega_{ij}}_r D^{\omega_{ij}} \phi^{\omega_{ij}}_r$.}}
\end{subfigure}
\begin{subfigure}{1\textwidth}
\centering
\includegraphics[width=0.17\linewidth]{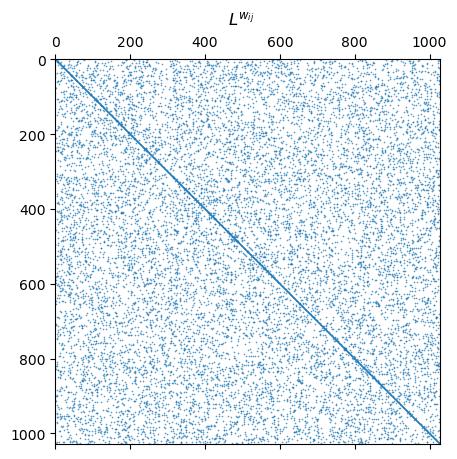}
\includegraphics[width=0.19\linewidth]{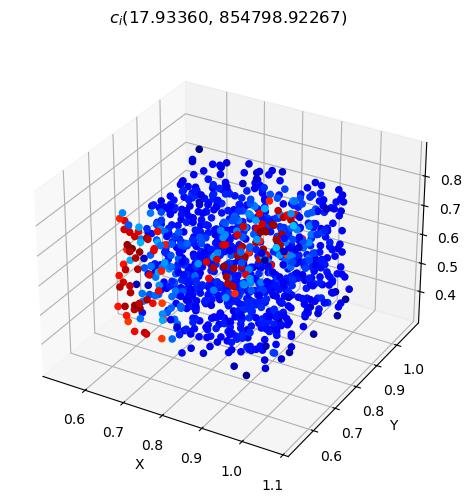}
\includegraphics[width=0.19\linewidth]{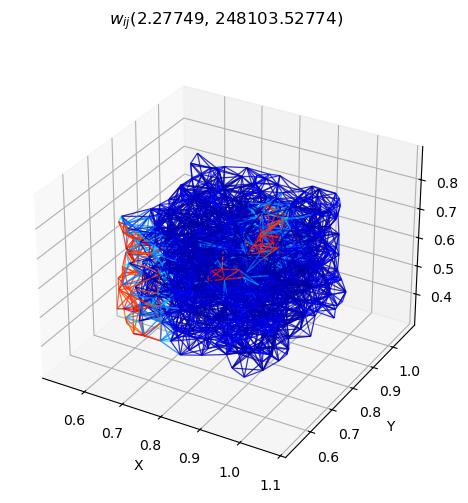}
\includegraphics[width=0.31\linewidth]{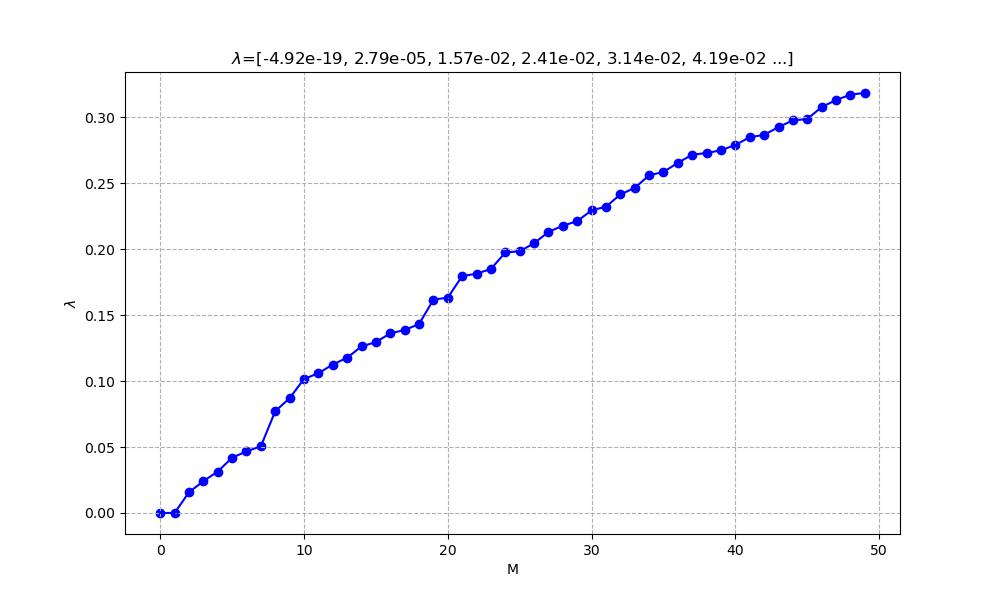}
\caption{\rev{Test-3c: Local matrix $L^{\omega_{ij}} = D^{\omega_{ij}} - W^{\omega_{ij}}$; illustration of  $D^{\omega_{ij}}$ and $W^{\omega_{ij}}$ and plot of the first smallest eigenvalues.}}
\end{subfigure}
\begin{subfigure}{1\textwidth}
\includegraphics[width=0.12\linewidth]{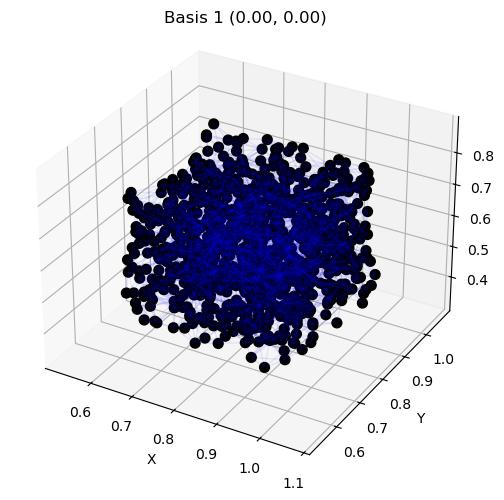}
\includegraphics[width=0.12\linewidth]{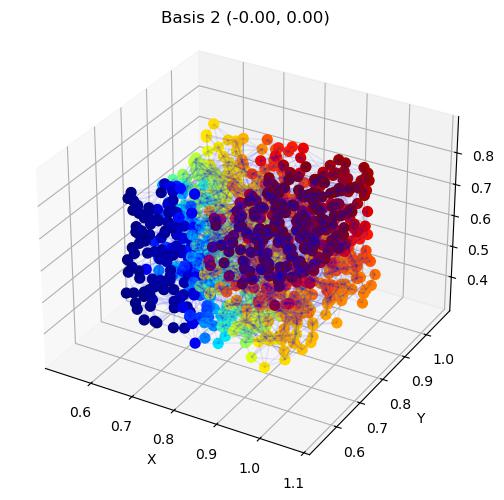}
\includegraphics[width=0.12\linewidth]{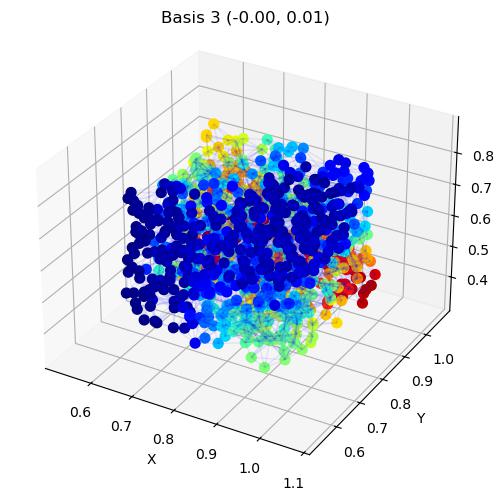}
\includegraphics[width=0.12\linewidth]{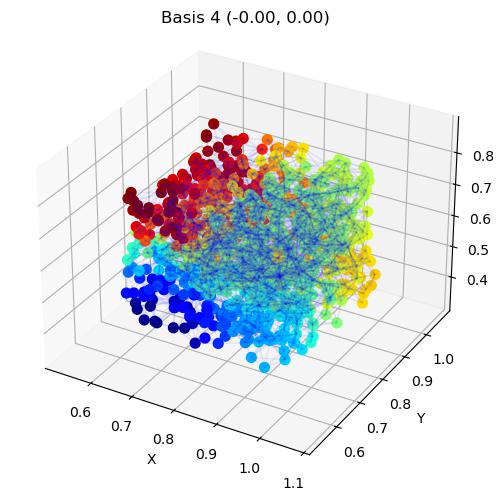}
\includegraphics[width=0.12\linewidth]{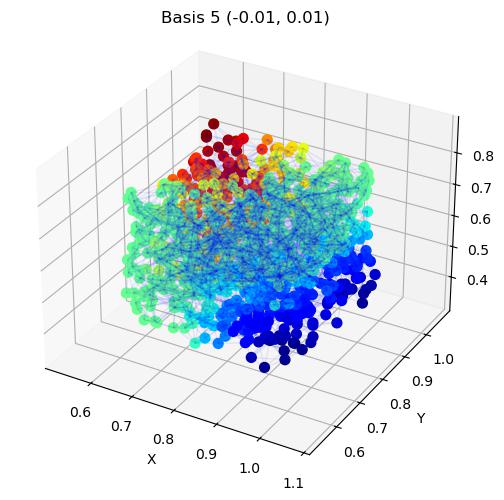}
\includegraphics[width=0.12\linewidth]{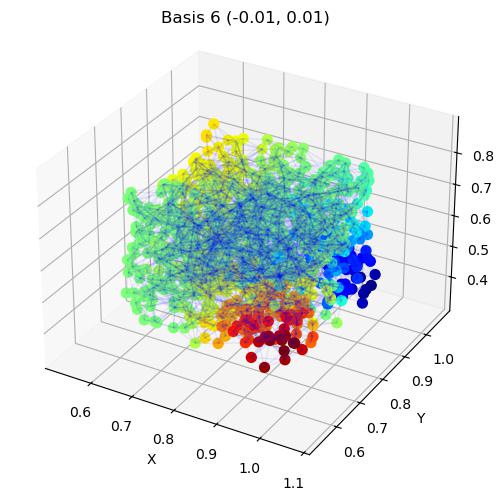}
\includegraphics[width=0.12\linewidth]{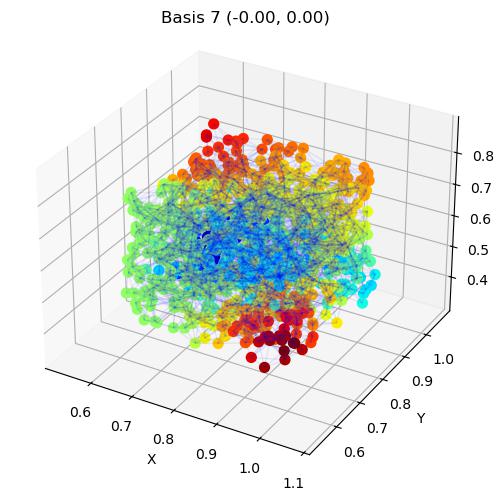}
\includegraphics[width=0.12\linewidth]{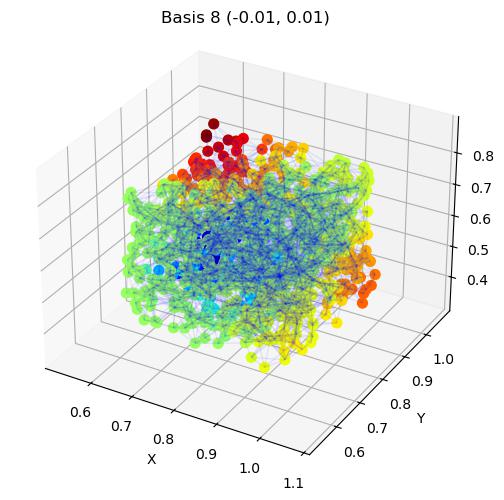}
\caption{\rev{Test-3c: The first eight eigenvectors corresponded to the smallest eigenvalues of $L^{\omega_{ij}} \phi^{\omega_{ij}}_r = \lambda^{\omega_{ij}}_r D^{\omega_{ij}} \phi^{\omega_{ij}}_r$.}}
\end{subfigure}
\caption{\rev{Illustration of the local network spectral properties for Test-1c, Test-2c and  Test-3c.}}
\label{fig:eig-t3d2}
\end{figure}

\subsection{\rev{Illustration of multiscale basis functions for networks}}

\rev{Next, we consider the spectral properties of the local problems. The multiscale basis functions are based on the eigenvectors $\phi^{\omega_{ij}}_r$ of generalized eigenvalue problem \eqref{sp1} and constructed as a result of multiplication of eigenvectors to the linear partition of unity functions $\chi_i$ in local subnetwork $G^{\omega_{ij}}$, $\psi^{\omega_{ij}}_r = \chi_i \phi^{\omega_{ij}}_r$. }

\rev{
In Figures \ref{fig:eig-t2d}, \ref{fig:eig-t3d} and \ref{fig:eig-t3d2}, we represent local problem properties and eigenvectors corresponding to the first eight smallest eigenvalues for each network. 
To illustrate the approximation properties of eigenvectors, we start with a plot of the local matrix. From the first plots of Figures \ref{fig:eig-t2d}, \ref{fig:eig-t3d} and \ref{fig:eig-t3d2}, we see that the structure of the local matrix  $L^{\omega_{ij}} = D^{\omega_{ij}} - W^{\omega_{ij}}$ highly depends on the network structure and especially connectivity. For the unstructured networks (Network-3a and Network-3b), we observe a denser matrix than for other structured networks (Network-1a and Network-1b, Network-2a and Network-2b). We note that the sparsity of the matrix is associated with the graph connectivity patterns and doesn't depend on assigned values of coefficients on networks. The network coefficients give heterogeneous weight (coefficients). To show the weights of connections and values of the corresponding degree matrix $D^{\omega_{ij}}$, we plot them on the local network. The degree matrix $D^{\omega_{ij}}$ is a diagonal matrix and can be represented as the coefficients of the nodes, and the weight matrix $W^{\omega_{ij}}$ can be represented as the coefficients of the throats. From the presented figures, we observe highly heterogeneous values in $D^{\omega_{ij}}$ and  $W^{\omega_{ij}}$ that correspond to local heterogeneity. The plot of the degree and weight matrix in the figures are given with a corresponding range of values. 

Next, we look to the distribution of the eigenvalues of generalized eigenvalue problem  $L^{\omega_{ij}} \phi^{\omega_{ij}}_r = \lambda^{\omega_{ij}}_r D^{\omega_{ij}} \phi^{\omega_{ij}}_r$. We have sorted eigenvalues in ascending order, plotted the first 50 values, and shown the first six values in the title. The first eight eigenvectors are plotted to illustrate their ability to understand a given local heterogeneity and connectivity. For the tests with heterogeneous properties (Test-1a/1b/2a/2b/3a/3b), we have a smooth decreasing behavior of the eigenvalues due to the small-scale oscillations of the coefficients. For the tests with high-contrast properties (Test-1c/2c/3c), we see a jump in the eigenvalues after the second eigenvector, which shows that we should include corresponding eigenvectors in the multiscale space. This behavior was observed in \cite{efendiev2011multiscale, efendiev2013generalized} and can be connected to the multicontinuum theory and adaptive choice of necessary number of basis functions in local domains. 
}

\subsection{\rev{Multiscale solver for networks}}

The presented multiscale method is applied for each network to investigate accuracy for different number numbers of basis functions.  
\rev{Additionally, we present results for a classic upscaling method. In the upscaling method, we construct a continuums scale model on a coarse grid by using flux averaging. The detaled explanation of the upscaling method presented in Appendix \ref{appendx}. }

To compare the accuracy of the presented \rev{upscaling and  multiscale methods}, we use the relative $L_2$ error in percentage.  
The errors are calculated using the following formula on the fine grid: 
\rev{
\begin{equation}
e_1^h = \frac{||u - u_{ms}||}{||u||} \times 100 \%,
\quad
e_2^h = \frac{||u - u_{ms}||_L}{||u||_L} \times 100 \%,
\quad
e_1^H = \frac{||\bar{u} - \bar{u}_{ms}||}{||\bar{u}||} \times 100 \%,
\end{equation}
where $||u||^2 = u^T u$ and $||u||^2_L = u^T L u$ are the L$_2$ and energy norms, $u$ and $u_{ms}$ are the reference solution and multiscale solution at the final time,  $\bar{u}$ and $\bar{u}_{ms}$ are the coarse grid reference solution and multiscale solution at the final time with $\bar{u} = \int_{K_i} u dx/|K_i|$ ($K_i$ is the coarse grid cell). }
We use the corresponding fine-grid solution for each test problem as a reference solution.

\begin{figure}[h!]
\centering
\begin{subfigure}{1\textwidth}
\centering
\includegraphics[width=0.24\linewidth]{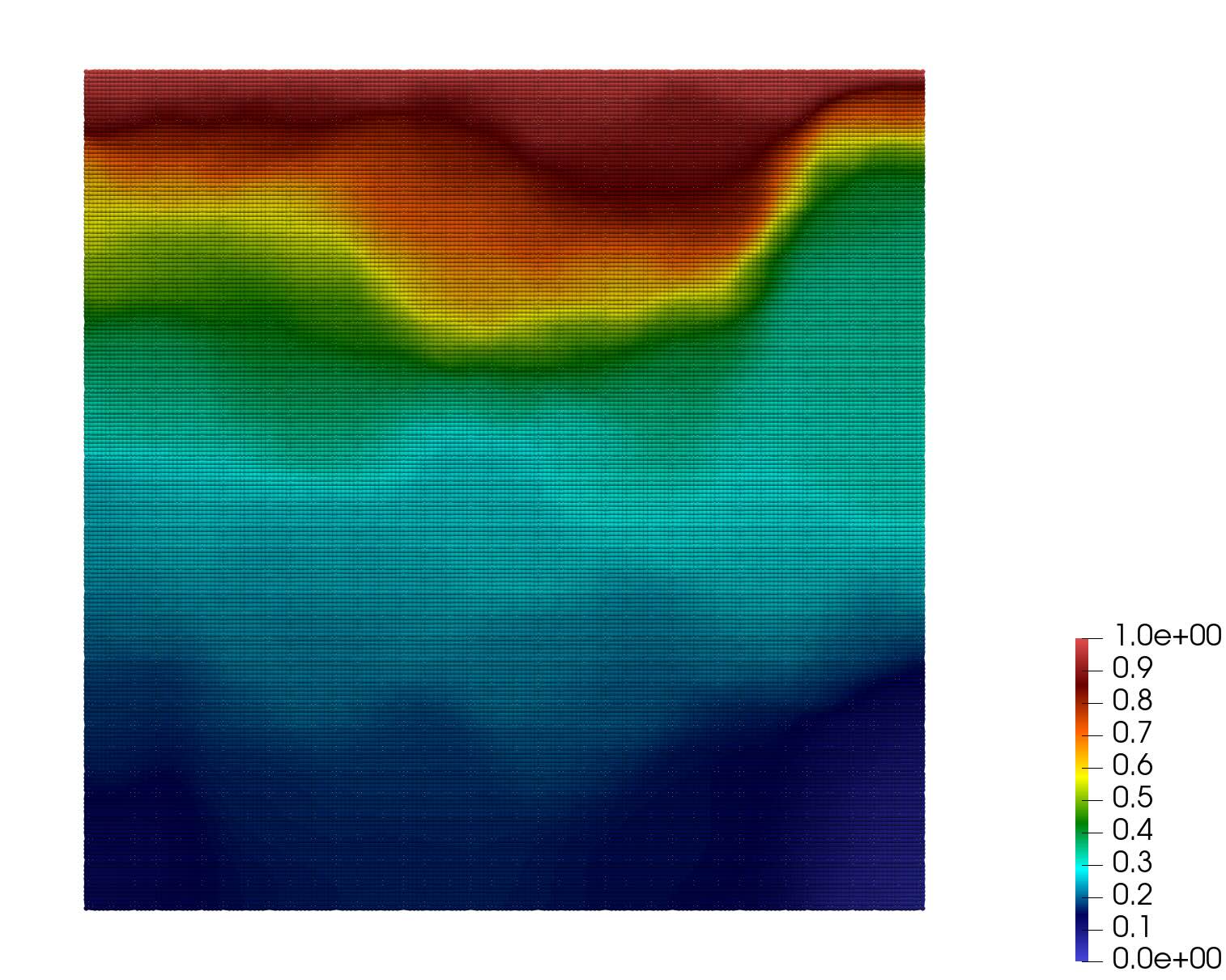}
\includegraphics[width=0.24\linewidth]{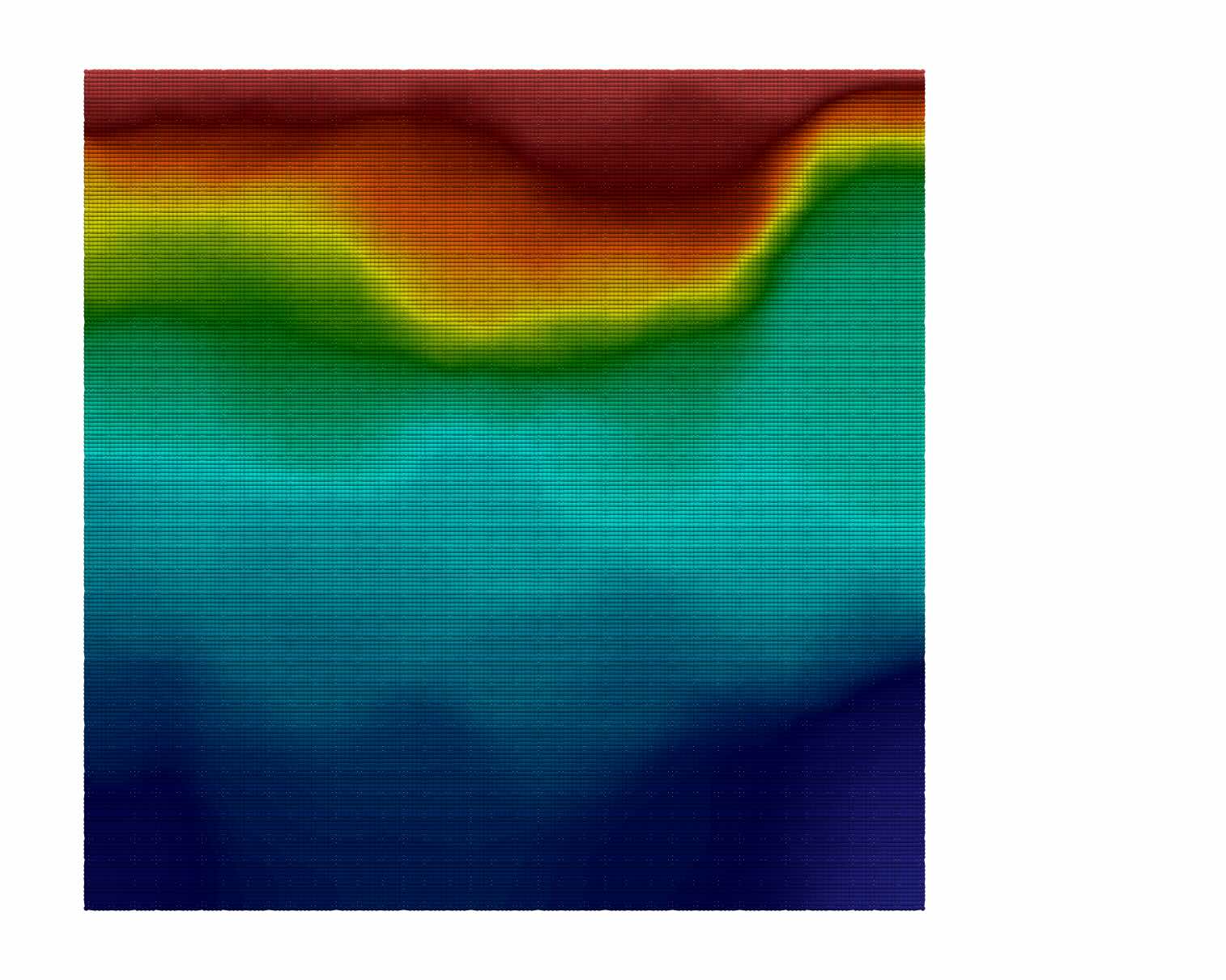}
\includegraphics[width=0.24\linewidth]{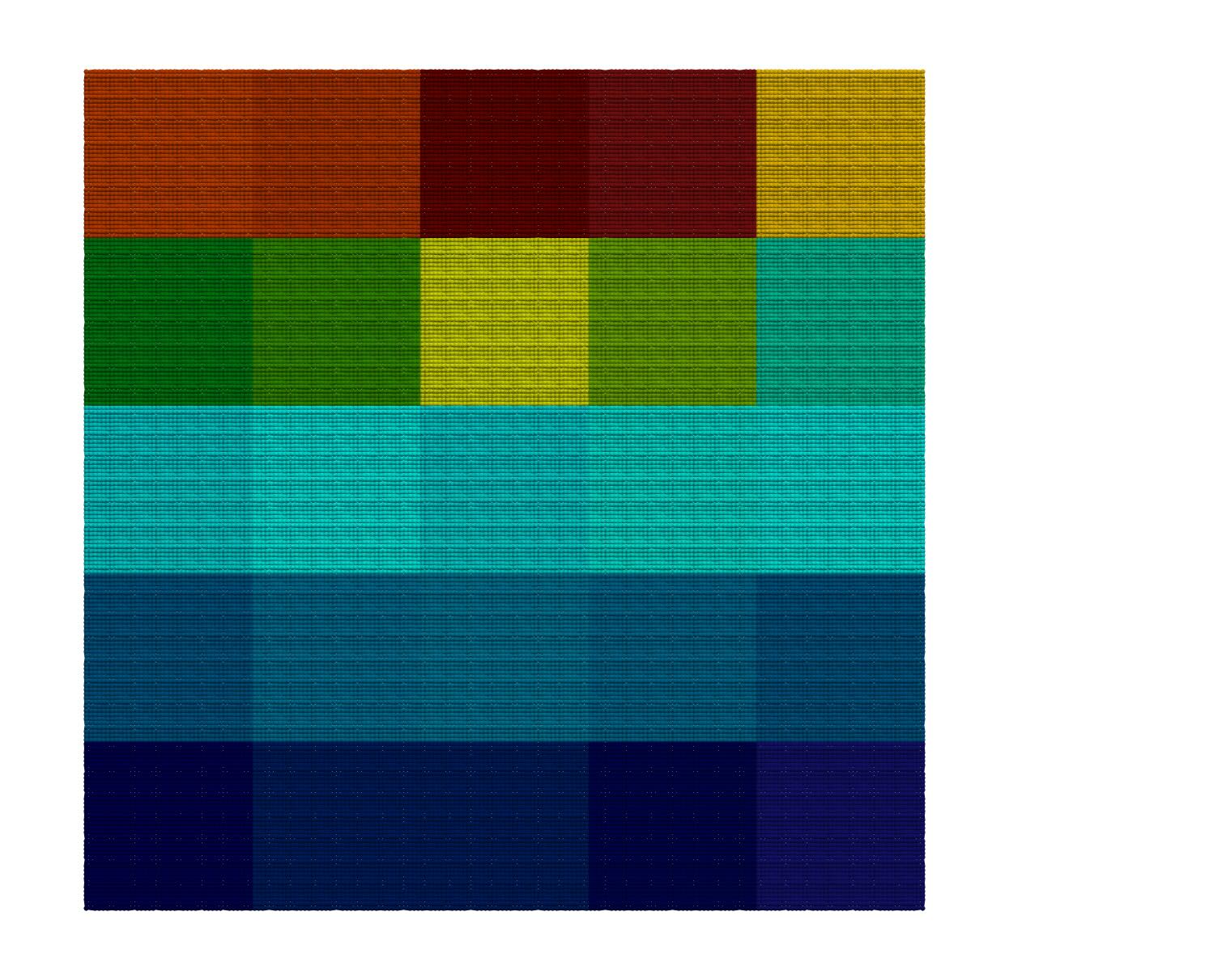}
\includegraphics[width=0.24\linewidth]{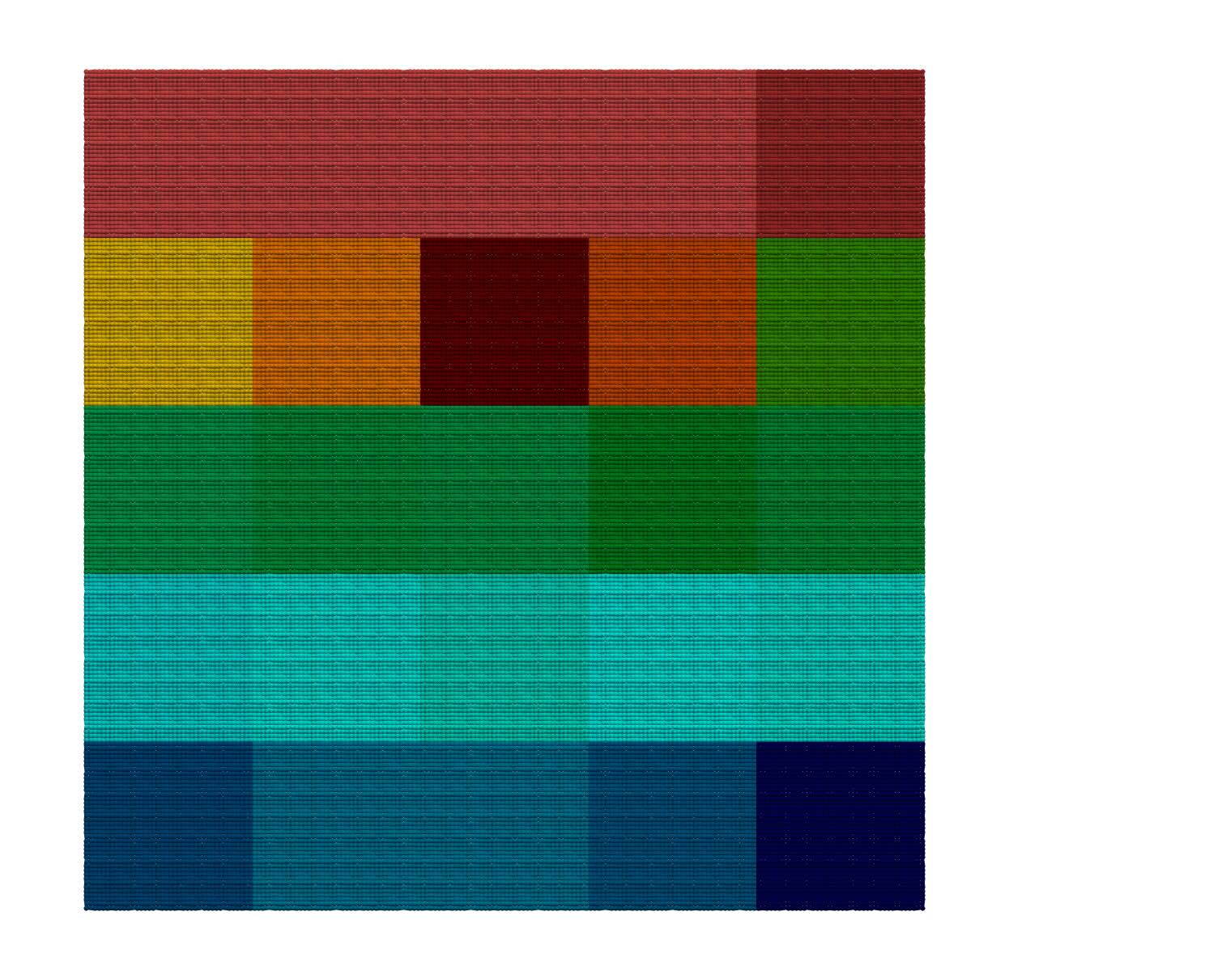}
\caption{\rev{Test-1a: SPE10 properties on Network 1a}}
\end{subfigure}
\begin{subfigure}{1\textwidth}
\centering
\includegraphics[width=0.24\linewidth]{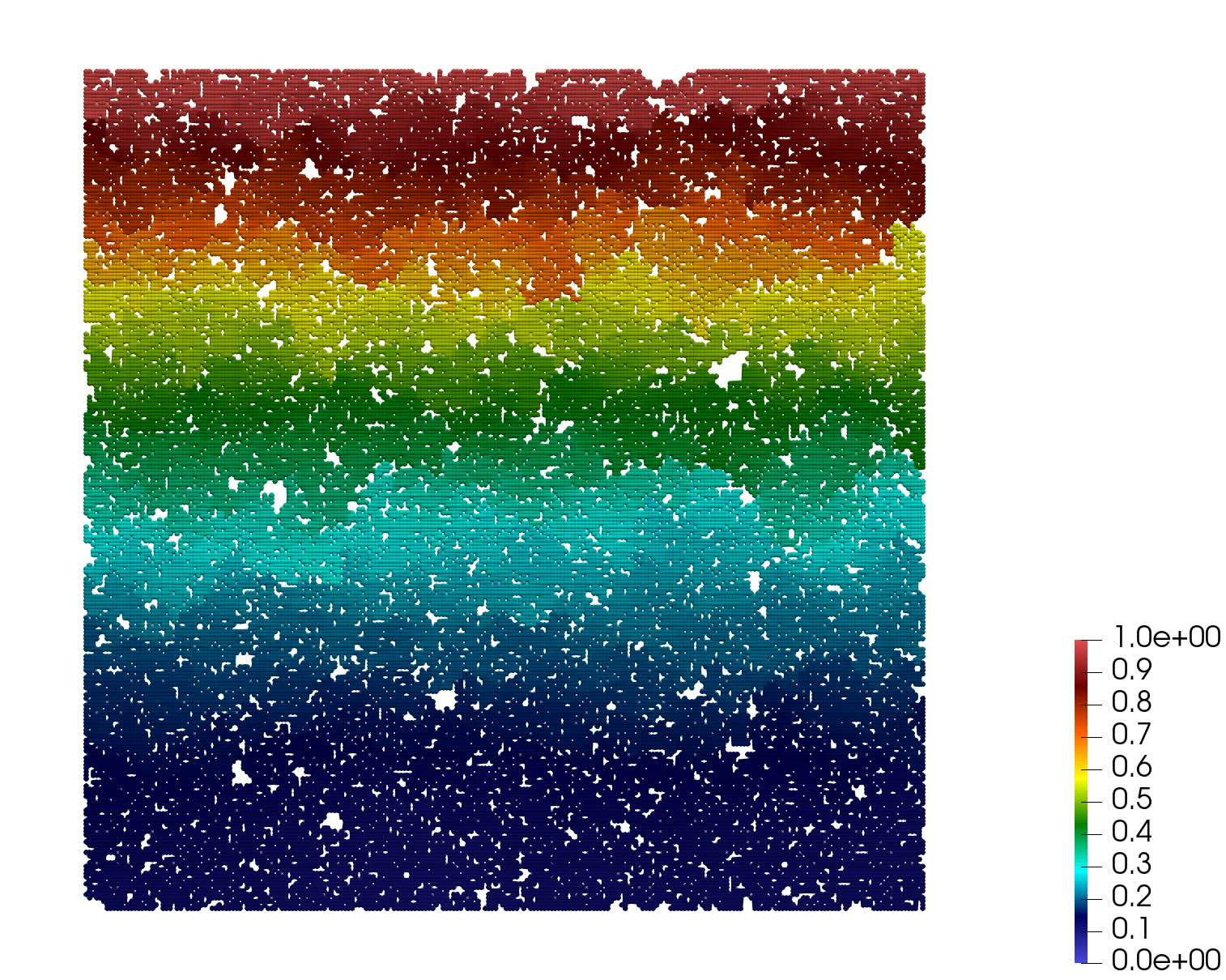}
\includegraphics[width=0.24\linewidth]{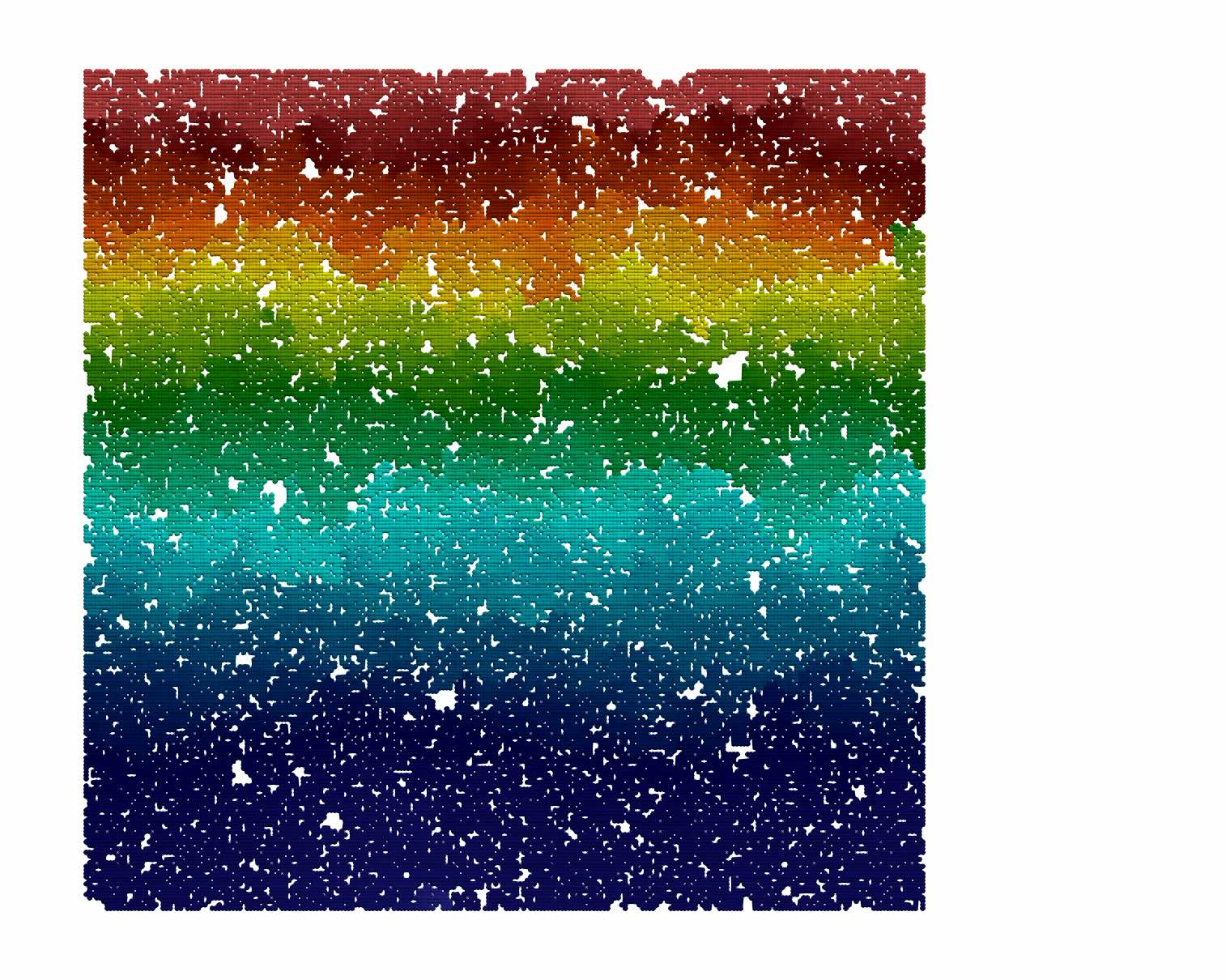}
\includegraphics[width=0.24\linewidth]{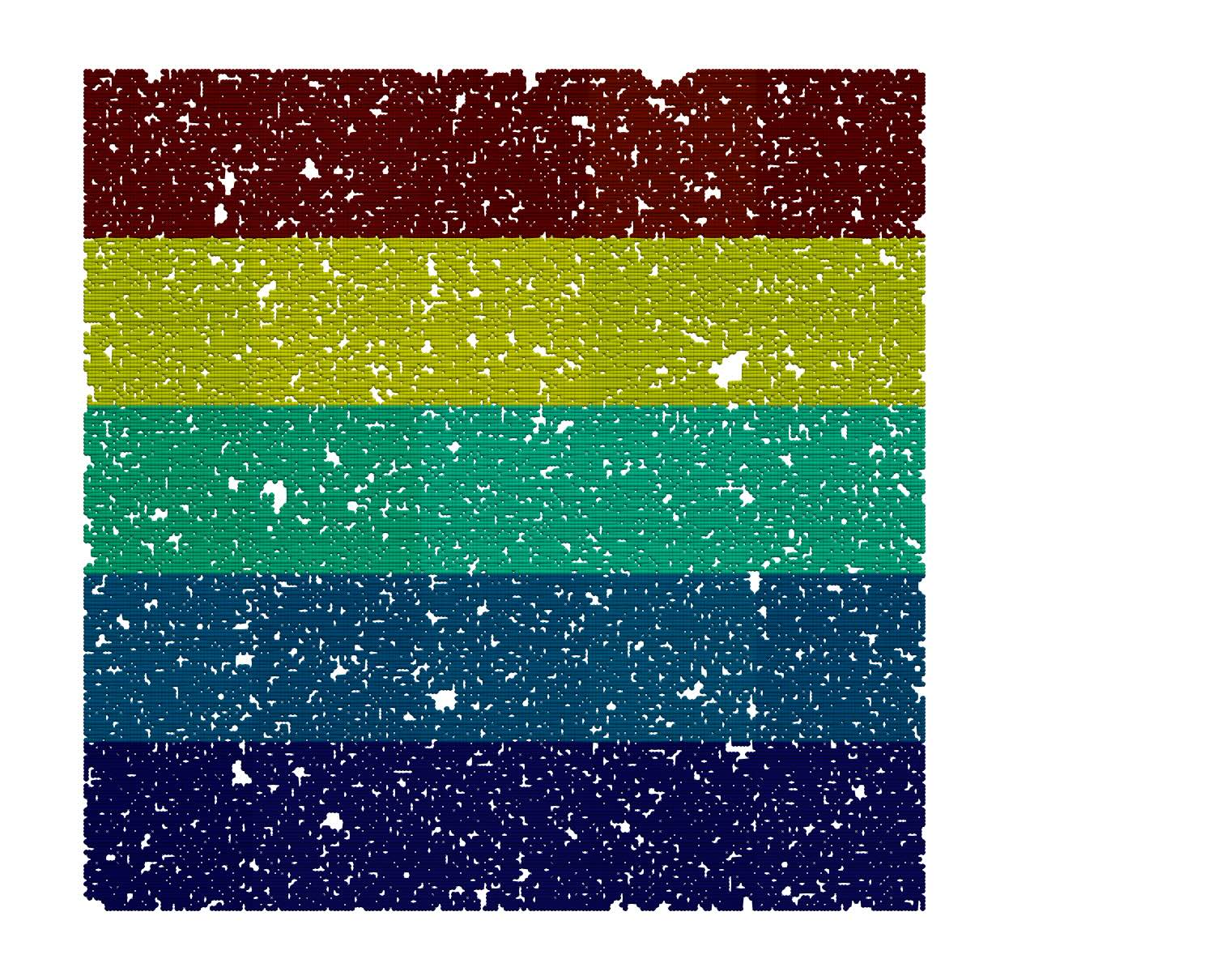}
\includegraphics[width=0.24\linewidth]{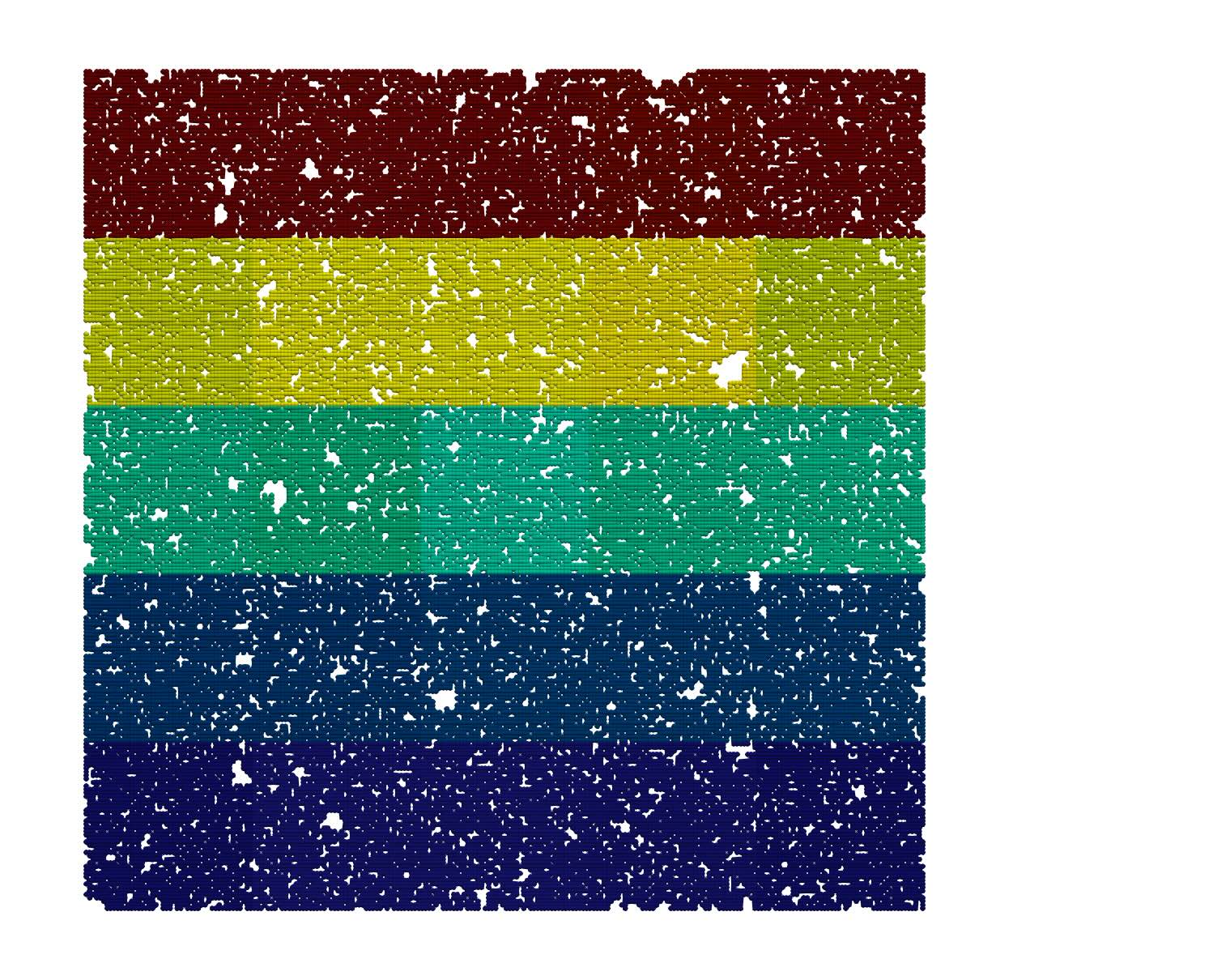}
\caption{\rev{Test-2a: Random properties on Network 2a}}
\end{subfigure}
\begin{subfigure}{1\textwidth}
\centering
\includegraphics[width=0.24\linewidth]{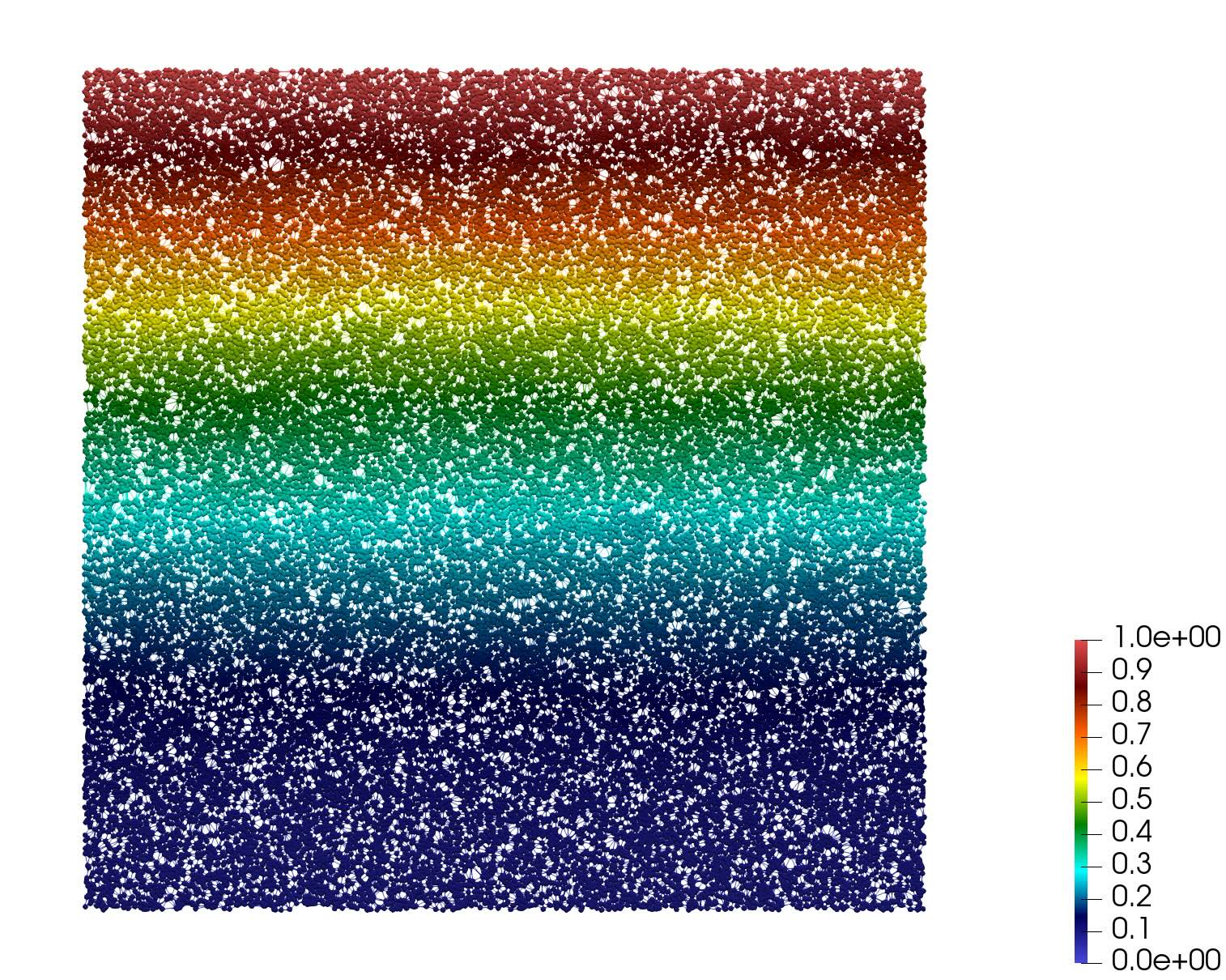}
\includegraphics[width=0.24\linewidth]{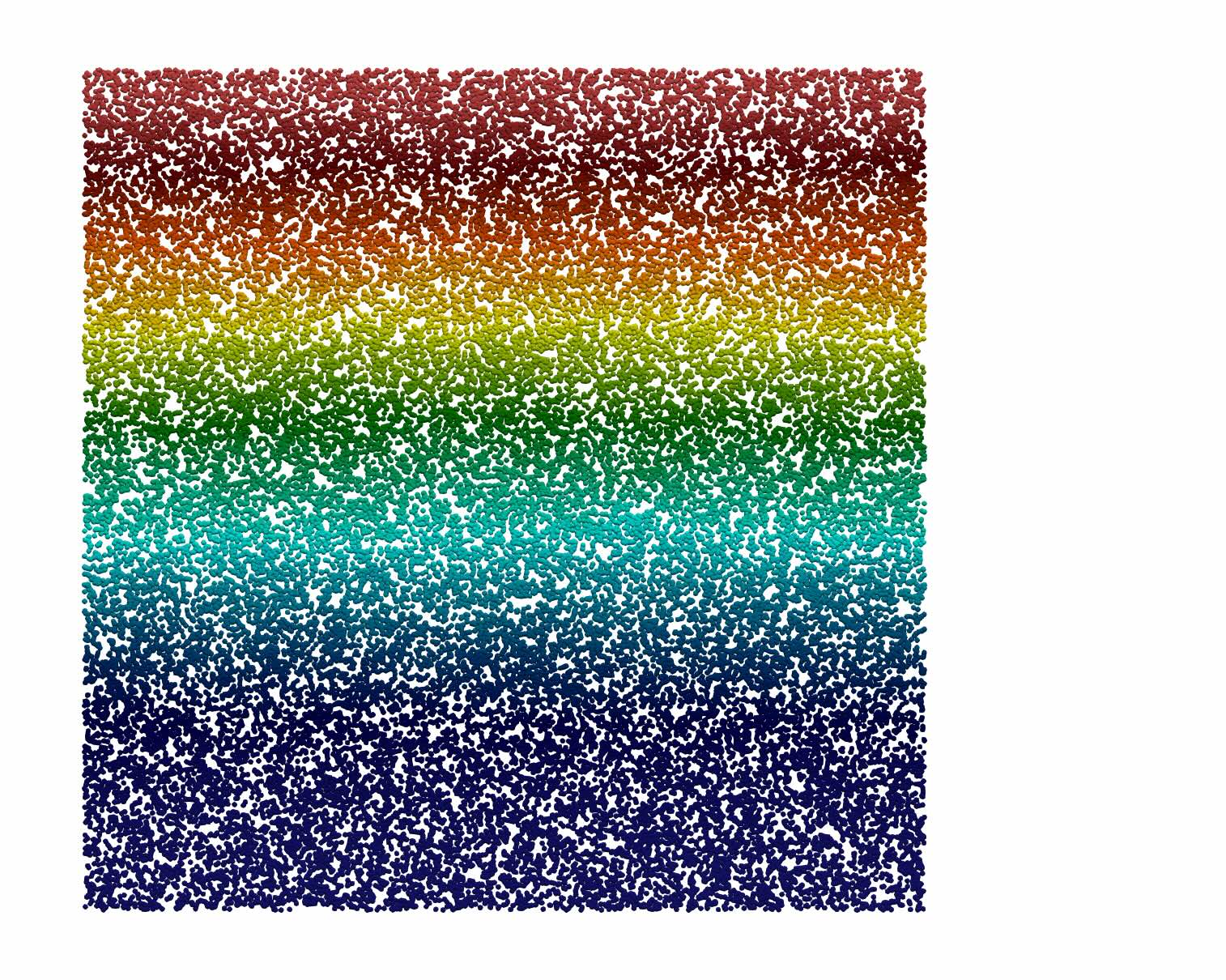}
\includegraphics[width=0.24\linewidth]{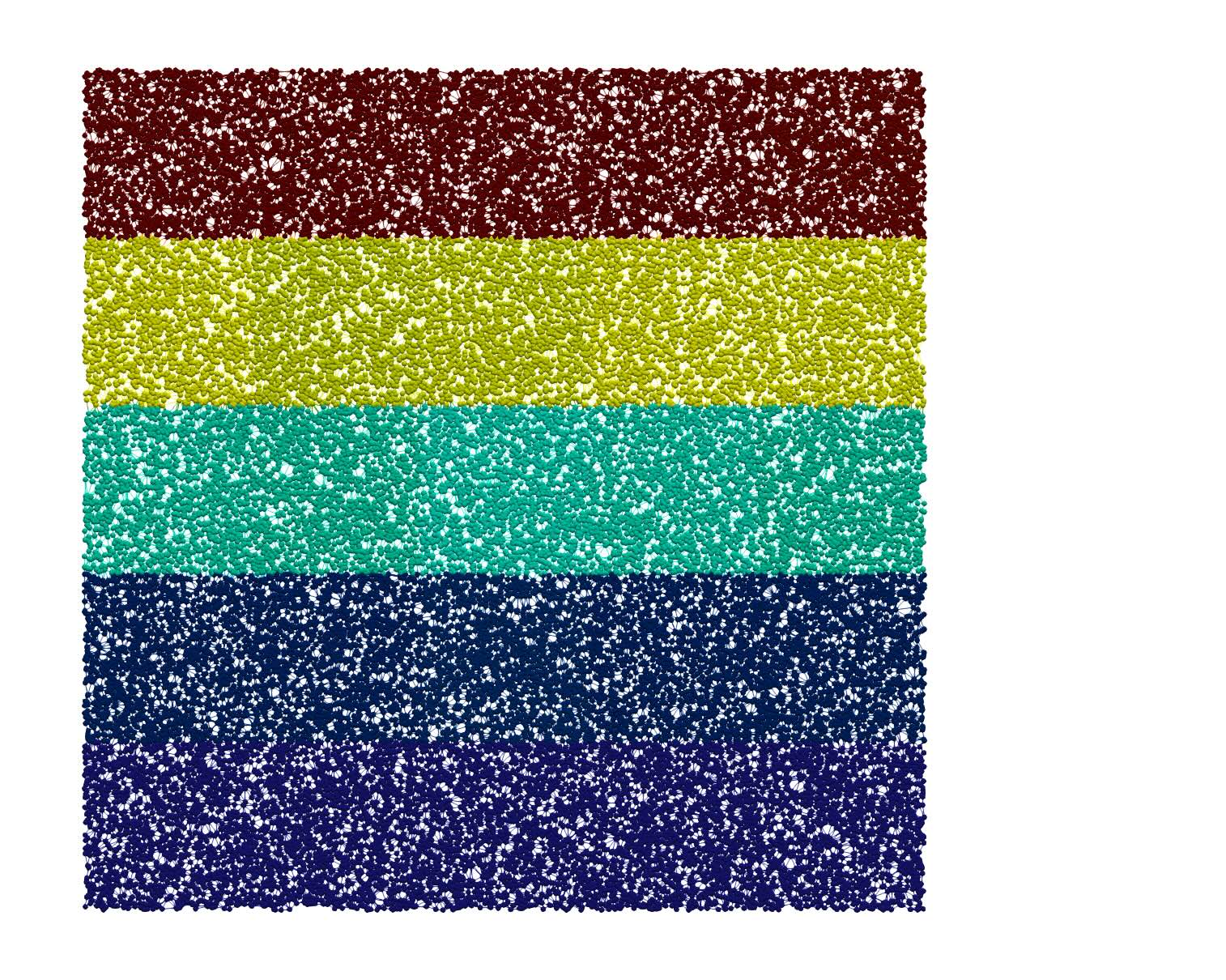}
\includegraphics[width=0.24\linewidth]{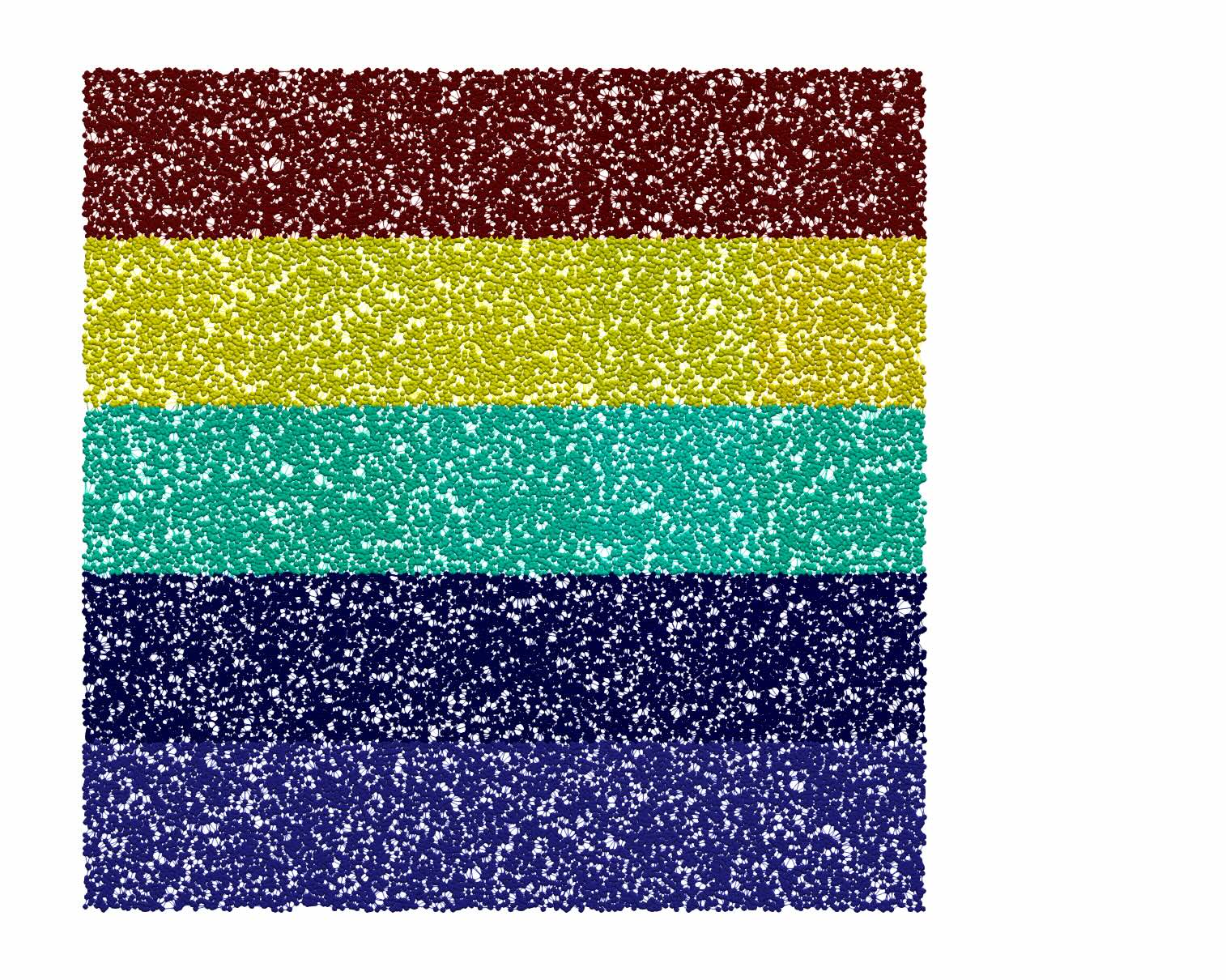}
\caption{\rev{Test-3a: Random properties on Network 3a}}
\end{subfigure}
\caption{\rev{Reference solution for the 2D case on the fine-scale network (first column), multiscale solution (second column), average reference solution on the coarse grid (third column) and upscaled solution on the coarse grid (fourth column).}}
\label{fig:solu2}
\end{figure}

\begin{figure}[h!]
\centering
\begin{subfigure}{1\textwidth}
\centering
\includegraphics[width=0.24\linewidth]{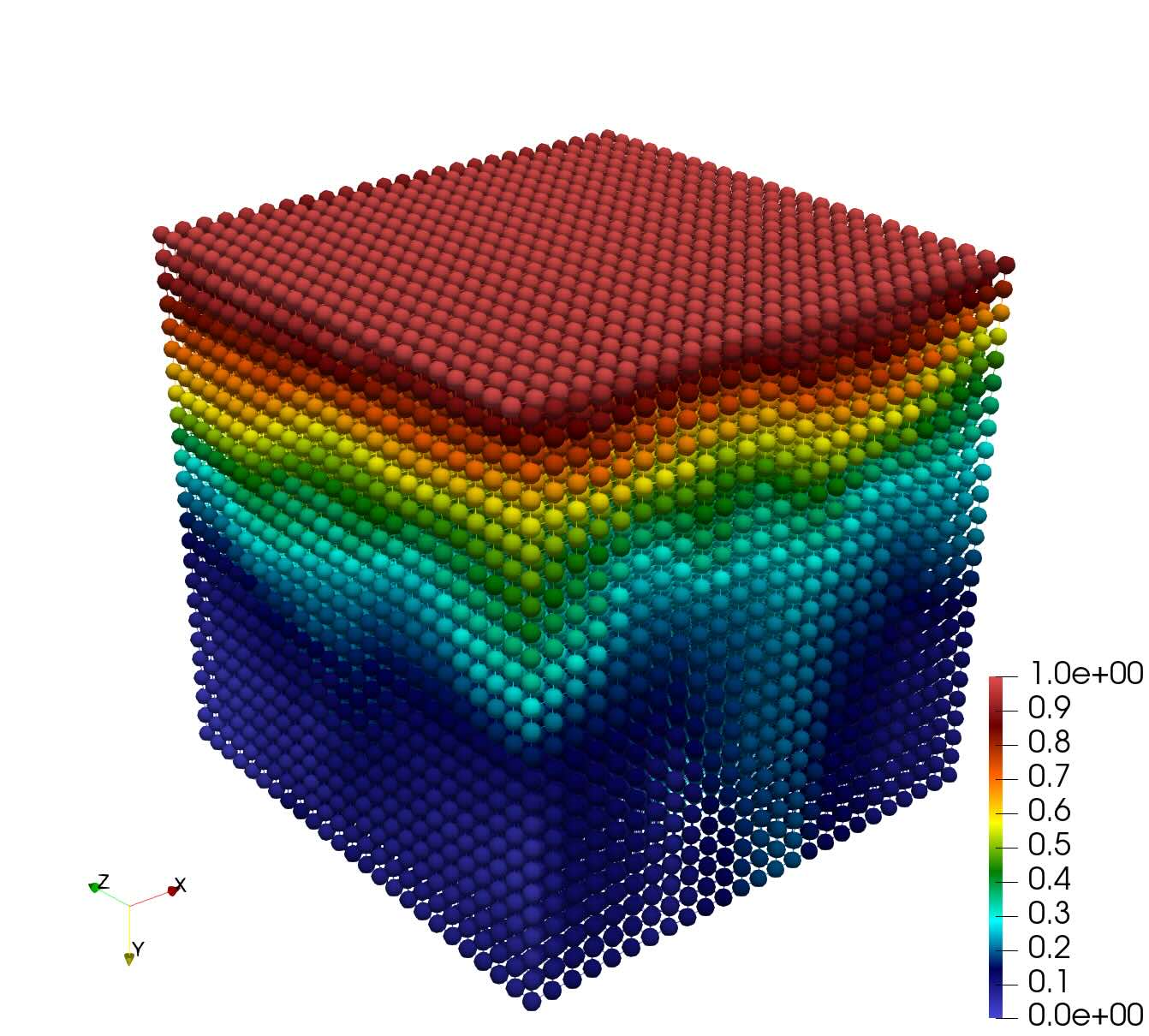}
\includegraphics[width=0.24\linewidth]{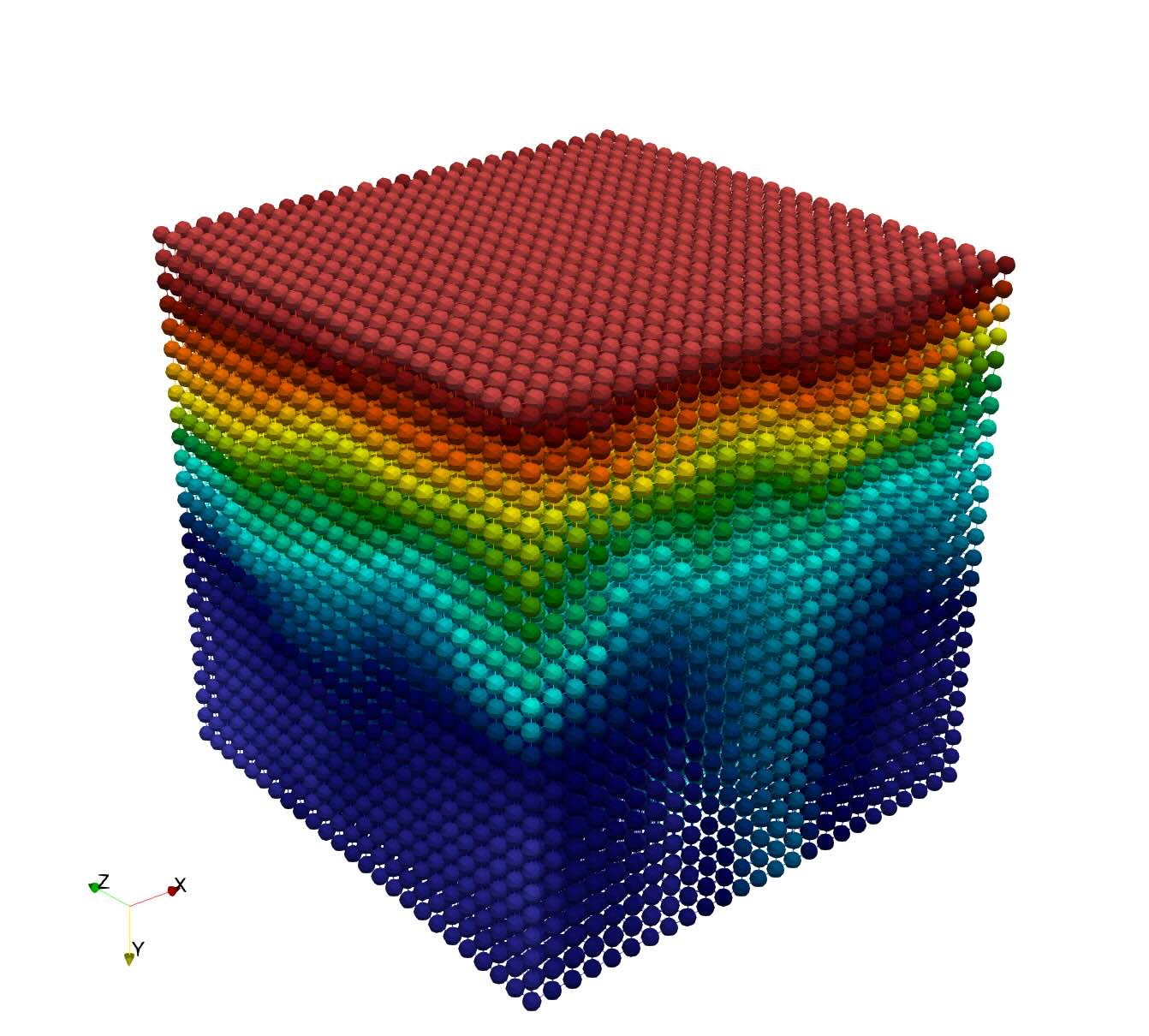}
\includegraphics[width=0.24\linewidth]{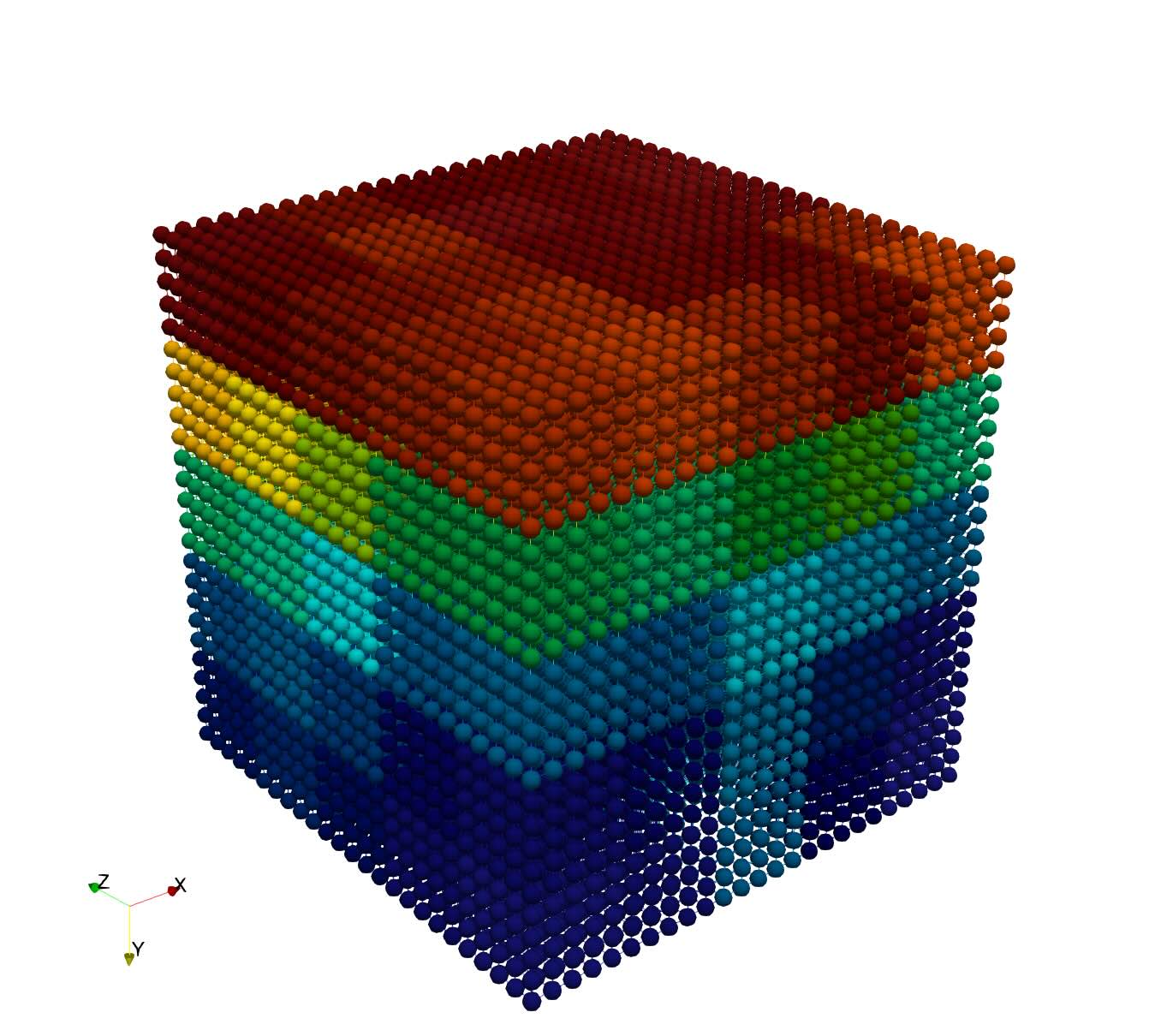}
\includegraphics[width=0.24\linewidth]{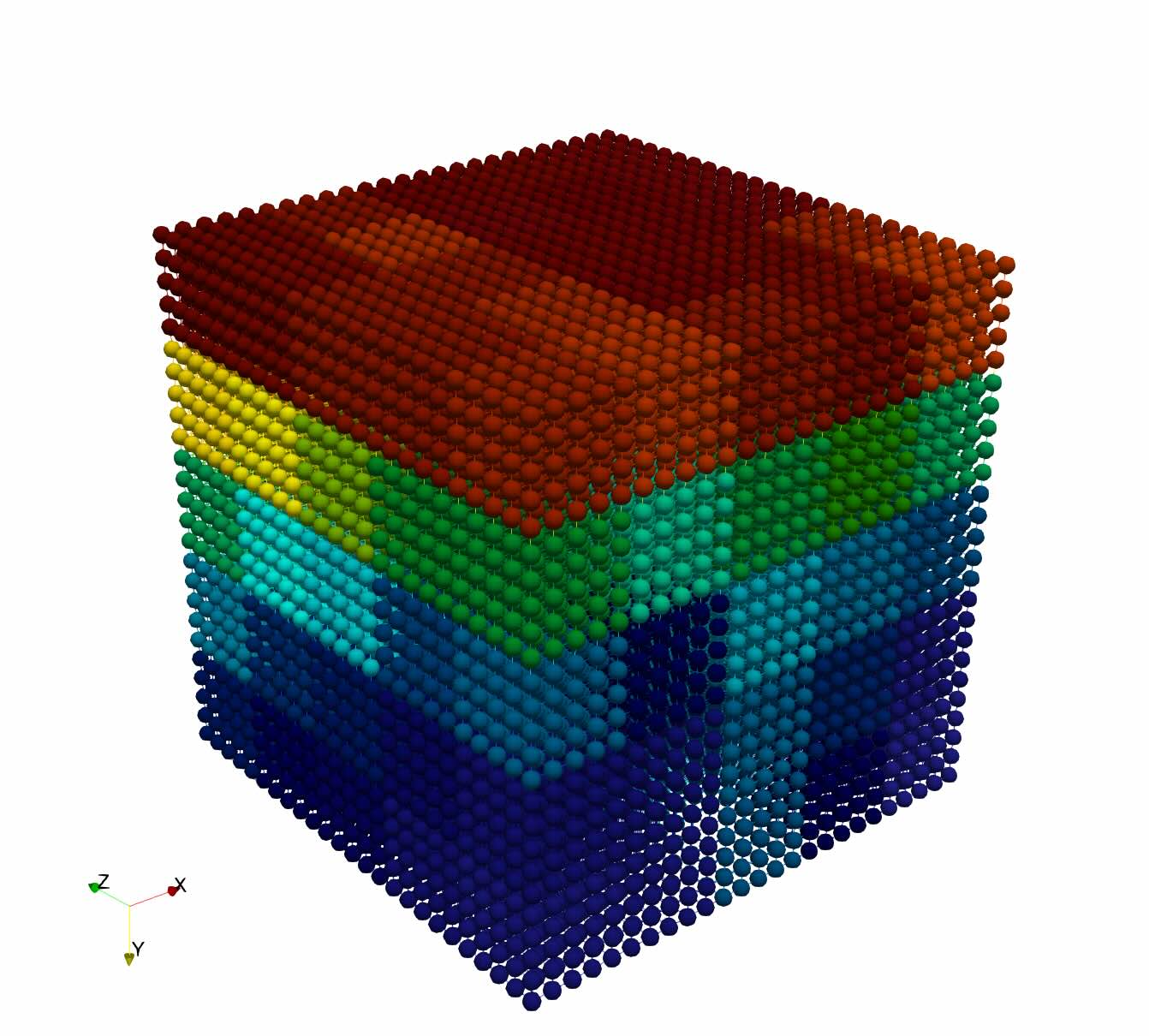}
\caption{\rev{Test-1b: SPE10 properties on Network 1b}}
\end{subfigure}
\begin{subfigure}{1\textwidth}
\centering
\includegraphics[width=0.24\linewidth]{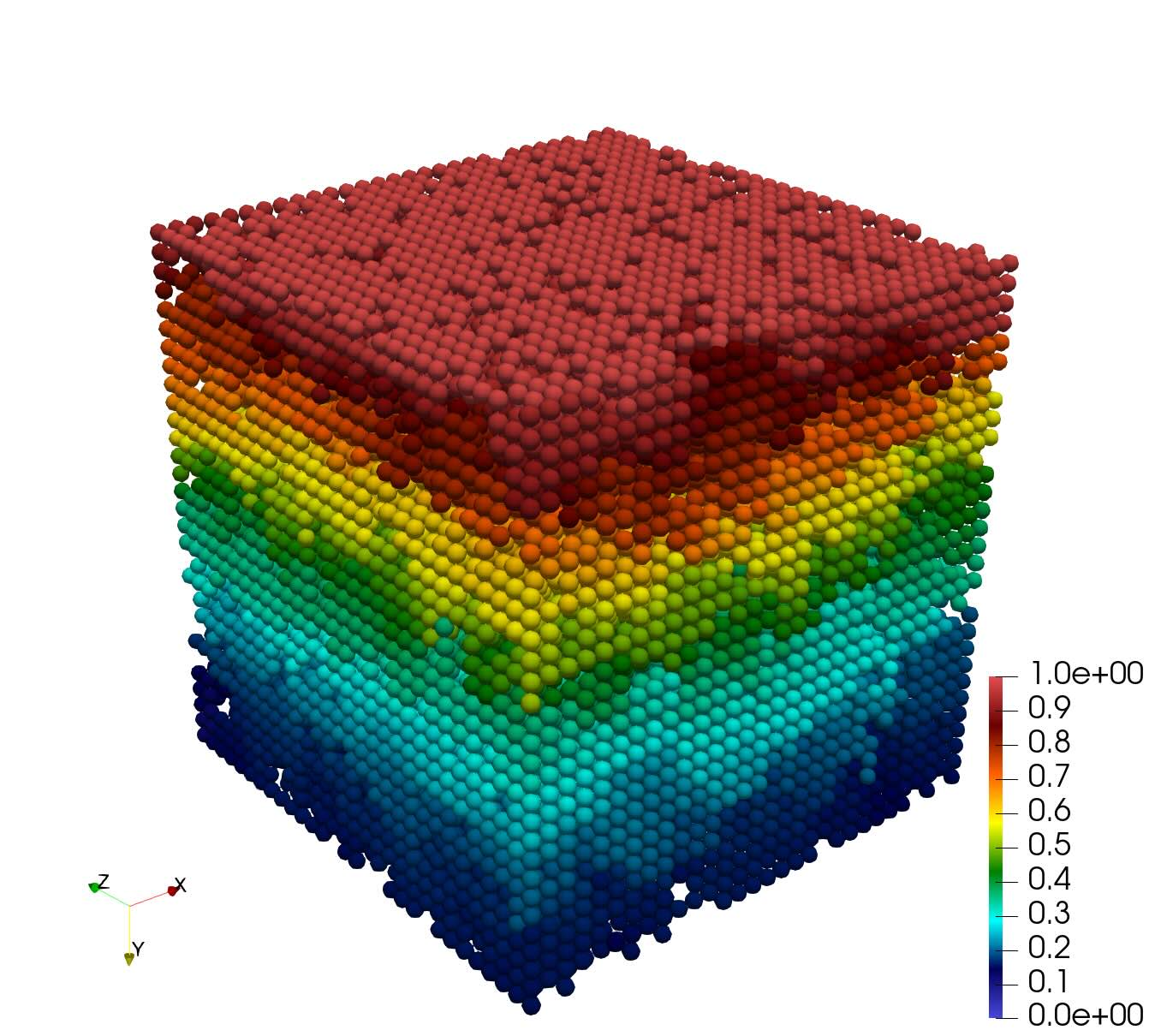}
\includegraphics[width=0.24\linewidth]{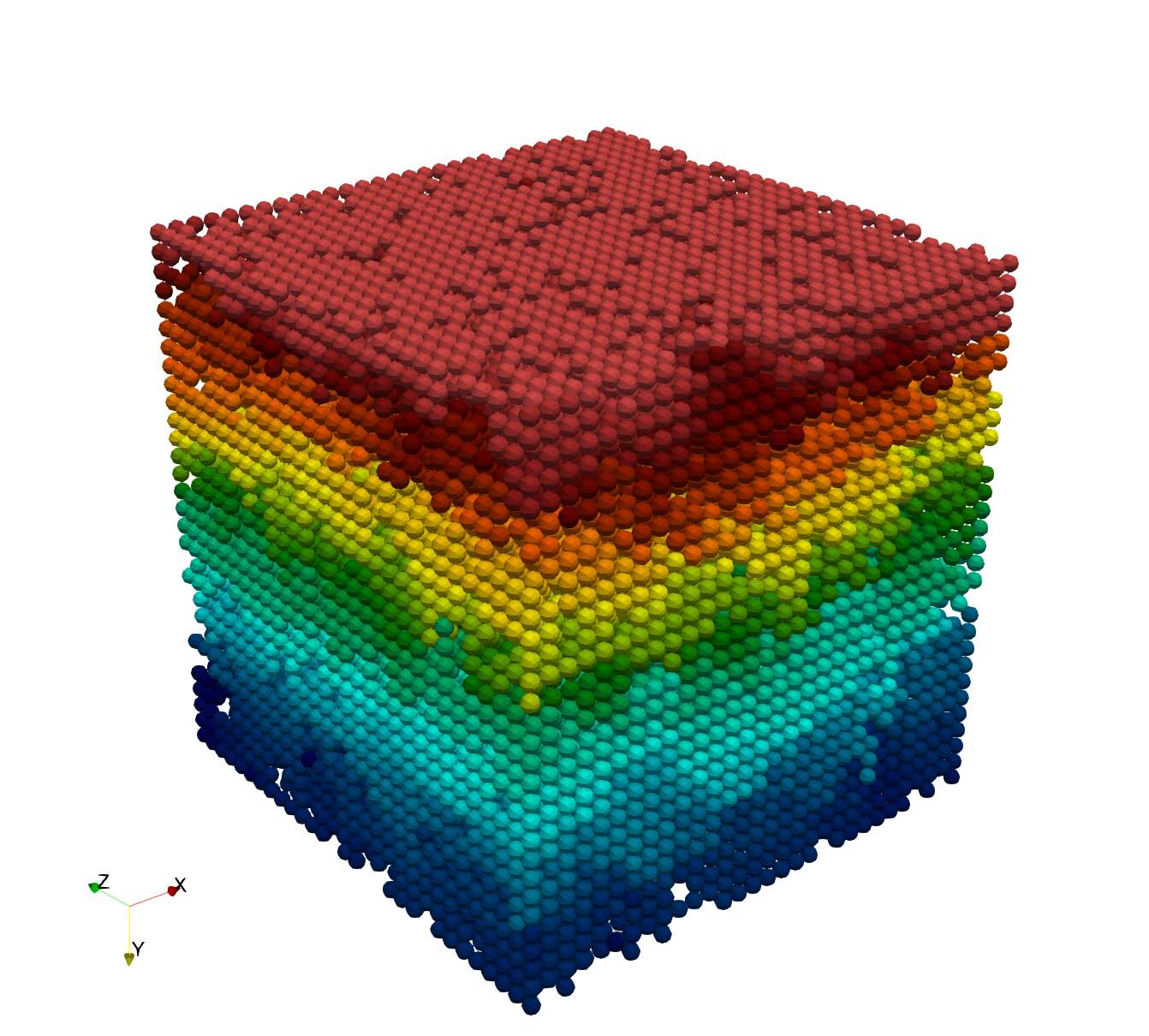}
\includegraphics[width=0.24\linewidth]{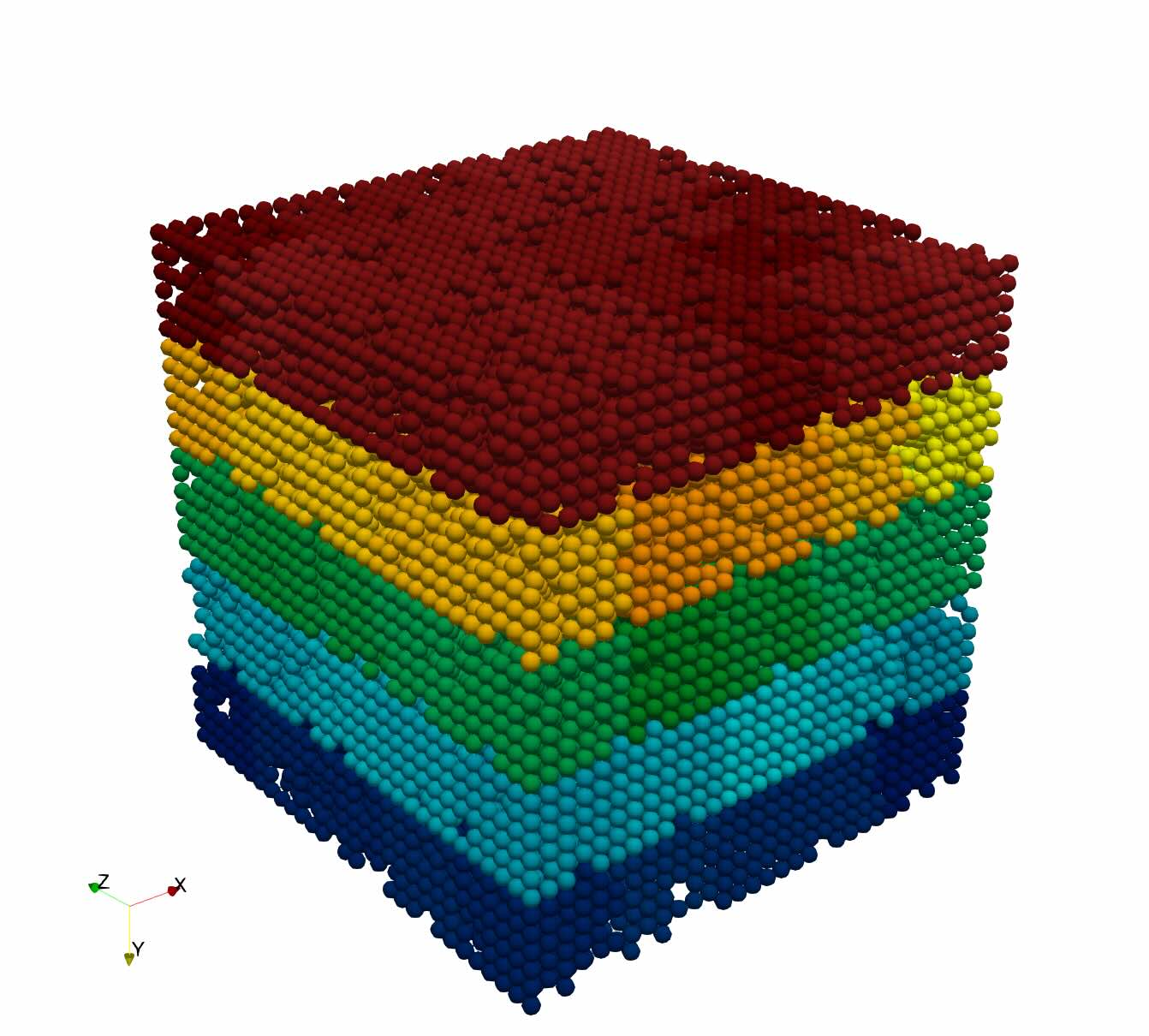}
\includegraphics[width=0.24\linewidth]{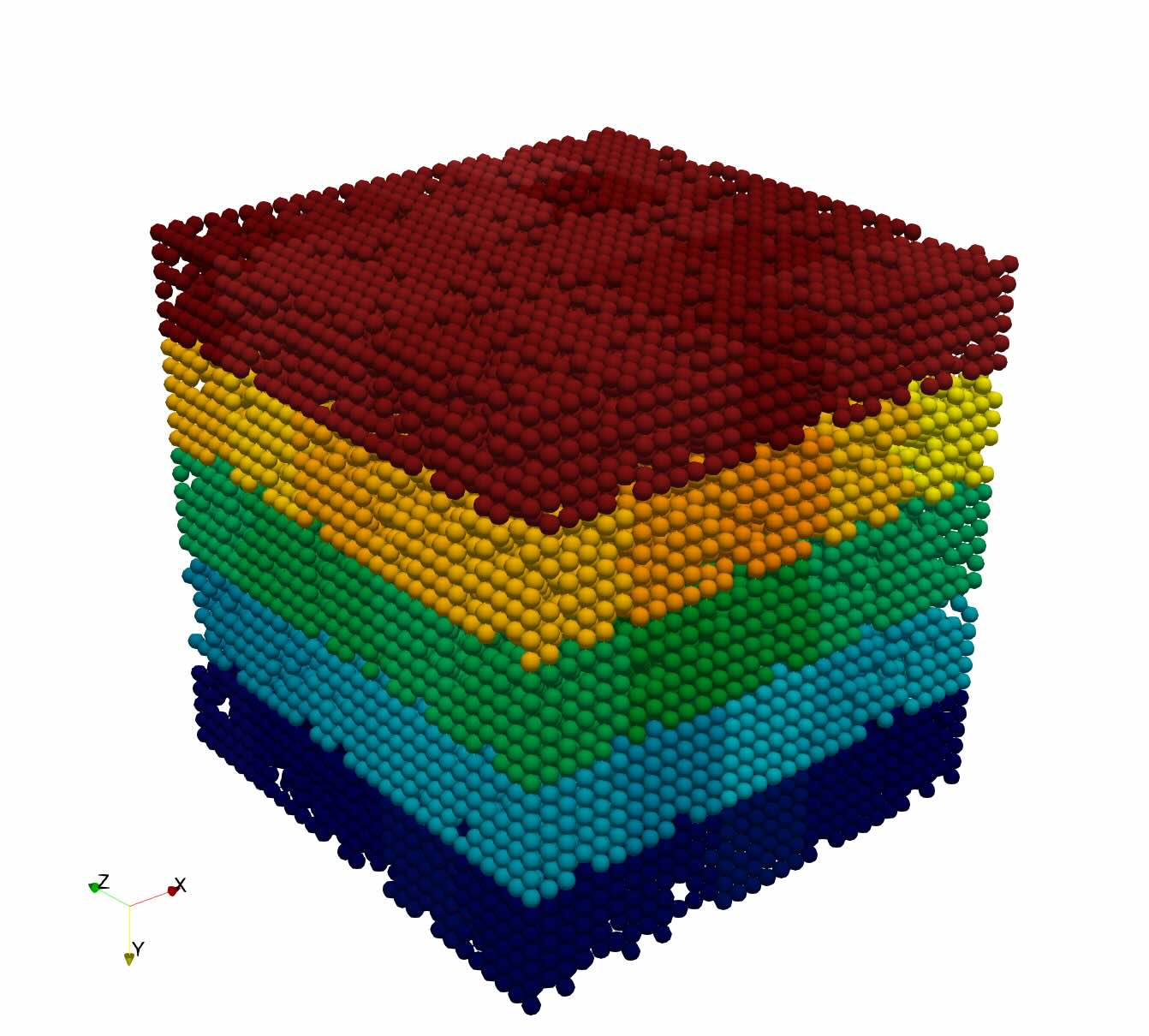}
\caption{\rev{Test-2b: Random properties on Network 2b}}
\end{subfigure}
\begin{subfigure}{1\textwidth}
\centering
\includegraphics[width=0.24\linewidth]{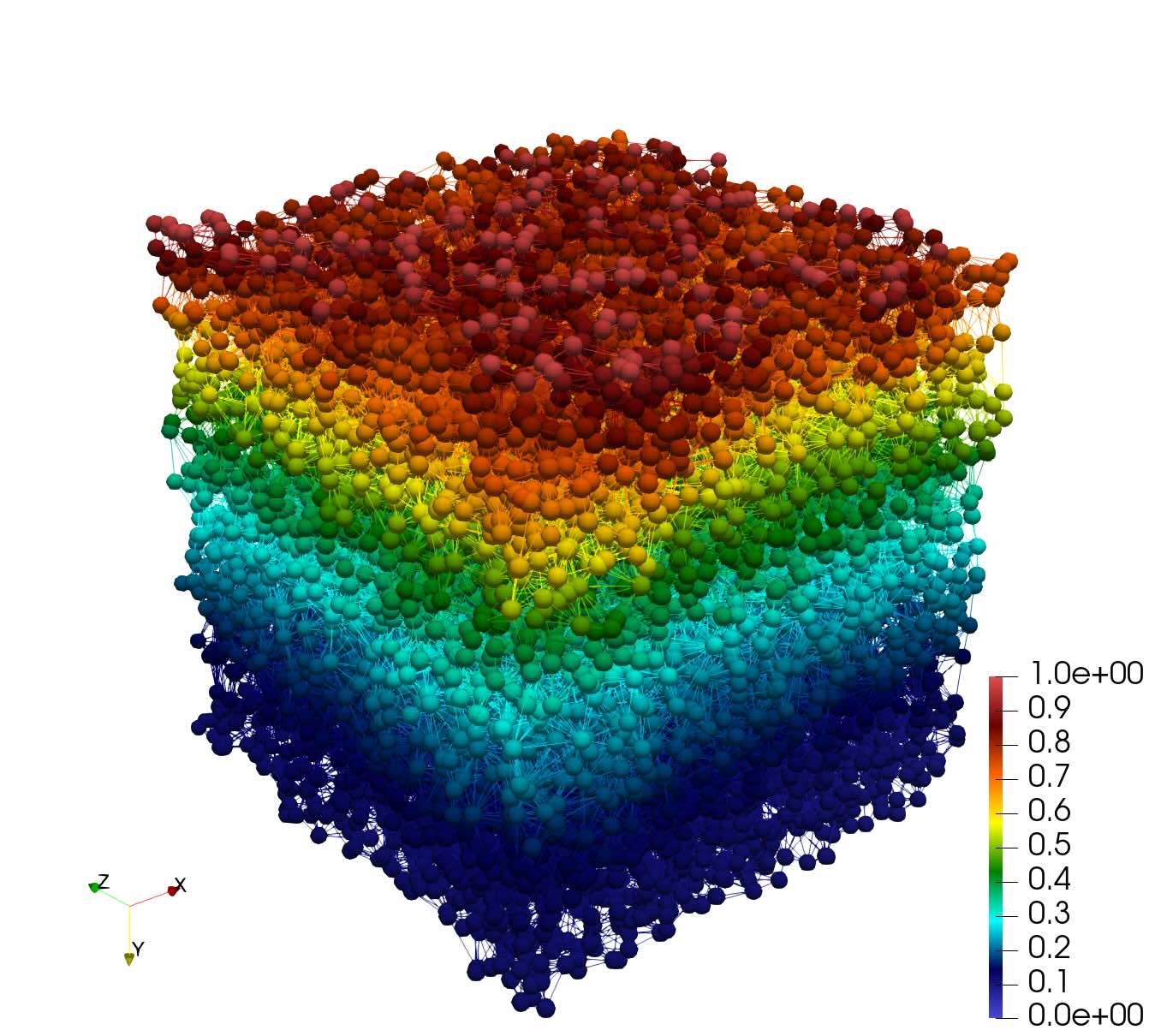}
\includegraphics[width=0.24\linewidth]{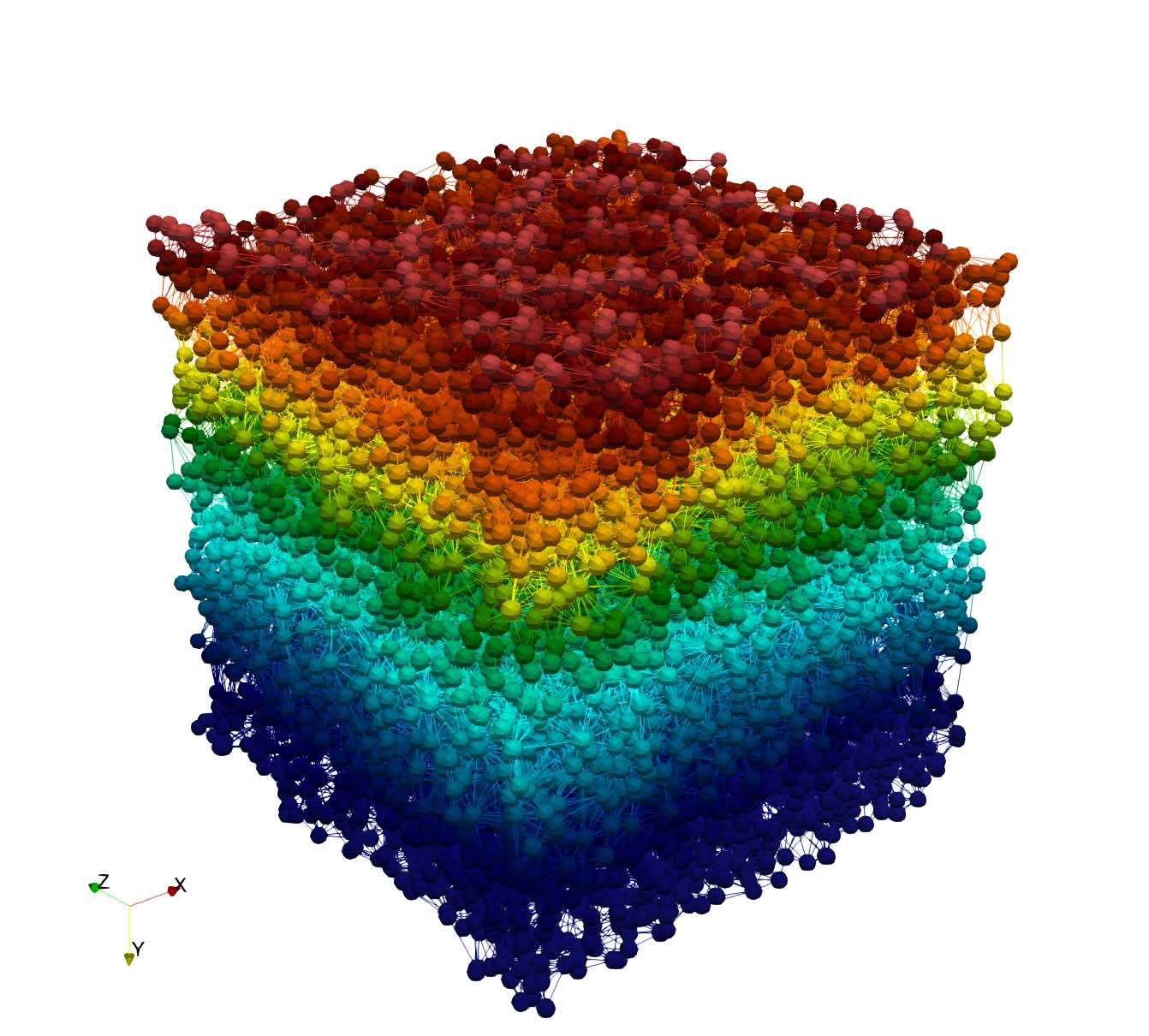}
\includegraphics[width=0.24\linewidth]{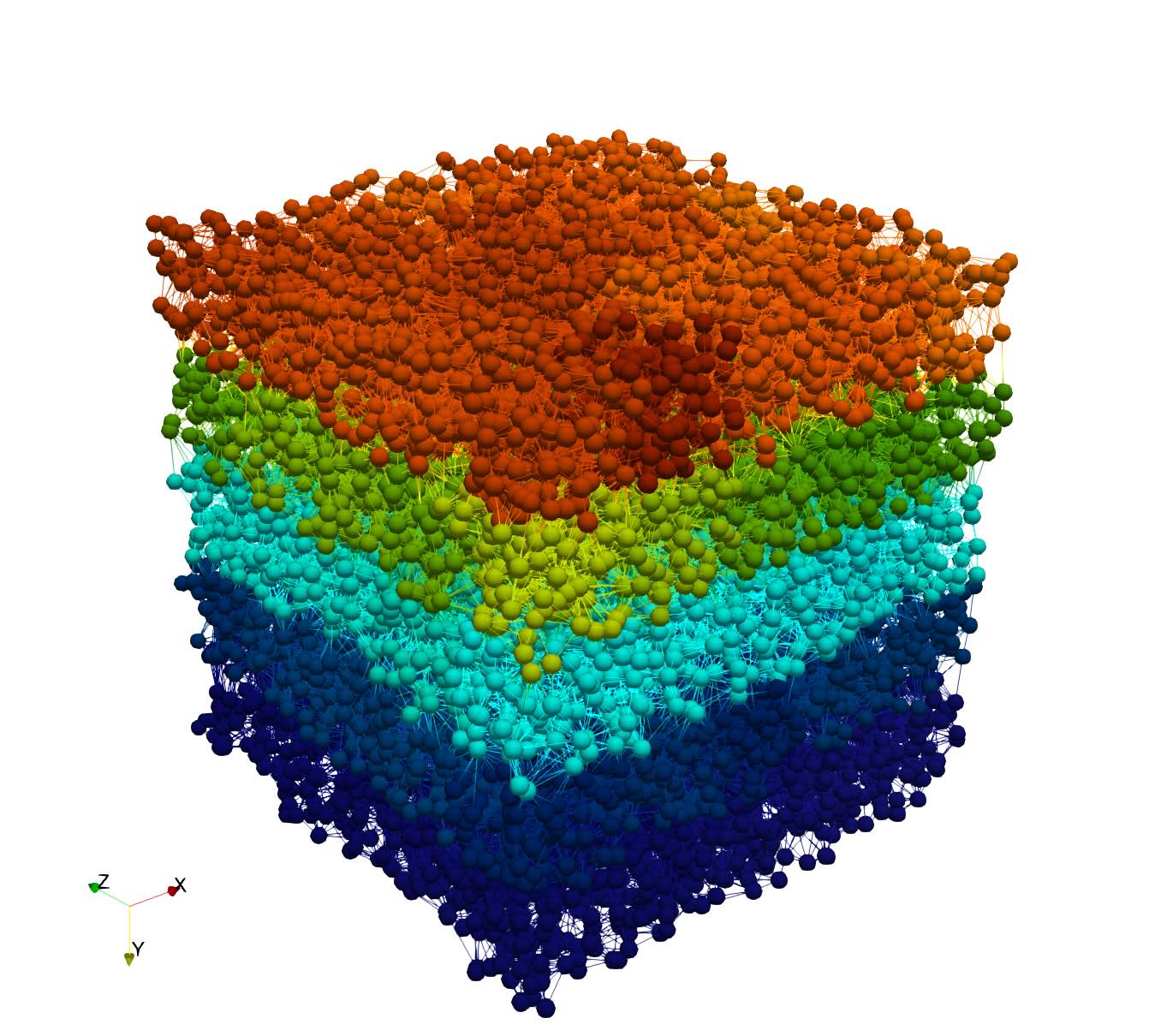}
\includegraphics[width=0.24\linewidth]{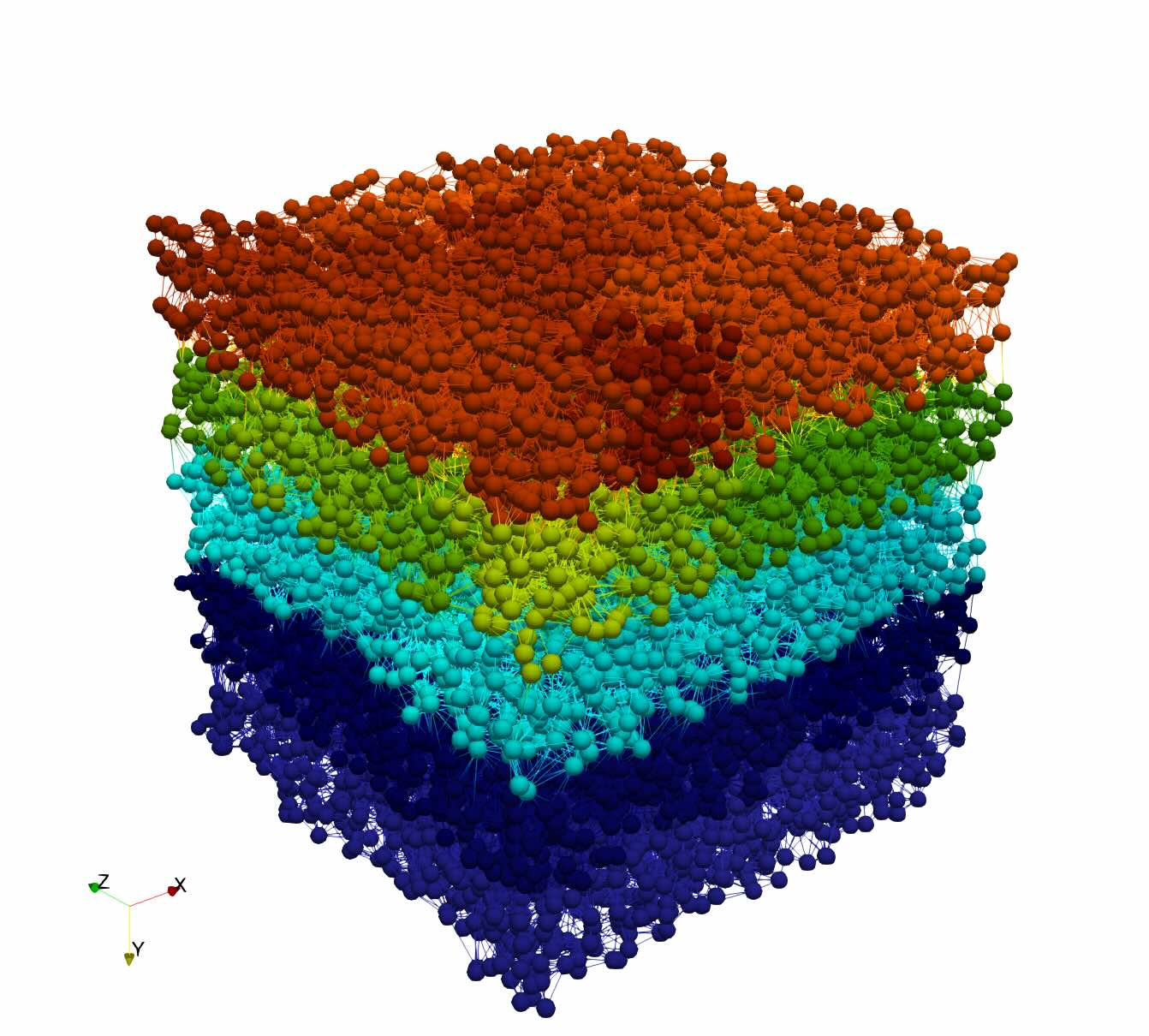}
\caption{\rev{Test-3b: Random properties on Network 3b}}
\end{subfigure}
\caption{\rev{Reference solution for the 3D case on the fine-scale network (first column), multiscale solution (second column), average reference solution on the coarse grid (third column) and upscaled solution on the coarse grid (fourth column).}}
\label{fig:solu3}
\end{figure}

\begin{figure}[h!]
\centering
\begin{subfigure}{1\textwidth}
\centering
\includegraphics[width=0.24\linewidth]{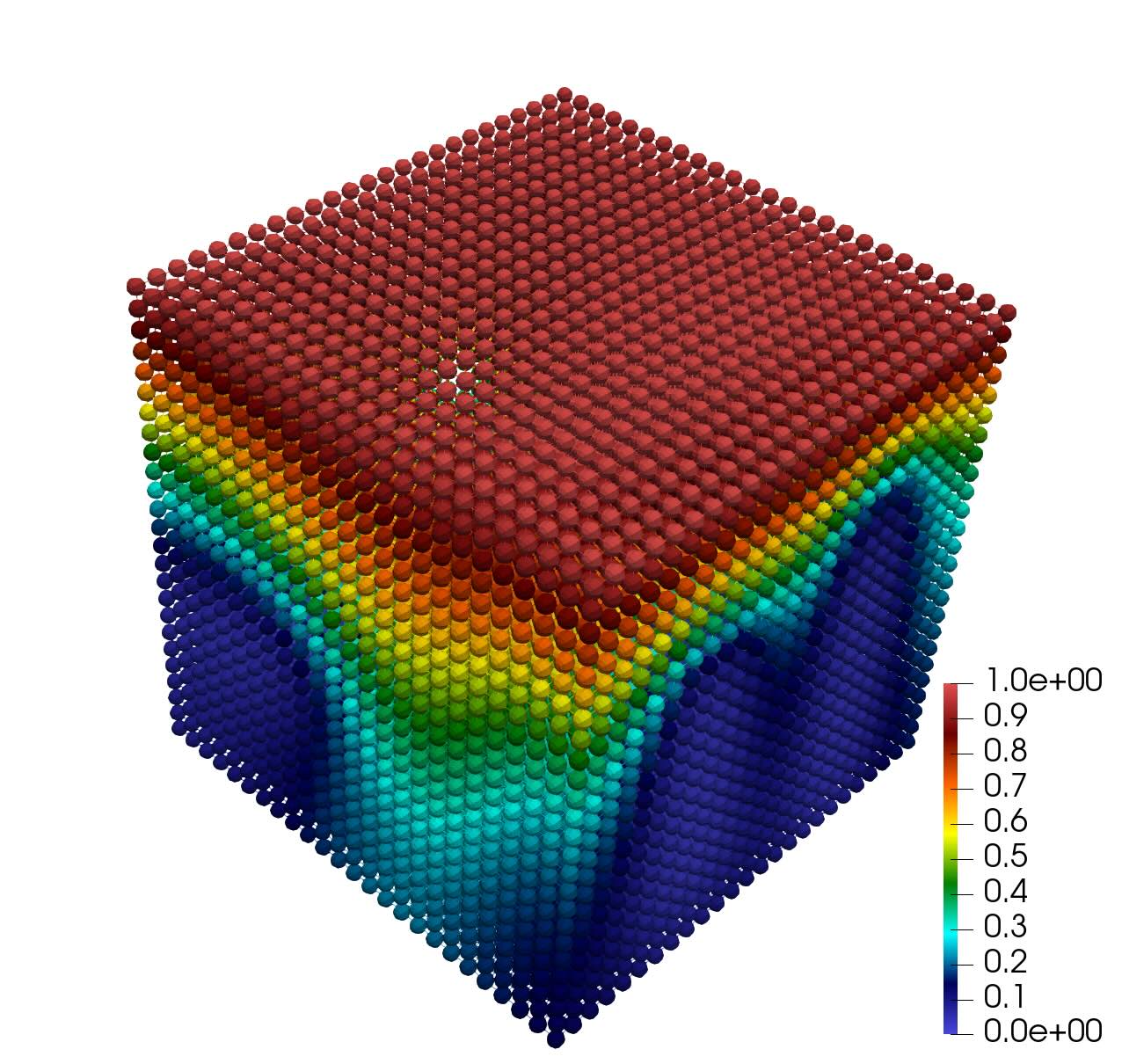}
\includegraphics[width=0.24\linewidth]{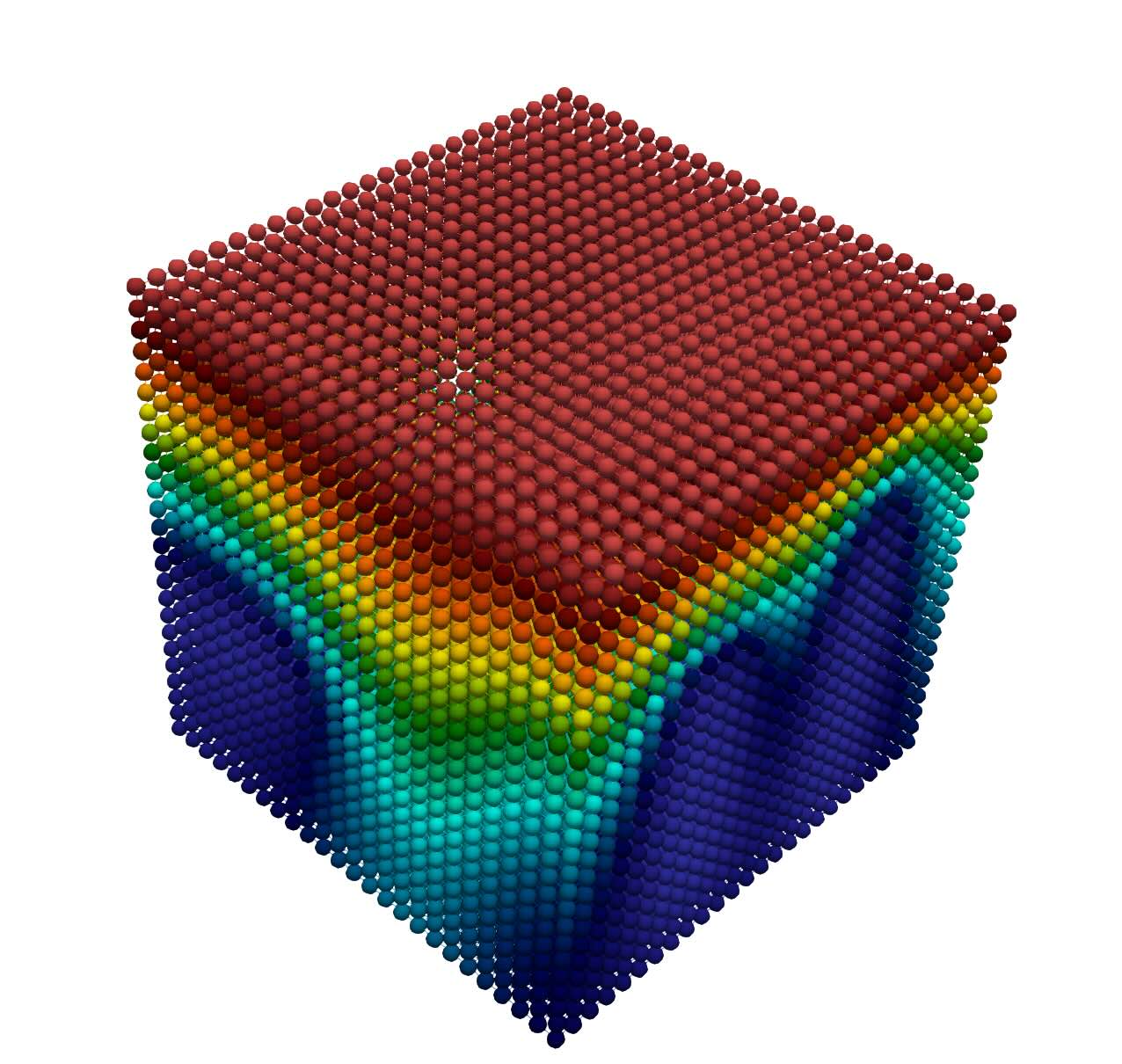}
\includegraphics[width=0.24\linewidth]{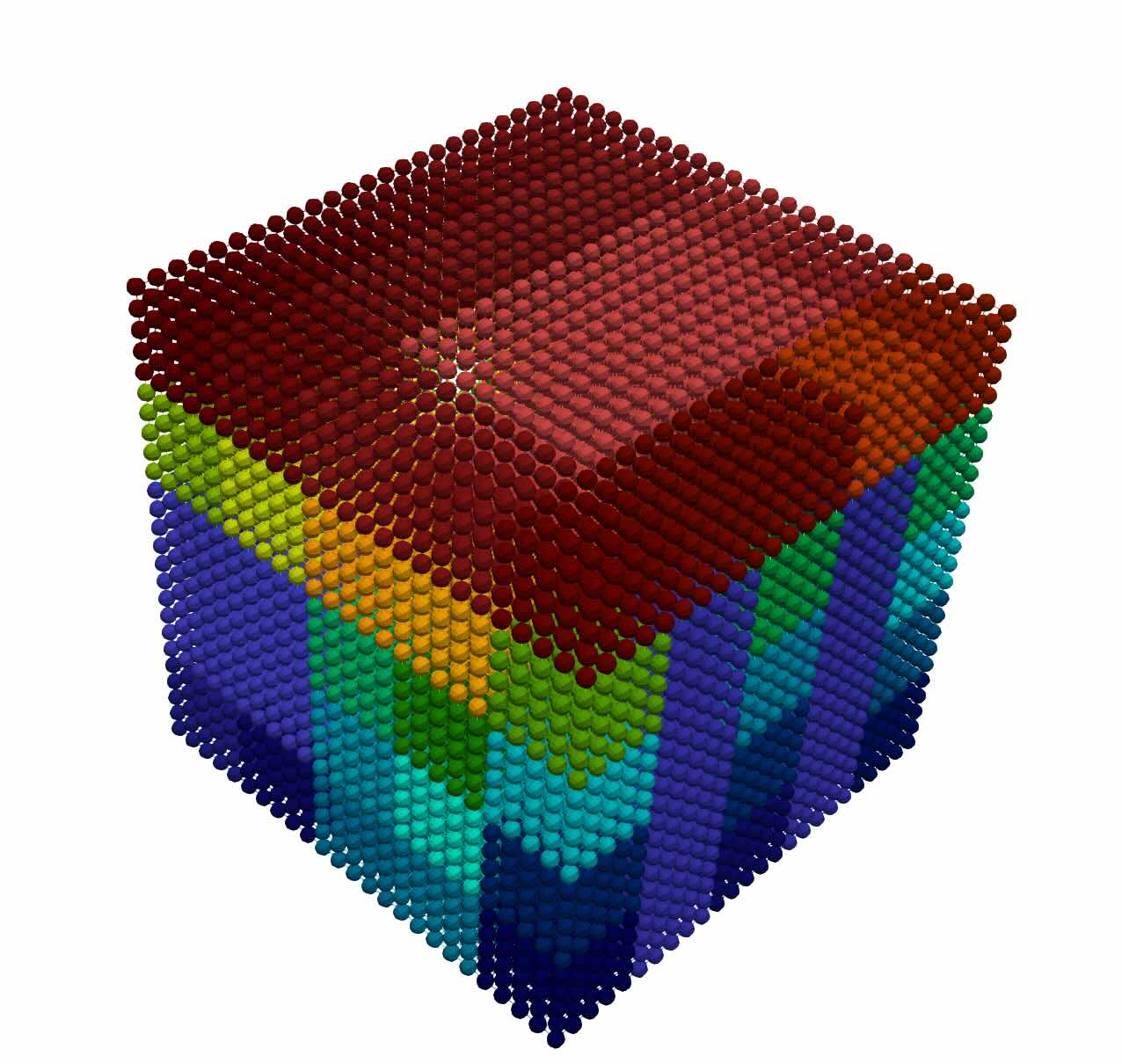}
\includegraphics[width=0.24\linewidth]{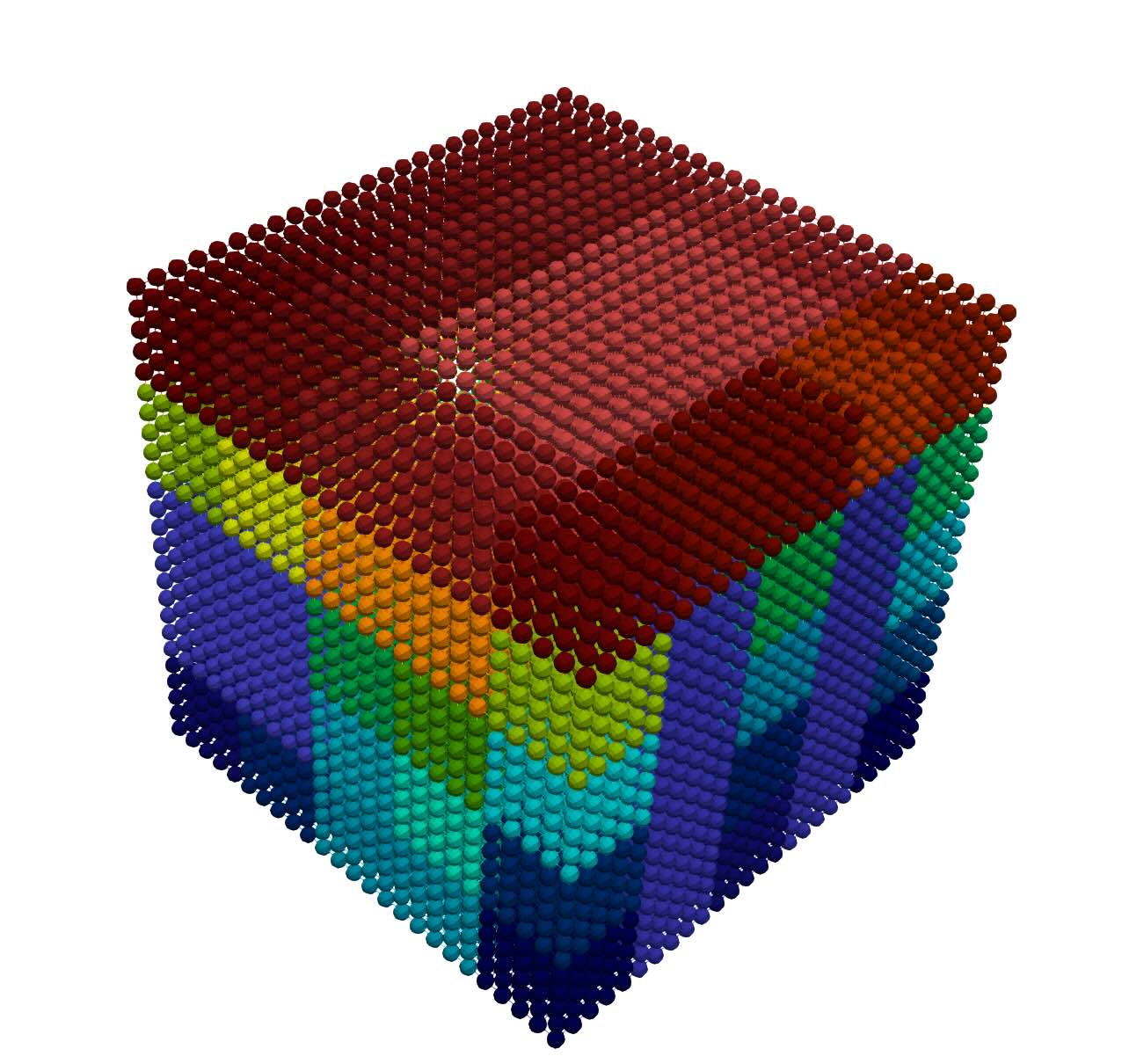}
\caption{\rev{Test-1c: High-contrast properties on Network 1b}}
\end{subfigure}
\begin{subfigure}{1\textwidth}
\centering
\includegraphics[width=0.24\linewidth]{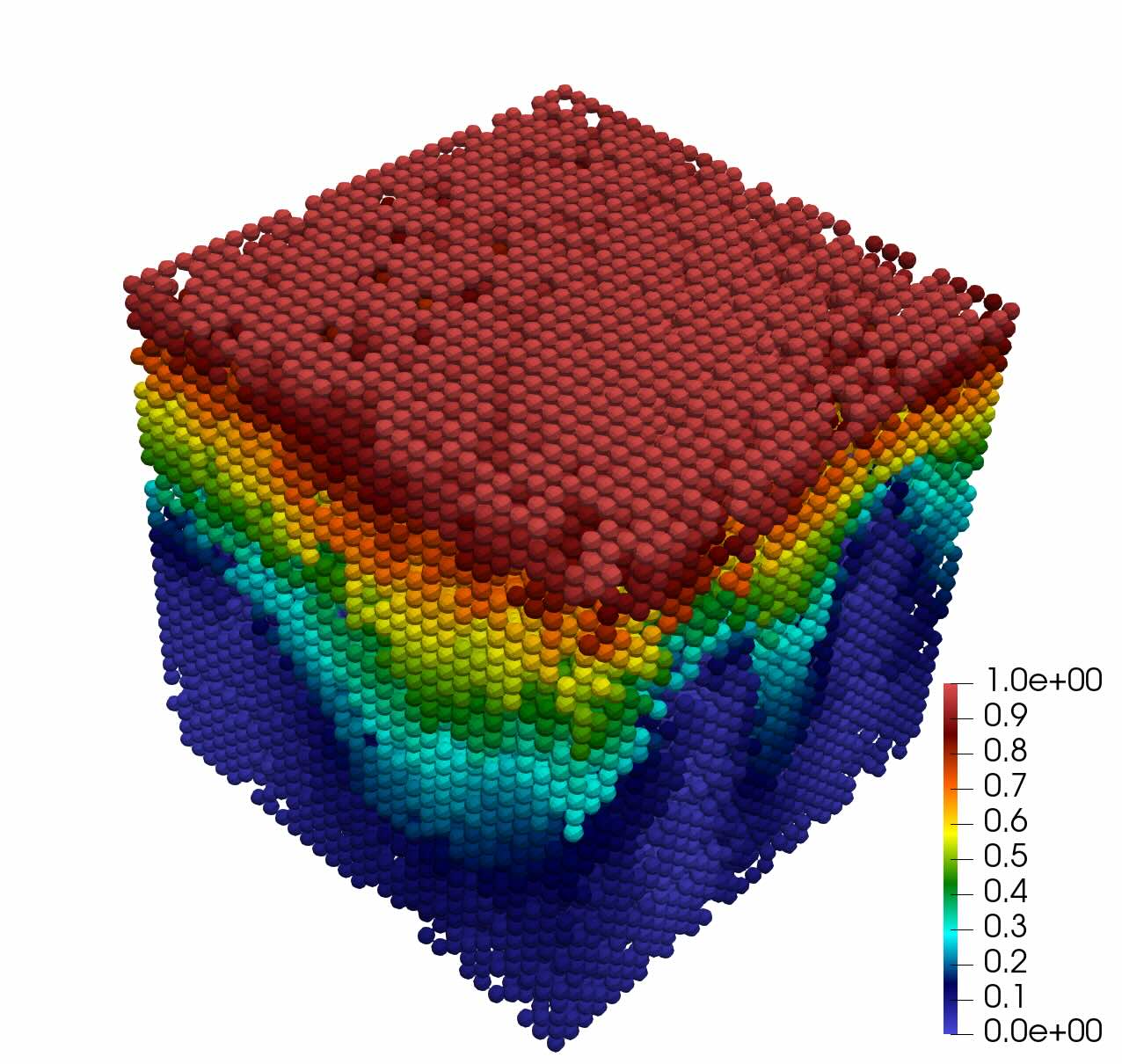}
\includegraphics[width=0.24\linewidth]{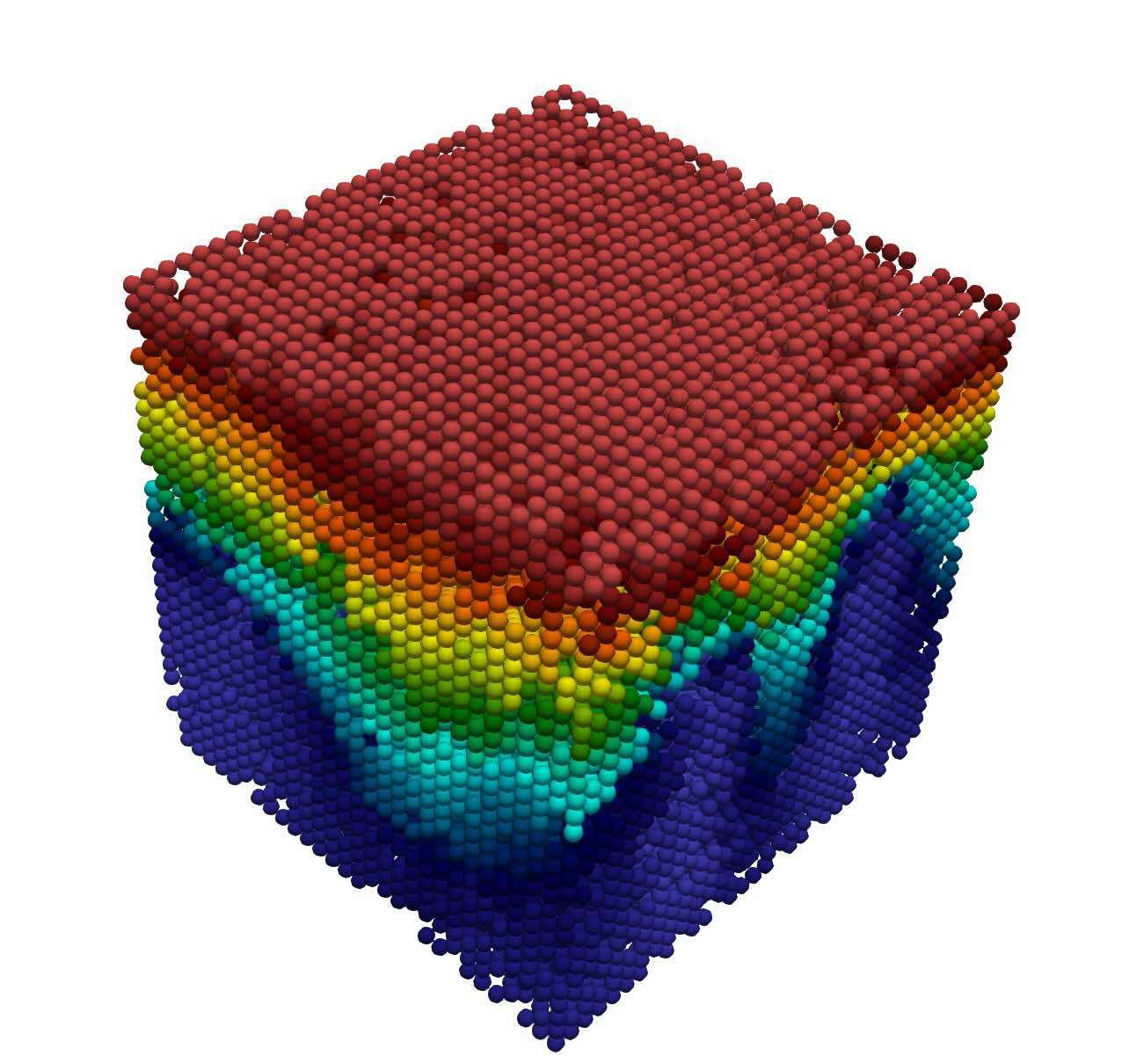}
\includegraphics[width=0.24\linewidth]{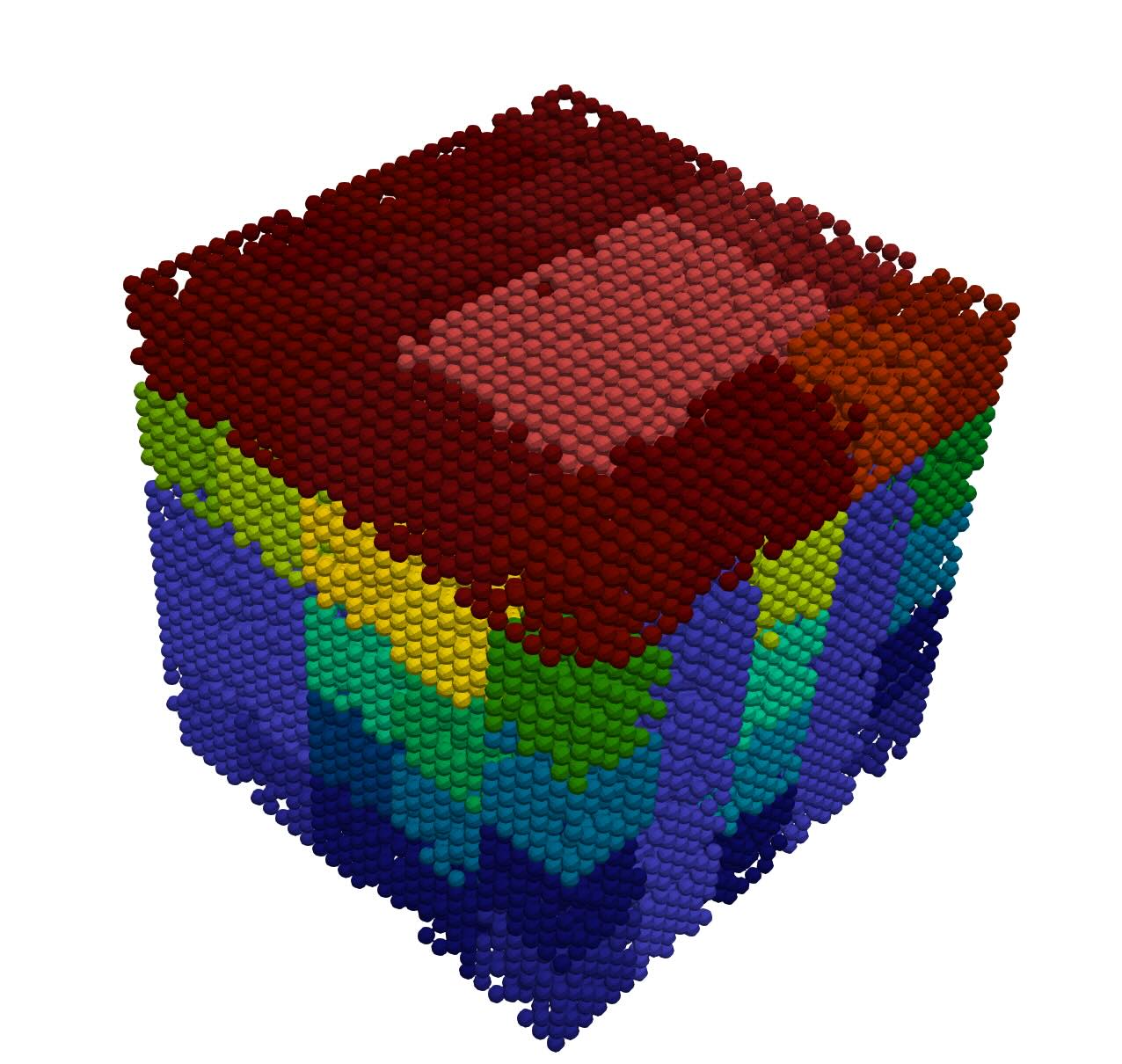}
\includegraphics[width=0.24\linewidth]{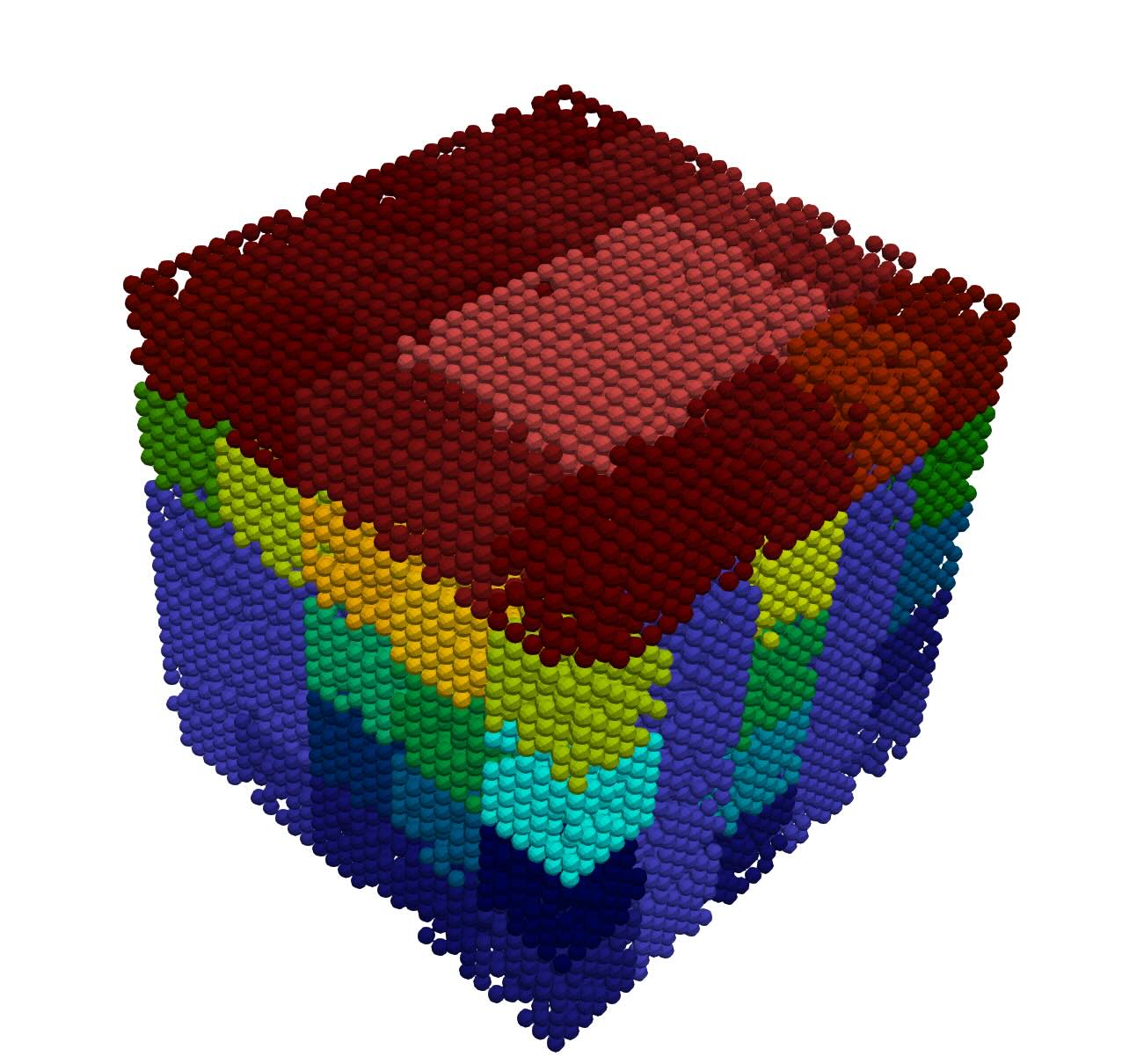}
\caption{\rev{Test-2c: High-contrast properties on Network 2b}}
\end{subfigure}
\begin{subfigure}{1\textwidth}
\centering
\includegraphics[width=0.24\linewidth]{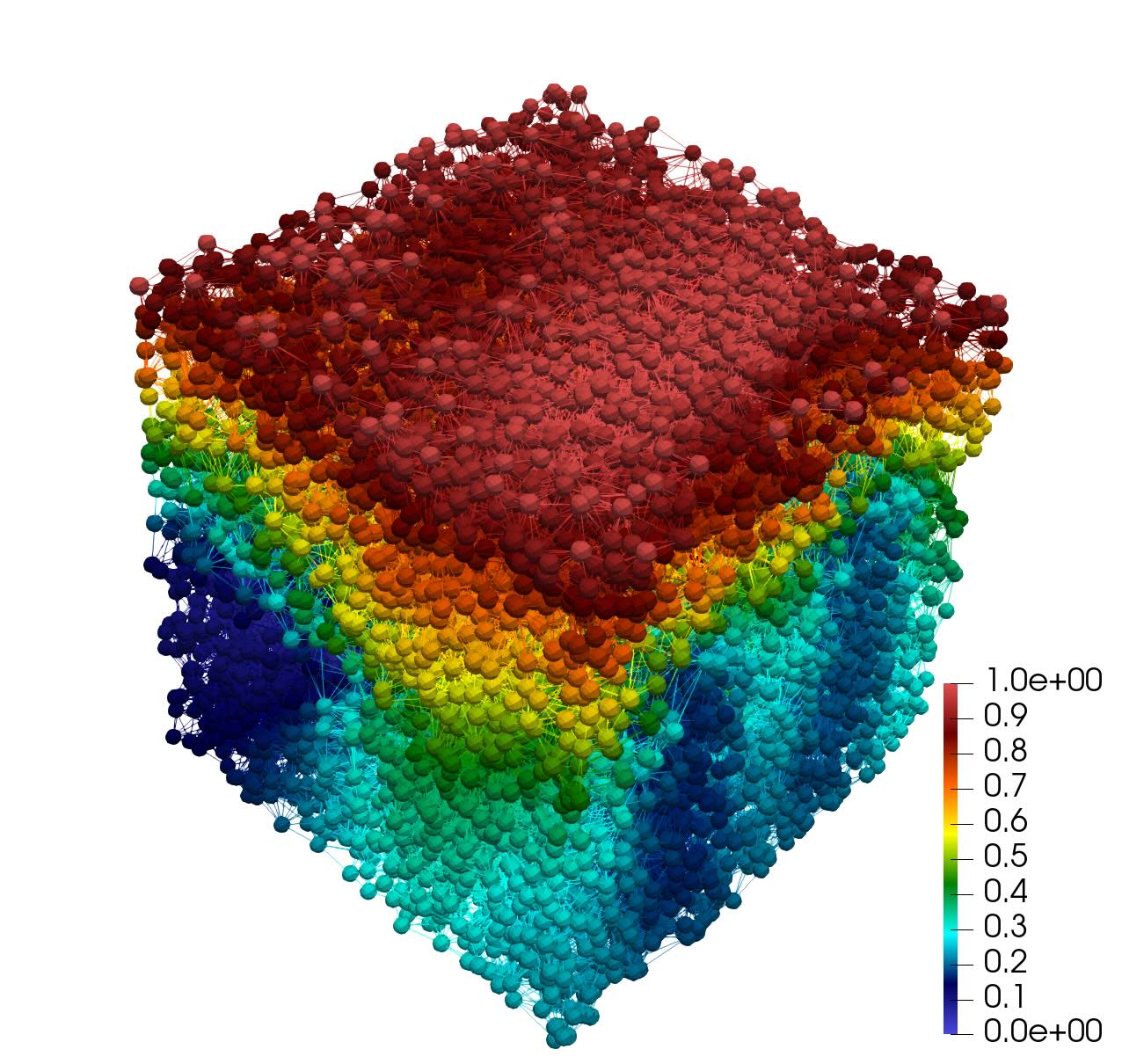}
\includegraphics[width=0.24\linewidth]{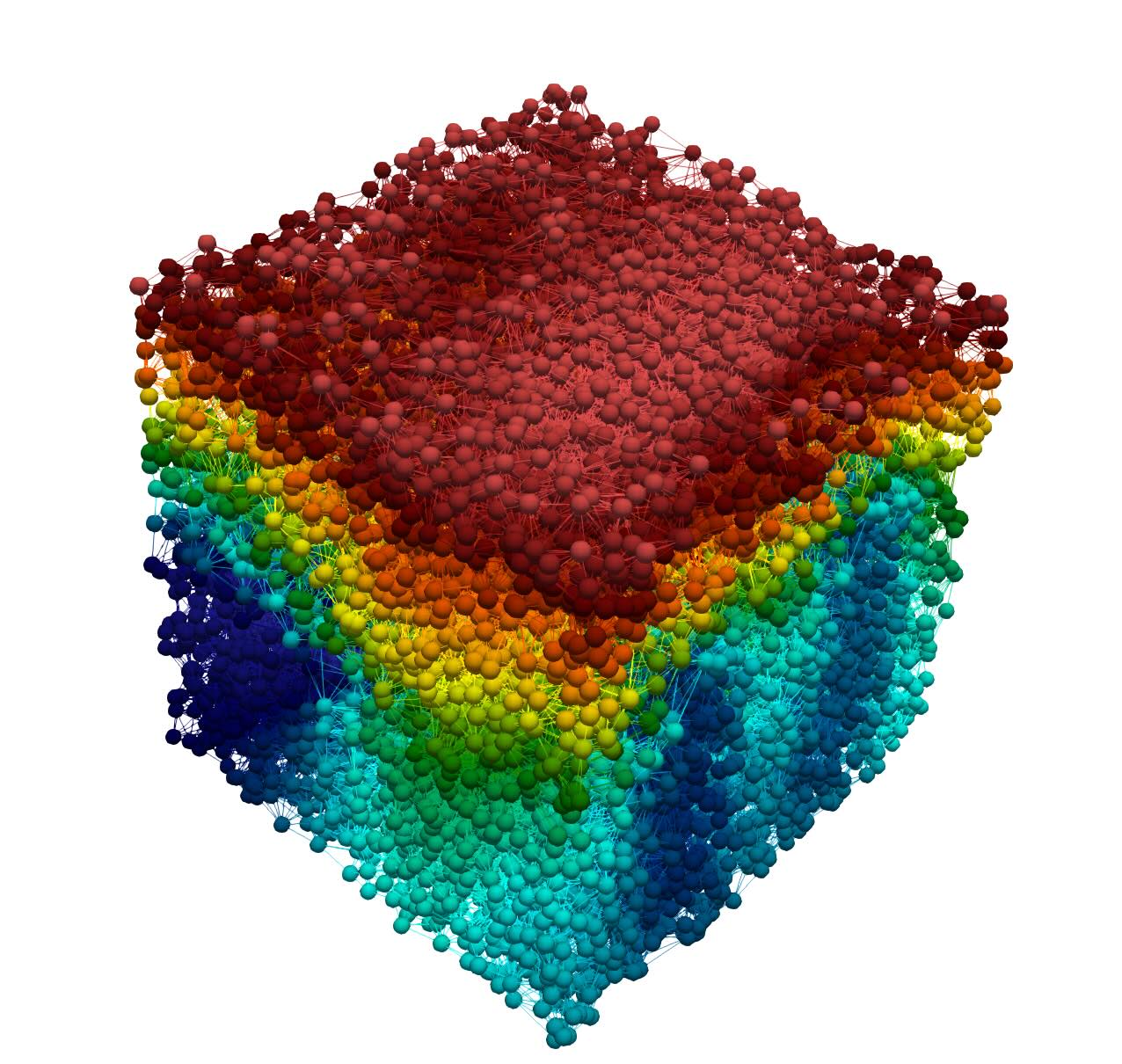}
\includegraphics[width=0.24\linewidth]{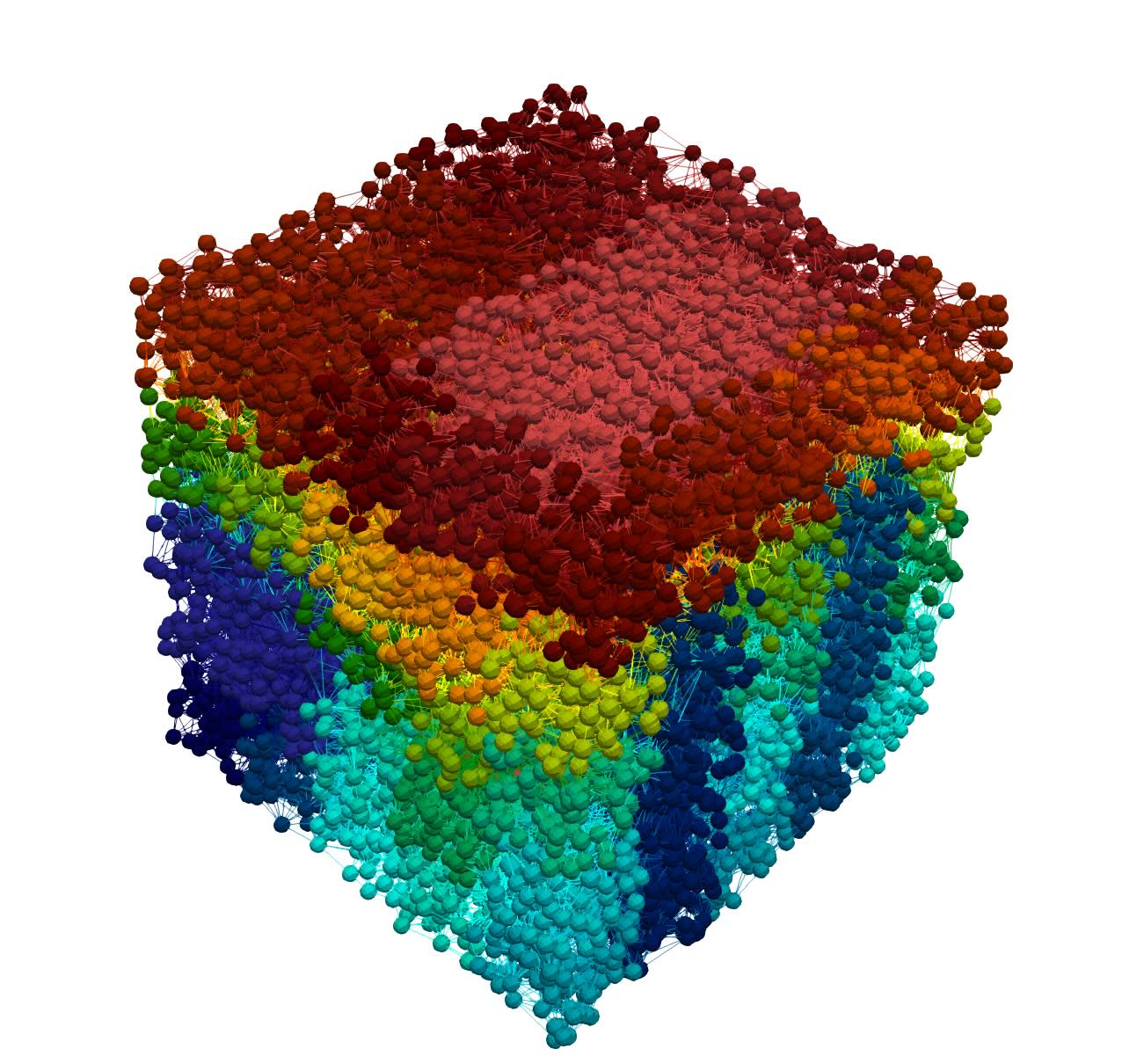}
\includegraphics[width=0.24\linewidth]{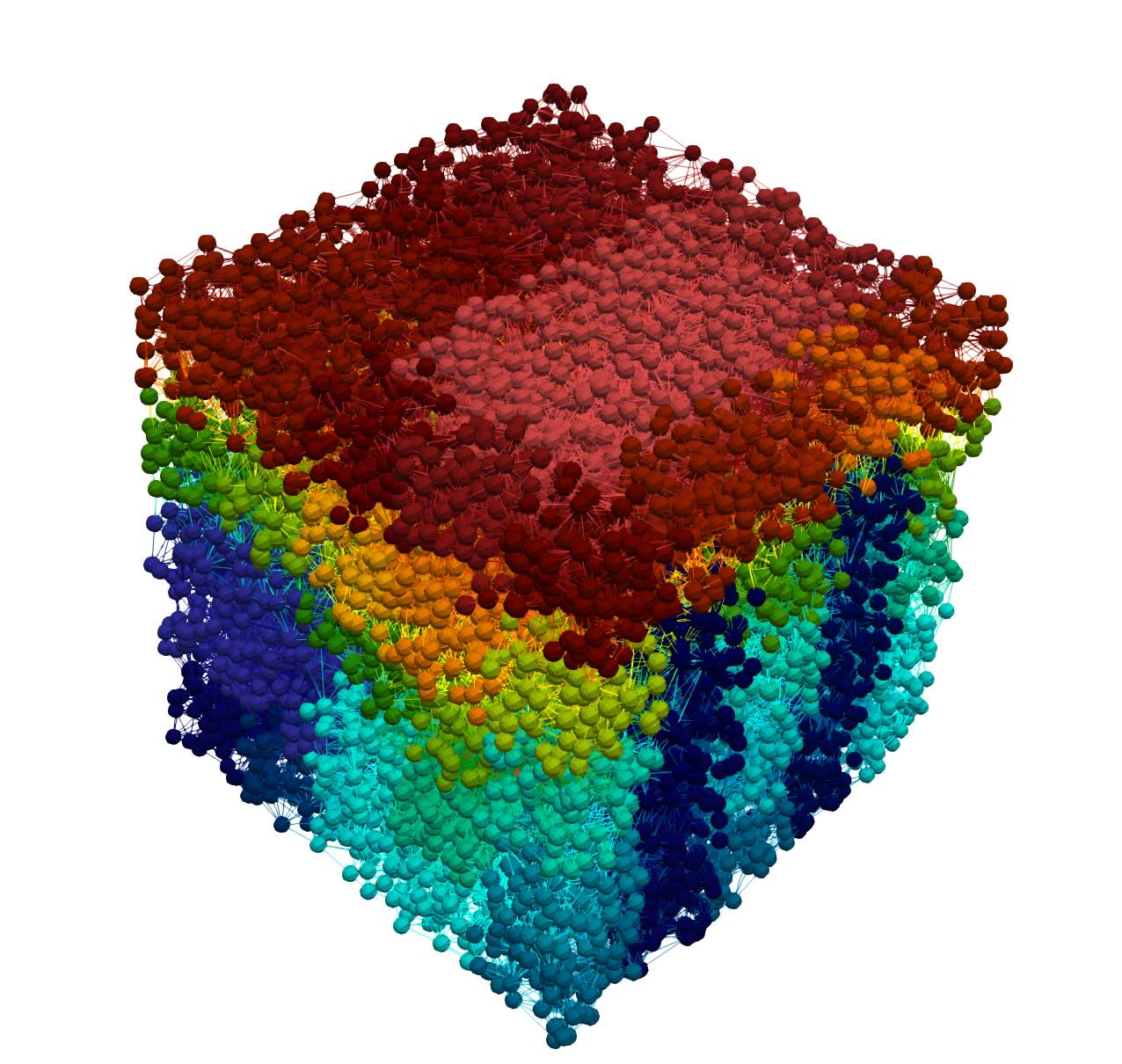}
\caption{\rev{Test-3c: High-contrast properties on Network 3b}}
\end{subfigure}
\caption{\rev{Reference solution for the 3D case on the fine-scale network (first column), multiscale solution (second column), average reference solution on the coarse grid (third column) and upscaled solution on the coarse grid (fourth column).}}
\label{fig:solu3c}
\end{figure}

\begin{table}[h!]
\centering
\begin{tabular}{|c || c|cc|c || c|cc|c || c|cc|c ||}
\hline
\multirow{2}{*}{$DOF_H$} 
& $e_1^H$ & $e_1^h$ & $e_2^h$ & $t_{sol}$
& $e_1^H$ & $e_1^h$ & $e_2^h$ & $t_{sol}$
& $e_1^H$ & $e_1^h$ & $e_2^h$ & $t_{sol}$ \\ \cline{2-13} 
& \multicolumn{4}{|c||}{Test-1a} 
& \multicolumn{4}{|c||}{Test-2a} 
& \multicolumn{4}{|c||}{Test-3a} \\ \hline
25 & 3.33 & 17.36 & 22.95 & 0.049 & 3.16 & 13.01 & 15.44 & 0.052 & 3.72 & 13.08 & 13.58 & 0.035
 \\ \hline
\hline
 $DOF_H$
& \multicolumn{4}{|c||}{Test-1b} 
& \multicolumn{4}{|c||}{Test-2b} 
& \multicolumn{4}{|c||}{Test-3b} \\ \hline
125 & 3.27 & 15.00 & 17.08 & 0.075 & 2.66 & 11.30 & 13.84 & 0.081 & 7.70 & 14.65 & 33.41 & 0.524 \\ \hline
\hline
 $DOF_H$
& \multicolumn{4}{|c||}{Test-1c} 
& \multicolumn{4}{|c||}{Test-2c} 
& \multicolumn{4}{|c||}{Test-3c} \\ \hline
125 & 2.63 & 26.09 & 10.93 & 0.09 & 3.47 & 29.34 & 11.52 & 0.05 & 2.88 & 23.02 & 13.58 & 0.25 \\ \hline
\end{tabular}
\caption{\rev{Solution time and accuracy between fine-scale network model and upscaled coarse network, $DOF_H$ is the number of degrees of freedom on the coarse grid, $t_{sol}$ is solution time in seconds, $e_1^h$ is the relative L$_2$ error on the coarse-grid in percentage, $e_1^h$ and $e_2^h$ are relative L$_2$ and energy errors on the fine-scale network in percentage.}}
\label{table-up}
\end{table}

\begin{table}[h!]
\centering
\begin{tabular}{|| ccc || ccc || ccc ||}
\hline
$DOF_h$ & $N_{conn}$ & $t_{sol}$ &
$DOF_h$ & $N_{conn}$ & $t_{sol}$ &
$DOF_h$ & $N_{conn}$ & $t_{sol}$ \\ \hline
\multicolumn{3}{||c||}{Test-1a} & 
\multicolumn{3}{|c||}{Test-2a} & 
\multicolumn{3}{|c||}{Test-3a} \\ \hline
40000 & 79600 & 9.491 & 46006 & 62538 & 1.854 & 40000 & 119297 & 18.315 \\ \hline
\hline
\multicolumn{3}{||c||}{Test-1b} & 
\multicolumn{3}{|c||}{Test-2b} & 
\multicolumn{3}{|c||}{Test-3b} \\ \hline
15625 & 45000 & 135.487 & 22092 & 43075 & 110.655 & 15625 & 114147 & 246.295 \\ \hline
\hline
\multicolumn{3}{||c||}{Test-1c} & 
\multicolumn{3}{|c||}{Test-2c} & 
\multicolumn{3}{|c||}{Test-3c} \\ \hline
15625 & 45000 & 131.429 & 22072 & 43075 & 113.782 & 15625 & 114147 & 271.428 \\ \hline
\end{tabular}
\caption{\rev{Solution time for fine-scale network model with $DOF_h=N_{pore}$, where $t_{sol}$ is solution time in seconds, $N_{pore}$ and $N_{conn}$ are the number of nodes and connections.}}
\label{table-f}
\end{table}

\begin{table}[h!]
\centering
\begin{tabular}{|c | c || cc|c || cc|c || cc|c ||}
\hline
\multirow{2}{*}{$DOF_H$} & \multirow{2}{*}{M} 
& $e_1^h$ & $e_2^h$ & $t_{sol}$
& $e_1^h$ & $e_2^h$ & $t_{sol}$
& $e_1^h$ & $e_2^h$ & $t_{sol}$ \\ \cline{3-11} 
&
& \multicolumn{3}{|c||}{Test-1a} 
& \multicolumn{3}{|c||}{Test-2a} 
& \multicolumn{3}{|c||}{Test-3a} \\
\hline
36 & 1 & 50.01 & 0.43 & 0.385 & 100.24 & 0.85 & 0.341 & 51.16 & 0.18 & 0.337 \\ \hline
72 & 2 & 29.87 & 0.34 & 0.475 & 49.32 & 0.77 & 0.354 & 26.00 & 0.13 & 0.290 \\ \hline
144 & 4 & 15.58 & 0.24 & 0.425 & 17.53 & 0.66 & 0.400 & 15.98 & 0.10 & 0.381 \\ \hline
288 & 8 & 9.11 & 0.18 & 0.257 & 10.02 & 0.61 & 0.480 & 10.47 & 0.08 & 0.486 \\ \hline
432 & 12 & 7.14 & 0.16 & 0.654 & 3.51 & 0.55 & 0.441 & 6.92 & 0.06 & 0.463 \\ \hline
576 & 16 & 5.26 & 0.13 & 0.604 & 2.34 & 0.52 & 0.542 & 6.16 & 0.06 & 0.622 \\ \hline
864 & 24 & 3.65 & 0.11 & 0.696 & 3.75 & 0.44 & 0.797 & 4.81 & 0.05 & 0.745 \\ \hline
1152 & 32 & 2.75 & 0.10 & 0.792 & 3.70 & 0.41 & 0.804 & 4.06 & 0.05 & 0.749 \\ \hline
\hline
$DOF_H$ & M 
& \multicolumn{3}{|c||}{Test-1b} 
& \multicolumn{3}{|c||}{Test-2b} 
& \multicolumn{3}{|c||}{Test-3b} \\ 
\hline
216 & 1 & 11.70 & 0.47 & 0.177 & 35.99 & 2.26 & 0.120 & 21.55 & 6.30 & 0.162 \\ \hline
432 & 2 & 9.66 & 0.41 & 0.272 & 23.42 & 1.96 & 0.271 & 20.45 & 5.94 & 0.194 \\ \hline
864 & 4 & 7.57 & 0.33 & 0.243 & 11.19 & 1.41 & 0.274 & 17.74 & 5.30 & 0.376 \\ \hline
1728 & 8 & 5.06 & 0.27 & 0.409 & 6.50 & 1.18 & 0.487 & 11.72 & 4.27 & 0.413 \\ \hline
2592 & 12 & 3.15 & 0.20 & 0.539 & 4.34 & 1.05 & 0.605 & 9.31 & 3.77 & 3.409 \\ \hline
3456 & 16 & 2.20 & 0.16 & 2.261 & 3.27 & 0.97 & 2.115 & 7.63 & 3.39 & 2.457 \\ \hline
5184 & 24 & 1.09 & 0.10 & 2.562 & 2.05 & 0.85 & 2.160 & 4.93 & 2.70 & 2.139 \\ \hline
6912 & 32 & 0.64 & 0.07 & 4.328 & 1.56 & 0.78 & 4.086 & 3.11 & 2.11 & 5.594 \\ \hline
\hline
$DOF_H$ & M 
& \multicolumn{3}{|c||}{Test-1c} 
& \multicolumn{3}{|c||}{Test-2c} 
& \multicolumn{3}{|c||}{Test-3c} \\ 
\hline
216 & 1 & 64.66 & 1.06 & 0.15 & 70.97 & 0.97 & 0.15 & 78.18 & 0.43 & 0.18 \\ \hline
432 & 2 & 4.51 & 0.15 & 0.24 & 17.91 & 0.60 & 0.30 & 11.68 & 0.15 & 0.27 \\ \hline
864 & 4 & 2.82 & 0.10 & 0.37 & 6.55 & 0.56 & 0.36 & 5.56 & 0.09 & 0.38 \\ \hline
1728 & 8 & 1.35 & 0.06 & 0.45 & 3.98 & 0.51 & 0.45 & 2.81 & 0.06 & 0.51 \\ \hline
2592 & 12 & 0.85 & 0.04 & 0.72 & 3.47 & 0.48 & 0.64 & 2.09 & 0.05 & 0.64 \\ \hline
3456 & 16 & 0.67 & 0.04 & 3.25 & 2.81 & 0.46 & 4.09 & 1.63 & 0.04 & 2.57 \\ \hline
5184 & 24 & 0.42 & 0.03 & 2.83 & 1.70 & 0.45 & 4.46 & 1.03 & 0.04 & 3.24 \\ \hline
6912 & 32 & 0.26 & 0.02 & 4.88 & 1.15 & 0.44 & 5.33 & 0.69 & 0.03 & 5.44 \\ \hline
\end{tabular}
\caption{\rev{Solution time and accuracy between fine-scale network model and multiscale network, $DOF_H$ is the number of degrees of freedom on the coarse grid, $M$ is number of basis functions, $t_{sol}$ is solution time in seconds, $e_1^h$ and $e_2^h$ are relative L$_2$ and energy errors  on the fine-scale network in percentage.}}
\label{table-ms}
\end{table}

\rev{We set zero source terms and Dirichlet boundary conditions on the top boundary with $g = 1$}.  We then simulate with a fixed number of time steps, $n = 50$. 
The networks are embedded into domain \rev{$\Omega=[0,1]^d$ with $d=2$ and $3$ for 2D and 3D cases. The coarse grid is $5^d$ for both upscaling and multiscale methods. In upscaling, the resulting coarse-scale model has $DOF_H = 5^d$. In the multiscale method, we have $DOF_H = M \cdot 6^d$, where $M$ is the number of multiscale basis functions. In the fine-scale network, we have $DOF_h = N_{pore}$, where $N_{pore}$ is the number of nodes in the network. } 
Numerical implementation of the network model is performed based on the PETSc DMNetwork framework \cite{balay2019petsc, maldonado2017scalable, betrie2018scalable}. The basis functions are constructed using a generalized eigenvalue solver from SLEPc  \cite{hernandez2005slepc}.

\rev{In Figures \ref{fig:solu2},  \ref{fig:solu3} and  \ref{fig:solu3c}, we depict reference solution on the fine-scale network (first column), multiscale solution (second column), average reference solution on the coarse grid (third column) and upscaled solution on the coarse grid (fourth column). 
From the fine-scale solution, we observe a significant influence of the heterogeneity properties from the SPE10 test and for high-contrast properties on the solution in Test-1a/1b and Test1c/2c/3c. However, we observe smoother solutions for Test-2a/2b and 3a/3b with random coefficients. From Test-1c/2c/3c, we observe a significant influence of the subdomains filled with larger pores that give a subdomain with much higher connection weights. Moreover, we observe an influence of the network structure on the solution, where results on Network-2b give a slower flow due to the smaller connectivity of the network, and faster flow is observed in Network-3b with the biggest connectivity. From the results of the multiscale solver, we observe visually a very good solution for all test cases. 

In Table \ref{table-f}, we present the time of solution for the fine-scale network model. 
In the 2D case, we have $DOF_h = 46,006$ for Network-2a, $DOF_h = 40,000$ for Network-1a and 3a.  
In the 3D case, we have $DOF_h = 22,092$ for Network-2b, $DOF_h = 15,625$ for Network-1b and 3b.  
The solution time is presented in Table \ref{table-f}. On the fine-scale, we use a direct solver. The solution time on the fine-scale network depends on the network structures (number of nodes $N_{pore}$ and connectivity $N_{conn}$). The network with stronger connections takes more time to solve. For example, we have solution time $t_{sol} = 9.5$ sec for Network-1a with $DOF_h = 40,000$; for Network-3a with larger connectivity, we have $t_{sol} = 18.3$ sec. In the 3D case, we obtain the same behavior depending on network connectivity with approximately two-times speed-up for a network with 2-times less number of connections, where we have $t_{sol} = 135.5$ sec for Network-1b and $t_{sol} = 246.3$ sec. for Network-3b in Test-3c. We observe the same time of solution between Test-1b/2b/3b and Test-1c/2c/3c  because the same networks are used, and we only vary heterogeneous coefficients. 

In Tables \ref{table-up} and \ref{table-ms}, we present results for the upscaling method and multiscale method.  For the structured coarse grid with quadrilateral cells, in the upscaling method, the coarse grid model in two-dimensional cases contains four connections associated with flow in x and y directions and six connections in the 3D case for flow in x,y, and z directions. The GMsFEM model is related to the finite element method, where we have $C=8$ connections in 2D and $C=26$ for 3D. Therefore, in total, we have $C + (M-1)*(C+1)= C + M(C+1)-C-1 = (C+1)*M-1$ connections in GMsFEM ($C$ connections in current continua plus connection with all other continua in local support). For example, with $M=1$ (one basis function per local domain), we have $(C+1)*M-1=9*2-1=17$ connections in 2D and $(C+1)*M-1=27*2-1=53$ connections in 3D for each interior node. For $M=2$, we have $(C+1)*M-1=9*4-1=35$ connections in 2D and $(C+1)*M-1=27*4-1=107$ connections in 3D. We see that the GMsFEM approximation with a sufficient number of basis functions gives a rich connectivity pattern and provides a good approximation to capture networks with many connections.
By using the upscaling method, we construct a continuum scale model with upscaled parameters and reduce the size to $DOF_H = 25$, leading to very fast calculations. However, because the upscaling method only provides solutions on the coarse grid, we obtain a big error between fine-scale and coarse-scale models when we project the coarse-scale solutions back to fine resolution. We should mention that, in the upscaling method, we can construct a prolongation operator accurately, for example, based on the fine-scale solution of local problems, and obtain a better representation. We will investigate the upscaling method in our future works. 
By constructing a multiscale method in a 3D case, we reduce the size of the system to $DOF_H = 1728$ and $DOF_H = 3456$ for 8  and 16 multiscale basis functions, respectively. In the 2D case, we have $DOF_H = 288$ and $DOF_H = 576$ for 8  and 16 multiscale basis functions, respectively. 
We observe that the error reduces when we take more basis functions, as shown in the convergence analysis. Furthermore, we can reduce error by considering a finer coarse grid. 
Compared with the upscaling method, we can reconstruct a fine-scale representation and use the proposed method as a two-grid preconditioner in future works. 
The size of the resulting coarse-scale system directly affects the solution time. Moreover, we also see that in the multiscale method, we obtain the same connectivity induced by a Galerkin projection for different underlying fine-scale networks. Therefore, the solution time for different networks on the coarse grid is similar. For Network-1b, we obtain very good results with less than one \% of error using 32 multiscale basis functions with solution time $t_{sol} = 4.3$ sec. Compared with the fine-scale solution time $t_{sol} = 135.5$ sec, we obtain a 31 times faster solution. For the example with an unstructured irregular network with very high connectivity (Network-3b), we obtain 44 times faster solution with $t_{sol} = 5.9$ sec. for multiscale method and $t_{sol} = 246.3$ sec. for a fine-scale solution. We also see that the connectivity of the underlying network highly affects the number of basis functions. We will investigate it in future works. Moreover, we will develop online techniques to reduce errors further and interplay with network connectivity. 
Also, we didn't include an investigation of very large network models in this work. We note that a bigger problem will require an interplay between local domain (subnetwork) size and coarse grid size. The size of a coarse grid will affect the compression properties of the multiscale method related to the ability to capture a complex fine-scale behavior in a few degrees of freedom and affect the coarse-system size. Very coarse grids lead to a larger number of spectral bases per local domain and affect the connectivity of the coarse-scale network representation (sparsity of the matrix). On the other side, a finer coarse grid will reduce the number of bases (connectivity) but increase the number of unknowns. Moreover, when we reach a limit of the direct solver for a coarse grid system, it leads to the development of the efficient preconditioner for multiscale approximation or combining it with a multilevel approach with a coarse grid hierarchy. On the other side, we should also have an 'optimal' size of the local domain (subnetwork) for efficient solving of a generalized eigenvalue problem, which can be done in parallel. It opens many interesting questions and research directions; we will investigate this aspect in future works.
}

\section{Conclusion}

We considered a time-dependent model on structured and random networks.  The time approximation was performed using an implicit scheme.  The stability of the semi-discrete and discrete networks was presented. 
The multiscale method for the network model was developed and analyzed for network models.  The proposed approach is based on the generalized multiscale finite element method.  To find a multiscale basis function, we solve local eigenvalue problems in sub-networks.  
Convergence analysis of the proposed method was presented for semi-discrete and discrete network models with stability estimates.   
Numerical results were presented for structured and random heterogeneous networks to confirm the theory.

\subsection*{\rev{Appendix. Upscaling of the network model}}\label{appendx}

\rev{For problems in non-periodic media, numerical homogenization (or upscaling) is used to calculate effective properties in each local domain \cite{vasilyeva2020learning, vasilyeva2021machine, vasilyeva2019convolutional}. 
We consider a coarse grid $\mathcal{T}_H$ with coarse cell $K_i$. We let $\mathcal{E}_H$ be the set of all faces of the coarse grid and $N_e$ be the total number of coarse faces. 
We let $E_{ij}$ be the coarse grid face, and we define the local domain by
\[
\omega_{E_{ij}} = K_i \cup K_j, \quad K_i, K_j \in \mathcal{T}_H,
\]
where $\omega_{E_{ij}}$ is a union of two coarse cells, when $E_{ij}$ lies in the interior of the domain $\Omega$. 
}

\begin{figure}[h!]
\centering
\includegraphics[width=0.5\linewidth]{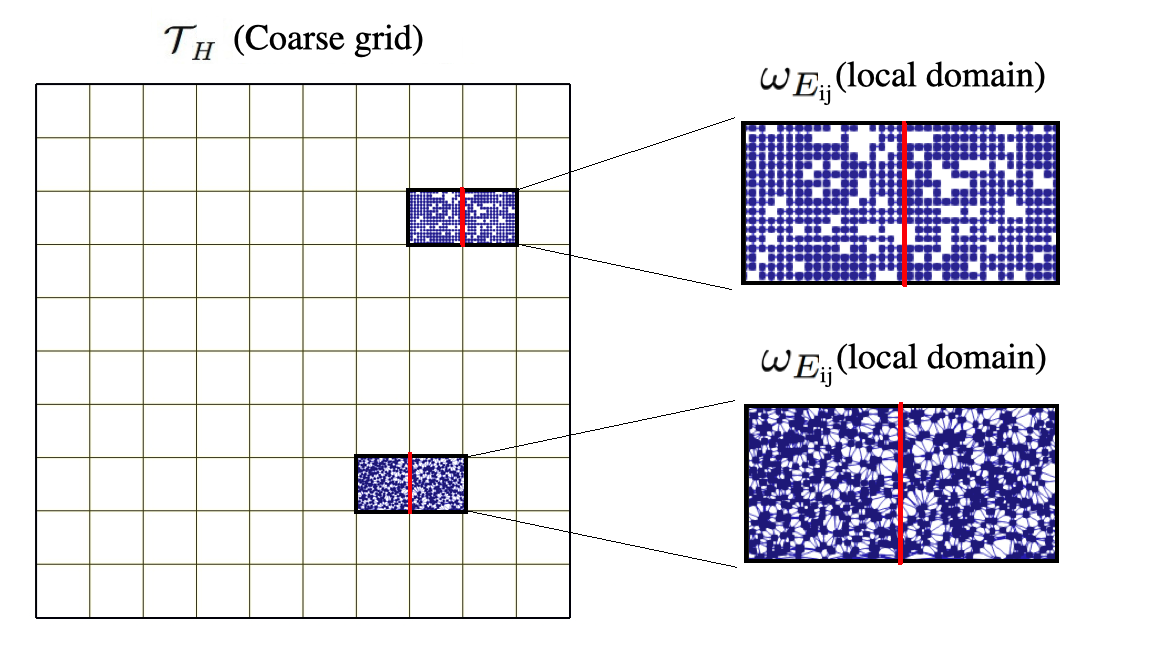}
\caption{\rev{Illustration of local domain in upscaling algorithm with corresponded subnetwork $G^{\omega_{E_{ij}}}$.}}
\label{fig:up}
\end{figure}

\rev{
We formulate a coarse-scale problem as follows
\begin{equation}
\bar{c}_i \frac{ \bar{u}^{n}_i - {\bar{u}}^{n-1}_i }{\tau} 
+ \sum_{j}  \bar{w}_{ij}  (\bar{u}^{n}_i -\bar{u}^{n}_j) = \bar{f}_i, 
\end{equation}
where $\bar{c}_i$ and $\bar{w}_{ij}$ are the upscaled coefficients. 
Note that this formulation is related to the finite volume method on the coarse grid, which leads to the continuum scale model on a coarse level with effective properties. 

For calculation of the upscaled connection weight $\bar{w}_{ij}$ for coarse face $E_{ij}$, we solve local problems in corresponding subnetwork with linear boundary conditions in $G^\omega_{E_{ij}}$. We solve  the following local problem
\begin{equation}
L^{\omega_{E_{ij}}} u^{\omega_{E_{ij}}} = 0,
\end{equation}
with Dirichlet boundary conditions on the inflow and outflow boundaries ($u = g$ with $g = 1$ for the inflow boundary and $g=0$ for the outflow boundary) and set zero fluxes for all other boundaries. 
We have two types of face orientation in the 2D case ($x$, $y$) and three orientations in the 3D case that correspond to the flow in $x$, $y$, and $z$ directions. For the flow in $x$-direction, we define the left boundary as an inflow boundary and the right boundary as an outflow; the rest of the boundaries are marked as a boundary with zero flow boundary conditions.

We calculate upscaled coefficient $\bar{w}_{ij}$ using average flux on the interface between two cells 
\begin{equation}
\bar{w}_{ij} = \frac{ \bar{q}_{ij} }{\bar{u}^{\omega_{E_{ij}}}_i - \bar{u}^{\omega_{E_{ij}}}_j}, 
\quad 
\bar{q}_{ij} = \sum_{l \sim n} w^{\omega_{E_{ij}}}_{ln}(u^{\omega_{E_{ij}}}_l - u^{\omega_{E_{ij}}}_n), 
\quad 
\bar{u}_i^{\omega_{E_{ij}}} = \frac{\sum_{\xi_l \in G^{K_i}} u_l^{\omega_{E_{ij}}} |\xi_i| }{\sum_{\xi_l \in G^{K_i}} |\xi_i|}, 
\end{equation}
and upscaled coefficient $\bar{c}_i$ is calculated  as follows $\bar{c}_i = \sum_{\xi_l \in G^{K_i}} c_l$ with $c_l = \xi_l$. 
Here $\xi_l$ is the fine-scale mode in $G^{\omega_{E_{ij}}}$, $|\xi_i|$ is the node volume, 
$j \sim  i$ if nodes $\xi_i$ and $\xi_j$ are connected  and $\xi_l \in G^{K_i}$, $\xi_n \in G^{K_j}$. 
For the interfaces on the boundary with zero flux boundary conditions, we will use no flux boundary conditions and, therefore, not need to calculate the upscaled connection weight. For the faces on the boundary where we set Dirichlet boundary conditions, we calculate the upscaled coefficient in the coarse cell that has a given global boundary with fixed boundary conditions. 
}

\bibliographystyle{unsrt}
\bibliography{lit}

\end{document}